\newcommand{\By}[2]{\overset{\mbox{\tiny{#1}}}{#2}}
\newcommand{\ByRef}[2]{   \By{\eqref{#1}}{#2} }
\newcommand{\lBy}[1]{     \By{#1}{<} }
\newcommand{\leBy}[1]{    \By{#1}{\le} }
\newcommand{\geBy}[1]{    \By{#1}{\ge} }
\newcommand{\eqByRef}[1]{ \ByRef{#1}{=} }
\newcommand{\lByRef}[1]{  \ByRef{#1}{<} }
\newcommand{\leByRef}[1]{ \ByRef{#1}{\le} }
\newcommand{\geByRef}[1]{ \ByRef{#1}{\ge} }
\newcommand{\LandauOmega}{\mathsf{\Omega}}
\newcommand{\LandauO}{\mathsf{O}}
\newcommand{\Landauo}{\mathsf{o}}
\newtheorem{thm}{Theorem}[section]
\newtheorem{lem}[thm]{Lemma}
\newtheorem{proposition}[thm]{Proposition}
\newtheorem{corollary}[thm]{Corollary}
\newtheorem{defn}[thm]{Definition}
\newtheorem{remark}[thm]{Remark}
\newtheorem{Prop26}{Prop26}[section]
\newtheorem{claim26}{Claim}[Prop26]
\newtheorem{claim}{Claim}
\numberwithin{claim}{thm}
\newcommand{\oldqed}{}
\newcommand{\qedClaim}{\hfill\scalebox{.6}{$\Box$}}
\renewcommand{\epsilon}{\varepsilon}
\newenvironment{claimproof}[1][Proof]{
  \renewcommand{\oldqed}{\qedsymbol}
  \renewcommand{\qedsymbol}{\qedClaim}
  \begin{proof}[#1]
}{
  \end{proof}
  \renewcommand{\qedsymbol}{\oldqed}
}
\newcommand{\T}{\mathcal{T}}
\renewcommand{\O}{\mathcal{O}}
\newcommand{\EXP}{\mathbb{E}}
\newcommand{\PROB}{\mathbb{P}}
\newcommand{\stab}{\mathrm{Stab}}
\newcommand{\lr}{\leftrightarrow}
\newcommand{\FREQ}{\mathrm{Freq}}
\newcommand{\congpure}{\stackrel{\mathrm{pure}}{\cong}}
\newcommand{\RESISTANCE}{\mathcal{R}_{\mathrm{eff}}}
\def\parent{\mathrm{Par}}
\newcommand{\POISSON}{\mathsf{Poisson}}
\newcommand{\BIN}{\mathsf{Bin}}
\newcommand{\bN}{\mathbb{N}}
\newcommand{\bR}{\mathbb{R}}
\newcommand{\cC}{\mathcal{C}}
\newcommand{\eps}{\varepsilon}
\renewcommand{\rho}{\varrho}
\renewcommand{\subset}{\subseteq}
\begin{document}

\title{The local limit of the uniform spanning tree on~dense~graphs}

\author{Jan Hladk\'y}
\address{Institut f\"ur Geometrie, TU Dresden, 01062 Dresden, Germany.}
\email{honzahladky@gmail.com}
\author{Asaf Nachmias}
\address{Department of Mathematical Sciences, Tel Aviv University, Tel Aviv 69978, Israel.}
\email{asafnach@post.tau.ac.il}
\author{Tuan Tran}
\address{Department of Mathematics, ETH, 8092 Zurich, Switzerland.}
\email{manh.tran@math.ethz.ch}

\begin{abstract}
Let $G$ be a connected graph in which almost all vertices have linear degrees and let $\T$ be a uniform spanning tree of $G$. For any fixed rooted tree $F$ of height $r$ we compute the asymptotic density of vertices $v$ for which the $r$-ball around $v$ in $\T$ is isomorphic to $F$.
We deduce from this that if $\{G_n\}$ is a sequence of such graphs converging to a graphon $W$, then the uniform spanning tree of $G_n$ locally converges to a multi-type branching process defined in terms of $W$.

As an application, we prove that in a graph with linear minimum degree, with high probability, the density of leaves in a uniform spanning tree is at least $e^{-1}-\Landauo(1)$, the density of vertices of degree $2$ is at most $e^{-1}+\Landauo(1)$ and the density of vertices of degree $k\geq 3$ is at most ${(k-2)^{k-2} \over (k-1)! e^{k-2}} + \Landauo(1)$. These bounds are sharp.
\end{abstract}

\maketitle

\vspace{-.5in}

\section{Introduction}

It is a classical fact \cite{Gri:RandomTree,Kolchin1977} that a uniformly chosen tree from the set of $n^{n-2}$ trees on $n$ vertices, viewed from an independently chosen uniform random vertex, is locally distributed as a $\POISSON(1)$ Galton--Watson branching process conditioned to live forever when $n$ is large. One can use this result to find the distribution of a certain ``local'' structures. For instance, it follows that the degree distribution of a uniformly chosen vertex of a uniformly chosen tree on $n$ vertices converges to the law of a $\POISSON(1)+1$ random variable.

A uniformly chosen tree on $n$ vertices is a uniform spanning tree (UST) of the complete graph on $n$ vertices. Our goal in this paper is to explicitly describe the local structure of the UST of any dense graph or, equivalently, of a sequence of dense graphs converging to a given graphon. Let us first present this result.

Given a connected graph $G$ we write $\T$ for a uniformly drawn spanning tree of $G$ and $B_\T(v,r)$ for the graph-distance ball of radius $r$ in $\T$ around the vertex $v\in V(G)$. Our goal is to describe the asymptotic distribution of $B_\T(X,r)$, viewed up to graph isomorphism, where $X$ is a uniformly chosen random vertex of $G$. To that aim, let $\Omega=[0,1]$ and $\mu$ be the Lebesgue measure on $\Omega$. For a given graphon $W:\Omega^2\rightarrow[0,1]$ (see Section~\ref{sec:graphonpreliminaries} for a brief introduction to graphons) and $\omega \in \Omega$ we write $\deg(\omega)=\int_{y \in \Omega} W(\omega,y)\mathrm{d}y$, and call this number the {\bf degree} of $\omega$. A graphon is called {\bf nondegenerate} if for almost every $\omega\in\Omega$ we have $\deg(\omega)>0$.

Let $T$ be a fixed rooted tree with $\ell \geq 2$ vertices and of height $r\geq 1$. We write by $\stab_T$ the set of graph automorphisms of $T$ that preserve the root. In what follows we denote the vertices of $T$ by the numbers $\{1,\ldots, \ell\}$ such that the vertices $p, \ldots, \ell$ are the vertices at height $r$ of $T$ and $p\in \{2,\ldots,\ell\}$. Given a nondegenerate graphon $W$ and a tree $T$ as above we define
\begin{equation}
\label{eq:frewTW}
\FREQ(T;W):= {1 \over |\stab_T|} \int\displaylimits_{\omega_1, \ldots, \omega_\ell} \exp \left(-\sum_{j=1}^{p-1} b_W(\omega_j) \right) \frac{\sum_{j=p}^\ell \deg(\omega_j) }{\prod_{j=1}^\ell \deg(\omega_j)} \prod_{\substack{(i,j) \in E(T)}} W(\omega_i, \omega_j) \mathrm{d}\omega_1 \cdots \mathrm{d}\omega_\ell \, ,
\end{equation}
where
\begin{equation}\label{eq:defbb}
b_W(\omega) = \int_{y\in \Omega} {W(\omega,y) \over \deg(y)} \mathrm{d}\mu\;.
\end{equation}

\begin{thm}\label{thm:mainthm2} Let $T$ be a fixed rooted tree as above, and $W:\Omega^2 \rightarrow [0,1]$ be a nondegenerate graphon. Then for any $\eps>0$ there exists $\xi=\xi(\eps,W,T)>0$ such that if $G$ is a connected simple graph on at least $\xi^{-1}$ vertices that is $\xi$-close to $W$ in the cut-distance, then with probability at least $1-\eps$ a uniformly chosen spanning tree $\T$ of $G$ satisfies
	$$ \big | \PROB( B_\T(X,r) \cong T ) - \FREQ(T;W) \big | \leq \eps \, ,$$
	where $X$ is an independently and uniformly chosen vertex of $G$, and by $B_\T(X,r) \cong T$ we mean that between the two rooted trees there is a graph-isomorphism preserving the root.
\end{thm}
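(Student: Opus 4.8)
Since $X$ is uniform and independent of $\T$, the quantity $\PROB(B_\T(X,r)\cong T)$ equals $\tfrac1n\#\{v:B_\T(v,r)\cong T\}$, whose expectation over $\T$ is $\tfrac1n\sum_v\PROB(B_\T(v,r)\cong T)$, and for each $v$ one has $\PROB(B_\T(v,r)\cong T)=\tfrac1{|\stab_T|}\sum_{\phi}\PROB(\cE_\phi)$, the sum running over injections $\phi:V(T)\hookrightarrow V(G)$ with $\phi(1)=v$, where $\cE_\phi$ is the event that $\phi$ is a root-preserving isomorphism of $T$ onto $\T[\phi(V(T))]$ \emph{and} $\phi(V(T))=B_\T(v,r)$. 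Because $\T$ is a tree (no chord of $\phi(T)$ can lie in $\T$), $\cE_\phi$ amounts to: every edge $\phi(i)\phi(j)$, $(i,j)\in E(T)$, lies in $\T$, and every $\phi(i)$ with $i$ of height $<r$ has no further $\T$-neighbour. Unfolding $\FREQ(T;W_G)$ (the graphon of $G$) as a sum over $\phi$, and using nondegeneracy of $W$ together with $\int b_W=1$ to argue that $\FREQ(\cdot;W)$ is stable under small cut-distance perturbations (a quantitative continuity statement, which must itself be proved, controlling atypically small degrees), the theorem reduces to: (a) for ``typical'' $\phi$, $\PROB(\cE_\phi)\approx\mathbbm1[\phi\text{ a homomorphism}]\,e^{-\sum_{i=1}^{p-1}b_{\phi(i)}}\sum_{k=p}^{\ell}\prod_{j\ne k}\deg_G(\phi(j))^{-1}$, with $b_w:=\sum_{u\sim w}\deg_G(u)^{-1}$; and (b) the atypical $\phi$ (some $\phi(j)$ of sublinear degree, or $\phi$ degenerate) contribute $o(1)$ to $\tfrac1{n|\stab_T|}\sum_\phi\PROB(\cE_\phi)$, which follows from ``almost all degrees linear'' together with homomorphism-counting bounds and $\sum_v\deg_\T(v)=2(n-1)$.

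\textbf{Spine decomposition via Aldous--Broder.} The plan is to run the Aldous--Broder chain from $\phi(1)$, whose first-entrance edges form a UST, and to split $\cE_\phi=\big(\bigsqcup_{k=p}^{\ell}\cE_\phi^{(k)}\big)\sqcup\cE_\phi^{\mathrm{bad}}$ according to which height-$r$ vertex the ``spine'' of the local limit passes through. Writing $\phi(1)=\phi(v_0),\phi(v_1),\dots,\phi(v_r)=\phi(k)$ for the root-to-$\phi(k)$ path, on $\cE_\phi^{(k)}$ one additionally asks that for each $i<r$ the vertex $\phi(v_{i+1})$ be the \emph{first} vertex ever first-visited from $\phi(v_i)$. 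Thus $\cE_\phi^{(k)}$ is the intersection of \emph{skeleton events} (for every non-root $j$, the first visit to $\phi(j)$ is along the edge from $\phi(\parent(j))$), \emph{exclusion events} (for every $i$ of height $<r$, no vertex outside $\phi(V(T))$ is ever first-visited from $\phi(i)$), and \emph{order events} (the spinal first-child conditions above, which subsume the skeleton events at $\phi(v_1),\dots,\phi(v_r)$). The remaining piece $\cE_\phi^{\mathrm{bad}}$, where the spinal first-child chain terminates at a leaf of $T$ below height $r$, has vanishing probability, since a spinal vertex of the limiting object a.s.\ has at least one child.

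\textbf{Random-walk estimates and assembly.} The core is to evaluate $\PROB(\cE_\phi^{(k)})$ on a dense graph, via two estimates. First, the source of the first visit to a fixed vertex $w$ is asymptotically uniform on $N_G(w)$: quantitatively $\PROB(\text{first hit }w\text{ via }y)=G_{\{w\}}(\phi(1),y)/\deg_G(y)$ and $G_{\{w\}}(\phi(1),y)\approx\deg_G(y)/\deg_G(w)$ for dense $G$, so each unsubsumed skeleton event (at an ordinary child $\phi(j)$) has probability $\approx\deg_G(\phi(j))^{-1}$, while the order event at a spinal $\phi(v_i)$ --- ``the first child of $\phi(v_i)$ is $\phi(v_{i+1})$'' --- has probability $\approx\deg_G(\phi(v_i))^{-1}$; multiplying over $i=0,\dots,r-1$ and over the non-spinal non-root vertices yields exactly $\prod_{j\ne k}\deg_G(\phi(j))^{-1}$. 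Second, conditioned on the skeleton, ``$\phi(i)$ first-visited from $w$'' is impossible, so for each $i$ of height $<r$ the count of vertices first-visited from $\phi(i)$ that are not prescribed by the skeleton is asymptotically $\POISSON(b_{\phi(i)})$, with i.i.d.\ positions of density $\propto W(\phi(i),\cdot)/\deg(\cdot)$; the exclusion-plus-position contribution at $\phi(i)$ therefore equals $e^{-b_{\phi(i)}}$ times $\prod\deg_G(\text{ordinary child of }i)^{-1}$, the $b_{\phi(i)}$-power and $m!$ factors of the Poisson law cancelling against the position normalisations and, after summing over $\phi$, against $|\stab_T|$. These Poissonian counts decouple across $i$ and from the skeleton, so $\PROB(\cE_\phi^{(k)})\approx\mathbbm1[\phi\text{ hom}]\,e^{-\sum_{i<p}b_{\phi(i)}}\prod_{j\ne k}\deg_G(\phi(j))^{-1}$, and summing over $k$ gives point (a). The estimates rest on effective-resistance identities such as $G_A(x,x)=\deg_G(x)\,\RESISTANCE(x,A)$, valid for any dense graph; global mixing is not required, only that from a uniform start the hitting distributions of bounded sets are approximately degree-proportional, which also handles slowly-mixing dense graphs (disjoint near-cliques joined by few edges) since the walk stays in a well-mixing region until the relevant finitely many events are decided. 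Finally, to pass from the expectation $\EXP\big[\tfrac1n\#\{v:B_\T(v,r)\cong T\}\big]\approx\FREQ(T;W)$ to the high-probability statement, one bounds the variance, estimating $\PROB(B_\T(u,r)\cong T,\,B_\T(v,r)\cong T)$ and showing it is $\approx$ the product for almost all pairs $u,v$ (asymptotic independence of the two local pictures, which the same analysis yields by reading off both neighbourhoods from one Aldous--Broder trajectory).

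\textbf{Main obstacle.} The delicate part is the random-walk analysis of the preceding step. The Aldous--Broder walk must cover $G$ (length $\Theta(n\log n)$) and is far from stationary early on, yet we must control finitely many local events --- in particular the exclusion events, which constrain the \emph{entire} trajectory --- showing that they have the asserted probabilities, that they asymptotically factorise, and that this is uniform over typical $\phi$ and over all dense $G$ close to $W$. Quantifying the errors in the hitting/return estimates and in the Poisson approximation, and then summing them against the summands of $\FREQ$ (which can be large, because of near-vanishing degrees) without spending more than $\eps$, together with the quantitative continuity of $\FREQ$ at nondegenerate $W$, is where essentially all the work lies.
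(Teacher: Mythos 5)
Your proposal takes a genuinely different route from the paper. You propose to generate $\T$ by the Aldous--Broder algorithm and estimate the probability of each local configuration by controlling first-hit sources and Poisson-approximating the number of first-visits from each $\phi(i)$; the variance bound would come from reading two neighbourhoods off the same trajectory. The paper instead never touches Aldous--Broder: it combines Kirchhoff's theorem ($\PROB(e\in\T)=\RESISTANCE(x\lr y)$), the spatial Markov property of the UST (conditioning on $S\subset\T$ or $S\cap\T=\emptyset$ is the UST of $G/S$ or $G-S$), and a decomposition of $G$ into a bounded number of linear expanders plus $\Landauo(n^2)$ interconnecting edges. The event ``these $|E(T)|$ edges are in $\T$ and all other edges out of $\phi(1),\dots,\phi(p-1)$ (inside the expander piece) are not'' is then evaluated \emph{one edge at a time}: after each conditioning the graph is again a dense expander (Proposition~\ref{prop:stillexpander}), so the next edge's resistance is $\approx\frac1{\deg u}+\frac1{\deg v}$ (Lemma~\ref{lem:revisitexpander}, Corollary~\ref{cor:effresinexpander}), and the product of $\bigl(1-\frac1{\deg u}-\ldots\bigr)$ over the $\Theta(n)$ non-tree edges yields the $e^{-b(\cdot)}$ factor. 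In short, the paper converts your global ``exclusion events'' into a telescoping product of single-edge resistance computations, each in a modified expander.

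That conversion is exactly where your proposal has a concrete gap. Your exclusion event at $\phi(i)$ (``no vertex outside $\phi(V(T))$ is ever first-visited from $\phi(i)$'') constrains the \emph{entire} covering trajectory, not an initial well-mixing segment, because first-visits from $\phi(i)$ can occur arbitrarily late --- whenever the walk returns to $\phi(i)$ with some neighbour still unvisited. Your remark that ``the walk stays in a well-mixing region until the relevant finitely many events are decided'' is therefore not correct as stated: in a two-near-clique example, with $\phi(V(T))$ in clique $A$, the walk must eventually cross into the other clique $B$ and come back, and the first-visit sources for the late-visited vertices of $A$ are far from degree-proportional because the configuration of visited/unvisited vertices at that stage is highly non-generic. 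You cannot sidestep the expander decomposition; you would have to recreate something like it to control the late-time first-visit structure. Likewise, your Green's-function estimate $G_{\{w\}}(\phi(1),y)\approx \deg(y)/\deg(w)$ is an expander statement, not a ``dense graph'' statement; it fails across the bridge. Everything you list under ``main obstacle'' is real, and the paper's whole point is that the Kirchhoff/spatial-Markov route dissolves it: after conditioning, each residual event is a one-edge resistance in a fresh expander, and there is no covering-time trajectory to control at all. As a high-level plan your sketch identifies the right target formula and even the right second-moment strategy (the paper's Lemma~\ref{lem:congTpurequenched} uses the same Chebyshev device, implemented via the spatial Markov property instead of Aldous--Broder), but the central analytic step --- making the exclusion/Poisson heuristic rigorous uniformly over dense $G$ --- is not supplied, and its most natural resolution is precisely the mechanism the paper uses.
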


Theorem~\ref{thm:mainthm2} is a natural statement in the context of limits of graph sequences. It asserts that if $\{G_n\}$ is a sequence of connected graphs converging to a graphon $W$, then the UST of $G_n$ converges locally to a certain multi-type branching process that is defined in terms of $W$. This interpretation involve two graph limit procedures: a dense graph limit \cite{Lovasz2006,Borgs2008c} to describe the limit of the dense graph sequence $G_n$, and sparse graph limit to describe the random limit of the UST of $G_n$, known as Benjamini--Schramm convergence \cite{BeSc}. We refer the reader to Sections~\ref{sec:graphonpreliminaries} and \ref{sec:sparselimits} for an introduction to these limiting procedure, and begin by describing the limiting branching process.

\begin{defn}\label{def:branchingprocess}
	Given a nondegenerate graphon $W:\Omega^2\rightarrow[0,1]$ we define a multi-type branching process $\kappa_W$. The process has continuum many types that are either $(\mathsf{anc},\omega)$ or $(\mathsf{oth},\omega)$, where $\omega\in\Omega$. Here, ``$\mathsf{anc}$'' stands for ``ancestral'' and ``$\mathsf{oth}$'' stands for ``other''.
	\begin{enumerate}
		\item The initial particle (i.e., the root) has type $(\mathsf{anc},\omega)$, where the distribution of $\omega$ is $\mu$.
		\item If a particle has type $(\mathsf{oth},\omega)$, then its progeny is $\{(\mathsf{oth},\omega_1),\ldots,(\mathsf{oth},\omega_k))\}$ where $\{\omega_1,\ldots,\omega_k\}$ are a Poisson point process on $\Omega$ with intensity $\frac{W(\omega,\omega')}{\deg(\omega')}$ at $\omega'\in\Omega$. In particular, $k$ has distribution $\POISSON\left(b_W(\omega)\right)$, where $b_W(\cdot)$ is defined in~\eqref{eq:defbb}.
		\item If a particle has type $(\mathsf{anc},\omega)$ the progeny is $\{(\mathsf{anc},\omega_0),(\mathsf{oth},\omega_1),\ldots,(\mathsf{oth},\omega_k))\}$ where $\{\omega_1,\ldots,\omega_k\}$ are a Poisson point process on $\Omega$ with the same intensity as above and $\omega_0$ is an independent new particle of $\Omega$ which is distributed according to the probability measure on $\Omega$ that has density ${W(\omega,\cdot) \over \deg(\omega)}$.
	\end{enumerate}
\end{defn}
We note that an ancestral vertex (i.e., of type $\mathsf{anc}$) always has at least $1$ progeny and since the initial particle is ancestral the process $\kappa_W$ survives forever with probability $1$. The following theorem is quickly deduced from Theorem~\ref{thm:mainthm2} at Section~\ref{sec:Theorem1Theorem2}.

\begin{thm}\label{thm:main}
	Suppose that $\{G_n\}$ is a sequence of simple connected graphs of growing orders that converge to a nondegenerate graphon $W$ in the cut-distance and let $T_n$ be a UST of $G_n$. Then the sequence $\{T_n\}$ almost surely converge in the Benjamini--Schramm sense to $\kappa_W$.
\end{thm}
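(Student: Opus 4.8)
The goal is to deduce Theorem~\ref{thm:main} from Theorem~\ref{thm:mainthm2}, so the work is essentially to package the single-ball statement into a statement about Benjamini--Schramm convergence, and to check that the limiting object identified by the frequency formula~\eqref{eq:frewTW} really is the branching process $\kappa_W$ of Definition~\ref{def:branchingprocess}. Recall that Benjamini--Schramm convergence of the random rooted graphs $(T_n, X_n)$ to (the law of) $\kappa_W$ means: for every fixed radius $r$ and every fixed finite rooted tree $T$ of height~$r$,
\[
\PROB\big(B_{T_n}(X_n,r)\cong T\big)\ \longrightarrow\ \PROB\big(B_{\kappa_W}(\rho,r)\cong T\big),
\]
where $X_n$ is an independent uniform vertex of $G_n$ and $\rho$ is the root of $\kappa_W$. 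Since for fixed $r$ there are only countably many isomorphism types of height-$\le r$ rooted trees, and the collection of all ``ball profiles'' at radius $r$ is a convergence-determining class for rooted-graph convergence, it suffices to prove this one convergence for each fixed $(T,r)$.

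\emph{Step 1: from high-probability to almost sure.} Theorem~\ref{thm:mainthm2} gives, for each fixed $\eps>0$ and each fixed $(T,r)$, that once $n$ is large enough (using $\xi\to 0$ along the convergent sequence $G_n\to W$),
\[
\PROB\Big(\,\big|\PROB_{T_n}(B_{T_n}(X_n,r)\cong T)-\FREQ(T;W)\big|\le\eps\,\Big)\ \ge\ 1-\eps .
\]
Here the outer probability is over the tree $T_n$ and the inner $\PROB_{T_n}$ is over $X_n$ only. Taking $\eps=\eps_k\downarrow 0$ along a sequence, a Borel--Cantelli argument (the events are summable after passing to a subsequence, or one argues directly with $\eps_k=2^{-k}$) shows that almost surely $\PROB_{T_n}(B_{T_n}(X_n,r)\cong T)\to\FREQ(T;W)$. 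Doing this simultaneously for the countably many pairs $(T,r)$ and taking a countable intersection of almost-sure events, we get: almost surely, for \emph{every} fixed $(T,r)$, the empirical ball-frequency in $T_n$ converges to $\FREQ(T;W)$. By the standard characterization of Benjamini--Schramm convergence via ball statistics, this says that almost surely $\{T_n\}$ converges in the Benjamini--Schramm sense to \emph{some} limiting random rooted graph whose radius-$r$ ball profile assigns mass $\FREQ(T;W)$ to each $T$.

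\emph{Step 2: identify the limit as $\kappa_W$.} It remains to check that $\PROB(B_{\kappa_W}(\rho,r)\cong T)=\FREQ(T;W)$ for every finite rooted tree $T$ of height~$r$. This is a direct computation from Definition~\ref{def:branchingprocess}. Expose $\kappa_W$ generation by generation up to depth~$r$; the spine of $\mathsf{anc}$-particles $\rho=u_0,u_1,\dots,u_r$ carries types $(\mathsf{anc},\omega_{u_0}),\dots,(\mathsf{anc},\omega_{u_r})$ with $\omega_{u_0}\sim\mu$ and each successive $\omega_{u_{i+1}}$ drawn with density $W(\omega_{u_i},\cdot)/\deg(\omega_{u_i})$; every non-spine particle at depth $<r$ of type $(\mathsf{oth},\omega)$ produces a $\POISSON(b_W(\omega))$-number of children with the spatial law given by the intensity $W(\omega,\cdot)/\deg(\cdot)$, and we only need the \emph{count} of children of each depth-$<r$ particle (children at depth $r$ are the leaves of the ball and are not further explored). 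Labelling the vertices of $T$ as in the statement so that $1=$ root and $p,\dots,\ell$ are the height-$r$ vertices, matching up the densities along the edges of $T$ produces exactly the integrand $\prod_{(i,j)\in E(T)}W(\omega_i,\omega_j)$; the factor $\exp(-\sum_{j=1}^{p-1}b_W(\omega_j))$ is the product of the Poisson ``no further children'' probabilities $e^{-b_W(\omega_j)}$ over the internal vertices $j=1,\dots,p-1$ (the spine contributes its mandatory $\mathsf{anc}$-child separately, which is why the spine's own Poisson factor is still just $e^{-b_W}$ with the extra child accounted for by an edge of $T$); the factor $\deg(\omega_j)$ in a denominator for each vertex comes from the spatial intensity/density normalizations, while the numerator $\sum_{j=p}^\ell \deg(\omega_j)$ arises because the depth-$r$ vertices are produced as points of a Poisson process with intensity $W(\cdot,\omega_j)/\deg(\omega_j)$ and one must multiply back the ``missing'' $\deg(\omega_j)$ — a short inclusion of the Poisson-process density bookkeeping; and $1/|\stab_T|$ is the usual symmetry factor correcting for the fact that the branching process produces an \emph{unlabelled} rooted tree while the integral ranges over labelled configurations. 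Carrying out this bookkeeping carefully yields precisely~\eqref{eq:frewTW}.

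\emph{Main obstacle.} The genuinely delicate point is Step~2, specifically getting all the normalizing factors right — the interplay between Poisson point processes with the intensity $W(\omega,\omega')/\deg(\omega')$, the change-of-variables that converts intensities into densities on $\Omega$, and the mandatory extra $\mathsf{anc}$-child on the spine — and checking that these combine to give exactly the $\deg$-factors and the $\exp$-factor in~\eqref{eq:frewTW}, including the asymmetry between internal vertices $1,\dots,p-1$ and leaf vertices $p,\dots,\ell$. The symmetry factor $1/|\stab_T|$ also needs a clean argument (e.g.\ summing the labelled integral over all labellings of $T$ and dividing by the number of distinct labellings, which is $\ell!/|\stab_T|$). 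Step~1 is routine once one fixes the countable convergence-determining family and applies Borel--Cantelli; the only care needed there is to intersect countably many almost-sure events and to invoke the standard fact that convergence of all finite ball-frequencies is equivalent to Benjamini--Schramm convergence.
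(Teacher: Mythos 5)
Your two-step structure --- first packaging the single-ball estimate of Theorem~\ref{thm:mainthm2} into Benjamini--Schramm convergence, then identifying $\FREQ(T;W)$ with the radius-$r$ ball probability of $\kappa_W$ --- is exactly the structure of the paper's derivation in Section~\ref{sec:Theorem1Theorem2}; the paper's written proof in fact carries out only the identification step and leaves the packaging implicit, so your Step~1 is more explicit than theirs.

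There is, however, a genuine flaw in your sketch of Step~2. Your explanation of the numerator $\sum_{j=p}^\ell\deg(\omega_j)$ in~\eqref{eq:frewTW} --- that one must ``multiply back the missing $\deg(\omega_j)$'' for the depth-$r$ particles --- is not correct; \emph{every} $\mathsf{oth}$-particle $j$, whether or not at depth $r$, contributes $1/\deg(\omega_j)$ from the intensity $W(\omega,\cdot)/\deg(\cdot)$, and nothing gets multiplied back. The ingredient your sketch is missing is a disjoint decomposition over the ancestral spine: the event $B_{\kappa_W}(\rho,r)\cong T$ is a disjoint union, over the $\ell-p+1$ height-$r$ vertices $q\in\{p,\dots,\ell\}$, of the event that the path from the root to $q$ is the first $r$ steps of the $\mathsf{anc}$-path. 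For a fixed $q$, every spine vertex except $q$ contributes $1/\deg(\cdot)$ as the \emph{parent} in the $\mathsf{anc}$-transition density $W(\omega,\cdot)/\deg(\omega)$, while every off-spine vertex contributes $1/\deg(\cdot)$ as an $\mathsf{oth}$ \emph{child}; the leaf $q$ is the unique vertex carrying no $1/\deg$ factor, so the configuration density is $\deg(\omega_q)/\prod_j\deg(\omega_j)$ times $\prod_{(i,j)\in E(T)}W(\omega_i,\omega_j)$, and summing over $q$ produces the numerator. This decomposition also explains cleanly why the $\exp$-factor runs only over $j=1,\dots,p-1$: every internal vertex, spine or not, contributes the residual $e^{-b_W(\omega_j)}$ after the $k!/b_W(\omega_j)^k$ from the ordered $k$-point Poisson density cancels with $b_W(\omega_j)^k/k!$ from the Poisson pmf, while depth-$r$ vertices contribute nothing because their offspring are unobserved. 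Without identifying the disjoint split over $q$, the integrand does not reassemble into the form~\eqref{eq:frewTW}.

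A smaller caveat on Step~1: Theorem~\ref{thm:mainthm2} gives a failure probability at most $\eps$ for each sufficiently large $n$, with no summability in $n$, so a bare Borel--Cantelli along the full sequence does not apply; one needs some further input (a rate for $\xi(\eps)$, a monotonicity argument, or content with convergence along a full-density subsequence). The paper also glosses this, so it is a minor point, but you should not present it as routine.
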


Since the $L^1$-norm (corresponding to the edit distance in graph theory) is finer than the cut-norm, we in particular obtain the following. Suppose that $G$ is a large connected simple $n$-vertex graph with minimum degree $\LandauOmega(n)$. Suppose that we add and/or delete $\Landauo(n^2)$ edges in a way that the resulting graph $G'$ stays connected. Then the structure of a typical UST of $G$ and of $G'$ is very similar in the Benjamini--Schramm sense. Even this statement seems to be new.

When $G_n$ are a sequence of dense \emph{regular} graphs, then the function $b_W:\Omega\to (0,\infty)$ defined in~\eqref{eq:defbb} is identically $1$ and we obtain the following.

\begin{corollary} Suppose that $\{G_n\}$ is a sequence of simple connected regular graphs of growing orders that converge to a nondegenerate graphon $W$ in the cut-distance and let $T_n$ be a UST of $G_n$. Then the sequence $\{T_n\}$ almost surely converge in the Benjamini--Schramm sense to a $\POISSON(1)$ Galton--Watson branching process conditioned to live forever.
\end{corollary}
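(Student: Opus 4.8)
The plan is to deduce the corollary directly from Theorem~\ref{thm:main} by specializing the branching process $\kappa_W$ to the regular case. First I would observe that if $\{G_n\}$ is a sequence of $d_n$-regular connected graphs converging to $W$ in the cut distance, then $W$ must be regular in the sense that $\deg(\omega)=c$ for almost every $\omega\in\Omega$, for some constant $c\in(0,1]$; indeed, the degree function $\deg(\cdot)$ is the limiting object of the normalized degree sequences $\deg_{G_n}(\cdot)/n = d_n/n$, which are constant, and convergence in cut distance forces $\deg(\cdot)$ to be constant almost everywhere (this is a standard fact about convergent graphon sequences — the degree function is continuous under cut-distance convergence up to the $L^1$-identification of $\Omega$). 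In particular $W$ is nondegenerate since $c>0$, so Theorem~\ref{thm:main} applies.

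Next I would compute $b_W$ under this regularity assumption. From~\eqref{eq:defbb},
\[
b_W(\omega)=\int_{y\in\Omega}\frac{W(\omega,y)}{\deg(y)}\,\mathrm{d}\mu
=\frac{1}{c}\int_{y\in\Omega}W(\omega,y)\,\mathrm{d}\mu
=\frac{\deg(\omega)}{c}=1
\]
for almost every $\omega$, which is exactly the assertion made in the paragraph preceding the corollary. Then I would revisit Definition~\ref{def:branchingprocess} and check that, with $b_W\equiv 1$, the branching process $\kappa_W$ reduces to the desired one. Every particle — whether of type $(\mathsf{anc},\omega)$ or $(\mathsf{oth},\omega)$ — has a Poisson$(1)$ number $k$ of $\mathsf{oth}$-children, since the intensity $W(\omega,\omega')/\deg(\omega')$ integrates to $b_W(\omega)=1$; ancestral particles additionally have exactly one further $\mathsf{anc}$-child. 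Thus if one forgets the types and the $\Omega$-labels, the genealogical tree of $\kappa_W$ is obtained from a Galton--Watson tree with offspring law Poisson$(1)$ by designating a distinguished infinite ``spine'' (the chain of ancestral particles) and attaching to each spine vertex an independent extra Poisson$(1)$ Galton--Watson subtree. This is precisely the classical size-biased / spine description of the Poisson$(1)$ Galton--Watson tree conditioned to survive forever (the Kesten tree for Poisson$(1)$ offspring); I would cite the standard references (e.g.\ \cite{Kolchin1977,Gri:RandomTree} for the Poisson$(1)$ case, or Lyons--Pemantle--Peres for the general spine construction) for this identification.

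The only genuine point requiring care is the reduction from the type-labelled process $\kappa_W$ to the unlabelled rooted tree: Benjamini--Schramm convergence is a statement about rooted graphs, so one must check that the pushforward of $\kappa_W$ under the map "erase types and $\Omega$-labels" is the Kesten tree. This follows because, conditionally on the shape of the genealogy, the $\Omega$-labels are an independent decoration (each child's label has a density depending only on its parent's label) that does not affect the unlabelled rooted-tree distribution, and because the split of each particle's children into "one ancestral plus $\mathrm{Poisson}(1)$ other" versus "$\mathrm{Poisson}(1)$ other" matches the spine decomposition exactly when the offspring parameter is $1$. I do not expect a serious obstacle here — the main content is genuinely Theorem~\ref{thm:main}, and the corollary is a short computation plus invocation of the classical spine description — but I would make sure to state explicitly that the degree function of a cut-convergent sequence of regular graphons is constant, since that is the one place where the regularity hypothesis on $\{G_n\}$ is actually used.
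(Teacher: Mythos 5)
Your proposal is correct and follows exactly the route the paper intends: it deduces the corollary from Theorem~\ref{thm:main} by noting that regularity of the $G_n$ forces $\deg_W$ to be constant a.e.\ (via convergence of the degree distribution, cf.\ Lemma~\ref{lem:graphondegrees}), hence $b_W\equiv 1$, and then identifying $\kappa_W$ with the spine (Kesten) description of the $\POISSON(1)$ Galton--Watson tree conditioned to survive. The paper states this only as a one-line remark before the corollary, so your write-up simply supplies the details (constancy of the degree function and the erasure of the type/$\Omega$-labels) that the paper leaves implicit.
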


\medskip
Theorem~\ref{thm:main} allows us to deduce several extremal properties of the UST on dense graphs. The number of vertices of degree $k\geq 1$ in the UST of the complete graph $K_n$ is $(e^{-1}/(k-1)!+\Landauo(1))n$. In fact, using Pr\"ufer codes one can establish that the degree of the a vertex in the UST of $K_n$ has distribution $1+\BIN(n-2,\frac{1}n)\approx 1+\POISSON(1)$.\footnote{Pr\"ufer codes, see e.g. \cite[page 245]{MR2469243}, provide a standard bijection between spanning trees of $K_n$ and and words of length $n-2$ over the alphabet $V(K_n)$. A quick look at this bijection shows the number of occurrences of any letter $v\in V(K_n)$ in a Pr\"ufer code is the degree of the vertex $v$ in the corresponding spanning tree decreased by~1. Therefore, the degree of $v$ in a uniform spanning tree has indeed  distribution $1+\BIN(n-2,\frac{1}n)$.} Using Theorem \ref{thm:mainthm2} we are able to find the extremal values of the number of vertices of degree $k$ in a general dense graph. In the following theorem we show that the complete graph (or any other \emph{regular} dense graph) is the minimizer of the number of leaves and the maximizer of the number of vertices of degrees $2$ and $3$ among the class of dense graphs. Somewhat surprisingly, the maximizer for the number of vertices of degree $k\geq 4$ is a different dense graph (see Section~\ref{sec:degreeextremal}).

\begin{thm}\label{cor.degdist} For any $k \geq 1$ we denote by $L_k$ the random variable counting the number of vertices of degree $k$ in a UST of a simple connected graph $G$. For every $\epsilon,\delta>0$ there exist numbers $n_0\in\bN$ and $\gamma>0$ such that the following holds: Whenever $G$ is a graph on $n\geq n_0$ vertices with at least $(1-\gamma)n$ vertices of degrees at least $\delta n$ then
	\begin{equation}\label{eq:extdeg1}
	\PROB\big ( L_1 \leq (e^{-1}-\epsilon)n \big ) \leq \epsilon \, ,
	\end{equation}
	\begin{equation}\label{eq:extdeg2}
	\PROB\big ( L_2 \geq (e^{-1}+\epsilon)n \big ) \leq \epsilon \, ,
	\end{equation}
	and for any $k \geq 3$ we have
	\begin{equation}\label{eq:extdegbiggerthan2}
	\PROB\left ( L_k \geq \left({1 \over (k-1)!}\tfrac{(k-2)^{k-2}}{e^{k-2}}+\epsilon\right)n\right ) \leq \epsilon\;.
	\end{equation}
\end{thm}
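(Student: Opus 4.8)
The plan is to deduce Theorem~\ref{cor.degdist} from Theorem~\ref{thm:mainthm2} (equivalently, from the local-limit statement) together with a concentration argument, and then to reduce the extremal problem to a pointwise optimization over a single scalar parameter. First I would record that the degree of a vertex $v$ in $\T$ is determined by $B_\T(v,1)$: namely $\deg_\T(v)=k$ iff $B_\T(v,1)$ is the star $S_k$ (rooted at its center). So, writing $X$ for a uniform random vertex of $G$, Theorem~\ref{thm:mainthm2} gives that $\PROB(\deg_\T(X)=k)$ is within $\eps$ of $\FREQ(S_k;W)$ with probability $\ge 1-\eps$, once $G$ is close enough to $W$ in cut distance and large enough. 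To pass from the one-vertex marginal to the empirical count $L_k=\sum_v \mathbbm{1}[\deg_\T(v)=k]$ I need a second-moment/concentration input: either a direct variance bound on $L_k$ for the UST (via negative correlation / transfer-current estimates, or via the fact that $\EXP|L_k - n\,\PROB(\deg_\T(X)=k)|$ is small because the event for $X$ is a local functional and Theorem~\ref{thm:mainthm2} controls its probability jointly), or—cleanest—one applies Theorem~\ref{thm:mainthm2} with $r=1$ and $T=S_k$ and notes $\EXP[L_k]/n=\PROB(B_\T(X,1)\cong S_k)$, then bounds $\Var(L_k)$. In the end one concludes $L_k/n \to \FREQ(S_k;W)$ in probability uniformly over the relevant class of graphs, so it suffices to prove the stated numerical bounds on $\FREQ(S_k;W)$.

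Next I would compute $\FREQ(S_k;W)$ from~\eqref{eq:frewTW}. The star $S_k$ has root $1$ and leaves $2,\dots,k+1$ at height $1$, so $p=2$, $\ell=k+1$, $|\stab_{S_k}|=(k-1)!$, and the single height-$0$ vertex contributes the factor $\exp(-b_W(\omega_1))$. Plugging in,
\begin{equation}\label{eq:freqStar}
\FREQ(S_k;W)=\frac{1}{(k-1)!}\int_{\Omega} e^{-b_W(\omega_1)}\,\frac{1}{\deg(\omega_1)}\left(\int_{\Omega}\frac{W(\omega_1,y)}{\deg(y)}\,\frac{\big(\text{sum of }\deg\text{ over leaves}\big)}{\prod\deg(\text{leaves})}\cdots\right)
\end{equation}
which after carrying out the $k$ leaf-integrals factorizes: each leaf integral over $\omega_j$ of $W(\omega_1,\omega_j)/\deg(\omega_j)$ contributes $b_W(\omega_1)$, and the numerator $\sum_{j\ge 2}\deg(\omega_j)$ distributes over the $k$ leaves. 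A short computation gives $\FREQ(S_k;W)=\frac{1}{(k-1)!}\int_\Omega e^{-b_W(\omega)}\,b_W(\omega)^{k-1}\,d\mu(\omega)$. (The $\deg(\omega_1)$ in the denominator cancels against the contribution of the $\sum\deg$ numerator once one keeps track of where the ancestral leaf sits; the clean way is to observe this is exactly $\PROB(\text{root of }\kappa_W\text{ has out-degree }k)=\PROB(1+\POISSON(b_W(\omega))+ \text{(the }\mathsf{anc}\text{ child)}=k)$ integrated over $\omega\sim\mu$, i.e. $\int_\Omega \PROB(1+\POISSON(b_W(\omega))=k)\,d\mu$.) Writing $\beta=\beta(\omega):=b_W(\omega)$, the key identity is therefore $\FREQ(S_k;W)=\int_\Omega f_k(\beta(\omega))\,d\mu(\omega)$ with $f_k(\beta)=\PROB(\POISSON(\beta)=k-1)=e^{-\beta}\beta^{k-1}/(k-1)!$.

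With this reduction the theorem becomes pure calculus. The constraint is $\int_\Omega \beta(\omega)\,d\mu = \int_\Omega\int_\Omega \frac{W(\omega,y)}{\deg(y)}\,d\mu\,d\mu = \int_\Omega \frac{1}{\deg(y)}\Big(\int_\Omega W(\omega,y)\,d\mu(\omega)\Big)\,d\mu(y)=\int_\Omega \frac{\deg(y)}{\deg(y)}\,d\mu(y)=1$, using symmetry of $W$. So $\EXP_\mu[\beta]=1$ always. Now: (i) for $k=1$, $f_1(\beta)=e^{-\beta}$ is convex, so by Jensen $\FREQ(S_1;W)=\EXP_\mu[e^{-\beta}]\ge e^{-\EXP_\mu[\beta]}=e^{-1}$; this is~\eqref{eq:extdeg1}. (ii) For $k=2$, $f_2(\beta)=\beta e^{-\beta}$ is concave on $[0,1]$... but $\beta$ need not lie in $[0,1]$; instead use that $f_2$ is concave on $[0,2]$ and handle $\beta>2$ separately, or better: $f_1+f_2=(1+\beta)e^{-\beta}$ and one shows $\FREQ(S_2;W)\le e^{-1}$ directly by noting $\beta e^{-\beta}\le e^{-1}$ pointwise? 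No — $\beta e^{-\beta}$ is maximized at $\beta=1$ with value $e^{-1}$, so indeed $f_2(\beta)\le e^{-1}$ for \emph{all} $\beta\ge 0$, giving $\FREQ(S_2;W)=\EXP_\mu[f_2(\beta)]\le e^{-1}$, which is~\eqref{eq:extdeg2}. (iii) For $k\ge 3$, $f_k(\beta)=\beta^{k-1}e^{-\beta}/(k-1)!$ is maximized over $\beta\ge 0$ at $\beta=k-1$ with value $\frac{(k-1)^{k-1}e^{-(k-1)}}{(k-1)!}$ — but the claimed bound has $(k-2)^{k-2}/e^{k-2}$, not $(k-1)^{k-1}/e^{k-1}$, so a naive pointwise bound is not tight. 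The correct move: maximize $\EXP_\mu[f_k(\beta)]$ subject to $\EXP_\mu[\beta]=1$, $\beta\ge 0$. Since $f_k(0)=0$, the extremal distribution puts mass on $0$ and on a single atom $a>0$: with mass $q$ at $a$ and $1-q$ at $0$ we need $qa=1$, and we maximize $q f_k(a)=\frac{1}{a}\cdot\frac{a^{k-1}e^{-a}}{(k-1)!}=\frac{a^{k-2}e^{-a}}{(k-1)!}$ over $a>0$; this is maximized at $a=k-2$ with value $\frac{(k-2)^{k-2}e^{-(k-2)}}{(k-1)!}$, exactly the claimed bound, proving~\eqref{eq:extdegbiggerthan2}. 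One must justify that this two-point configuration is optimal among all laws of $\beta$ with mean $1$: this is a standard extreme-point argument (the feasible set is a moment body; $f_k$ is bounded and vanishes at $0$, and by a Choquet/support-splitting argument the supremum over the convex constraint set is attained at a measure supported on at most two points, one of which we may take to be $0$ since $f_k/\mathrm{id}$ — i.e. $f_k(\beta)/\beta$ — is unimodal). This extreme-point/moment-optimization step is the one place requiring genuine care, and it is also where one sees that a \emph{graphon} realizing $\beta=k-2$ on a fraction $1/(k-2)$ of $\Omega$ and $\beta=0$ elsewhere cannot literally exist (since $\deg>0$ a.e. forces $\beta>0$ a.e.), which is why the bounds are sharp only in the limit, consistent with the paper's remark that for $k\ge 4$ the extremizer is a different, non-regular dense graph and the inequality is asymptotically but not exactly attained.

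The main obstacle, then, is not the local-limit input (that is Theorem~\ref{thm:mainthm2}, which we may assume) nor the concentration of $L_k$ around its mean (routine, given negative association / variance bounds for USTs), but rather the clean derivation of the identity $\FREQ(S_k;W)=\int_\Omega e^{-b_W(\omega)}b_W(\omega)^{k-1}/(k-1)!\,d\mu$ — being careful about the ancestral-leaf bookkeeping in~\eqref{eq:frewTW} — followed by the variational argument that the mean-one constraint on $b_W$ forces the stated extremal values, with the two-point (atom-at-$0$-plus-one-atom) configuration being optimal.
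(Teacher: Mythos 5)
Your overall strategy—compute $\FREQ(S_k;W)=\frac{1}{(k-1)!}\int_\Omega e^{-b_W(\omega)}b_W(\omega)^{k-1}\,d\mu(\omega)$, invoke $\int b_W\,d\mu=1$, and optimize pointwise or via a two-point distribution—is exactly the paper's approach, and your bounds are all correct (for $k\geq 3$ your reduction to maximizing $g(a)=a^{k-2}e^{-a}/(k-1)!$ over $a>0$ gives precisely the paper's Lemma~\ref{lem:optimize} after reindexing). However, there is one genuine gap and two lesser issues.

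\textbf{The gap: no compactness step.} The theorem quantifies over \emph{all} graphs in which at least $(1-\gamma)n$ vertices have degree $\geq\delta n$; it does not suppose the graph is close in cut distance to any particular graphon $W$. Your proof tacitly fixes a $W$ and then invokes Theorem~\ref{thm:mainthm2} for graphs $\xi$-close to it, with $\xi=\xi(\eps,W,T)$ depending on $W$. That yields a statement about a single $W$ at a time, not the uniform statement required. The paper closes this by a contradiction argument: assume a sequence $G_n$ violates the bound, use Theorem~\ref{thm:graphsconverge} to pass to a convergent subsequence, conclude the limit graphon is nondegenerate from Lemma~\ref{lem:graphondegrees}\eqref{en:mindeg}, and then apply Theorem~\ref{thm:main} to that subsequence. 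Without some version of this compactness step your argument does not reach the stated conclusion.

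\textbf{Lesser issues.} (1) Your worry that ``I need a second-moment/concentration input... then bound $\Var(L_k)$'' is unnecessary: the quantity $\PROB(B_\T(X,1)\cong S_k)$ in Theorem~\ref{thm:mainthm2} is the conditional probability over $X$ given $\T$, which for $T=S_k$, $r=1$ is exactly $L_k/n$; so Theorem~\ref{thm:mainthm2} already delivers the quenched statement $|L_k/n-\FREQ(S_k;W)|\le\eps$ with probability $\geq 1-\eps$, and no separate variance bound is needed. (2) $|\stab_{S_k}|=k!$ (all $k$ leaves permute), not $(k-1)!$; your final formula is nonetheless right because the factor $\sum_{j=p}^\ell\deg(\omega_j)$ produces an extra factor of $k$ upon integration, or—as you note—because the formula is just $\int_\Omega\PROB(1+\POISSON(b_W(\omega))=k)\,d\mu$. (3) Your extreme-point argument for $k\ge 3$ is correct in spirit but hand-wavy; it can be made airtight (and shorter than the paper's Lemma~\ref{lem:optimize}) by writing $e^{-\beta}\beta^{k-1}/(k-1)!=\beta\cdot g(\beta)$ with $g(\beta)=\beta^{k-2}e^{-\beta}/(k-1)!$ and observing directly that $\int\beta g(\beta)\,d\mu\le(\max g)\int\beta\,d\mu=\max g=g(k-2)$, which avoids Choquet theory altogether.
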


We derive Theorem~\ref{cor.degdist} in Section~\ref{sec:degreeextremal}.
The rest of this section is organized as follows. We first give the formal definitions and background of dense and sparse graph limits necessary for the reader to parse the statement of Theorems \ref{thm:mainthm2} and \ref{thm:main}. Next we discuss some aspects of the theorem, such as the necessity of its assumptions. We end this section with a discussion of related results in the literature.

\subsection{Dense graph limits} \label{sec:graphonpreliminaries}

Graphons were introduced by Borgs, Chayes, Lov\'asz, S\'os, Szegedy, and Vesztergombi~\cite{Lovasz2006,Borgs2008c} as limit objects to sequences of dense graphs. Here, we review basic facts and we refer the reader to~\cite[Part~II]{Lovasz2012} for a thorough treatment of the subject. A {\bf graphon} is a symmetric Lebesgue measurable function $W:\Omega^2\rightarrow [0,1]$, where $\Omega$ is any standard atomless probability space.\footnote{Recall that all such probability spaces are isomorphic.} The underlying measure on $\Omega$ will be always denoted by $\mu$.

Given a measurable function $U:\Omega^2\rightarrow[-1,1]$ we define its {\bf cut-norm} by
\begin{equation}
\label{eq:cutnormdef}
\|U\|_\square=\sup_{S,T}\left|\int_{x\in S}\int_{y\in T}U(x,y)\right|,
\end{equation}
where $S$ and $T$ range over all measurable subsets of $\Omega$. Now, we can define the key notion of {\bf cut-distance}. For two graphons $W_1,W_1:\Omega^2\rightarrow[0,1]$, we define
\begin{equation}\label{eq:defcutdist}
\delta_\square(W_1,W_2)=\inf_{\varphi} \|W_1^\varphi-W_2\|_\square\;,
\end{equation} where $\varphi:\Omega\rightarrow \Omega$ ranges through all measure preserving automorphisms of $\Omega$, and $W_1^\varphi$ stands for a graphon defined by $W_1^\varphi(x,y)=W_1(\varphi(x),\varphi(y))$. Note that the definition of $\delta_\square(W_1,W_2)$ extends in a straightforward way if $W_2$ lives on some other standard atomless probability space $\Lambda$. In that case, $\varphi$ ranges of all measure preserving bijections from $\Lambda$ to $\Omega$.

Suppose that $G$ is an $n$-vertex graph. Then we can consider a {\bf graphon representation} of $G$. To this end, partition an atomless standard probability space $\Omega$ into $n$ sets, each set of measure $\frac1n$, $\Omega=\bigsqcup_{v\in V(G)} \Omega_v$. Then define a graphon $W_G$ to be $1$ on  $\Omega_u\times \Omega_v$ for each edge $uv\in E(G)$, and $0$ otherwise. Note that $W_G$ is not unique as it depends on the choice of the partition $\{\Omega_v\}$. With the notion of a graphon representation, we can define distance between a graph and a graphon. Namely, if $W:\Omega^2\rightarrow [0,1]$ is a graphon and $G$ is a graph, we define $\delta_\square(W,G):=\delta_\square(W,W_G)$. This definition does not depend on the choice of the representation $W_G$. We say that $G$ is {\bf $\alpha$-close} to $W$ if $\delta_\square(W,G)\le \alpha$. Though through much of the paper, we shall work with loopless multigraphs (introduced in Section~\ref{ssec:densexpanders}), we always restrict ourselves to graphs when representing as graphons, or when referring to the cut-distance.

Throughout the paper, all sets and functions are tacitly assumed to be measurable. Conversely all our constructions of auxiliary sets and functions are measurable, too.

We say that a sequence $(G_n)_n$ of simple graphs {\bf converges} to a graphon $W$ if and only if $\delta_\square(W,G_n) \to 0$. Now, it is possible to understand the first sentence of Theorem~\ref{thm:main}. The following statement, proved first in~\cite{Lovasz2006}, is the core of the theory of graphons.
\begin{thm}\label{thm:graphsconverge}
	For every sequence of simple graphs of increasing orders there exists a subsequence which converges to a graphon.
\end{thm}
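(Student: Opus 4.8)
The plan is to run the classical compactness argument for the cut-metric, powered by the weak (Frieze--Kannan) regularity lemma together with a diagonalisation over a tower of bounded-complexity approximations. Replace each $G_n$ by a graphon representation $W_n:=W_{G_n}$, so it suffices to extract a subsequence of $(W_n)$ that converges in $\delta_\square$ and to identify the limit. For each $k\in\bN$ let $m_k$ be the integer furnished by the weak regularity lemma for error $1/k$: every graphon admits a partition of its ground space into at most $m_k$ parts --- with $m_k$ depending on $k$ only --- on which the conditional expectation of the graphon lies within $1/k$ of it in the cut-norm; by the refinement version of the lemma this partition may moreover be taken to refine any prescribed partition. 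Iterating, for each $n$ we build nested partitions $\mathcal{P}_{n,1}\preceq\mathcal{P}_{n,2}\preceq\cdots$ with $|\mathcal{P}_{n,k}|\le M_k$ (again depending on $k$ only) and $\|W_n-W_{n,k}\|_\square\le 1/k$, where $W_{n,k}$ is the conditional expectation of $W_n$ on $\mathcal{P}_{n,k}\times\mathcal{P}_{n,k}$. Each $W_{n,k}$ is a step function, i.e.\ (after blow-up) a node- and edge-weighted graph $H_{n,k}$ on at most $M_k$ nodes.

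For a \emph{fixed} $k$, the family of weighted graphs on at most $M_k$ nodes --- each specified by a probability vector of node weights and a symmetric $[0,1]$-valued matrix of edge weights, up to relabelling of nodes --- is compact in the $\delta_\square$-metric, being the continuous image under blow-up of a compact parameter space. There are also only finitely many combinatorial ``merge patterns'' recording how the parts of $\mathcal{P}_{n,k+1}$ group into those of $\mathcal{P}_{n,k}$. Hence a diagonal extraction over $n$ yields a subsequence along which, simultaneously for all $k$, $\delta_\square(H_{n,k},H_k)\to 0$ for some weighted graph $H_k$, and the merge pattern between levels $k+1$ and $k$ is eventually constant. Since merging is $\delta_\square$-continuous, $H_k$ is exactly the node-size-weighted merge of $H_{k+1}$ along that limiting pattern; in particular the $H_k$ form a consistent tower.

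Such a tower can be realised on one atomless standard probability space $\Omega_\infty$: subdivide $[0,1]$ level by level into ever-finer interval blocks, producing nested partitions $\mathcal{P}^\infty_1\preceq\mathcal{P}^\infty_2\preceq\cdots$ and step functions $W^\infty_k$ representing $H_k$, with $W^\infty_k$ equal to the conditional expectation of $W^\infty_{k+1}$ on $\mathcal{P}^\infty_k\times\mathcal{P}^\infty_k$. Thus $(W^\infty_k)_k$ is a $[0,1]$-valued martingale on $\Omega_\infty^2$ for the associated filtration, so by the martingale convergence theorem it converges almost everywhere and in $L^1$ to a graphon $W:\Omega_\infty^2\to[0,1]$. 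Finally, for the chosen subsequence and every $k$,
$$ \delta_\square(W_n,W)\;\le\;\|W_n-W_{n,k}\|_\square\;+\;\delta_\square(W_{n,k},H_k)\;+\;\|W^\infty_k-W\|_\square , $$
the first term being at most $1/k$, the second tending to $0$ as $n\to\infty$, and the third tending to $0$ as $k\to\infty$; letting first $k$ and then $n$ grow gives $W_n\to W$ in $\delta_\square$, which is the assertion.

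The genuinely delicate points are in the middle steps: securing a \emph{consistent} tower of limiting approximations --- this is where the refinement form of weak regularity and the ``finitely many merge patterns, pass to a further subsequence'' bookkeeping enter --- and recognising that tower as a martingale so that the limit graphon exists. If one is willing to quote that the space of graphons modulo weak isomorphism is complete under $\delta_\square$, the martingale step may be skipped: it then suffices to observe that $(H_k)_k$ is $\delta_\square$-Cauchy, since $\delta_\square(H_k,H_{k'})\le\|W_{n,k}-W_{n,k'}\|_\square+\delta_\square(H_{n,k},H_k)+\delta_\square(H_{n,k'},H_{k'})$ and the right side is at most $2/\min(k,k')$ plus a quantity tending to $0$ with $n$ --- but completeness is itself essentially equivalent to the present theorem. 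Everything else is routine: the weak regularity lemma is standard, the compactness in the middle step is that of a closed bounded subset of Euclidean space, and martingale convergence is classical. A sampling-based alternative --- extract a subsequence along which all subgraph densities converge, then reconstruct a graphon realising those densities --- is also available, but it is not obviously shorter since the reconstruction again passes through regularity-type arguments.
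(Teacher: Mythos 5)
The paper itself gives no proof of Theorem~\ref{thm:graphsconverge} (it is quoted from~\cite{Lovasz2006}), and your argument is exactly the standard compactness proof from that literature: iterated weak (Frieze--Kannan) regularity producing nested bounded-complexity step approximations, a diagonal extraction over compact parameter spaces with eventually constant merge patterns to obtain a consistent tower, and bounded martingale convergence to realise the limit graphon, followed by the three-term triangle inequality. This is correct; the only cosmetic slip is the phrase ``letting first $k$ and then $n$ grow'' --- one fixes $k$, takes $\limsup_{n}$, and only then lets $k\to\infty$ --- but your displayed estimate already makes the intended order clear.
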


\subsection{Sparse graph limits}\label{sec:sparselimits} Let $\mathcal{G}_\bullet$ denote the space of rooted locally finite graphs viewed up to root-preserving graph isomorphisms. That is, each element of $\mathcal{G}_\bullet$ is $(G,\rho)$ where $G$ is a graph and $\rho$ is a vertex of it, and two such elements $(G_1,\rho_1)$ and $(G_2,\rho_2)$ are considered equivalent if and only if there exists a graph automorphism $\varphi:G_1 \to G_2$ such that $\varphi(\rho_1)=\rho_2$. Given a rooted graph $(G,\rho)$ and an integer $r \geq 1$ we write $B_G(\rho,r)$ for the graph-distance ball of radius $r$ around $\rho$ in $G$, that is, $B_G(\rho,r)\in \mathcal{G}_\bullet$ is a finite graph rooted at $\rho$ on the set of vertices of graphs distance at most $r$ from $\rho$ in $G$ together with all the edges induced from $G$. There is a natural notion of a metric on $\mathcal{G}_\bullet$. The distance between two elements $(G_1,\rho_1), (G_2, \rho_2) \in \mathcal{G}_\bullet$ is defined to be $2^{-R}$ where $R\geq 0$ is the largest number such that there is a root-preserving graph isomorphism between $B_{G_1}(\rho_1,R)$ and $B_{G_2}(\rho_2,R)$. Having defined the metric we can consider probability spaces on $\mathcal{G}_\bullet$ with respect to the Borel $\sigma$-algebra.

We say that the law of a random element $(G,\rho)\in \mathcal{G}_\bullet$ is the {\bf Benjamini--Schramm limit} of a (possibly random) sequence of finite graphs $G_n$, if and only if, for any fixed integer $r \geq 1$ the random variable $B_{G_n}(\rho_n,r)$ converges in distribution to $B_G(\rho,r)$ where $\rho_n$ is an independently chosen uniform random vertex of $G_n$. In this case we say that the sequence $G_n$ Benjamini--Schramm converges to $(G,\rho)$. Note that by putting $r=1$ in the definition we deduce that in this convergence the degree of the random root $\rho_n$ must converge to the degree of $\rho$ which explains why this limiting procedure is best suited for sparse graphs. See further discussion in~\cite{BeSc}. \medskip

We have finished defining all the needed terminology required to parse Theorems~\ref{thm:mainthm2} and~\ref{thm:main}.

\subsection{Necessity of the assumptions}

The assumptions in Theorem~\ref{thm:main} are the minimal necessary. First, we obviously need the assumption that $G_n$ are connected in order for spanning trees to exist.

Next we claim that the local structure of a uniform spanning tree cannot be determined from a degenerate graphon $W$. Indeed, suppose that a graphon $W$ is given, and let $\Omega^0$ be the elements of $\Omega$ that have zero degree $W$ and $\Omega^+ = \Omega \setminus \Omega^0$. Assume that $W$ is degenerate so that $\mu(\Omega^0)=\delta>0$.

We can now construct two graph sequences that converge to $W$. We start with dense graphs $G_n$ of size $(1-\delta)n$ that converge\footnote{Such a sequence can be obtained for example by taking typical inhomogeneous random graphs $\mathbb G(n,W^+)$, see \cite[Lemma 10.16]{Lovasz2012}.} to $W^+=W_{\mid \Omega^+}$  and in the first sequence we attach to $G_n$ a path of length $\delta n$ at an arbitrary vertex and in the second sequence we attach $\delta n$ edges arbitrary to an vertex of $G_n$ creating $\delta n$ new vertices of degree $1$. It is clear that both sequences converge to $W$. However, the USTs on the two sequences have different Benjamini--Schramm limits. Indeed, let $p_1$ denote the probability that in $\kappa_{W^+}$ the root is a leaf. Then the probability that a randomly chosen vertex is a leaf in the first sequence tends to $(1-\delta)p_1$ and in the second sequence this probability tends to $(1-\delta)p_1 + \delta$.

\subsection{Discussion}

Theorem~\ref{thm:main} shows that the local structure of the UST is continuous on the space of dense graphs with the cut-metric \eqref{eq:defcutdist} and describes this local structure explicitly. As mentioned earlier, the only instance in the literature of Theorem~\ref{thm:main} that we are aware of is the case of the UST of the complete graph $K_n$. In this case Grimmett~\cite{Gri:RandomTree} showed that the limiting object is an infinite path upon which we to each vertex an independent $\POISSON(1)$ branching process --- that is, a $\POISSON(1)$ branching process conditioned to survive forever. This is precisely $\kappa_W$ where corresponding graphon $W$ is just $W\equiv 1$.

The analogous continuity result for sparse graphs is also true, though in this case it is typically harder to describe explicitly the limiting object.

\begin{thm}[{\cite[Proposition~7.1]{MR2354165}}]\label{thm:sparse}
	Suppose that $\{G_n\}$ is a Benjamini--Schramm convergent sequence of connected graphs and let $T_n$ be a UST of $G_n$. Then there exists a random rooted tree $(T,\rho)$ such that $T_n$ Benjamini--Schramm converges to $(T,\rho)$.
\end{thm}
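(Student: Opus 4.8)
The plan is to reduce the claim to the convergence in distribution of the random rooted graph $(G_n,\rho_n)$ (with $\rho_n$ uniform in $V(G_n)$) \emph{decorated} by the transfer currents of nearby edges, and then to establish that convergence through an analysis of the network random walk, identifying the limit with the wired uniform spanning forest of the limiting network.

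\textbf{Reduction.} First I would check that $\{(T_n,\rho_n)\}$ is tight in $\mathcal{G}_\bullet$: since $T_n\subseteq G_n$, distances in $T_n$ dominate those in $G_n$, so every $T_n$-geodesic from $\rho_n$ of length at most $r$ lies inside $B_{G_n}(\rho_n,r)$; hence $B_{T_n}(\rho_n,r)$ is a deterministic graph-theoretic function of the pair $\big(B_{G_n}(\rho_n,r),\,E(T_n)\cap E(B_{G_n}(\rho_n,r))\big)$, and in particular a subgraph of $B_{G_n}(\rho_n,r)$, whose law is tight because $\{G_n\}$ converges. So it suffices to show that $\PROB\big(B_{T_n}(\rho_n,r)\cong F\big)$ converges for every $r$ and every finite rooted tree $F$. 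For this I would use the transfer-current theorem of Burton and Pemantle: the $\UST$ of a finite graph $G$ is a determinantal process whose kernel is the transfer-current operator $Y_G$ (the orthogonal projection of $\ell^2(E(G))$ onto the span of the stars, equivalently the orthocomplement of the cycle space), so $\PROB_{\UST(G)}[S\subseteq\T]=\det\big(Y_G(e,f)\big)_{e,f\in S}$ for every finite edge set $S$. Inclusion--exclusion then expresses $\PROB_{\UST(G_n)}\big[E(T_n)\cap E(H)=S'\big]$, for $H=B_{G_n}(v,r)$ and $S'\subseteq E(H)$, as a universal polynomial in the transfer-current values $Y_{G_n}(e,f)$, $e,f\in E(H)$, depending only on the combinatorial type of $H$ and on $S'$; summing over the relevant $S'$ and averaging over $v$ gives
\[
\PROB\big(B_{T_n}(\rho_n,r)\cong F\big)=\EXP\Big[\Psi_F\big(B_{G_n}(\rho_n,r),\,(Y_{G_n}(e,f))_{e,f\in E(B_{G_n}(\rho_n,r))}\big)\Big]
\]
with $\Psi_F$ a fixed bounded function, polynomial in the transfer-current entries. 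Hence it is enough to prove that $\big(G_n,\rho_n,(Y_{G_n}(e,f))_{e,f}\big)$ converges in distribution.

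\textbf{Identifying the limit.} Let $(G,\rho)$ be the Benjamini--Schramm limit of $\{G_n\}$. I would show these decorated graphs converge to $\big(G,\rho,Y^{\mathrm{W}}\big)$, where $Y^{\mathrm{W}}=Y^{\mathrm{W}}_{(G,\rho)}$ is the transfer-current operator of the \emph{wired} uniform spanning forest of $(G,\rho)$, i.e.\ the projection of $\ell^2(E(G))$ onto the closure of the span of the finitely supported stars. On finitely supported stars and on finitely supported cycles the finite-graph operators $Y_{G_n}$ act exactly as $Y^{\mathrm{W}}$ does (once $n$ is large), so the whole issue is their limiting behaviour on the harmonic Dirichlet gradients of $(G,\rho)$, where one must show the limit is $0$ and \emph{not} the identity --- the latter would give the \emph{free} uniform spanning forest. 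The tool is the representation of $Y_{G_n}(e,f)$ through the network random walk: up to conductances it is a combination of effective resistances and hitting probabilities among the endpoints of $e$ and $f$, with an absolutely convergent expansion over random-walk excursions between those endpoints; one shows that the contribution of excursions leaving a radius-$R$ ball about $\rho_n$ is negligible \emph{uniformly in $n$}, using escape/return estimates that are stable under Benjamini--Schramm convergence. Letting $n\to\infty$ with $R$ fixed identifies the limit with the same expansion computed inside $(G,\rho)$ over finite excursions only --- equivalently, $\RESISTANCE$ in $G_n$ between vertices near $\rho_n$ converges to the \emph{wired} effective resistance in $(G,\rho)$ --- and then $R\to\infty$. (A more hands-on route is to run Wilson's algorithm on $G_n$ from a far-away root and couple it, excursion by excursion, with Wilson's algorithm rooted at infinity generating the wired spanning forest of $(G,\rho)$.) The limiting rooted tree is then the connected component of $\rho$ in that forest.

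\textbf{Main obstacle.} The crux is this last step. The transfer current $Y_{G_n}(e,f)$ is a \emph{global} functional of $G_n$, so its convergence does not follow formally from local convergence; the content is precisely that the finite approximations feel the wired rather than the free boundary at infinity --- that the limiting kernel annihilates the harmonic Dirichlet gradients of $(G,\rho)$ --- and this requires the quantitative random-walk input (uniform-in-$n$ control of far excursions, and stability of escape probabilities under Benjamini--Schramm convergence). The remaining steps are routine.
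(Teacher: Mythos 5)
This statement is not proved in the paper at all: it is quoted verbatim from Aldous--Lyons \cite{MR2354165}, so the only comparison available is with the known proof there. Your reduction is fine as far as it goes: tightness via $B_{T_n}(\rho_n,r)\subseteq B_{G_n}(\rho_n,r)$, the Burton--Pemantle determinantal formula, and inclusion--exclusion do reduce the theorem to convergence in distribution (over the random root) of the transfer currents $Y_{G_n}(e,f)$ for edges in a bounded ball, and you correctly identify that the whole content is whether the limit kernel is the wired or the free one. The gap is in the step you offer to resolve this. The claim that ``the contribution of excursions leaving a radius-$R$ ball about $\rho_n$ is negligible uniformly in $n$'' is false, and it is false in exactly the regime the theorem is about. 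Take $G_n$ to be $3$-regular graphs of girth tending to infinity, so the Benjamini--Schramm limit is the $3$-regular tree, and let $e=f=xy$ be an edge at the root. Then $Y_{G_n}(e,e)=\RESISTANCE(x\lr y;G_n)=1/(3\,\PROB_x[\tau_y<\tau_x^+])$. Computed ``over finite excursions only'' in the limit tree one gets $\PROB_x[\tau_y<\tau_x^+]=1/3$, i.e.\ the \emph{free} resistance $1$; in $G_n$ one gets $\PROB_x[\tau_y<\tau_x^+]\to 1/2$, i.e.\ the wired resistance $2/3$, and the difference is carried entirely by walks that leave any fixed ball (without having hit $x$ or $y$) and later return to hit $y$ before $x$ --- an order-one contribution, uniformly in $R$. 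So your proposed local estimate cannot hold, and, taken literally, your prescription ``identify the limit with the same expansion computed inside $(G,\rho)$ over finite excursions only'' produces the free quantities, i.e.\ it would prove the wrong theorem. Benjamini--Schramm convergence controls escape probabilities from balls, but says nothing about what a walk does between leaving a large ball and returning (harmonic measure from afar, relative hitting probabilities of $x$ versus $y$), and for atypically placed roots the finite resistances need not converge to the wired values at all; the statement is intrinsically an averaged/unimodular one.

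This is why the actual proof in \cite{MR2354165} is global rather than local-analytic. One sandwiches $\UST(G_n)$ restricted to a ball between the USTs of the wired and free $R$-balls using Rayleigh monotonicity upgraded to stochastic domination (Feder--Mihail negative association, see \cite{LyPe:ProbabilityTrees}), so every subsequential Benjamini--Schramm limit of $(G_n,T_n)$ is a spanning forest of $(G,\rho)$ squeezed between $\mathrm{WSF}$ and $\mathrm{FSF}$; then one pins it to the wired forest by an expected-degree (mass-transport) computation: the limit forest has expected root degree at most $2$ because $|E(T_n)|=n-1$, while the WSF of a unimodular limit has expected root degree exactly $2$, and stochastic domination plus equality of expected degrees forces equality of laws. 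Your parenthetical alternative (coupling Wilson's algorithm on $G_n$ with Wilson's algorithm rooted at infinity) faces the same unresolved issue --- justifying that the finite walk behaves as if killed at infinity is again the wired-versus-free question --- so as written the proposal has a genuine gap at its central step, and some global ingredient of the above kind is needed to close it.
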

\noindent The limiting object in the above theorem $(T,\rho)$ is the \emph{wired uniform spanning forest} of the Benjamini--Schramm limit $(G,\rho)$ of the graphs $\{G_n\}$, see~\cite{LyPe:ProbabilityTrees}.

\medskip

One can also ask whether the normalized number of spanning trees is continuous with respect to taking graph limits. Given a graph $G$, let $t(G)$ be the number of spanning trees in $G$. In the bounded-degree model, Lyons~\cite[Theorem 3.2]{Lyons2005} proved that $n^{-1} \log t(G)$ is continuous in the Benjamini--Schramm topology. In the dense model, the natural normalization of $t(G)$ is $n^{-1} t(G)^{1/n}$. For example when $G=K_n$ then by Cayley's formula, $n^{-1} t(G)^{1/n}$ tends to $1$, and when $G$ is a typical Erd\H{o}s--R\'enyi random graph $\mathbb G(n,p)$ for $p\in(0,1)$ fixed, then $n^{-1} t(G)^{1/n}$ tends to $p$.
However, one cannot expect that with no further assumptions continuity of this parameter with respect to the cut-metric will hold. Indeed, let $G_n$ formed by a clique of order $n-\frac{n}{\sqrt{\log n}}$ and a path of length $\frac{n}{\sqrt{\log n}}$ attached to it at an arbitrary vertex. The sequence $\{G_n\}$ converges to the complete graphon $W\equiv 1$ but the number of spanning is substantially lower than for complete graphs; indeed, in this case it can easily by seen using Cayley's formula that $n^{-1} t(G_n)^{1/n}$ tends to $0$.

However, when we impose a minimum degree condition on the graphs, we can infer the asymptotic normalized number of spanning trees from the limit graphon.
\begin{thm}\label{thm:count}
	Let $\delta>0$. Suppose that $G_1,G_2,\ldots$ is a sequence of simple connected graphs that converge to a graphon $W$. Suppose that the order of $G_n$ is $n$ and the minimum degree is at least $\delta n$. Then the number of spanning trees satisfies
	$$\lim_{n\rightarrow \infty}\frac{\sqrt[n]{t(G_n)}}{n}=\exp\left(\int_x\log(\deg_W(x))\right)\;.$$\qed
\end{thm}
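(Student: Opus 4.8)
The plan is to bypass the branching-process description of Theorem~\ref{thm:main} entirely and argue directly from Kirchhoff's matrix-tree theorem, reducing everything to two crude spectral estimates for the simple random walk on $G=G_n$. Write $L=D-A$ for the Laplacian, $P=D^{-1}A$ for the transition matrix of the simple random walk, and, for a fixed vertex $v$, let $L_v,D_v,P_v,A_v$ denote these matrices with the row and column of $v$ deleted. Since $L_v=D_v(I-P_v)$ and $t(G)=\det L_v$ for every $v$ (matrix-tree theorem), we get
\[
t(G)=\Big(\prod_{x\ne v}\deg(x)\Big)\det(I-P_v).
\]
Taking $\tfrac1n\log$ and subtracting $\log n$ decomposes $\log\!\big(\sqrt[n]{t(G)}/n\big)$ into $\tfrac1n\sum_x\log(\deg(x)/n)$, the negligible term $-\tfrac1n\log\deg(v)=O(\tfrac{\log n}{n})$, and $\tfrac1n\log\det(I-P_v)$. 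For the first piece, the minimum-degree hypothesis forces $\deg(x)/n\in[\delta,1]$ for all $x$, while convergence in the cut-distance controls $\|\deg_{W_{G_n}}-\deg_W\|_1$ up to a factor $2$ (because $\sup_S\big|\int_S(\deg_{W_{G_n}}-\deg_W)\big|\le\|W_{G_n}-W\|_\square$ after the optimal alignment); since $\log$ is bounded and Lipschitz on $[\delta,1]$ and $\deg_W\ge\delta$ a.e. (so the target integral is finite), $\tfrac1n\sum_x\log(\deg(x)/n)\to\int_\Omega\log\deg_W(x)\,\mathrm dx$. Thus the theorem follows once I show $\tfrac1n\log\det(I-P_v)\to 0$.

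For this, let $\nu_1\ge\cdots\ge\nu_{n-1}$ be the eigenvalues of $P_v$; they are real and lie in $[-1,1)$, being the eigenvalues of the symmetric matrix $D_v^{-1/2}A_vD_v^{-1/2}$ (a principal submatrix of $D^{-1/2}AD^{-1/2}$), with $\prod_i(1-\nu_i)=\det(I-P_v)=t(G)/\prod_{x\ne v}\deg(x)>0$. Two facts do the job. First, a \emph{counting bound}: $\sum_i\nu_i^2=\mathrm{tr}(P_v^2)\le\mathrm{tr}(P^2)=2\sum_{xy\in E}\tfrac1{\deg(x)\deg(y)}\le\tfrac{2|E|}{(\delta n)^2}\le\tfrac1{\delta^2}$, so for any $\eps>0$ at most $\delta^{-2}\eps^{-2}$ of the $\nu_i$ lie outside $[-\eps,\eps]$. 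Second, a \emph{polynomial spectral-gap bound}: $1-\nu_1=\lambda_{\min}(I-P_v)=\lambda_{\min}(D_v^{-1}L_v)\ge\lambda_{\min}(L_v)/n$, and $\lambda_{\min}(L_v)=\|L_v^{-1}\|_{\mathrm{op}}^{-1}\ge\|L_v^{-1}\|_\infty^{-1}$; since $L_v^{-1}=\sum_{k\ge0}P_v^kD_v^{-1}$ has nonnegative entries with $\sum_y(L_v^{-1})_{xy}\deg(y)=\EXP_x[\tau_v]$, one has $\|L_v^{-1}\|_\infty\le(\delta n)^{-1}\max_x\EXP_x[\tau_v]$. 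Finally the commute-time identity together with Rayleigh monotonicity give $\EXP_x[\tau_v]\le 2|E|\cdot\RESISTANCE(x,v)\le n^2\,\mathrm{diam}(G)\le 3n^2/\delta$, using $\mathrm{diam}(G)<3/\delta$ for connected graphs of minimum degree $\delta n$. Hence $1-\nu_i\ge\delta^2/(3n^2)$ for every $i$, so $|\log(1-\nu_i)|=O(\log n)$.

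To conclude, split $\log\det(I-P_v)=\sum_i\log(1-\nu_i)$ according to whether $|\nu_i|\le\eps$: the first block contributes at most $n\cdot(-\log(1-\eps))\le 2\eps n$ in absolute value, and the second block at most $\delta^{-2}\eps^{-2}\cdot O(\log n)$. Dividing by $n$, letting $n\to\infty$, and then $\eps\to 0$ gives $\tfrac1n\log\det(I-P_v)\to 0$; plugging back into the displayed identity and exponentiating proves the theorem.

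The proof is essentially soft once the matrix-tree identity is in place; the only genuinely load-bearing quantitative input is the counting bound $\mathrm{tr}(P^2)=O_\delta(1)$, which records the fact that in a graph of linear minimum degree all but boundedly many eigenvalues of the walk are $o(1)$. The mild technical point to watch is that the finitely many ``outlier'' eigenvalues $\nu_i$ near $1$ could in principle make $\det(I-P_v)$ very small; the hitting-time/diameter estimate is what rules out an \emph{exponentially} small value there, which is all that is needed. (Heuristically $\tfrac1n\log\det(I-P_v)=-\tfrac1n\sum_{k\ge1}\tfrac1k\,\mathrm{tr}(P_v^k)$ and each $\tfrac1n\,\mathrm{tr}(P_v^k)$ is $o(1)$ because a walk on a dense graph leaves any fixed vertex in a single step; the eigenvalue split above is merely the rigorous way to perform the interchange of limits.)
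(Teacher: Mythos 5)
Your proof is correct, but it takes a genuinely different route from the paper. The paper disposes of Theorem~\ref{thm:count} in a few lines by quoting Kostochka's inequality~\eqref{eq:Kostochka}, which already says that $t(G_n)$ equals $\prod_x \deg(x)$ up to a multiplicative factor $e^{\Landauo(n)}$ once the minimum degree is linear, and then appealing to Lemma~\ref{lem:graphondegrees}\eqref{en:alldeg} to convert $\frac1n\sum_x\log(\deg(x)/n)$ into $\int\log\deg_W$. You instead reprove the needed instance of Kostochka's estimate from scratch: via the matrix-tree theorem you write $t(G)=\prod_{x\neq v}\deg(x)\cdot\det(I-P_v)$ and show $\det(I-P_v)=e^{\Landauo(n)}$ by combining the trace bound $\mathrm{tr}(P^2)\le\delta^{-2}$ (so for each $\eps$ all but $\LandauO_{\delta,\eps}(1)$ eigenvalues lie in $[-\eps,\eps]$) with a polynomial lower bound on $1-\nu_1$ obtained from $\|L_v^{-1}\|_\infty$, the Green's-function identity $\sum_y (L_v^{-1})_{xy}\deg(y)=\EXP_x[\tau_v]$, the commute-time identity and the $\LandauO(1/\delta)$ diameter bound for graphs of linear minimum degree; these ingredients (together with interlacing for the principal submatrix of $D^{-1/2}AD^{-1/2}$, which keeps the $\nu_i$ in $[-1,1)$) are standard and correctly applied, and they fit the electric/random-walk toolkit the rest of the paper uses. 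Your treatment of the degree term is also slightly different from, and arguably cleaner than, the paper's: rather than the distributional statement of Lemma~\ref{lem:graphondegrees} you bound $\|\deg_{W_{G_n}^{\varphi_n}}-\deg_W\|_1\le 2\,\|W_{G_n}^{\varphi_n}-W\|_\square$ directly (taking $T=\Omega$ in the cut-norm) and use that $\log$ is Lipschitz on $[\delta,1]$, noting $\deg_W\ge\delta$ a.e.\ in the limit. What the paper's route buys is brevity, at the cost of citing a nontrivial external theorem; what yours buys is a self-contained argument whose only inputs are classical facts about random walks and determinants, though it is correspondingly longer and, unlike Kostochka's bound, is tailored to the dense (linear minimum degree) regime.
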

Theorem \ref{thm:count} follows almost immediately from a result due to Kostochka~\cite{Ko:SpanningTrees} which states that if $1<d_1\le d_2 \le \ldots \le d_n$ is the degree sequence of a simple connected graph $G$ then for some absolute constant $C>0$ we have
\begin{equation}\label{eq:Kostochka}
\frac{\prod_i d_i}{d_1^{(Cn\log d_1)/d_1}}\le t(G)\le \frac{\prod_i d_i}{n-1}\, .
\end{equation}
To see that~\eqref{eq:Kostochka} yields Theorem~\ref{thm:count}, it is enough to recall that the degree distribution of a limit graphon is inherited from degree distribution of graphs that converge to it (see Lemma~\ref{lem:graphondegrees}\eqref{en:alldeg}). In the case of regular graphs, Theorem \ref{thm:count} can also be derived from \cite{Alon1990, Frieze2000}. \newline

Lastly, let us mention a result of a similar flavor to ours in the context of percolation \cite{BBChR2010}.  There, the authors show that the critical percolation probability of a dense graph is ${1 \over \lambda_n}$ where $\lambda_n$ is the largest eigenvalue of the adjacency matrix. In particular it follows that if two dense graphs are close in the cut-metric, then this threshold is also close. They also describe the limiting local structure of bond percolation on dense graph in terms of a branching process on the limiting graphon. While there is some resemblance to the branching process of Theorem~\ref{thm:main}, they are quite different.

\subsection{About the proof of Theorem~\ref{thm:mainthm2}} Our proof combines, for the first time to the best of our knowledge, two seemingly unrelated mathematical areas, Kirchhoff's electric network theory and Szemer\'edi's regularity lemma-like graph partitioning techniques. These two are shown here to work seamlessly together.

Kirchhoff's theory of electric networks~\cite{Kirchhoff} allows to compute the probability that a given edge $e=xy$ is in a UST of a connected graph. This probability is precisely the \emph{effective electric resistance} between $x$ and $y$, when we consider the graph as an electric network and let current flow from $x$ to $y$, see Section~\ref{sec:effres} and~\eqref{eq:kirchh}. Since there is an edge connecting $x$ to $y$, this quantity is always a number in $[0,1]$. This is the starting point of our proof.

Next we use partition theory (Section~\ref{sec:decomposition}) to decompose our graph $G$ into a bounded number of dense expanders so that different expanders of the decomposition are connected by $\Landauo(n^2)$ edges. Heuristically, the UST of $G$ is close the union of independent USTs on each of these dense expanders.

Thus, it is natural to study electric theory on dense expanders. It is intuitive (and easy to prove) that if $x$ and $y$ are two vertices in graph, then the effective resistance between $x$ and $y$ is at least ${1 \over \deg(x)}+{1 \over \deg(y)}$ since at the most efficient scenario the electric current splits equally from $x$ to all its neighbors and arrives to $y$ equally from all of $y$'s neighbors. Of course this lower bound is not sharp --- the graph could be the disjoint union of two large stars around $x$ and $y$ and an edge connecting $x$ and $y$, in which case the resistance between $x$ and $y$ is $1$.

However, when the graph is a dense expander, one can use random walks estimates and employ the fascinating and classical connection between random walks and electric networks, to deduce a corresponding approximate upper bound, see Corollary~\ref{cor:effresinexpander}. The random walk estimate we prove (Lemma~\ref{lem:revisitexpander}) states that if one starts a random walk on a dense expander from some vertex that is not $x$ or $y$, then the probability that $x$ is visited before $y$ is ${(1+\Landauo(1))\deg(x) \over \deg(x) + \deg(y)}$.

It is now quite pleasant to observe that Rayleigh's monotonicity~\eqref{eq:Rayleigh}, which states that the electric resistance can only decrease by enlarging a network, shows that this upper bound on the resistance holds in each expander in the decomposition of $G$, and the matching lower bound holds for \emph{most} edges of $G$ since there are $\Landauo(n^2)$ edges between components, see Lemma~\ref{cor:effresinexpander}.

This explains why for most edges in $G$ the probability that they are exhibited in the UST is the sum of the degree reciprocals. An iterative argument is presented in Section~\ref{sec:fixedtrees}, employing the spatial Markov property of the UST (Proposition~\ref{prop:ustcond}), to control the probability of events such as $B_\T(v,r) \cong T$.
There are some delicate technicalities to overcome involving the ``outside'' effects of the decomposition. Once these are overcome, one reaches the discrete version $\FREQ(T;G)$ of the parameter $\FREQ(T;W)$ of Theorem~\ref{thm:mainthm2} and we show that this parameter approximates the desired probability (Lemma~\ref{lem:congTquenched}).
In Section~\ref{sec:proofmain} it is shown that $\FREQ(T;G)$ is close to $\FREQ(T;W)$, its continuous counterpart, if $G$ is close to $W$ in the cut-distance, concluding the proof.

\subsection{Organisation of the paper}
We tried to write the paper so that it can be read by probabilists and graph theorists alike. For this reason we recall even concepts relatively well known to one of the communities in a pedestrian manner. Also, at places we try to convey an idea of a proof even when this idea is standard, but in only one of the two communities.

In Section~\ref{sec:decomposition} we introduce a suitable decomposition of dense graphs, into so-called linear expanders.  In Section~\ref{sec:localust} we prove the discrete estimate approximating the probability of the event $\PROB(B_\T(X,r)=T)$ by the discrete parameter $\FREQ(T;G)$. In Section~\ref{sec:proofmain} we show that $\FREQ(T;G)$ and $\FREQ(T;W)$ are close whenever $G$ and $W$ are close in the cut-metric. Lastly, in Section~\ref{sec:degreeextremal} we derive Theorem~\ref{cor.degdist}.

\section{Decomposing dense graphs into linear expanders}\label{sec:decomposition}
\subsection{Dense expanders}\label{ssec:densexpanders}

Informally, a graph is a dense expander if whenever a vertex set and its complement are of linear size (in the order of the graph) then there are quadratically many edges between these two parts. So, a primal example of a dense graph that is \emph{not} an expander is a disjoint union of two cliques of order $\frac n2$ with a perfect matching connecting them.

We give our definition of expansion for {\bf loopless multigraphs}. That is, self-loops are not allowed, and two vertices may be connected by several edges. The quantity $e(A,B)$ counts all ordered pairs $ab$ that form an edge, $a\in A$ and $b\in B$, \emph{including multiplicities}. Note that each edge with both endvertices in $A\cap B$ contributes twice to $e(A,B)$. For a vertex $v$ and a vertex set $A$, we write $\deg(v,A):=e(\{v\},A)$. We write $\deg(v):=\deg(v,V)$, where $V$ is the vertex set of the (multi)graph. Note that with these conventions, we have $2e(G)=\sum_{v\in V} \deg(v)$, and $\sum_{a\in A}\deg(a,B)=e(A,B)=\sum_{b\in B}\deg(b,A)$, for each vertex sets $A$ and $B$.

\begin{defn}\label{def:expander}
	We say that a loopless multigraph $H$ is a {\bf $\gamma$-expander} if for each $U\subset V(H)$, we have $e(U,V(H)\setminus U)\ge \gamma |U|(v(H)-|U|)$.
\end{defn}

We will later use a simple observation.  Removing edges from an expander can obviously render its expansion properties. However, if one removes edges touching only one vertex while leaving the degree of the vertex high, the expansion properties are not damaged by too much as the following simple proposition states.

\begin{proposition}\label{prop:stillexpander} Let $G$ be a $\gamma$-expander loopless multigraph on $m$ vertices and let $v\in V(G)$ be a vertex. Assume that the maximal number of edges between any two vertices in $V(G)\setminus\{v\}$ is at most $\ell \in\bN$. Consider the graph $G'$ obtained from $G$ by erasing some set of edges emanating from $v$ of size at most $\ell m$ so that the degree of $v$ in $G'$ is at least $\gamma m$. Assume also that $8\ell^2 \gamma^{-2} \leq m$ . Then $G'$ is a ${\gamma \over 2}$-expander.
\end{proposition}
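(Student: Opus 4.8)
The plan is to verify the expansion inequality $e_{G'}(U,V(G')\setminus U)\ge \tfrac{\gamma}{2}|U|(m-|U|)$ directly for every $U\subset V(G')$ by comparing $e_{G'}(U,\cdot)$ with $e_G(U,\cdot)$, keeping careful track of how many edges incident to $v$ could have been deleted. Write $W:=V(G)\setminus U$ and note that $e_{G'}(U,W)\ge e_G(U,W)-D$, where $D$ is the number of deleted edges with one endpoint in $U$ and the other in $W$; certainly $D\le \ell m$ by hypothesis, but the cruder bound $D\le\ell m$ is too weak when $|U|$ or $|W|$ is small, so the argument must split into cases depending on the sizes of $U$ and $W$.

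First I would dispose of the case where both $|U|\ge 4\ell\gamma^{-1}$ and $|W|\ge 4\ell\gamma^{-1}$. Here $e_G(U,W)\ge \gamma|U||W|$ by expansion of $G$, while $D\le \ell m\le \ell\cdot\tfrac{|U||W|}{(4\ell\gamma^{-1})}\cdot\tfrac{m}{\max(|U|,|W|)}$; more simply, since $\min(|U|,|W|)\ge 4\ell\gamma^{-1}$ we get $D\le \ell m = \ell\cdot\tfrac{m}{\min(|U|,|W|)}\cdot\min(|U|,|W|)\le \tfrac{\gamma}{4}\cdot m\le \tfrac{\gamma}{4}|U||W|\cdot\tfrac{1}{\min(|U|,|W|)}\cdot\tfrac{\max(|U|,|W|)}{1}$ — I will arrange the bookkeeping so that $D\le\tfrac{\gamma}{2}|U||W|$, using $|U||W|\ge \min(|U|,|W|)\cdot\tfrac{m}{2}$ when $\min\ge \tfrac{m}{2}$ is false, i.e. writing $|U||W|\ge\min(|U|,|W|)$ only loses a factor, so better: $|U||W|=\min\cdot\max\ge \min\cdot\tfrac{m}{2}$ since the larger of the two is at least $m/2$. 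Hence $D\le\ell m\le\tfrac{\gamma}{4}\min(|U|,|W|)\cdot m/\bigl(4\ell\gamma^{-1}\bigr)\cdot 4\ell\gamma^{-1}$... the clean statement is: $D\le \ell m$ and $|U||W|\ge\tfrac{m}{2}\min(|U|,|W|)\ge\tfrac{m}{2}\cdot 4\ell\gamma^{-1}=2\ell\gamma^{-1}m$, so $D\le\ell m\le\tfrac{\gamma}{2}|U||W|$, giving $e_{G'}(U,W)\ge\gamma|U||W|-\tfrac{\gamma}{2}|U||W|=\tfrac{\gamma}{2}|U||W|$ as required.

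Next I would treat the remaining case where $\min(|U|,|W|)< 4\ell\gamma^{-1}$; by symmetry of the expander condition (the inequality for $U$ and for $V(G')\setminus U$ are identical) assume it is $|U|$ that is small, say $|U|=k<4\ell\gamma^{-1}$. Here I \emph{cannot} afford to lose $D$ edges if $v\notin U$, so I separate according to whether $v\in U$. If $v\notin U$, then no edge incident to $U$ was deleted except possibly those from $v\in W$ to $U$; but those are at most $\deg_G(v,U)\le \ell|U|=\ell k$ in number (at most $\ell$ parallel edges to each of the $k$ vertices), and moreover in $G'$ we still have $\deg_{G'}(v)\ge\gamma m$, so the edges from $v$ into $U$ that survive contribute to $e_{G'}(U,W)$; in any event $e_{G'}(U,W)\ge e_G(U,W)-\ell k\ge \gamma k(m-k)-\ell k$, and since $m-k\ge m/2$ (as $k<4\ell\gamma^{-1}<m/2$ by the assumption $8\ell^2\gamma^{-2}\le m$, hence $4\ell\gamma^{-1}\le m/2$ provided $\gamma\le1$, which holds), we get $\gamma k(m-k)\ge\tfrac{\gamma}{2}km\ge\tfrac{\gamma}{2}k\cdot 4\ell\gamma^{-1}\cdot\tfrac{m}{4\ell\gamma^{-1}}$... cleanly: $\gamma k(m-k)-\ell k = k\bigl(\gamma(m-k)-\ell\bigr)$ and $\gamma(m-k)-\ell\ge\tfrac{\gamma m}{2}-\ell\ge\tfrac{\gamma}{2}(m-k)$ iff $\tfrac{\gamma}{2}k\ge\ell$, which may fail for small $k$ — so instead I bound $\gamma(m-k)-\ell\ge\tfrac12\gamma(m-k)$ iff $\tfrac12\gamma(m-k)\ge\ell$ iff $m-k\ge 2\ell\gamma^{-1}$, true since $m-k\ge m/2\ge 4\ell^2\gamma^{-2}\ge 2\ell\gamma^{-1}$ (using $\ell\ge1$, $\gamma\le 1$). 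If instead $v\in U$, the deleted edges incident to $U$ go from $v$ to $W$, but now we use that $\deg_{G'}(v)\ge\gamma m$ while $\deg_{G'}(v,U)\le \deg_G(v,U)\le\ell(k-1)<\ell k$, so $\deg_{G'}(v,W)\ge\gamma m-\ell k\ge\tfrac{\gamma m}{2}$ (since $\ell k<4\ell^2\gamma^{-1}\le\tfrac{\gamma}{2}\cdot 8\ell^2\gamma^{-2}\le\tfrac{\gamma m}{2}$); as $v\in U$, these $\ge\gamma m/2\ge\tfrac{\gamma}{2}|U|(m-|U|)/(m-|U|)\cdot(m-|U|)$... precisely $\deg_{G'}(v,W)\ge\tfrac{\gamma m}{2}\ge\tfrac{\gamma}{2}(m-k)\ge\tfrac{\gamma}{2}\cdot\tfrac{m-k}{k}\cdot k$ is not quite it; rather $e_{G'}(U,W)\ge\deg_{G'}(v,W)\ge\tfrac{\gamma m}{2}\ge\tfrac{\gamma}{2}k(m-k)$ because $k(m-k)\le k\cdot m\le$ ... no: we need $\tfrac{\gamma m}{2}\ge\tfrac{\gamma}{2}k(m-k)$, i.e. $m\ge k(m-k)$, which is false for $k\ge 2$. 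So here I must also keep the edges from $U\setminus\{v\}$ to $W$: $e_{G'}(U,W)= e_{G'}(\{v\},W)+e_{G'}(U\setminus\{v\},W)\ge \deg_{G'}(v,W)+\bigl(e_G(U\setminus\{v\},W)\bigr)$ since no edges inside $V(G)\setminus\{v\}$ were deleted; then $e_G(U\setminus\{v\},W)\ge e_G(U,W)-\deg_G(v)\ge\gamma k(m-k)-\deg_G(v)$, and combined with $\deg_{G'}(v,W)\ge\deg_G(v)-\ell k$ (at most $\ell k$ deleted edges, all counted, crude) I get $e_{G'}(U,W)\ge\gamma k(m-k)-\ell k\ge\tfrac{\gamma}{2}k(m-k)$ exactly as in the $v\notin U$ subcase. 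That settles all cases; collecting the conditions used ($\gamma\le1$, $\ell\ge1$, $8\ell^2\gamma^{-2}\le m$), all are implied by the hypotheses, completing the proof.

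\textbf{Main obstacle.} The only real subtlety is the bookkeeping when $|U|$ (or its complement) is small \emph{and} contains $v$: the naive bound "$D\le\ell m$ deleted edges" then swamps the $\Theta(\gamma k m)$ worth of edges guaranteed by $G$'s expansion, so one must instead exploit the surviving degree hypothesis $\deg_{G'}(v)\ge\gamma m$ together with the fact that only $O(\ell k)$ of $v$'s edges can land inside a $k$-set. Getting the threshold $4\ell\gamma^{-1}$ (versus $m/2$) and checking that $8\ell^2\gamma^{-2}\le m$ makes every inequality go through is the one place where care is needed; everything else is Definition~\ref{def:expander} applied to $G$ plus triangle-inequality-style edge counting. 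I would present the two-case split ($\min(|U|,|V(G')\setminus U|)$ large or small) as the skeleton and fold the $v\in U$ versus $v\notin U$ distinction into the small case.
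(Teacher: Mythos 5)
Your case split (both sides of the cut large; one side of size $k<4\ell\gamma^{-1}$ and $v\notin U$; one side small and $v\in U$) mirrors the paper's proof, which first reduces to $|U|\le m/2$ and then splits on $|U|\ge 4\gamma^{-1}\ell$ versus $|U|<4\gamma^{-1}\ell$, with the small case further split on $U=\{v\}$, $v\notin U$, and $v\in U$, $|U|>1$. Your ``both large'' and ``small, $v\notin U$'' subcases are correct and match the paper in spirit.

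There is, however, a genuine gap in your ``small and $v\in U$'' subcase. The final chain rests on the claim $\deg_{G'}(v,W)\ge\deg_G(v)-\ell k$, justified parenthetically by ``at most $\ell k$ deleted edges.'' This is not right: the hypothesis permits deleting up to $\ell m$ edges from $v$, and all of those deletions can go to $W$, so $\deg_G(v,W)-\deg_{G'}(v,W)$ can be as large as $\ell m$, not $\ell k$. (Earlier in the same paragraph you correctly derived $\deg_{G'}(v,W)\ge\gamma m-\ell k$ from the surviving-degree hypothesis and $\deg_{G'}(v,U)\le \ell(k-1)$; but that bound cannot replace $\deg_G(v)-\ell k$, because $\deg_G(v)$ has no a priori upper bound in terms of $\gamma m$, so the $\deg_G(v)$ introduced by $e_G(U\setminus\{v\},W)\ge e_G(U,W)-\deg_G(v)$ does not cancel.) The fix is to abandon expansion of $U$ and instead apply expansion of $U':=U\setminus\{v\}$ in $G$: since edges among $V(G)\setminus\{v\}$ are untouched, $e_{G'}(U,W)\ge e_G(U',W)\ge e_G(U',V(G)\setminus U')-\deg_G(v,U')\ge\gamma(k-1)(m-k+1)-\ell(k-1)$, and adding $\deg_{G'}(v,W)\ge\gamma m-\ell(k-1)$ one gets $e_{G'}(U,W)\ge \gamma m+\gamma(k-1)(m-k+1)-2\ell(k-1)$, which for $2\le k<4\ell\gamma^{-1}$ exceeds $\tfrac{\gamma}{2}k(m-k)$ because $\gamma(k-1)(m-k+1)\ge\tfrac{\gamma}{2}k(m-k)$ and $\gamma m\ge 8\ell^2\gamma^{-1}>2\ell(k-1)$; the case $k=1$ is immediate from $\deg_{G'}(v)\ge\gamma m$. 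This is essentially the route the paper takes (bound $e_{G}(U\setminus\{v\},V(G)\setminus U\cup\{v\})$ and subtract $\ell|U|$).
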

\begin{proof}
	We need to prove that for any $U \subset V(G)$,
	\begin{equation}\label{eq:toprove} e_{G'}(U, V(G) \setminus U) \geq {\gamma \over 2} |U|(m-|U|) \, .
	\end{equation}
	By symmetry it is enough to prove it when $|U|\leq m/2$. We proceed by considering two cases. In the first case we assume $|U| \geq 4 \gamma^{-1} \ell$. Then, since we erased at most $\ell m$ edges, we have
	$$ e_{G'}(U, V(G)\setminus U) \geq \gamma |U|(m-|U|) - \ell m \geq {\gamma \over 2} |U|(m-|U|) \, ,$$
	since in this case $\gamma |U|(m-|U|) \geq 2\ell m$.
	
	In the second case we assume that $|U| \leq 4 \gamma^{-1} \ell$. If $U=\{v\}$, then \eqref{eq:toprove} follows since the degree of $v$ in $G'$ is at least $\gamma m$. If $v \not \in U$, then the maximum number of edges that could have been erased from $e_G(U, V(G)\setminus U)$ is at most $\ell |U|$, hence
	$$ e_{G'}(U, V(G) \setminus U) \geq \gamma |U|(m-|U|) - 4 \ell^2 \gamma^{-1} \geq {\gamma \over 2} |U|(m-|U|) \, ,$$
	since $\gamma |U|(m-|U|) \geq \gamma m$ and $8\ell^2 \gamma^{-2} \leq m$. Lastly, if $v \in U$ and $|U|>1$, then
	$$ e_{G'}(U, V(G)\setminus U) \geq e_{G}(U \setminus \{v\}, V(G) \setminus U \cup \{v\}) - \ell |U| \geq {\gamma \over 2} |U|(m-|U|) \, ,$$
	by the same logic as above, concluding the proof. \end{proof}

\subsection{Expander decomposition of dense graphs}

The main result of this section, Theorem~\ref{prop:expanderdecomposition}, asserts that each graph that is close to a nondegenerate graphon can be decomposed into a bounded number expanders that are almost isolated from each other. In Definition~\ref{def:expanderdecomposition} below we describe the expander decomposition that we actually use. Let us recall that the need of such a  decomposition (rather than a single expander) stems from examples such as that of a disjoint union of two cliques of order $\frac n2$ with a perfect matching connecting them mentioned at the beginning of Section~\ref{ssec:densexpanders}. Passing to a limit we see that in the graphon perspective, the perfect matching vanishes and we are left with two components.
Therefore, we now introduce graphon counterparts to the notion of graph connectivity and components, and give their basic properties.
\begin{defn}\label{defn:irreducible}
	A graphon $W$ on a ground space $(\Omega,\mu)$ is {\bf disconnected} if either $W=0$ a.e. or there exists a subset $A\subset \Omega$ with $0<\mu(A)<1$ such that $W=0$ a.e. on $A\times A^c$; otherwise $W$ is {\bf connected}.	
\end{defn}

We shall require a result of Bollob{\'a}s, Janson and Riordan \cite[Lemma 5.17]{BolJanRio:PhaseTransition} which enables us to decompose a graphon into (at most) countably many connected components.

\begin{lem}\label{lem:BJR-decomposition}
	Let $W:\Omega\times\Omega\rightarrow [0,1]$ be a graphon. Then there exists a partition $\Omega=\bigcup_{i=0}^{N}\Omega_i$ into measurable subsets with $0\le N\le \infty$ such that $\mu(\Omega_i)>0$ for $i\ge 1$, the restriction of $W$ to $\Omega_i\times\Omega_i$ is connected for each $i\ge 1$, and $W=0$ a.e. on $(\Omega\times\Omega)\setminus\bigcup_{i=1}^{N}(\Omega_i\times\Omega_i)$.
\end{lem}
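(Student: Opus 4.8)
The statement is \cite[Lemma~5.17]{BolJanRio:PhaseTransition}; here is the route I would take to reprove it, built around the \emph{invariant $\sigma$-algebra} of $W$. First I would call a measurable set $A\subseteq\Omega$ \textbf{$W$-invariant} if $W=0$ a.e.\ on $A\times(\Omega\setminus A)$, and record that the family $\mathcal I$ of $W$-invariant sets is a sub-$\sigma$-algebra of the Borel sets of $\Omega$ modulo null sets. Closure under complementation is immediate from the symmetry of $W$; for a countable family $(A_k)$ one uses the containments $\big(\bigcup_k A_k\big)\times\bigcap_k A_k^c\subseteq\bigcup_k\big(A_k\times A_k^c\big)$ and $\big(\bigcap_k A_k\big)\times\bigcup_k A_k^c\subseteq\bigcup_k\big(A_k\times A_k^c\big)$, each $A_k\times A_k^c$ being $W$-null. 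One also notes that for $B\subseteq A$ with $A$ invariant, $B$ is $W$-invariant in $\Omega$ iff $B$ is invariant for the restricted graphon $W|_{A\times A}$ on $A$.

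Next I would extract the components by a maximality argument. Consider the poset, ordered by inclusion, of families $\mathcal F$ consisting of pairwise-disjoint $W$-invariant subsets of $\Omega$, each of positive measure, such that the restriction of $W$ to each member's square is connected in the sense of Definition~\ref{defn:irreducible}. A chain in this poset has the union of its members as an upper bound: this union is again such a family, and it is automatically countable since it consists of pairwise-disjoint positive-measure sets. By Zorn's lemma there is a maximal family $\{\Omega_i\}_{i\ge1}$, necessarily finite or countably infinite; set $\Omega_0=\Omega\setminus\bigcup_{i\ge1}\Omega_i$. Since each $\Omega_i$ ($i\ge1$) is $W$-invariant we get $W=0$ a.e.\ on $\Omega_i\times\Omega_j$ for $i\ne j$, and, using symmetry of $W$ and countability of the family, $W=0$ a.e.\ on $\Omega_0\times\bigcup_{i\ge1}\Omega_i$; hence $\Omega_0$ is $W$-invariant and $W=0$ a.e.\ on $(\Omega\times\Omega)\setminus\big(\bigcup_{i\ge1}(\Omega_i\times\Omega_i)\cup(\Omega_0\times\Omega_0)\big)$. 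As $\mu(\Omega_i)>0$ and $W|_{\Omega_i\times\Omega_i}$ is connected for $i\ge1$ by construction, the only remaining point is to show $W=0$ a.e.\ on $\Omega_0\times\Omega_0$; granting that, relabelling the indices (and keeping or discarding $\Omega_0$ as allowed by the statement) yields the claimed partition with $N$ equal to the number of the $\Omega_i$'s.

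I expect the last point to be the main obstacle, because naively peeling connected pieces off $\Omega_0$ need not terminate after countably many steps. The clean fix is to use compactness of the integral operator. Suppose $W$ is not a.e.\ zero on $\Omega_0\times\Omega_0$; write $V=W|_{\Omega_0\times\Omega_0}$ and let $T_V\colon L^2(\Omega_0)\to L^2(\Omega_0)$ be the Hilbert--Schmidt operator $f\mapsto\int_{\Omega_0}V(\cdot,y)f(y)\,\mathrm{d}y$. A direct computation shows that $A\subseteq\Omega_0$ is $V$-invariant iff multiplication by $\mathbf 1_A$ commutes with $T_V$, and consequently every such $\mathbf 1_A$ preserves each eigenspace of $T_V$. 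Since $T_V$ is a nonzero compact self-adjoint operator it has a nonzero eigenvalue $\lambda$ with finite-dimensional eigenspace $E$. Among the $V$-invariant sets $A$ with $\mathbf 1_A E\ne\{0\}$ choose one, $C$, minimal with respect to inclusion; such a minimal set exists because along a decreasing chain of these sets the subspaces $\mathbf 1_{A_n}E$ form a decreasing chain of nonzero subspaces of the finite-dimensional $E$, hence stabilise, so the intersection of the chain is still in the family. If $C$ admitted a nontrivial splitting $C=C'\sqcup C''$ into $V$-invariant sets, then $\mathbf 1_C E=\mathbf 1_{C'}E\oplus\mathbf 1_{C''}E$, so one of $C',C''$ is a proper $V$-invariant subset of $C$ with nonzero image in $E$, contradicting minimality; hence $W|_{C\times C}=V|_{C\times C}$ is connected (and it is not a.e.\ zero, since otherwise $T_V$ would annihilate $\mathbf 1_C E\subseteq E$ while $T_V|_E=\lambda\,\mathrm{id}$ with $\lambda\ne0$). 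But $C$ has positive measure, is $W$-invariant because $\Omega_0$ is, and is disjoint from every $\Omega_i$ with $i\ge1$, so $\{\Omega_i\}_{i\ge1}\cup\{C\}$ is a strictly larger admissible family, contradicting maximality. Therefore $W=0$ a.e.\ on $\Omega_0\times\Omega_0$, completing the plan. The genuinely delicate ingredient is precisely this reduction of ``no non-atomic surviving part'' to compactness/finite-dimensionality of an eigenspace; the rest is routine manipulation of the measure algebra and two applications of Zorn's lemma.
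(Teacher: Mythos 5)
The paper itself contains no proof of Lemma~\ref{lem:BJR-decomposition}: it is imported verbatim from \cite[Lemma~5.17]{BolJanRio:PhaseTransition}, so there is no internal argument to compare yours against. Judged on its own terms, your proof is correct and gives a self-contained alternative to the citation. The measure-algebra of $W$-invariant sets, the first Zorn application producing a maximal disjoint family of positive-measure invariant sets with connected restrictions, and the spectral argument that the leftover set $\Omega_0$ carries no mass of $W$ all check out. In particular, the two key functional-analytic points are valid: $A$ is $V$-invariant iff multiplication by $\mathbf{1}_A$ commutes with $T_V$ (the converse direction using $V\ge 0$), hence $\mathbf{1}_A$ preserves the finite-dimensional eigenspace $E$ of a nonzero eigenvalue; and $\mathbf{1}_CE=\mathbf{1}_{C'}E\oplus\mathbf{1}_{C''}E$ for a splitting of an invariant $C$. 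The hereditary remark in your first paragraph is exactly what is needed to see that a splitting of $C$ in the sense of Definition~\ref{defn:irreducible} consists of sets that are $V$-invariant in $\Omega_0$, and your observation that $W|_{C\times C}$ cannot be a.e.\ zero (else $T_V$ would kill $\mathbf{1}_CE$ while acting as $\lambda\,\mathrm{id}\neq 0$ on $E$) closes the connectedness check.

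One step is stated more loosely than it should be, though it is repairable by a standard device rather than a genuine gap: in the second Zorn application you treat a chain of invariant sets as a sequence $(A_n)$. Zorn requires arbitrary chains, and an uncountable intersection need not be measurable, nor is invariance stable under uncountable intersections (the $\sigma$-algebra of invariant sets is only countably closed). The fix is to work in the measure algebra: along a chain, members that differ on a set of positive measure have distinct measures, so one can choose a nested sequence $B_1\supseteq B_2\supseteq\cdots$ from the chain with $\mu(B_n)$ decreasing to $\inf_\alpha\mu(A_\alpha)$; then $B=\bigcap_n B_n$ is measurable, invariant, and is (up to null sets) a lower bound for the entire chain, and your finite-dimensionality argument applied to the sequence $(B_n)$ shows $\mathbf{1}_BE\supseteq\bigcap_n\mathbf{1}_{B_n}E\neq\{0\}$, so $B$ remains in the family. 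With that patch the minimal set $C$ exists, and the rest of your argument, including the final contradiction with maximality and the assembly of the partition (with $\Omega_0$ kept as the index-$0$ part, which is exactly what the statement permits), goes through.
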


Bollob\'as, Borgs, Chayes and Riordan \cite[Lemma 7]{BBChR2010} showed that connectivity implies an apparently stronger statement.

\begin{lem}\label{lem:connectedness-quantitative}
	Let $W:\Omega^2\rightarrow [0,1]$ be a connected graphon, and let $0<\alpha<\tfrac12$ be given. There is some constant $\beta=\beta(W,\alpha)>0$ such that $\int_{A\times A^c}W\ge \beta$ for every measurable subset $A\subset \Omega$ with $\alpha\le \mu(A)\le \tfrac12$.
\end{lem}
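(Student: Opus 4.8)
\emph{Proof proposal.} The plan is to argue by contradiction via a compactness argument in $L^2(\Omega)$, using that the integral operator with kernel $W$ is compact. Suppose no such $\beta$ exists. Then there is a sequence of measurable sets $A_n\subset\Omega$ with $\alpha\le\mu(A_n)\le\tfrac12$ and $\int_{A_n\times A_n^c}W\to 0$. Put $g_n=\mathbbm 1_{A_n}$. Since $\{g_n\}$ is bounded in $L^2(\Omega)$ (a Hilbert space, as $\mu$ is finite), after passing to a subsequence we may assume $g_n\rightharpoonup g$ weakly in $L^2$, where $0\le g\le 1$ a.e.\ and $\int_\Omega g=\lim_n\mu(A_n)\in[\alpha,\tfrac12]$.

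The key step is to pass the cut functional through this weak limit, which is delicate because $\int_{A_n\times A_n^c}W$ is \emph{quadratic} in $g_n$. Write $\int_{A_n\times A_n^c}W=\langle g_n,\,T_W(\mathbbm 1-g_n)\rangle$, where $T_W$ is the (self-adjoint) integral operator $(T_Wh)(x)=\int_\Omega W(x,y)h(y)\,\mathrm d\mu(y)$. Since $W$ is bounded and $\mu\otimes\mu$ finite, $W\in L^2(\Omega^2)$, so $T_W$ is Hilbert--Schmidt, hence compact; therefore $T_Wg_n\to T_Wg$ \emph{strongly} in $L^2$. Splitting
$$\langle g_n,T_W(\mathbbm 1-g_n)\rangle=\langle g_n,T_W\mathbbm 1\rangle-\langle g_n,T_Wg\rangle-\langle g_n,\,T_Wg_n-T_Wg\rangle ,$$
the first two terms converge to $\langle g,T_W\mathbbm 1\rangle$ and $\langle g,T_Wg\rangle$ by weak convergence tested against the fixed vectors $T_W\mathbbm 1,T_Wg$, while the last term is $o(1)$ since $\|g_n\|_2\le 1$ and $T_Wg_n-T_Wg\to 0$ strongly. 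Hence
$$0=\lim_n\int_{A_n\times A_n^c}W=\langle g,T_W(\mathbbm 1-g)\rangle=\int_{\Omega^2}g(x)\bigl(1-g(y)\bigr)W(x,y)\,\mathrm d\mu(x)\,\mathrm d\mu(y).$$

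Finally I would convert this identity into a violation of connectedness by a level-set (layer-cake) argument. For $s\in(0,1)$ set $A(s)=\{g>s\}$; then $g=\int_0^1\mathbbm 1_{A(s)}\,\mathrm ds$ and $1-g=\int_0^1\mathbbm 1_{A(t)^c}\,\mathrm dt$ pointwise, so by Tonelli the vanishing integral above equals $\int_0^1\!\!\int_0^1\bigl(\int_{A(s)\times A(t)^c}W\bigr)\,\mathrm ds\,\mathrm dt$, and nonnegativity of the integrand forces $\int_{A(s)\times A(t)^c}W=0$ for a.e.\ $(s,t)$. If $g$ is a.e.\ equal to a constant $c\in[\alpha,\tfrac12]$, the integral is $c(1-c)\int_{\Omega^2}W$, forcing $W=0$ a.e.\ and contradicting connectedness. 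Otherwise $\mathrm{ess\,inf}\,g<\mathrm{ess\,sup}\,g$, and (using $\mathrm{ess\,inf}\,g\le\int g\le\tfrac12$ and $\mathrm{ess\,sup}\,g\ge\int g\ge\alpha$) the open interval $(\mathrm{ess\,inf}\,g,\mathrm{ess\,sup}\,g)$ meets $(0,1)$ in an interval of positive length, on which $0<\mu(A(s))<1$. Choosing in that interval an $s$ with $\int_{A(s)\times A(t)^c}W=0$ for a.e.\ $t$ and then $t_k\downarrow s$ along which this holds, dominated convergence (note $\mathbbm 1_{A(t_k)^c}\downarrow\mathbbm 1_{A(s)^c}$) gives $\int_{A(s)\times A(s)^c}W=0$, i.e.\ $W=0$ a.e.\ on $A(s)\times A(s)^c$ with $0<\mu(A(s))<1$ --- contradicting connectedness and completing the argument. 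The main obstacle is the middle step: weak $L^2$-convergence does not pass through the quadratic cut functional, and the compactness of $T_W$ is precisely what makes it work.
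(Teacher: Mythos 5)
Your argument is correct, but note that the paper does not prove this lemma at all: it is quoted verbatim as Lemma~7 of Bollob\'as--Borgs--Chayes--Riordan \cite{BBChR2010}, so what you have produced is a self-contained substitute for a cited result rather than a variant of an in-paper proof. Your route is sound at every step: (i) if no $\beta$ existed you get sets $A_n$ with $\int_{A_n\times A_n^c}W\to 0$, and weak $L^2$-compactness of the indicators gives a weak limit $g$ with $0\le g\le 1$ and $\int g\in[\alpha,\tfrac12]$ (the set $\{0\le h\le 1\}$ is convex and closed, hence weakly closed); (ii) the genuine difficulty, that the cut functional is quadratic in $g_n$, is resolved correctly by the Hilbert--Schmidt (hence compact) nature of $T_W$, which upgrades $T_Wg_n\to T_Wg$ to strong convergence and yields $\int g(x)(1-g(y))W(x,y)=0$; (iii) the layer-cake step and the choice of a level $s\in(\operatorname{ess\,inf}g,\operatorname{ess\,sup}g)$ with $0<\mu(\{g>s\})<1$ are fine, and the constant case is handled separately and consistently with the paper's Definition~\ref{defn:irreducible}, which declares $W=0$ a.e.\ disconnected. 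Two cosmetic remarks: in the last step you do not need dominated convergence or the sequence $t_k\downarrow s$ at all, since for any $t>s$ one has $A(t)^c\supseteq A(s)^c$ and hence $\int_{A(s)\times A(s)^c}W\le\int_{A(s)\times A(t)^c}W=0$ by monotonicity alone; and the inequalities $\operatorname{ess\,inf}g\le\int g\le\tfrac12$, $\operatorname{ess\,sup}g\ge\alpha$ are not needed to see that $(\operatorname{ess\,inf}g,\operatorname{ess\,sup}g)\subseteq(0,1)$ --- this is automatic from $0\le g\le 1$. What your approach buys is a short, purely functional-analytic proof (Banach--Alaoglu plus compactness of Hilbert--Schmidt operators) that could replace the external citation; what the citation buys the authors is, of course, brevity.
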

We can now give our definition of expander decomposition.
\begin{defn}\label{def:expanderdecomposition}
	Suppose that $G$ is a loopless multigraph of order $n$. We say that $V(G)=V_0\sqcup V_1\sqcup\ldots\sqcup V_k$ is a {\bf $(\gamma,\eta,\epsilon)$-expander decomposition} if
	\begin{itemize}
		\item[\rm (G1)] $|V_0|\le \epsilon n$,
		\item[\rm (G2)] for each $i\in [k]$ we have that $e(V_i,V\setminus V_i)\le\eta |V_i|n$,
		\item[\rm (G3)] for each $i\in[k]$ and each $U\subset V_i$, we have $e(U,V_i\setminus U) \ge \gamma |U||V_i\setminus U|$.
	\end{itemize}
\end{defn}	
\begin{thm}\label{prop:expanderdecomposition}
	Suppose that $W:\Omega^2\rightarrow[0,1]$ is a nondegenerate graphon. Then for every $\epsilon,\eta>0$ there exist positive constants $\gamma=\gamma(W,\epsilon,\eta)$, $\xi=\xi(W,\epsilon,\eta)$ and $n_0=n_0(W,\epsilon,\eta)$ such that if $G$ is a graph with $v(G)>n_0$ and $\delta_\square(G,W)<\xi$ then $G$ admits a $(\gamma,\eta,\epsilon)$-expander decomposition.
	\setcounter{Prop26}{\value{thm}}
\end{thm}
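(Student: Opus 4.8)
The plan is to construct the decomposition by a greedy sparsest-cut refinement carried out separately inside each connected component of $W$, discarding into $V_0$ only the ``small'' sparse cuts that turn up; the crux, and the only place where nondegeneracy of $W$ is used, is that the quantitative connectivity of $W$ from Lemma~\ref{lem:connectedness-quantitative} prevents this discarding from ever swallowing a linear-sized chunk of any component.

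First I would set up constants. Apply Lemma~\ref{lem:BJR-decomposition} to $W$ to get components $\Omega_1,\Omega_2,\dots$; since $W$ is nondegenerate, every point outside $\bigcup_{i\ge1}\Omega_i$ has degree $0$, so that exceptional set is null, $\sum_{i\ge1}\mu(\Omega_i)=1$, and I may fix $m=m(W,\epsilon)$ with $\sum_{i>m}\mu(\Omega_i)<\epsilon/4$ and set $c_0:=\min_{i\le m}\mu(\Omega_i)>0$. For each $i\le m$, Lemma~\ref{lem:connectedness-quantitative} applied to the renormalisation of the connected graphon $W|_{\Omega_i\times\Omega_i}$ with parameter $\epsilon/100$ gives $\beta_i>0$; put $\beta:=\min_{i\le m}\beta_i\mu(\Omega_i)^2>0$ and $K:=1+2^{7}\epsilon^{-2}$. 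Then choose $\gamma=\gamma(W,\epsilon,\eta)>0$ and $\xi=\xi(W,\epsilon,\eta)>0$ so small that $\gamma(1+K)+3(m+1)\xi<\min\{\beta,\tfrac1{32}\eta\epsilon c_0\}$ and $\xi<c_0/4$, and take $n_0$ large. Given $G$ with $v(G)>n_0$ and $\delta_\square(G,W)<\xi$, I would use this closeness together with a weak regularity partition of $G$ (to render the correspondence combinatorial) to fix a partition $V(G)=V_1^G\sqcup\dots\sqcup V_m^G\sqcup V_{\mathrm{small}}$ with $\bigl|\,|V_i^G|-\mu(\Omega_i)n\,\bigr|\le\xi n$, and with $\bigl|e(S,T)-n^2\!\int_{A\times B}W\bigr|\le2\xi n^2$ for all $S\subseteq V_i^G$, $T\subseteq V_j^G$ corresponding to $A\subseteq\Omega_i$, $B\subseteq\Omega_j$; in particular $e(V_i^G,V_j^G)\le3\xi n^2$ for $i\ne j$ and $e(V_i^G,V_{\mathrm{small}})\le3\xi n^2$, as distinct components carry no $W$-mass. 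Initialise $V_0:=V_{\mathrm{small}}$.

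Next, for each $i\le m$ I would run the following refinement inside $G[V_i^G]$. Keep a family of active sets, starting from $\{V_i^G\}$; while some active $P$ fails to be a $\gamma$-expander, pick (by Definition~\ref{def:expander}) $U\subseteq P$ with $|U|\le|P|/2$ and $e(U,P\setminus U)<\gamma|U||P\setminus U|$, and either replace $P$ by $U$ and $P\setminus U$ if $|U|\ge\tfrac\epsilon8|V_i^G|$ (a \emph{large split}), or delete $U$ and add it to $V_0$ otherwise (a \emph{small split}). The potential $\sum_{P\text{ active}}(|P|/|V_i^G|)^2\in[0,1]$ drops by at least $2(\epsilon/8)^2$ at each large split, so there are fewer than $K$ of them and never more than $K$ active sets; small splits strictly shrink the active pool; hence the process halts, every active set is a $\gamma$-expander, and the collection of all such sets over all $i$ is my $V_1,\dots,V_k$. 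The key estimate is that this refinement moves at most $\tfrac\epsilon{16}|V_i^G|$ vertices into $V_0$. Indeed, if $Z\subseteq V_i^G$ is the set discarded at the first moment $|Z|$ reaches $\tfrac\epsilon{16}|V_i^G|$, then (a single small split adding $<\tfrac\epsilon8|V_i^G|$) we have $|Z|/|V_i^G|\in[\tfrac\epsilon{16},\tfrac{3\epsilon}{16}]$, and charging each edge between $Z$ and $V_i^G\setminus Z$ to the split that first separated its endpoints, the small splits contribute at most $\gamma|V_i^G|\sum_t|U_t|\le\gamma|V_i^G|^2$ and the fewer than $K$ large splits at most $\gamma|V_i^G|^2/4$ each, so $e(Z,V_i^G\setminus Z)<\gamma(1+K)|V_i^G|^2$; transferring back, the set $A\subseteq\Omega_i$ matching $Z$ has $\mu(A)/\mu(\Omega_i)\in[\epsilon/100,1/2]$ and $\int_{A\times(\Omega_i\setminus A)}W<\gamma(1+K)+2\xi<\beta$, contradicting Lemma~\ref{lem:connectedness-quantitative} for $W|_{\Omega_i\times\Omega_i}$. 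Since each surviving $V_j\subseteq V_i^G$ was born with at least $\tfrac\epsilon8|V_i^G|$ vertices (as $V_i^G$ or a child of a large split) and only lost vertices to small splits, it follows that $|V_j|\ge\tfrac\epsilon{16}|V_i^G|\ge\tfrac{\epsilon c_0}{32}n$.

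The verification of (G1)--(G3), in the sense of Definition~\ref{def:expanderdecomposition}, is then short. (G3) is immediate. For (G1), $|V_0|\le|V_{\mathrm{small}}|+\sum_{i\le m}\tfrac\epsilon{16}|V_i^G|\le(\tfrac\epsilon4+\xi)n+\tfrac\epsilon{16}n<\epsilon n$. For (G2), the same charging argument bounds $e(V_j,V_i^G\setminus V_j)$ by $\gamma(1+K)|V_i^G|^2$, and adding the at most $3(m+1)\xi n^2$ edges from $V_i^G$ to the other parts and to $V_{\mathrm{small}}$ gives $e(V_j,V(G)\setminus V_j)<(\gamma(1+K)+3(m+1)\xi)n^2<\tfrac1{32}\eta\epsilon c_0\,n^2\le\eta|V_j|n$, by the choices of $\gamma,\xi$ and the lower bound on $|V_j|$. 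So $V_0\sqcup V_1\sqcup\dots\sqcup V_k$ is a $(\gamma,\eta,\epsilon)$-expander decomposition. The main obstacle is the discarding estimate in the previous paragraph: quantitative connectivity of each $W|_{\Omega_i\times\Omega_i}$ is precisely what forbids a linear-sized set of vanishing edge boundary, which is what an over-eager sequence of small splits would manufacture, and it is the sole use of nondegeneracy (through $\mu(\Omega_0)=0$, hence finitely many relevant components). The remaining steps are routine but require care: transferring the component partition and all relevant cut statistics from $W$ to $G$ through the cut-distance with $O(\xi n^2)$ errors at each of the boundedly many scales involved, and the bookkeeping of the potential and of the edge-charging.
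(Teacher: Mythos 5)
Your proposal is correct, and it shares the main skeleton with the paper's proof --- apply Lemma~\ref{lem:BJR-decomposition} to split $W$ into connected components, transfer this to a near-partition $V_1^G,\dots,V_m^G$ of $V(G)$ via cut-distance closeness, clean each piece into expanders, and use the quantitative connectivity of each $W\restriction_{\Omega_i\times\Omega_i}$ (Lemma~\ref{lem:connectedness-quantitative}) to rule out a linear-sized discard. The point where you genuinely diverge is the cleaning step. The paper's cleaning is single-track: inside each $V_i'$ it iteratively removes a set $X$ of size at most $\tfrac35|P|$ satisfying the a priori stronger criterion $e(X,P\setminus X)<\gamma|X|n$, sends every removed $X$ into $V_0$, and stops with \emph{one} expander per component; the fact that no such $X$ remains immediately gives (G3), and the bound on the discard (Claim 1 in the paper's proof) is argued directly from the quantitative connectivity by tracking the cumulative $Q_i^j$. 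Your cleaning instead does a genuine sparsest-cut refinement using the exact expander criterion $e(U,P\setminus U)<\gamma|U||P\setminus U|$, keeps both halves of a large split, discards only small pieces, and needs two extra ingredients the paper doesn't: the potential function $\sum_P(|P|/|V_i^G|)^2$ to cap the number of large splits by $K\approx 2^7\epsilon^{-2}$, and a charging argument to convert the split bounds into an estimate on $e(Z,V_i^G\setminus Z)$. In exchange you keep more of each component out of $V_0$ and your stopping condition is literally the $\gamma$-expander definition, which is arguably cleaner; the price is that your $\gamma$ must absorb the $1+K$ factor, hence is smaller by roughly $\epsilon^2$ compared to the paper's (this does not affect the theorem since $\gamma$ is allowed to depend on $W,\epsilon,\eta$). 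Two minor remarks: you can assume $\epsilon<8/3$ without loss of generality so that $3\epsilon/16<1/2$ and Lemma~\ref{lem:connectedness-quantitative} applies, and your transfer step from $W$ to $G$ is sketched more loosely than the paper's appeal to Lemma~\ref{lem:severalcompo}, but the underlying rounding of the measure-preserving bijection to a vertex partition is the same routine content. Overall the argument is sound; it is a valid alternative to the paper's proof rather than an error or a shortcut.
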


For the proof of Theorem~\ref{prop:expanderdecomposition}, we shall need the following result. While we were not able to find an explicit reference, we consider this result folklore. To this end, we need the notion of subgraphons which we introduce now. Suppose that $W:\Omega^2\rightarrow[0,1]$ is a graphon on a probability space $(\mu, \Omega)$. Similarly to the graph case, for a set $\Lambda\subset \Omega$ we have the notion of a {\bf subgraphon of $W$ induced by $\Lambda$}. This is the restricted function $W[\Lambda]:=W_{\restriction\Lambda\times\Lambda}$. In order for $W[\Lambda]$ to be a graphon, we always have to consider it together with the renormalized probability space $(\frac{\mu(\cdot)}{\mu(\Lambda)}, \Lambda)$.
\begin{lem}\label{lem:severalcompo}
	Suppose that $W:\Omega^2\rightarrow [0,1]$ is a nondegenerate graphon. Suppose that we have a partition $\Omega=\Omega^*\sqcup\bigsqcup_{i=1}^k \Omega_i$ such that for each $i\in[k]$ we have that $W$ is zero almost everywhere on $\Omega_i\times (\Omega\setminus \Omega_i)$. Then for every $\lambda>0$ there exists a number $\xi>0$ so that we have the following. If $G$ is an $n$-vertex graph with $\delta_\square(G,W)<\xi$ then there exists a partition $V(G)=\bigsqcup_{i=0}^k V_i$ such that for each $i\in[k]$,
	\begin{enumerate}
		\item[(a)] $|V_0|=\mu(\Omega^*)n\pm \lambda n$, and $|V_i|=\mu(\Omega_i)n\pm \lambda n$,
		\item[(b)] $e_G(V_i,V(G)\setminus V_i)<\lambda n^2$,
		\item[(c)] $\delta_\square(G[V_i],W[\Omega_i])<\lambda$.
	\end{enumerate}
\end{lem}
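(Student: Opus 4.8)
The plan is to argue by contradiction via a compactness/limit argument, which is the most natural route for a ``folklore'' statement of this kind. Suppose the conclusion fails. Then there is some $\lambda>0$ and a sequence of graphs $G_n$ with $v(G_n)\to\infty$ and $\delta_\square(G_n,W)\to 0$, such that for no partition $V(G_n)=\bigsqcup_{i=0}^k V_i$ do (a), (b), (c) all hold. Since $\delta_\square(G_n,W)\to 0$, we may fix, for each $n$, a measure-preserving way of representing $G_n$ so that $\|W_{G_n}-W\|_\square\to 0$; concretely, partition $\Omega=\bigsqcup_{v\in V(G_n)}\Omega^{(n)}_v$ into pieces of measure $1/v(G_n)$ and take $W_{G_n}$ constant on each box $\Omega^{(n)}_u\times\Omega^{(n)}_v$. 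I would then pull back the target partition $\{\Omega^*,\Omega_1,\dots,\Omega_k\}$ through this representation up to a small error: for each $i\in[k]$ and each $v\in V(G_n)$, assign $v$ to $V_i$ if the majority (say more than half the measure) of $\Omega^{(n)}_v$ lies in $\Omega_i$, assign $v$ to $V_0$ if the majority lies in $\Omega^*$, and break ties arbitrarily. By Lebesgue density, the total measure of the set of $x\in\Omega$ whose box is ``split'' shrinks, so $|V_i|=\mu(\Omega_i)n\pm\lambda n$ and $|V_0|=\mu(\Omega^*)n\pm\lambda n$ for $n$ large, giving (a).

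For (b): the key point is that $W$ vanishes a.e.\ on $\Omega_i\times(\Omega\setminus\Omega_i)$, so $\int_{\Omega_i\times(\Omega\setminus\Omega_i)}W=0$. Enlarging $\Omega_i$ and its complement slightly to the unions of boxes assigned to $V_i$ and $V\setminus V_i$ changes this integral by at most the measure of the symmetric-difference region, which is $\Landauo(1)$. Hence $\int_{(\bigcup_{v\in V_i}\Omega^{(n)}_v)\times(\bigcup_{v\notin V_i}\Omega^{(n)}_v)}W=\Landauo(1)$, and since $\|W_{G_n}-W\|_\square\to 0$ controls the same integral for $W_{G_n}$, which equals $e_{G_n}(V_i,V(G_n)\setminus V_i)/v(G_n)^2$, we get $e_{G_n}(V_i,V(G_n)\setminus V_i)<\lambda n^2$ for $n$ large, proving (b). (Here I use the elementary fact that $|\int_{S\times T}(W-W')|\le 2\|W-W'\|_\square$ for arbitrary measurable $S,T$.)

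For (c): I would invoke the standard fact that the cut-distance is well behaved under taking induced subgraphons on a fixed-measure set. Concretely, let $\Lambda_i=\bigcup_{v\in V_i}\Omega^{(n)}_v$, so $\mu(\Lambda_i)\to\mu(\Omega_i)$ and $\mu(\Lambda_i\triangle\Omega_i)\to 0$. One checks directly from the definition of $\|\cdot\|_\square$ that $\delta_\square\big(W[\Lambda_i],W[\Omega_i]\big)\to 0$ (the renormalization factors $\mu(\Lambda_i)^{-2},\mu(\Omega_i)^{-2}$ converge, and the integrands differ only over the vanishing symmetric-difference region, so this is an $\eps$-$\delta$ bookkeeping exercise), and that $\delta_\square\big(W_{G_n}[\Lambda_i],W[\Lambda_i]\big)\le \mu(\Lambda_i)^{-2}\|W_{G_n}-W\|_\square\to 0$. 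Since $W_{G_n}[\Lambda_i]$ is precisely a graphon representation of $G_n[V_i]$ (all boxes have equal measure), we get $\delta_\square(G_n[V_i],W[\Omega_i])\to 0$ by the triangle inequality, hence it is $<\lambda$ for $n$ large. Combining, for all large $n$ the partition $\{V_0,\dots,V_k\}$ satisfies (a), (b) and (c) simultaneously, contradicting the choice of the sequence $\{G_n\}$.

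The main obstacle I anticipate is (c): bounding the cut-distance between $G_n[V_i]$ and $W[\Omega_i]$ requires being careful that the optimal coupling $\varphi$ witnessing $\delta_\square(G_n,W)<\xi$ restricts nicely to $\Lambda_i$ versus $\Omega_i$ — in general a measure-preserving automorphism of $\Omega$ need not map $\Lambda_i$ onto $\Omega_i$. The clean fix is to work not with the abstract $\delta_\square$ but with the explicit ``box'' representation above, where the alignment between $V(G_n)$ and $\Omega$ is built in by construction; then one only pays for the symmetric difference $\Lambda_i\triangle\Omega_i$, which is $\Landauo(1)$ by the density argument, plus the vanishing $\|W_{G_n}-W\|_\square$. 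This reduces everything to elementary estimates, at the cost of having first to upgrade $\delta_\square(G_n,W)\to 0$ to the existence of such an aligned representation, which is routine.
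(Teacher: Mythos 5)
The paper does not actually prove Lemma~\ref{lem:severalcompo} (it is declared folklore and used as a black box), so I can only judge your argument on its own merits. Your overall scheme --- pass to an aligned representation $W_{G_n}=\bigsqcup_v \Omega^{(n)}_v$ with $\|W_{G_n}-W\|_\square\to 0$, assign each vertex by majority vote, then read off (a)--(c) --- is the right one, but the step carrying all the weight is unjustified: the claim that ``by Lebesgue density'' the total measure of split cells is $\Landauo(1)$. To make $\|W_{G_n}-W\|_\square\to0$ you must compose the interval representation with near-optimal measure-preserving bijections, so the cells $\Omega^{(n)}_v$ are \emph{arbitrary} measurable sets of measure $1/n$, not shrinking intervals; the Lebesgue density theorem says nothing about such sets (a cell meeting $\Omega_1$ may place almost all of its mass inside $\Omega_2$). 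If instead you keep interval cells, then it is $W$, and hence the sets $\Omega_i$, that get pulled back by maps varying with $n$, and density again gives nothing uniform. Since (a) rests entirely on this claim, and you also invoke it for (b) and (c) via $\mu(\Lambda_i\triangle\Omega_i)=\Landauo(1)$, this is a genuine gap rather than bookkeeping. A symptom of the gap is that you never use the hypothesis that $W$ is nondegenerate: cut-norm closeness cannot ``see'' where vertices of tiny degree are represented, so without a bound on the number of low-degree vertices nothing prevents linearly many cells from being badly split, and nondegeneracy is exactly what rules this out.

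The gap can be closed, and then the rest of your argument stands. Write $a_v^{(i)}:=n\,\mu(\Omega^{(n)}_v\cap\Omega_i)$ and, for $\delta>0$, let $B_i$ be the set of $v$ with $\min\{a_v^{(i)},1-a_v^{(i)}\}\ge\delta$. Testing the cut-norm on the rectangles $S=\bigcup_{v\in B_i}(\Omega^{(n)}_v\cap\Omega_i)$, $T=\Omega\setminus\Omega_i$ and on the companion pair with the two factors exchanged, and using that $W$ vanishes a.e.\ on $\Omega_i\times(\Omega\setminus\Omega_i)$, gives $\sum_{v\in B_i}\deg_{G_n}(v)\le 2\xi n^2/\delta$; a short pigeonhole argument shows every cell not having a $(1-\delta)$-majority in one of $\Omega^*,\Omega_1,\dots,\Omega_k$ lies in some $B_i$ with $\delta$ replaced by $\delta/(k+1)$. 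Hence split vertices of degree at least $\eps n$ number $\LandauO(k\xi n/(\delta\eps))$, while split vertices of degree below $\eps n$ number at most $(\rho(\eps)+\Landauo(1))n$, where $\rho(\eps)\to0$ as $\eps\to 0$ precisely because $W$ is nondegenerate (continuity of the degree distribution, Lemma~\ref{lem:graphondegrees}). Choosing $\eps$, then $\delta$, then $\xi$ appropriately makes the split set of size $\Landauo(\lambda n)$; since $\sum_v a_v^{(i)}=\mu(\Omega_i)n$, (a) follows, and your derivations of (b) and (c) then go through as written. (For (b) you can even bypass splitness entirely: every $u\in V_i$ has $a_u^{(i)}>1/2$ and every $v\notin V_i$ has $1-a_v^{(i)}\ge 1/2$, so $e(V_i,V\setminus V_i)\le 4n^2\int_{\Omega_i\times(\Omega\setminus\Omega_i)}W_{G_n}\le 4\xi n^2$.) A cosmetic point: the negation of the lemma yields bad graphs $G_m$ with $\delta_\square(G_m,W)<1/m$ but not necessarily of growing order; the bounded-order case forces $\delta_\square(G^*,W)=0$ for a fixed graph $G^*$ and is handled directly, so your compactness framing is fine once this is noted.
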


\begin{proof}[Proof of Theorem~\ref{prop:expanderdecomposition}]
	Suppose that we are given a graphon $W:\Omega^2\rightarrow [0,1]$ and two parameters $\epsilon,\eta>0$.
	By Lemma~\ref{lem:BJR-decomposition}, there exists a partition $\Omega=\bigsqcup_{i=1}^K\Omega_i$ (with $K\le\infty$) into components of $W$. Let $k\in\mathbb N$ be such that $\mu\left(\bigcup_{i=0}^k\Omega_i\right)\ge 1-\frac\epsilon4$. Set $\Omega^*:=\bigcup_{i=k+1}^K\Omega_i$. Let $\alpha=\min \left\{ \frac{\epsilon}{6k}, \min\limits_{i\in[k]}\frac{\mu(\Omega_i)}{40}\right\}$. Lemma \ref{lem:connectedness-quantitative} shows the existence of a positive constant $\beta=\beta(W,\alpha)$ such that
	\begin{equation}\label{eq:graphon-connectedness}
	\iint\limits_{A\times (\Omega_i\setminus A)}W \ge \beta\ \text{for all $i\in [k]$ and all $A \subset \Omega_i$ with $\alpha \mu(\Omega_i) \le \mu(A)\le \mu(\Omega_i)/2$}.
	\end{equation}
	
	Let $\gamma,\xi$ and $\lambda$ satisfy
	\begin{equation}
	\label{eq:JaHl}
	\gamma=\min\left\{\frac{\mu(\Omega_i)\eta}{48},\frac{\mu(\Omega_i)\beta}{500}\right\} \quad \mbox{and} \quad 0<\xi\ll\lambda\ll \min\left\{\beta,\min_{i\in [k]}\mu(\Omega_i)\eta\right\}\;.
	\end{equation}
	Suppose that $G$ is a graph given at the input of the proposition.
	
	Let $V(G)=V'_0\sqcup V'_1\sqcup\ldots\sqcup V'_k$ be a partition satisfying properties of Lemma~\ref{lem:severalcompo} for the graphon $W$ and its partition $\Omega=\Omega^*\sqcup\bigsqcup_{i=1}^k\Omega_i$, together with input error parameter $\xi$ and output error parameter~$\lambda$.
	We will modify this partition to obtain a $(\gamma,\eta,\eps)$-expander decomposition of $G$.
	
	Lemma~\ref{lem:severalcompo}(a) gives that
	\begin{equation}\label{eq:frpole}
	|V'_0|\le \tfrac12 \epsilon n.
	\end{equation}
	
	For each $i\in[k]$, we perform the following cleaning procedure. Let $U:\Omega_i^2\rightarrow [0,1]$ be a graphon representation of $G[V_i']$ on the (renormalized) probability space $\Omega_i$ such that we have $\|W[\Omega_i]-U\|_\square <\lambda$. Such a representation exists by Lemma~\ref{lem:severalcompo}(c).
	
	Let $P_i^0:=V_i'$ and $Q_i^0:=\emptyset$.
	Now, for $j=1,2,3,\ldots$ we proceed as follows. If there exists at least one set $X_i^j\subset P_i^{j-1}$ of size at most $\frac35|P_i^{j-1}|$ with $e(X_i^j,P_i^{j-1}\setminus X_i^j)< \gamma |X_i^j|n$, then we take this set, and let $Q_i^j:=X_i^1\cup X_i^2\cup \ldots \cup X_i^j$, $P_i^j:=V_i'\setminus Q_i^j$, and proceed with $j+1$. If no set $X_i^j$ exists, then we set $j(i):=j-1$, $V_i:=P_i^{j(i)}$, and terminate. Since the sets $X_i^j$ ($j=1,2,\ldots, j(i)-1$) are nonempty, we will stop eventually.
	
	For every $i\in [k]$ and every $j\in [j(i)]$, we have
	\begin{align}
		\label{eq:fS1}
		e(Q_i^{j},P_i^j)=\sum_{\ell=1}^je(X^{\ell}_i,P_i^j) \le \sum_{\ell=1}^je(X^{\ell}_i,P^{\ell-1}_i\setminus X^{\ell}_i) \le \gamma n \sum_{\ell=1}^j |X^{\ell}_i|=\gamma |Q_i^j|n.
	\end{align}
	
	\begin{claim26}\label{cl:sx} For each $i\in[k]$,
		\begin{align}	
			\label{eq:fS2}
			|V_i'\setminus V_i|&\le 3\alpha n.
		\end{align}
	\end{claim26}
	\begin{claimproof}
		Suppose to the contrary that $|V_i'\setminus V_i|\ge 3\alpha n$. Let $j\in\{0,1,2,\ldots,j(i)\}$ be the largest index for which
		\begin{equation}\label{eq:bd}
		|P_i^{j}|\ge \tfrac14|V_i'|.
		\end{equation}
		Now, there are two cases to consider. Either $|P_i^{j+1}|<\frac14|V_i'|$ and then we have $|P_i^j|\le (\frac14+\frac35)|V_i'|$ by the way we chose the set $X_i^{j+1}$. Another case is that $j=j(i)$, that is, we terminated in the step $j$. Then, by our assumption, $|V_i'\setminus P_i^j|=|V_i'\setminus V_i|\ge 3\alpha n$. Put together,
		\begin{equation}\label{eq:wtf}
		|Q^j_i|=|V_i'\setminus P_i^{j}|\ge \min\left\{(1-\tfrac14-\tfrac35)|V_i'|,3\alpha n\right\}= 3\alpha n,
		\end{equation}
		as $|V'_i| \ge \tfrac12 \mu(\Omega_i)n$ (by Lemma~\ref{lem:severalcompo}(a)) and $\alpha \le \tfrac{1}{40}\mu(\Omega_i)$ (by~\eqref{eq:JaHl}).
		
		We learn from \eqref{eq:fS1} that
		\begin{align}\label{eq:fysins}
			e(Q_i^{j},P_i^{j})& \le \gamma |Q_i^j|n\leByRef{eq:bd} 4\gamma |Q_i^j||P_i^j|\cdot\frac{n}{|V_i'|}\le 5\gamma |Q_i^j||P_i^j|\cdot\frac{1}{\mu(\Omega_i)}\lByRef{eq:JaHl}\tfrac{\beta}{100} |Q_i^j||P_i^j|.
		\end{align}

		Let $\Lambda\subset \Omega_i$ represent the vertices of $P_i^j$. We have $\mu(\Lambda)\ge \frac{1}{4}\mu(\Omega_i)-\lambda\ge \frac{1}{5}\mu(\Omega_i)$, due to Lemma~\ref{lem:severalcompo}(a) and~\eqref{eq:bd}. Similarly,~\eqref{eq:wtf} gives $\mu(\Omega_i\setminus \Lambda)\ge \alpha\mu(\Omega_i)$. Thus, \eqref{eq:graphon-connectedness} applies. We have
		\begin{equation*}
			\int_{\Lambda}\int_{\Omega_i\setminus \Lambda} U\ge
			\int_{\Lambda}\int_{\Omega_i\setminus \Lambda} W-\|W[\Omega_i]-U\|_\square
			\geByRef{eq:graphon-connectedness}\beta-\lambda\;,
		\end{equation*}
		which contradicts~\eqref{eq:fysins}.
	\end{claimproof}
	
	We have defined the sets $V_1,V_2,\ldots, V_k$. Set $V_0:=V_0'\cup \bigcup_{i=1}^k (V_i'\setminus V_i)$. Let us now check that $V(G)=V_0\sqcup V_1\sqcup \ldots \sqcup V_k$ is indeed a desired expander decomposition.
	
	As for property (G1), we have
	\[
	|V_0|=|V_0'|+ \sum_{i=1}^{k}|V_i'\setminus V_i|\leBy{\eqref{eq:frpole}, Cl\ref{cl:sx}}\tfrac12 \epsilon n+3\alpha kn\le \epsilon n.
	\]
	
	For (G2), we first notice that
	\[
	|V_i|\geBy{\eqref{eq:fS2}} |V'_i|-3\alpha n \ge (\mu(\Omega_i)-\lambda)n-3\alpha n \ge \tfrac12 \mu(\Omega_i)n,
	\]
	as $\alpha \le \tfrac{1}{12}\mu(\Omega_i)$. Thus we find, as required,
	\begin{align*}
		e(V_i,V\setminus V_i) \le e(V'_i,V\setminus V'_i)+e(V'_i\setminus V_i,V_i) \leByRef{eq:fS1} \lambda n^2 +4\gamma|V'_i\setminus V_i|n \leBy{Cl\ref{cl:sx}} \lambda n^2+12\gamma \alpha n^2 \le \eta |V_i|n,
	\end{align*}
	where the last inequality holds since $|V_i|\ge \tfrac12\mu(\Omega_i)n$, $\lambda \ll \mu(\Omega_i)\eta$, and $\gamma \le \frac{\mu(\Omega_i)\eta}{48}$.
	
	Finally, property (G3) follows immediately from the stopping condition.
	This completes our proof of Theorem \ref{prop:expanderdecomposition}.
\end{proof}

\subsection{Properties of the expander decomposition}

For the proof of Theorem~\ref{thm:mainthm2} we argue that the majority of vertices do not ``see'' much beyond the component in the expander decomposition they belong to. This is formalized in the following definitions.
 
\begin{defn} \label{def:good}
	Suppose that $G$ is a loopless multigraph of order $n$. Assume that $V(G)=V_0\sqcup V_1\sqcup\ldots\sqcup V_k$ is some expander decomposition of $G$. For a vertex $v$ we write $i(v)$ for the unique $i\in\{0,1,\ldots,k\}$ such that $v \in V_i$. Given $\alpha>0$ and $\eps>0$ we say that $v\in V\setminus V_0$ is $(\alpha,\eps)$-{\bf good} with respect to the decomposition if the following hold:
	\begin{enumerate}[label=(\alph*)]
		\item\label{en:firstJohn} $\deg(v) \geq \LandauOmega(\eps n)$,
		\item\label{en:secondJohn} $\deg(v; V_{i(v)}) \geq (1-\LandauO(\eps^2)) \deg(v)$,
		\item\label{en:thirdJohn} $\displaystyle \sum_{u \in V_{i(v)}, u \sim v} \big ( {1 \over \deg(u; V_{i(v)})} - {1 \over \deg(u)} \big ) \leq \LandauO(\alpha^{1/2})$,
		\item\label{en:fourthJohn} $\displaystyle \sum_{u \in V_{i(v)}, u \sim v}  {1 \over \deg(u; V_{i(v)})} \leq \LandauO(\alpha^{-1/4})$.
	\end{enumerate}
\end{defn}

\begin{defn}\label{def:decomp}
	Suppose that $G$ is a loopless multigraph of order $n$. Given numbers $\beta, \alpha, \gamma>0$ and $\eps\in(0,\alpha)$, we say that $G$ has an $(\beta, \alpha,\gamma,\eps)$-{\bf good-decomposition} if
	\begin{enumerate}[label=(\arabic*)]
		\item\label{en:arab1} $G$ admits a $(\gamma, \eps^5, \eps^5)$-expander decomposition $V(G)=V_0\sqcup V_1\sqcup\ldots\sqcup V_k$, and
		\item\label{en:arab2} At least $(1-\LandauO(\alpha^{1/4}))n$ vertices of $G$ are $(\alpha,\eps)$-good.
        \item\label{e:arab3} At least $(1-\LandauO(\beta))n$ vertices of $G$ have degree at least $\LandauOmega(\alpha^{1/10} n)$.
	\end{enumerate}
\end{defn}

Next we refine Theorem~\ref{prop:expanderdecomposition}.

\begin{lem} \label{lem:furtherdecomp} For any $\beta>0$ and any nondegenerate graphon $W:\Omega^2\rightarrow[0,1]$, there exist $\alpha,\eps, \gamma,\xi>0$ with $\beta \gg \alpha \gg \eps \gg \gamma \gg \xi$ such that if $G$ is a simple graph on $n \geq \xi^{-1}$ vertices with $d_\square(G,W)\leq \xi$, then $G$ has a $(\beta, \alpha, \eps, \gamma)$-good-decomposition.
\end{lem}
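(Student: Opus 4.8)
The plan is to feed $G$ into the expander decomposition of Theorem~\ref{prop:expanderdecomposition} and then verify conditions (a)--(d) of Definition~\ref{def:good} for all but an $\LandauO(\alpha^{1/4})$-fraction of the vertices, after fixing the parameters in the right order. First I would fix the hierarchy. Nondegeneracy of $W$ means exactly that $\mu(\{\omega:\deg_W(\omega)<t\})\to0$ as $t\downarrow0$, so I may choose $\alpha\ll\beta$ (with $\alpha<1$) so that $\mu(\{\deg_W<\alpha^{1/10}\})<\beta/2$, then an auxiliary threshold $\delta=\delta(\alpha,W)\ll\alpha$ with $\mu(\{\deg_W<\delta\})<\alpha^{3/4}$, then $\eps=\eps(\delta,W)\ll\delta$; finally I apply Theorem~\ref{prop:expanderdecomposition} with error parameters $\eps^5,\eps^5$, obtain the constants it provides, replace its expansion constant by its minimum with $\eps/2$ (which only weakens (G3)) to get $\gamma$, and take $\xi\ll\gamma$ small enough also for the degree-transfer estimate below and so that $\xi^{-1}$ exceeds the order threshold of Theorem~\ref{prop:expanderdecomposition}. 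For $G$ as in the statement this yields an expander decomposition $V(G)=V_0\sqcup V_1\sqcup\dots\sqcup V_k$ satisfying~\ref{en:arab1}, with $|V_0|\le\eps^5n$ and $e_i:=e(V_i,V\setminus V_i)\le\eps^5|V_i|n$, hence $\sum_i e_i\le\eps^5n^2$.

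Conditions~\ref{e:arab3}, (a) and (b) I would obtain from crude counting. By the standard fact that the cut-distance transmits the degree distribution (see Lemma~\ref{lem:graphondegrees}), for $\xi$ small at most $\beta n$ vertices have $\deg(v)<\alpha^{1/10}n$, which is~\ref{e:arab3}, and at most $\alpha^{3/4}n$ have $\deg(v)<\delta n$; since $\eps<\delta$ the latter already yields (a) outside $\le\alpha^{1/4}n$ vertices. For (b) note that $\#\{v\in V\setminus V_0:\deg(v,V\setminus V_{i(v)})>\eps^4n\}\le\sum_i e_i/(\eps^4n)\le\eps n$, and any $v$ outside this set satisfying (a) has $\deg(v;V_{i(v)})\ge\deg(v)-\eps^4n\ge(1-\LandauO(\eps^2))\deg(v)$. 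Conveniently, this same escaping set together with the (a)-violators also absorbs every vertex lying in a part $V_i$ with $|V_i|\le\eps n/2$: such a vertex has $\deg(v;V_i)<\eps n/2$, so if it is also outside the escaping set then $\deg(v)<\eps n$ and it fails (a).

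The core of the argument is conditions (c) and (d), which I would deduce from two exact double-counting identities inside each part $V_i$: since $G$ is simple, the number of neighbours of $u\in V_i$ inside $V_i$ equals $\deg(u;V_i)$, whence $\sum_{v\in V_i}\sum_{u\sim v,\,u\in V_i}\frac{1}{\deg(u;V_i)}\le|V_i|$ and $\sum_{v\in V_i}\sum_{u\sim v,\,u\in V_i}\big(\frac{1}{\deg(u;V_i)}-\frac{1}{\deg(u)}\big)=\sum_{u\in V_i}\frac{\deg(u,V\setminus V_i)}{\deg(u)}\le\frac{e_i}{\delta n}+\ell_i$, where $\ell_i:=\#\{u\in V_i:\deg(u)<\delta n\}$. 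Thus the average over $v\in V_i$ of the expression in (d) is $\le1$ and of the expression in (c) is $\le\eps^5/\delta+\ell_i/|V_i|$; by Markov's inequality at most $\alpha^{1/4}|V_i|$ vertices of $V_i$ violate (d) at level $\alpha^{-1/4}$, and at most $\alpha^{-1/2}(\eps^5|V_i|/\delta+\ell_i)$ violate (c) at level $\alpha^{1/2}$. Summing over $i$ and using $\sum_i\ell_i\le\alpha^{3/4}n$ and $\eps^5/\delta\le\alpha^{3/4}$ (true once $\eps$ is small) contributes $\LandauO(\alpha^{1/4})n$ further exceptional vertices. The union of all exceptional sets — $V_0$, the low-degree and escaping sets, and the (c)/(d)-violators — has size $\LandauO(\alpha^{1/4})n$, which is~\ref{en:arab2}; together with~\ref{en:arab1} and~\ref{e:arab3} this is a $(\beta,\alpha,\eps,\gamma)$-good-decomposition.

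The main obstacle is not conceptual but lies in the order of quantifiers. Because nondegeneracy only provides $\deg_W>0$ almost everywhere and not a uniform positive lower bound, one cannot simply say ``most vertices have linear degree''; instead the thresholds $\delta,\eps$ and the expander parameters $\gamma,\xi$ must all be introduced \emph{after} $\alpha$ (and $\alpha$ after $\beta$), so that each of the many small exceptional sets is controlled against the \emph{fixed} number $\alpha^{1/4}$ rather than merely being $\Landauo(n)$. Once the chain $\beta\gg\alpha\gg\delta\gg\eps\gg\gamma\gg\xi$ is in place, conditions (c) and (d) collapse to the elementary identities above, and (a), (b), \ref{e:arab3} follow from the degree-transfer fact together with property (G2); note that the expansion property (G3) is not needed in verifying goodness, being inherited directly from Theorem~\ref{prop:expanderdecomposition} and used only later.
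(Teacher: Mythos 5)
Your proof is correct and follows essentially the same route as the paper: apply Theorem~\ref{prop:expanderdecomposition} with parameters $(\eps^5,\eps^5)$, invoke degree-transfer (Lemma~\ref{lem:graphondegrees}) to discharge condition~\ref{e:arab3} and the low-degree violators of (a), use property (G2) summed over $i$ to bound (b), and obtain (c), (d) from exactly the same two per-part double-counting identities followed by Markov. The only cosmetic differences are that you introduce an intermediate degree threshold $\delta$ between $\alpha$ and $\eps$ (the paper folds this into the choice of $\eps$ directly) and you use an additive cutoff $\eps^4 n$ on $\deg(v;V\setminus V_{i(v)})$ where the paper uses the multiplicative cutoff $\eps^2\deg(v)$; both are equivalent in the presence of (a).
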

\begin{proof}
	Let $\beta$ and $W$ be given. Since $W$ is nondegenerate, there exists  $\alpha>0$ such that any $m$-vertex graph ($m$ is arbitrary) that is $\xi_1$-close (for $\xi_1>0$ sufficiently small) to $W$ has at least $(1-\beta)m$ of degrees at least $\alpha^{1/10} m$, so requirement~\ref{e:arab3} of Definition~\ref{def:decomp} holds. Similarly, we can find constants $\epsilon \in (0,\alpha^{20})$ and $\xi_2>0$  such that any $m$-vertex graph ($m$ is arbitrary) which is $\xi_2$-close to $W$ has at most $\alpha m$ vertices of degrees at most $\epsilon m$.
	We apply Theorem~\ref{prop:expanderdecomposition}  with input $\eps^5$ and $\eta=\eps^5$ and retrieve $\gamma>0$ and $\xi_3>0$. We set $\xi:=\min(\xi_1,\xi_2,\xi_3)$. Suppose now that $G=(V,E)$ is a graph satisfying the assumptions of the lemma. Theorem~\ref{prop:expanderdecomposition} readily gives item~\ref{en:arab1} of Definition~\ref{def:decomp}.
	
	To show item~\ref{en:arab2}, we first note that by property (G2) of the expander decomposition we have that $e(V_i, V \setminus V_i) \leq \eps^5 n |V_i|$ for all $i\in [k]$. By summing over $i$ we deduce that
    \begin{equation}\label{eq:outedges} \sum_{i=1}^k e(V_i,V \setminus V_i) \leq \eps^5 n^2 \, .\end{equation}
	
	Denote by $S$ the set of vertices of $V\setminus V_0$ violating~\ref{en:secondJohn} of Definition~\ref{def:good} using~$1$ as the implicit constant in the term $\LandauO(\epsilon^2)$,\footnote{Note that later, in the proof of Lemma~\ref{lem:congTpurequenched}, we shall be forced to use larger implicit constants in~\ref{en:secondJohn} of Definition~\ref{def:good}.} that is,
	$$ S = \big \{ v \in V\setminus V_0 : \deg(v; V\setminus V_{i(v)}) \geq \eps^2 \deg(v) \big \} \, .$$
	Then by \eqref{eq:outedges} we have that
	$$ \eps^5 n^2 \geq \sum_{v \in S} \deg(v; V \setminus V_{i(u)}) \geq \eps^2 \sum_{v\in S} \deg(v) \, .$$
	Therefore $\sum _{v \in S} \deg(v) \leq \eps^3 n^2$, from which we learn that
	\begin{equation}\label{eq:boundS} |S \cap \{v : \deg(v) \geq \eps n \}| \leq  \eps^2 n \, .\end{equation}
	We deduce that
	\begin{equation}\label{eq:boundAB} \big | \big \{ v \in V \setminus V_0 : \deg(v) \leq \eps n \mathrm{\ or\ }\deg(v; V\setminus V_{i(v)}) \geq \eps^2 \deg(v) \big \} \big | \leq (\alpha + \eps^2)n \, .
	\end{equation}
	
	Next, for $i\in [k]$ we write
	$$ \sum_{v \in V_i} \sum_{u \in V_{i}, u \sim v} \big ( {1\over \deg(u; V_{i})} - {1 \over \deg(u)} \big ) = |V_{i}| - \sum_{u\in V_i}{\deg(u; V_i) \over \deg(u)} = \sum_{u \in V_i} {\deg(u; V\setminus V_i) \over \deg(u)} \, .$$
	We sum this over $i\in[k]$ and get that
	\begin{eqnarray*} \sum_{v \in V\setminus V_0} \sum_{u \in V_{i(v)}, u \sim v} \big ( {1\over \deg(u; V_{i})} - {1 \over \deg(u)} \big ) = \sum_{u \in V\setminus V_0} {\deg(u; V\setminus V_{i(u)}) \over \deg(u)} \leq (\alpha + \eps^2)n + \eps^2 n = \LandauO(\alpha n) \, ,\end{eqnarray*}
	where we bounded the ratio by $1$ for those vertices counted in~\eqref{eq:boundAB}, and by $\eps^2$ for the vertices that were not. From the last inequality we deduce that there cannot be more than $\LandauOmega(\alpha^{1/2} n)$ vertices $v\in V\setminus V_0$ such that requirement~\ref{en:thirdJohn} in the definition of $(\alpha,\eps)$-good is not satisfied.
	
	Lastly, to show~\ref{en:fourthJohn}, we have that
	$$ \sum_{i=1}^k \sum_{v\in V_i} \sum_{u\in V_i: u \sim v} {1 \over \deg(u; V_i)} = \sum_{i=1}^k |V_i| \leq n \, ,$$
	therefore there cannot be more than $\LandauOmega(\alpha^{1/4} n)$ vertices $v\in V\setminus V_0$ such that (d) is violated. This concludes our proof.
\end{proof}

Part~\ref{en:arab2} of Definition~\ref{def:decomp} asserts that there are many $(\alpha,\eps)$-good vertices in the graph, yet there could still be components of the decomposition $V_i$ in which the majority of their vertices are not $(\alpha,\eps)$-good. These cannot occupy too much of the mass. Indeed, for some $i\in [k]$, we say the set $V_i$ is $(\alpha,\eps)${\bf -big} if at least $(1-\LandauO(\alpha^{1/8}))|V_i|$ of its vertices are $(\alpha,\eps)$-good, and $e(G[V_i])\geq \LandauOmega(\alpha^{1/9}|V_i|n)$.

\begin{proposition}\label{prop:goodvis} Suppose that $G$ is a graph with $n$ vertices that has a $(\beta, \alpha,\gamma,\eps)$-good-decomposition (as in Definition~\ref{def:decomp}). Then
	$$ \sum_{i\in[k]: V_i \mathrm{\ is \ } (\alpha,\eps)\mathrm{-big}} |V_i| \geq (1-\LandauO(\beta^{1/8})) n \, . $$
\end{proposition}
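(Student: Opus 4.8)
The plan is to show that $V_0$ together with the union of all components $V_i$ ($i\in[k]$) that are \emph{not} $(\alpha,\eps)$-big occupies at most $\LandauO(\beta^{1/8})n$ vertices, and then subtract from $n$. A component $V_i$ with $i\in[k]$ can fail to be $(\alpha,\eps)$-big for one of two reasons, and I would split the non-big components accordingly: either $V_i$ contains more than $\LandauOmega(\alpha^{1/8})|V_i|$ vertices that are not $(\alpha,\eps)$-good (call such a $V_i$ \emph{dirty}), or $e(G[V_i])<\LandauOmega(\alpha^{1/9})|V_i|n$ (call such a $V_i$ \emph{sparse}); by de Morgan every non-big $V_i$ is dirty or sparse. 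Throughout one uses that the parameters are small and ordered $\beta\gg\alpha\gg\eps$, which is the only regime in which the proposition is ever invoked (cf.\ Lemma~\ref{lem:furtherdecomp}); outside it the claimed bound is vacuous.

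For the dirty components I would use a one-line double count: item~\ref{en:arab2} of Definition~\ref{def:decomp} caps the number of vertices of $V\setminus V_0$ that are not $(\alpha,\eps)$-good at $\LandauO(\alpha^{1/4})n$, while each dirty $V_i$ has $|V_i|\le\LandauO(\alpha^{-1/8})$ times the number of non-good vertices it contains, so summing over the disjoint dirty components gives $\sum_{i\ \mathrm{dirty}}|V_i|\le\LandauO(\alpha^{-1/8})\cdot\LandauO(\alpha^{1/4})n=\LandauO(\alpha^{1/8})n$. For the sparse components I would combine the edge deficit with property~(G2) of the expander decomposition, which holds with $\eta=\eps^5$ by item~\ref{en:arab1}: from the identity $\sum_{v\in V_i}\deg(v)=2e(G[V_i])+e(V_i,V\setminus V_i)$, the first summand is $\LandauO(\alpha^{1/9})|V_i|n$ by sparsity and the second is $\eps^5|V_i|n\le\LandauO(\alpha^{1/9})|V_i|n$ since $\eps^5\ll\alpha^{1/9}$, so the average degree inside a sparse $V_i$ is $\LandauO(\alpha^{1/9})n$. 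By Markov's inequality at least $\tfrac12|V_i|$ vertices of $V_i$ have degree below $\LandauOmega(\alpha^{1/10})n$ --- here one uses $\alpha^{1/9}\ll\alpha^{1/10}$, valid for $\alpha$ small since $\alpha<1$ --- hence violate item~\ref{e:arab3} of Definition~\ref{def:decomp}; as item~\ref{e:arab3} permits only $\LandauO(\beta)n$ such low-degree vertices in all of $G$ and the $V_i$ are disjoint, $\sum_{i\ \mathrm{sparse}}|V_i|\le 2\LandauO(\beta)n=\LandauO(\beta)n$.

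Combining these with $|V_0|\le\eps^5n$ from property~(G1), the total size of $V_0$ and all non-big components is at most $\eps^5n+\LandauO(\alpha^{1/8})n+\LandauO(\beta)n$, which is $\LandauO(\beta^{1/8})n$ because $\eps\le\alpha\le\beta<1$ forces $\eps^5\le\beta^{1/8}$, $\alpha^{1/8}\le\beta^{1/8}$ and $\beta\le\beta^{1/8}$; subtracting from $n$ yields $\sum_{i:\ V_i\ (\alpha,\eps)\text{-big}}|V_i|\ge(1-\LandauO(\beta^{1/8}))n$. I expect no genuine obstacle: the two cases are each elementary, and the only thing needing care is the bookkeeping of the exponents ($\eps^5\ll\alpha^{1/9}\ll\alpha^{1/10}\ll 1$ and $\alpha^{1/8},\beta\le\beta^{1/8}$) together with the harmless requirement that $\alpha$ be small enough for the Markov step, which is automatic in the regime where the proposition is applied.
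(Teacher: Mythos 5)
Your proof is correct and follows essentially the same route as the paper: split the non-big components into those with too many non-good vertices (bounded via item~\ref{en:arab2} of Definition~\ref{def:decomp}) and those with too few internal edges (bounded via (G2) and the degree condition in item~\ref{e:arab3}), then add $|V_0|$. The only difference is cosmetic --- you handle the sparse components with a per-component Markov argument counting low-degree vertices, while the paper derives a contradiction from the aggregate union $V'=\bigcup_{i\in I_2}V_i$; both rest on the same comparison $\alpha^{1/9}\ll\alpha^{1/10}$ and carry the same implicit-constant caveats as the paper itself.
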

\begin{proof}
Let $I_1$ be the indices $i\in[k]$ such that $V_i$ has $\LandauOmega(\alpha^{1/8} n)$ vertices that are not $(\alpha,\eps)$-good.  Since this is a $(\beta, \alpha,\gamma,\eps)$-good-decomposition we have that the total number of not $(\alpha,\eps)$-good vertices is $\LandauO(\alpha^{1/4} n)$. Hence,
$$ \sum_{i\in I_1} |V_i| \leq \LandauO(\alpha^{1/8} n) = \LandauO(\beta^{1/8} n) \, .$$
Next, let $I_2$ be the indices $i \in [k]$ such that $e(G[V_i]) = O(\alpha^{1/9}|V_i|n)$. Put $V' = \bigcup_{i\in I_2}V_i$. We have
$$ \sum_{v \in V'} \deg(v) \leq 2 \sum_{i\in I_2} e(G[V_i]) + \sum_{i \in I_2} e(V_i, V \setminus V_i) \leq O(\alpha^{1/9}|V'| n) + \eps^5 |V'|n = O(\alpha^{1/9}|V'| n) \, .$$

If $|V'| = \LandauOmega(\beta^{1/8} n)$, then by property~\ref{e:arab3} in Definition \ref{def:decomp}, we may bound $\sum_{v \in V'} \deg(v)$ from below by $\LandauOmega(\alpha^{1/10} |V'| n)$, giving a contradiction to the last estimate. The proof is concluded since $\sum_{i\in[k]} |V_i| \geq (1-\eps^5)n$.
\end{proof}

\section{Local neighborhoods of the UST via electric networks}\label{sec:localust}
\newcommand{\is}{\mathrm{\ is \ }}
\newcommand{\are}{\mathrm{\ are \ }}
\newcommand{\andd}{\mathrm{\ and \ }}

\subsection{Preliminaries}
Suppose that $G=(V,E)$ is a loopless multigraph. We denote by $\{X_t\}_{t \geq 0}$ the simple random walk starting from some (possibly random) vertex $X_0$. That is, $\{X_t\}_{t \geq 0}$ is a Markov chain with state space $V$ and transition matrix $p(x,y) = \frac{e(\{x\},\{y\})}{\deg(x)}$. We denote by $\PROB_v$ the probability measure of the simple random walk started at a vertex $v$. We will frequently use two stopping times: the hitting time of a vertex $v$ is the random variable $\tau_v:=\min\{t \geq 0:X(t)=u\}$ and the hitting time after zero of a vertex $v$ is the random variable $\tau_v^+:=\min\{t>0:X(t)=v\}$. Clearly when $X_0 \neq v$ these two random times are equal.

\subsubsection{Effective resistance.}\label{sec:effres} Our analysis relies on the relation between random walks, USTs and the theory of electrical networks. We briefly recall here the basic theory we will use and refer the reader to \cite[Chapter 2]{LyPe:ProbabilityTrees} for a comprehensive study. Given a graph $G=(V,E)$ we write $\overrightarrow{E}$ for the set of directed edges of size $2|E|$ which contain each edge of $E$ in both direction. Given two distinct vertices $u,v$ we say that an  antisymmetric function $f:\overrightarrow{E}\rightarrow \bR$ is a {\bf flow} from $u$ to $v$ if for each vertex $w\notin\{u,v\}$ the sum of $f$ over edges outgoing from $w$ is zero. A flow is called {\bf unit} if the sum of $f$ over edges outgoing from $u$ is~$1$. The {\bf effective resistance} $\RESISTANCE(u \lr v; G)$ between $u$ and $v$ is defined as the minimum energy $\mathcal{E}(f) = \sum_{e \in E} f(e)^2$ of any unit flow $f$ from $u$ to $v$. If $u$ and $v$ are not in the same connected component, we define effective resistance between them to be $\infty$. When it is clear what the underlying graph $G$ is we simply write $\RESISTANCE(u \lr v)$. From this definition it is immediate that if $G'$ is a subgraph of $G$, then
\begin{equation}
\label{eq:Rayleigh}
\RESISTANCE(u  \leftrightarrow v; G) \le \RESISTANCE(u \leftrightarrow v; G')\; .
\end{equation}
The latter inequality is also known as Rayleigh's monotonicity law. The \emph{discrete Dirichlet's principle} gives a dual definition of the effective resistance in terms of functions on the vertices. It states that
\begin{equation}\label{eq:dirichlet} \RESISTANCE(u\lr v) ^{-1} = \inf \left\{ \sum_{(x,y) \in E} (h(x) - h(y))^2 : h:V\to\mathbb{R}\, , \,  h(u)=0\, , \, h(v)=1 \right\} \, ,\end{equation}
see~\cite[Exercise 2.13]{LyPe:ProbabilityTrees}.
We will also a basic probabilistic interpretation of the effective resistance which can be found in \cite[Chapter 2]{LyPe:ProbabilityTrees}:
\begin{equation}\label{eq:revisitconductance}
\PROB_u\left[\tau_v<\tau_u^+\right] = \frac{1}{\deg(u) \RESISTANCE(u  \leftrightarrow v)} \, .
\end{equation}

\subsubsection{Uniform spanning trees} There is a fundamental connection between the uniform spanning tree and electric networks due to Kirchhoff \cite{Kirchhoff}. Let $G$ be connected loopless multigraph and $e=xy$ be an edge of the graph. As before we denote by $\T$ a UST of $G$. Kirchhoff's formula~\cite{Kirchhoff} (see also~\cite[Chapter 4]{LyPe:ProbabilityTrees}) states that for any edge $e=(x,y)$ of $G$ we have
\begin{equation}\label{eq:kirchh}
\PROB( e \in \T) = \RESISTANCE(x \leftrightarrow y) \, .
\end{equation}
Let $S$ be a subset of $E(G)$. We would like to condition on events of the form $S \subset \T$ or $S \cap \T = \emptyset$. We denote by $G/S$ the loopless multigraph obtained from $G$ by contracting the edges of $S$  and erasing any loops that has been formed, and by $G-S$ the graph $G$ with the edges of $S$ erased. The following is an easy and classical observation, see \cite[Chapter 4]{LyPe:ProbabilityTrees}.

\begin{proposition} \label{prop:ustcond} Let $G$ be a connected loopless multigraph and $S$ a subset of edges of $G$.
	\begin{enumerate}
		\item If $G-S$ is connected, then the UST $\T$ of $G$ conditioned on $S \cap \T = \emptyset$ has the distribution of the UST on $G - S$.
		\item If $S$ does not contain a cycle, then the UST $\T$ of $G$ conditioned on $S \subset \T$ has the distribution of $S \cup \T_{G/S}$ where $T_{G/S}$ is a UST of $G/S$.
	\end{enumerate}
\end{proposition}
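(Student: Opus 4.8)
The plan is to deduce both parts from two trivial facts: that, by definition, a UST $\T$ of $G$ is uniformly distributed over the finite set $\mathcal{S}(G)$ of all spanning trees of $G$; and that a bijection between two finite sets transports the uniform distribution on one to the uniform distribution on the other. In each part I will identify the conditioned family of spanning trees of $G$ with $\mathcal{S}(G')$ for the appropriate modified graph $G'$ (equal to $G-S$ in part~(1), to $G/S$ in part~(2)) via an explicit bijection, and then observe that the conditional law of $\T$ given the event in question is, by definition, the uniform law on that conditioned family.

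For part~(1): a spanning tree $T$ of $G$ satisfies $S\cap T=\emptyset$ if and only if $T$ uses only edges of $G-S$, i.e.\ if and only if $T\in\mathcal{S}(G-S)$. The hypothesis that $G-S$ is connected guarantees $\mathcal{S}(G-S)\neq\emptyset$, so $\PROB(S\cap\T=\emptyset)>0$ and the conditioning is legitimate; moreover $\{T\in\mathcal{S}(G):S\cap T=\emptyset\}$ equals $\mathcal{S}(G-S)$ as a set of edge-subsets. Conditioning the uniform measure on $\mathcal{S}(G)$ on this subset therefore gives exactly the uniform measure on $\mathcal{S}(G-S)$, which is the law of the UST of $G-S$.

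For part~(2): since $S$ is acyclic and $G$ is connected, $S$ can be completed to a spanning tree, so $\PROB(S\subseteq\T)>0$. The relevant bijection is edge-contraction, $T\mapsto T\setminus S$, viewed as a map from $\{T\in\mathcal{S}(G):S\subseteq T\}$ to $\mathcal{S}(G/S)$: if $S\subseteq T\in\mathcal{S}(G)$ then $T\setminus S$, regarded as a set of edges of $G/S$, is a connected spanning subgraph of $G/S$ with $|T|-|S|=(v(G)-1)-|S|=v(G/S)-1$ edges, where $v(G/S)=v(G)-|S|$ holds precisely because $S$ is a forest (sequential contraction of a forest turns no $S$-edge into a loop, so each contraction drops the vertex count by one); hence $T\setminus S\in\mathcal{S}(G/S)$. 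Conversely, lifting any $T'\in\mathcal{S}(G/S)$ to the edge set $T'\cup S$ of $G$ (its edges, being non-loops of $G/S$, correspond to unique edges of $G$ outside $S$) produces a connected spanning subgraph of $G$ with $v(G)-1$ edges, hence a spanning tree of $G$ containing $S$; the two maps are mutually inverse. Transporting the uniform law on $\mathcal{S}(G)$ conditioned on $\{S\subseteq\T\}$ through this bijection shows that $\T$ conditioned on $S\subseteq\T$ is distributed as $S\cup\T_{G/S}$, as claimed.

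There is no real obstacle here: the argument is purely combinatorial and short. The only points needing care are the positivity of the two conditioning events — which is exactly where ``$G-S$ connected'' and ``$S$ acyclic'' are used — together with the elementary bookkeeping that contracting a forest with $|S|$ edges decreases the vertex count by exactly $|S|$, and that discarding the loops created by contraction is harmless since loops never occur in spanning trees of a loopless multigraph. (Both statements are classical; an alternative derivation runs through Kirchhoff's transfer-current formula, but the bijective argument above is the cleanest; see \cite[Chapter~4]{LyPe:ProbabilityTrees}.)
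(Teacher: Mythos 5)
Your argument is correct and complete: part (1) is the observation that the conditioned family of spanning trees is exactly $\mathcal{S}(G-S)$, and part (2) is the standard contraction bijection $T\mapsto T\setminus S$ between spanning trees of $G$ containing the forest $S$ and spanning trees of $G/S$, with the vertex/edge counts and positivity of the conditioning events checked where needed. The paper does not prove this proposition at all — it is cited as a classical fact from \cite[Chapter~4]{LyPe:ProbabilityTrees} — and your bijective argument is precisely the standard proof that reference supplies, so there is nothing to add.
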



\subsubsection{Mixing time} Let $G=(V,E)$ be a finite connected loopless multigraph and consider the lazy simple random walk on it, that is, the Markov chain on the vertex set $V$ with transition probability $p(x,y) = {e(\{x\},\{y\})\over 2\deg(x)}$ whenever $x \neq y$ and $p(x,x) = 1/2$ for any vertex $x$. Let $\pi$ be the stationary distribution $\pi(x) = \deg(x) / 2|E|$ and for each two disjoint subsets of vertices $A,B$ we write
$$ Q(A,B) = \sum_{x\in A, y\in B} \pi(x)p(x,y) = e(A,B)/4|E| \, .$$
The Cheeger constant $\Phi_*$ is defined as
$$ \Phi_* = \min_ {S : \pi(S) \leq {1\over2}} {Q(S,V\setminus S) \over \pi(S)} \, ,$$
where $\pi(S) = \sum_{x\in S} \pi(x)$. This ``bottleneck'' ratio is frequently used to control the spectral gap of the lazy random walk from which we may bound its mixing time. Let $1=\lambda_1 > \lambda_2 \geq \cdots \geq \lambda_n \geq 0$ be the eigenvalues of the transition matrix $p$ (we have $\lambda_1 > \lambda_2$ since $G$ is connected, and $\lambda_n \geq 0$ since the chain is lazy, see \cite{LPW:Mixing}). A result by Jerrum and Sinclair~\cite{SiJe:spectral-gap}, Lawler and Sokal~\cite{LaSo:spectral-gap} and Alon and Milman \cite{AlonMilman} states that
\begin{equation}\label{eq:cheeger} \Phi_*^2/2 \leq 1-\lambda_2 \leq 2\Phi_*.\end{equation}

Assume now that $G$ is a $\gamma$-expander as in Definition \ref{def:expander} and that the number of parallel edges between any two vertices 
is at most $f \geq 1$. Then the degree of each vertex is at most $f n$, hence $\pi(S) \leq f|S|n/2|E|$ for any $S\subset V$. Similarly, for any $S \subset V$ we have that $\pi(V\setminus S) \leq fn|V\setminus S|/2|E|$, so if $\pi(V\setminus S) \geq 1/2$ we get that $|V\setminus S| \geq |E|/fn$. Since the minimum degree in a $\gamma$-expander is at least $\gamma(n-1)$ we get that if $\pi(V\setminus S) \geq 1/2$, then $|V\setminus S| \geq \gamma(n-1)/2f$. Putting all this together we get that if $G$ is a $\gamma$-expander and $n\geq 2$, then
$$ \Phi_* \geq {\gamma |S| |V\setminus S| \over 2|S|fn} \geq {\gamma^2 \over 8 f^2} \, ,$$
from which we get by \eqref{eq:cheeger} that
\begin{equation} \label{eq:spectral} 1-\lambda_2 \geq {c \gamma^4 \over f^4} \, , \end{equation}
where $c=1/128$. Recall that the {\bf total variation} distance $\|\mu-\nu\|_{\mathrm{TV}}$ between two probability measures $\mu$ and $\nu$ on the same probability space is defined to be $\sup_A |\mu(A)-\nu(A)|$, where the $\sup$ is ranging over all events $A$. For $\eps>0$ the $\eps$-mixing-time $T_{\mathrm{mix}}(\eps)$ of the chain is defined as
$$ T_{\mathrm{mix}}(\eps) = \min \big \{ t : \|p^t(x,\cdot) - \pi(\cdot)\|_{\mathrm{TV}} \leq \eps \textrm{ for all } x\in V \big  \} \, .$$
The mixing time and the spectral gap are related via the following statement, see \cite[Theorem~12.4]{LPW:Mixing},
$$ T_{\mathrm{mix}}(\eps) \leq {1 \over 1-\lambda_2} \Big [ {1 \over 2} \log{1 \over \min_{x \in V} \pi(x)} + \log(1/2\eps) \Big ] \, .$$

Since $f\geq 1$ bounds the maximal number of parallel edges between any two vertices, 
we get that $|E|\leq fn^2$ and hence $\pi(x) \geq \gamma(n-1)/2|E| \geq c\gamma /(fn)$ for some universal constant $c>0$. We deduce from this, the above bound on $T_{\mathrm{mix}}(\eps)$ and~\eqref{eq:spectral} that 
\begin{equation}\label{eq:mixrelax}  T_{\mathrm{mix}}(\eps) \leq C f^4 \gamma^{-4} \big [ \log n + \log f/\gamma + \log \eps^{-1} \big ] \, ,\end{equation}
for some universal constant $C>0$.

\subsection{Random walks on dense expanders}

\begin{lem}\label{lem:revisitexpander}
	Suppose that $H$ is a loopless multigraph on $n$ vertices that is a $\gamma$-expander and that the number of parallel edges between any two vertices is at most $f \geq 1$. Then for any two distinct nodes $u$ and $v$ and a node $w \neq v$ we have that $$\PROB_w\left[\tau_v<\tau_u^+\right]=\frac{\deg(v)}{\deg(u)+\deg(v)}+\LandauO(\gamma^{-7}f^7n^{-1}\log(n))\, .$$
\end{lem}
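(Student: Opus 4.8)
The plan is to analyze the harmonic function $h(x):=\PROB_x[\tau_v<\tau_u]$ on $V(H)$, where we set $h(u)=0$ and $h(v)=1$; for $x\notin\{u,v\}$ this satisfies $h(x)=\sum_y p(x,y)h(y)$, and it is well defined since a $\gamma$-expander with $\gamma>0$ is connected. I would work throughout with the \emph{lazy} walk: laziness does not affect the order in which distinct vertices are visited, hence does not change any $\PROB_x[\tau_v<\tau_u]$, and it is the lazy chain for which the mixing bound~\eqref{eq:mixrelax} is stated. There are two things to establish: (i) $h$ is almost constant on $V(H)\setminus\{u,v\}$, and (ii) that near-constant value is $\frac{\deg(v)}{\deg(u)+\deg(v)}$ up to a small error. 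Once this is done, the case $w\notin\{u,v\}$ of the lemma is immediate since then $\tau_u^+=\tau_u$, and the case $w=u$ follows by conditioning on the first step. We may assume $n$ is large in terms of $f,\gamma$, else the claimed error exceeds $1$ and there is nothing to prove; I set $T:=T_{\mathrm{mix}}(1/n)$ for the lazy chain, so that by~\eqref{eq:mixrelax} one has $T=\LandauO(f^4\gamma^{-4}\log n)$, and $\epsilon_1:=\frac{Tf}{\gamma(n-1)}=\LandauO(f^5\gamma^{-5}n^{-1}\log n)$.

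For (i) the point I would exploit is that in a $\gamma$-expander the minimum degree is at least $\gamma(n-1)$ while at most $f$ edges join any pair of vertices; hence from any $y\notin\{u,v\}$ the walk steps into $\{u,v\}$ with probability at most $\tfrac12\cdot\frac{e(\{y\},\{u\})+e(\{y\},\{v\})}{\deg(y)}\le \frac{f}{\gamma(n-1)}$, and a union bound over the $T$ steps of the mixing window gives $\PROB_x[\tau_{\{u,v\}}\le T]\le \epsilon_1$ for every $x\notin\{u,v\}$ (here $\tau_{\{u,v\}}$ is the hitting time of the set $\{u,v\}$). Then, splitting $h(x)$ according to whether $\{u,v\}$ is hit by time $T$, discarding that event at cost $\LandauO(\epsilon_1)$, and applying the Markov property at time $T$, I get $h(x)=\EXP_x[h(X_T)]+\LandauO(\epsilon_1)$; comparing $\EXP_x[h(X_T)]=\langle p^T(x,\cdot),h\rangle$ with $\langle\pi,h\rangle$ via $\|p^T(x,\cdot)-\pi\|_{\mathrm{TV}}\le 1/n$ and $0\le h\le1$ yields $h(x)=\bar h+\LandauO(\epsilon_1)$, where $\bar h:=\sum_y\pi(y)h(y)$ and $\pi(y)=\deg(y)/2e(H)$ is the stationary distribution.

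For (ii) the naive idea of reading off $\bar h$ from $\langle\pi,h\rangle\approx(1-\pi(\{u,v\}))\bar h+\pi(v)$ fails, because solving this divides the error by $\pi(\{u,v\})=\Theta(1/n)$ and destroys the estimate. Instead I would use a ``balance equation'' that only ever divides by $\LandauOmega(\gamma n)$: from stationarity of $\pi$ and harmonicity of $h$ off $\{u,v\}$ one has $\sum_y\pi(y)h(y)=\sum_y\pi(y)\EXP_y[h(X_1)]$, and comparing the two sides gives $\deg(u)A+\deg(v)B=\deg(v)$ with $A:=\EXP_u[h(X_1)]$, $B:=\EXP_v[h(X_1)]$. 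Now substitute $h(y)=\bar h+\LandauO(\epsilon_1)$ for $y\notin\{u,v\}$, $h(u)=0$, $h(v)=1$, and $e(\{u\},\{v\})\le f$, which gives $A=\frac{e(\{u\},\{v\})}{\deg(u)}+\frac{\deg(u)-e(\{u\},\{v\})}{\deg(u)}\bar h+\LandauO(\epsilon_1)$ and $B=\frac{\deg(v)-e(\{u\},\{v\})}{\deg(v)}\bar h+\LandauO(\epsilon_1)$; plugging these in and solving for $\bar h$ (using $\deg(u)+\deg(v)-2e(\{u\},\{v\})=\LandauOmega(\gamma n)$ and $\deg(u),\deg(v)\le fn$) gives $\bar h=\frac{\deg(v)}{\deg(u)+\deg(v)}+\LandauO(f^6\gamma^{-6}n^{-1}\log n)$.

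Finally I would assemble the pieces. For $w\notin\{u,v\}$, $\PROB_w[\tau_v<\tau_u^+]=h(w)=\bar h+\LandauO(\epsilon_1)$ is exactly the asserted estimate. For $w=u$, conditioning on the first step gives $\PROB_u[\tau_v<\tau_u^+]=\frac{e(\{u\},\{v\})}{\deg(u)}+\frac{\deg(u)-e(\{u\},\{v\})}{\deg(u)}\bar h+\LandauO(\epsilon_1)$, which simplifies to $\frac{\deg(v)}{\deg(u)+\deg(v)}+\LandauO(f^6\gamma^{-6}n^{-1}\log n)$ after using $e(\{u\},\{v\})\le f$ and $\deg(u)+\deg(v)=\LandauOmega(\gamma n)$; since $\gamma\le f$ for every $\gamma$-expander (test the expansion inequality on a single vertex) we have $f^6\gamma^{-6}\le f^7\gamma^{-7}$, matching the stated error. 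I expect the main obstacle to be step (i) — establishing that the walk genuinely does not see $\{u,v\}$ during its logarithmic mixing window, which is what forces $h$ to be flat — together with the realization in step (ii) that one must clear denominators via the balance equation rather than average $h$ directly against $\pi$.
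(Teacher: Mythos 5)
Your proof is correct and, despite the different packaging, is essentially the same argument as the paper's. You show flatness of $h(x)=\PROB_x[\tau_v<\tau_u]$ off $\{u,v\}$ via the mixing bound~\eqref{eq:mixrelax} (the paper does this by coupling two walks, you do it by comparing $p^T(x,\cdot)$ to $\pi$; these are two formulations of the same estimate), and you then pin down the constant $\bar h$ by an exact identity that clears the dangerous $\Theta(1/n)$ denominator. Your balance equation $\deg(u)A+\deg(v)B=\deg(v)$, obtained from $\langle\pi,h\rangle=\langle\pi,Ph\rangle$ and the harmonicity of $h$, is precisely the paper's detailed-balance relation $\deg(u)p(u,v)=\deg(v)p(v,u)$ for the watched chain on $\{u,v\}$ once one notes $p(u,v)=A$ and $p(v,u)=1-B$ (in the non-lazy normalization), so steps (ii) of the two proofs are identical in content; your derivation via $\pi$-stationarity is perhaps slightly slicker, and it also avoids the paper's preliminary reduction to the case $e(\{u\},\{v\})=0$ by carrying the $e(\{u\},\{v\})$ term explicitly. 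One small slip in the bookkeeping: from~\eqref{eq:mixrelax} with $\eps=1/n$ you get $T=\LandauO(f^4\gamma^{-4}(\log n+\log(f/\gamma)))=\LandauO(f^5\gamma^{-5}\log n)$ after bounding $\log(f/\gamma)\le f/\gamma$, not $\LandauO(f^4\gamma^{-4}\log n)$; this makes $\eps_1=\LandauO(f^6\gamma^{-6}n^{-1}\log n)$ and the final error $\LandauO(f^7\gamma^{-7}n^{-1}\log n)$, which is exactly the stated bound, with no slack needed from $\gamma\le f$.
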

\begin{proof}
	Let us assume that there are no edges between $u$ and $v$ --- this can only matter for the assertion of the statement when $w = u$ and in this case affect the estimate by the probability that this edge is traversed on the first step of the random walk; the latter probability is at most $\LandauO(\gamma^{-1} f/n)$ the assumption, since $\deg(u) \geq \gamma n$.  This error is swallowed in the error estimate of the lemma.
	
	We denote by $\PROB^{\textrm{lazy}}$ the lazy random walk on the graph, that is, the random walk that with probability $1/2$ stays put and otherwise jumps to a uniformly chosen neighbor. It is clear that if $w$ is a vertex such that $w \not \in \{u,v\}$ then $\PROB^{\textrm{lazy}}_w(\tau_v<\tau_u^+) = \PROB_w(\tau_v<\tau_u^+)$. We will first show, via a coupling argument, that for any two vertices $w_1, w_2 \not \in \{u,v\}$ we have
	\begin{equation}\label{lem:revisitcoupling} \PROB^{\textrm{lazy}}_{w_1}(\tau_v < \tau_u^+) = \PROB^{\textrm{lazy}}_{w_2}(\tau_v < \tau_u^+) + \LandauO(\gamma^{-6} f^6 n^{-1} \log(n)) \, .\end{equation}
	
	Indeed, let $\{X_t\}_{t \geq 0}$ and $\{Y_t\}_{t \geq 0}$ be two lazy simple random walks starting at $w_1$ and $w_2$, respectively. We put $\eps=n^{-1}$ in \eqref{eq:mixrelax} and bound $\log(f/\gamma)$ by $f/\gamma$ and get that if $T=C\gamma^{-5}f^5\log(n)$, then $\|X_T - \pi\|_{\textrm{TV}} \leq n^{-1}$ and the same estimate holds for $Y_T$, hence $\|X_T - Y_T\|_{\textrm{TV}} \leq 2n^{-1}$ (where by $\|X_T - Y_t\|_{\textrm{TV}}$ we mean the total variation distance between the \emph{laws} of $X_T$ and $Y_T$).
	
By~\cite[Proposition 4.7]{LPW:Mixing} we deduce that we can couple the walks $\{X_t\}$ and $\{Y_t\}$ so that $X_T=Y_T$ with probability at least $1-2n^{-1}$. If this occurs we continue the coupling so that $X_t = Y_t$ for all $t \geq T$ by using the same random neighbor at each step of the walk. Thus, if both walks have not visited $u$ or $v$ between time $1$ and $T$, then the event $\{\tau_v < \tau_u^+\}$ occurs for $\{X_t\}$ if and only if it occurs for $\{Y_t\}$.  Since the minimal degree of $H$ is at least $\gamma (n-1)$ and the maximal number of parallel edges between any two vertices is at most $f$ we learn that the probability of visiting $u$ or $v$ between time $1$ and $T$ is at most $Tf/\gamma(n-1) \le 2C\gamma^{-6}f^6n^{-1}\log(n)$, concluding the proof of (\ref{lem:revisitcoupling}).
	
	We now continue the proof of the lemma for the case that $w=u$. If the walker starts at $u$ and $\tau_v < \tau_u^+$, then it cannot be lazy in the first step. Hence
	\begin{equation}\label{lem:revisitlazyvsnot} \PROB^{\textrm{lazy}}_u(\tau_v<\tau_u^+) = {1 \over 2} \PROB_u(\tau_v<\tau_u^+) \, .\end{equation}
	
	Consider now the Markov chain on the two states $\{u,v\}$ with transition probabilities
	$$ p(u,v) = \PROB^{\textrm{lazy}}_u(\tau_v < \tau_u^+) \, , \qquad p(v,u) = \PROB^{\textrm{lazy}}_v(\tau_u < \tau_v^+)\, ,$$
	with $p(u,u) = 1-p(u,v)$ and $p(v,v) = 1-p(v,u)$. This is the lazy random walk ``watched'' on the vertices $u$ and $v$. By summing over paths it is immediate that
	\begin{equation}\label{lem:revisitstationary} \deg(u) p(u,v) = \deg(v) p(v,u) \, .\end{equation}
	In order to visit $v$ before returning to $u$, the lazy walker must walk to a random neighbor in the first step, so
	\begin{equation}\label{lem:revisitfirsteq} p(u,v) = {1 \over 2} \PROB^{\textrm{lazy}}_{N(u)}(\tau_v < \tau_u^+) \, ,\end{equation}
	where $\PROB^{\textrm{lazy}}_{N(u)}$ indicates a uniform starting position from the set $N(u)$ of neighbors of $u$. Similarly, when starting from $v$, in order to return to $v$ before visiting $u$, the lazy walker can either stay put on the first step, or jump to a uniform neighbor of $v$ and from there visit $v$ before $u$, thus
	\begin{equation}\label{lem:revisitsecondeq} p(v,v) = {1 \over 2} + {1\over 2} \PROB^{\textrm{lazy}}_{N(v)}(\tau_v < \tau_u^+) \, .\end{equation}
	
	Since we assumed there are no edges between $u$ and $v$, by (\ref{lem:revisitcoupling}) we have that
	$$ \PROB^{\textrm{lazy}}_{N(v)}(\tau_v < \tau_u^+) = \PROB^{\textrm{lazy}}_{N(u)}(\tau_v < \tau_u^+) + \LandauO(\gamma^{-6}f^6n^{-1}\log(n)) \, .$$
	This together with (\ref{lem:revisitfirsteq}) and (\ref{lem:revisitsecondeq}) gives that $p(u,v)+p(v,u)=1/2 + \LandauO(\gamma^{-6}f^6n^{-1}\log(n))$. Together with (\ref{lem:revisitstationary}) and the fact that all degrees are at least $\gamma (n-1)$ and at most $fn$ gives that
	$$ p(u,v) = {\deg(v) \over 2(\deg(u) + \deg(v))} + \LandauO( \gamma^{-7}f^7 n^{-1}\log(n) ) \, ,$$
	concluding the proof of lemma when $w=u$ by (\ref{lem:revisitlazyvsnot}). The proof for any $w \not \in \{u,v\}$ can now be completed easily. By (\ref{lem:revisitcoupling}) and our assumption that there are no edges between $u$ and $v$ shows that
	$$ \PROB^{\textrm{lazy}}_{w}(\tau_v < \tau_u^+) = \PROB^{\textrm{lazy}}_{N(u)}(\tau_v < \tau_u^+) + \LandauO(\gamma^{-6}f^6n^{-1}\log n ) \, ,$$
	and so the lemma follows by~(\ref{lem:revisitfirsteq}).
\end{proof}

\begin{corollary}\label{cor:effresinexpander}
	Suppose that $H$ is a loopless multigraph on $n$ vertices that is a $\gamma$-expander and that the number of parallel edges between any two vertices is at most $f \geq 1$.
	Then for any two vertices $u \neq v$
	$$ \RESISTANCE(u \lr v) = (1 + \LandauO( \gamma^{-8} f^8 n^{-1} \log(n) ))\Big ( {1 \over \deg(u)} + {1 \over \deg(v)} \Big ) \, .$$
\end{corollary}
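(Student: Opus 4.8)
The plan is to obtain this directly from Lemma~\ref{lem:revisitexpander} together with the identity~\eqref{eq:revisitconductance}; the real work has already been carried out in the lemma, so the corollary should follow by a short computation. First I would apply~\eqref{eq:revisitconductance} with source $u$ and target $v$ to write $\RESISTANCE(u\lr v)=\bigl(\deg(u)\,\PROB_u[\tau_v<\tau_u^+]\bigr)^{-1}$, and then feed in Lemma~\ref{lem:revisitexpander} with $w=u$ — a legitimate choice, since the lemma is stated for every $w\neq v$ and the possible $u$--$v$ edge has already been absorbed into its error term — to get
$$\PROB_u[\tau_v<\tau_u^+]=\frac{\deg(v)}{\deg(u)+\deg(v)}+\LandauO\!\left(\gamma^{-7}f^7n^{-1}\log n\right).$$

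It then remains to convert this additive estimate into the multiplicative form claimed. A loopless $\gamma$-expander on $n$ vertices with at most $f$ parallel edges between any pair has all degrees in $[\gamma(n-1),f(n-1)]$ (the lower bound is Definition~\ref{def:expander} with $U=\{x\}$, the upper bound is immediate), so $\frac{\deg(u)+\deg(v)}{\deg(v)}=\LandauO(f/\gamma)$. Writing $\PROB_u[\tau_v<\tau_u^+]=\tfrac{\deg(v)}{\deg(u)+\deg(v)}(1+\theta)$, the error $\theta$ is therefore $\LandauO(\gamma^{-8}f^8n^{-1}\log n)$, so that
$$\RESISTANCE(u\lr v)=\frac{1}{\deg(u)}\cdot\frac{\deg(u)+\deg(v)}{\deg(v)}\cdot\frac{1}{1+\theta}=\Bigl(\frac{1}{\deg(u)}+\frac{1}{\deg(v)}\Bigr)\bigl(1+\LandauO(\gamma^{-8}f^8n^{-1}\log n)\bigr),$$
using $\tfrac{1}{\deg(u)}\cdot\tfrac{\deg(u)+\deg(v)}{\deg(v)}=\tfrac1{\deg(u)}+\tfrac1{\deg(v)}$ and $(1+\theta)^{-1}=1+\LandauO(\theta)$. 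The last expansion needs $|\theta|\le\tfrac12$, which we may freely assume: in the complementary range $n=\LandauO(\gamma^{-8}f^8\log n)$ the stated error term is already of constant order, and the estimate is correspondingly weak, holding by the crude bounds $\tfrac12\bigl(\tfrac1{\deg u}+\tfrac1{\deg v}\bigr)\le\RESISTANCE(u\lr v)$ (from~\eqref{eq:revisitconductance}) and $\RESISTANCE(u\lr v)=\LandauO(\mathrm{poly}(f/\gamma)/n)$ (from~\eqref{eq:revisitconductance} and a standard expander commute-time estimate).

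I do not expect any genuine obstacle here — the substance is Lemma~\ref{lem:revisitexpander}, which is already proved, and everything else is algebra. The one place calling for a little care is the degree bookkeeping in the second paragraph, which is exactly what turns the $\LandauO(\gamma^{-7}f^7n^{-1}\log n)$ additive error of the lemma into the $\LandauO(\gamma^{-8}f^8n^{-1}\log n)$ relative error recorded in the corollary.
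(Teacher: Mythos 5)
Your proof is correct and follows exactly the route the paper takes: the paper's own proof is the one-line remark ``Follows immediately by~\eqref{eq:revisitconductance} and Lemma~\ref{lem:revisitexpander} together with the fact that all degrees are at least $\gamma(n-1)$ and at most $fn$,'' and your argument simply spells out that computation (with the small bonus of explicitly addressing the edge case $|\theta|>\tfrac12$, which the paper leaves implicit because all subsequent applications impose $\gamma^{-8}f^8 n^{-1}\log n \le \eps$ anyway).
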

\begin{proof} Follows immediately by \eqref{eq:revisitconductance} and Lemma \ref{lem:revisitexpander} together with the fact that all degrees are at least $\gamma (n-1)$ and at most $fn$.
\end{proof}

We now extend Corollary~\ref{cor:effresinexpander} to the setting of a general dense graph.

\begin{lem}\label{lem:effresistinexpanderdecomp} Suppose that $G$ is a loopless multigraph with $n$ vertices given together with a $(\gamma,\eps^5,\eps^5)$-expander decomposition $V(G)=V_0\sqcup V_1\sqcup\ldots\sqcup V_k$. Assume further that the maximal number of parallel edges among any two pairs of vertices is at most $f\geq 1$. Assume that $\gamma^{-8} f^8 n^{-1}\log n \leq \eps$. Let $i \in [k]$ and $u \neq v$ be two distinct vertices of $V_i$. Then
	$$ (1- \LandauO(\eps)) \Big ( {1 \over \deg(u)} + {1 \over \deg(v)} \Big ) \leq \RESISTANCE(u \lr v) \leq (1- \LandauO(\eps)) \Big ( {1 \over \deg(u; V_i)} + {1 \over \deg(v; V_i)} \Big ) \, ,$$
	and if in addition $u$ and $v$ are $(\alpha,\eps)$-good, then
	$$ \RESISTANCE(u \lr v) =  (1 + \LandauO(\eps)) ) \Big ( {1 \over \deg(u)} + {1 \over \deg(v)} \Big ) \, .$$
\end{lem}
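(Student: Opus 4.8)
\emph{Plan of proof.} The approach is to establish the two displayed two-sided inequalities by elementary electric-network arguments, and then to obtain the sharp asymptotics for good vertices by combining them with property~\ref{en:secondJohn} of Definition~\ref{def:good}. For the upper bound I would pass to the induced submultigraph $G[V_i]$, which contains both $u$ and $v$; by Rayleigh's monotonicity law~\eqref{eq:Rayleigh} this can only increase the effective resistance, so $\RESISTANCE(u\lr v;G)\le\RESISTANCE(u\lr v;G[V_i])$. By property (G3) of the expander decomposition, $G[V_i]$ is itself a $\gamma$-expander on $|V_i|$ vertices, still with at most $f$ parallel edges between any pair, so Corollary~\ref{cor:effresinexpander} applies to $G[V_i]$ and gives
\[
\RESISTANCE(u\lr v;G[V_i])=\bigl(1+\LandauO(\gamma^{-8}f^8|V_i|^{-1}\log|V_i|)\bigr)\Bigl(\tfrac1{\deg(u;V_i)}+\tfrac1{\deg(v;V_i)}\Bigr)\,.
\]
The hypothesis $\gamma^{-8}f^8n^{-1}\log n\le\eps$, together with the fact that the components $V_i$ occurring in our applications have order linear in $n$, makes this error term $\LandauO(\eps)$, which is exactly the asserted upper bound.

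For the lower bound I would argue straight from the flow definition of $\RESISTANCE$, splitting on whether $u\sim v$. If $u\not\sim v$: any unit flow from $u$ to $v$ sends total current $1$ through the (at most $\deg(u)$) edges incident to $u$, so by Cauchy--Schwarz those edges contribute at least $1/\deg(u)$ to the energy, and likewise the edges at $v$ contribute at least $1/\deg(v)$; as $u\not\sim v$ these two sets of edges are disjoint, so every unit flow has energy at least $\tfrac1{\deg(u)}+\tfrac1{\deg(v)}$, giving $\RESISTANCE(u\lr v;G)\ge\tfrac1{\deg(u)}+\tfrac1{\deg(v)}$. If $u\sim v$, let $j\le f$ be the number of $uv$-edges and let $G'$ be $G$ with those edges deleted; combining conductances in parallel gives $\RESISTANCE(u\lr v;G)^{-1}=j+\RESISTANCE(u\lr v;G')^{-1}$, while the non-adjacent case applied in $G'$ gives $\RESISTANCE(u\lr v;G')\ge\tfrac1{\deg(u)-j}+\tfrac1{\deg(v)-j}\ge\tfrac1{\deg(u)}+\tfrac1{\deg(v)}$. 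Since $j\le f$ whereas $\deg(u),\deg(v)\ge\gamma(|V_i|-1)=\LandauOmega(\gamma n)$ by (G3), the resulting correction factor is $1-\LandauO(f\gamma^{-1}n^{-1})=1-\LandauO(\eps)$ (here using $\gamma\le f$ and the hypothesis), which yields the lower bound.

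For the sharp estimate when $u$ and $v$ are $(\alpha,\eps)$-good, property~\ref{en:secondJohn} of Definition~\ref{def:good} gives $\deg(u;V_i)\ge(1-\LandauO(\eps^2))\deg(u)$ and the analogous bound for $v$; hence $\tfrac1{\deg(u;V_i)}+\tfrac1{\deg(v;V_i)}\le(1+\LandauO(\eps))\bigl(\tfrac1{\deg(u)}+\tfrac1{\deg(v)}\bigr)$. Substituting this into the upper bound just proved, and combining with the lower bound, gives $\RESISTANCE(u\lr v)=(1+\LandauO(\eps))\bigl(\tfrac1{\deg(u)}+\tfrac1{\deg(v)}\bigr)$, as required.

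The conceptual content is light given Corollary~\ref{cor:effresinexpander} and Rayleigh monotonicity; I expect the only slightly delicate points to be (i) the series--parallel reduction in the lower bound when $u\sim v$, in particular verifying that removing the direct $uv$-edges costs only a factor $1-\LandauO(\eps)$ rather than something worse, and (ii) the bookkeeping that each error term of the shape $\LandauO(\gamma^{-c}f^c m^{-1}\log m)$ arising with $m=|V_i|$ is genuinely $\LandauO(\eps)$, which relies on the stated hypothesis together with the components being of linear order.
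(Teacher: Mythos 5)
Your proposal is correct, and half of it coincides with the paper's proof: the upper bound is obtained exactly as in the paper (Rayleigh's monotonicity \eqref{eq:Rayleigh} to pass to $G[V_i]$, which is a $\gamma$-expander by (G3), then Corollary~\ref{cor:effresinexpander} applied there), and the sharp estimate for $(\alpha,\eps)$-good vertices is likewise deduced by invoking part~\ref{en:secondJohn} of Definition~\ref{def:good} to replace $\deg(\cdot;V_i)$ by $\deg(\cdot)$ at a multiplicative cost $1+\LandauO(\eps^2)$. Where you genuinely diverge is the lower bound. The paper uses the discrete Dirichlet principle \eqref{eq:dirichlet} with the explicit test function $h(u)=0$, $h(v)=1$, $h\equiv\deg(v)/(\deg(u)+\deg(v))$ elsewhere, which yields $\RESISTANCE(u\lr v)^{-1}\le f+\frac{\deg(u)\deg(v)}{\deg(u)+\deg(v)}$ in one stroke, with no case distinction on adjacency. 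You instead work with the primal (flow/energy) definition: Cauchy--Schwarz at $u$ and at $v$ gives the clean bound $\frac1{\deg(u)}+\frac1{\deg(v)}$ when $u\not\sim v$, and the parallel-law reduction $\RESISTANCE(u\lr v;G)^{-1}=j+\RESISTANCE(u\lr v;G')^{-1}$ (with $j\le f$ the number of direct $uv$-edges) handles the adjacent case; note that you end up with exactly the same inequality $\RESISTANCE(u\lr v)^{-1}\le f+\frac{\deg(u)\deg(v)}{\deg(u)+\deg(v)}$ as the paper, so the two routes are equivalent in strength. The test-function route buys brevity (no series/parallel reduction, no case split); your flow route makes more transparent that the only loss beyond the trivial bound comes from the at most $f$ parallel $uv$-edges. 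One bookkeeping remark, which you flag yourself: both arguments need $\deg(u),\deg(v)$ to be polynomially-in-$\gamma$ linear in $n$ (the paper derives $\deg(u),\deg(v)=\LandauOmega(\gamma^2 n)$ from $|V_i|=\LandauOmega(\gamma n)$ together with (G3)); with only that bound your correction factor should read $1-\LandauO(f\gamma^{-2}n^{-1})$ rather than $1-\LandauO(f\gamma^{-1}n^{-1})$, but this is immaterial since the hypothesis $\gamma^{-8}f^8n^{-1}\log n\le\eps$ swallows either error term.
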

\begin{proof}
	Since $G[V_i]$ is a $\gamma$-expander on at least $\gamma n$ vertices, by Corollary \ref{cor:effresinexpander} together with Rayleigh's monotonicity~\eqref{eq:Rayleigh} we have
	\begin{eqnarray*} \RESISTANCE(u \lr v) &\leq& (1 + \LandauO( \gamma^{-8} f^8 n^{-1} \log(n) ) \Big ( {1 \over \deg(u; V_i)} + {1 \over \deg(v; V_i)} \Big ) \, ,
	\end{eqnarray*}
	giving the upper bound of the first assertion of the lemma. The upper bound of the second assertion immediately follows using the part~\ref{en:secondJohn} of Definition~\ref{def:good}.
	
	For the lower bound we will use Dirichlet's principle~\eqref{eq:dirichlet} and let $h:V(G) \to [0,1]$ be the function assigning $h(v)=1$, $h(u)=0$ and for any vertex $x \not \in \{u,v\}$ we put $h(x) = \deg(v)/(\deg(u) + \deg(v))$. By our assumption there are at most $f$ edges $(x,y)$ such that $x=u$ and $y=v$ in which $h(y)-h(x)=1$. Next, there are at most $\deg(u)$ edges $(x,y)$ for which $x=u$ and $h(y)-h(x) = \deg(v)/(\deg(u)+\deg(v))$. Similarly, there are at most $\deg(v)$ edges $(x,y)$ for which $y=v$ and $h(y)-h(x)=\deg(u)/(\deg(u)+\deg(v))$. All other edges $(x,y)$ of the graph have $h(x)-h(y)=0$. Hence by~\eqref{eq:dirichlet} we have
	$$ \RESISTANCE(u \lr v)^{-1} \leq f + {\deg(u) \deg(v)^2 + \deg(v) \deg(u)^2 \over (\deg(u) + \deg(v))^2} \leq {\deg(u)\deg(v) \over \deg(u) +\deg(v)} (1+ \gamma^{-4} f^2n^{-1}) \, ,$$
	where we used the fact that since each $V_i$ is a $\gamma$-expander, its cardinality must be $\LandauOmega(\gamma n)$ and hence $\deg(u)$ and $\deg(v)$ can be bounded below by $\LandauOmega(\gamma^2 n)$ and above by $fn$. This gives the required lower bound. \end{proof}

\subsection{The density of fixed trees in the UST}\label{sec:fixedtrees} The main result of this section, Lemma~\ref{lem:congTquenched}, allows express the frequency of a given fixed rooted tree $T$ in a graph using a discrete analogue of the parameter $\FREQ$, see Definition~\ref{def:freq} below.

Let us introduce some definitions and a setting that will be used throughout this section. In what follows we are always given an arbitrary $\beta>0$ and a nondegenerate graphon $W$. We then apply Lemma~\ref{lem:furtherdecomp} and extract the corresponding $\alpha,\eps,\gamma$ and $\xi$ so that if $G$ is a connected graph on $n$ vertices and $n$ is sufficiently large (as a function of $\alpha, \eps$ and $\gamma$) and $\delta_\square(G,W)\leq \xi$, then $G$ has a $(\beta, \alpha,\gamma,\eps)$-good decomposition as in Definition \ref{def:decomp}. We denote the given expander decomposition of $G$ by $V(G)=V_0\sqcup V_1\sqcup\ldots\sqcup V_k$. In light of the quantification of Lemma \ref{lem:furtherdecomp} we may assume that
$$ \beta \gg \alpha \gg \eps \gg \gamma \gg \xi \gg n^{-1} \, .$$

Next, let $T$ be a finite rooted tree of height $r$ and $\ell$ vertices denoted by $1, \ldots, \ell$ so that $1$ is the root of $T$. Given a graph $G$ we say that $\ell$ distinct vertices $(v_1, \ldots, v_\ell)$ of $G$ are {\bf compatible with $T$} if
the pairs
$$ T(v_1,\ldots,v_\ell) := \big \{ (v_q, v_t) : (q,t) \in E(T) \big \} \, ,$$
are all edges of $G$. Without loss of generality we may assume that the numbering $\{1,\ldots, \ell\}$ of the vertices of $T$ is such that there exists some $p\in \{2,\ldots, \ell\}$ such that the vertices of distance $r$ from the root (which all must be leaves) are $p,\ldots, \ell$.

\begin{defn}\label{def:ipure} Assume the setting as introduced at the beginning of Section~\ref{sec:fixedtrees}. Given some fixed $\ell \geq 1$ and $i\in[k]$ we say that an $\ell$-tuple of distinct vertices of $G$ are {\bf $i$-pure} if each vertex in the $\ell$-tuple belongs to $V_i$ and is $(\alpha,\eps)$-good.
\end{defn}

Next we define $\FREQ(T;G)$ which is the discrete analogue of $\FREQ(T;W)$. Note the notation $\FREQ(T;G)$ does not reflect the fact that this parameter depends on the partition, and not just on the graph $G$.

\begin{defn}\label{def:freq}
	Assume the setting as introduced at the beginning of Section~\ref{sec:fixedtrees}. For each $i\in [k]$ we define
	$$\FREQ(T;G, i):=|\stab_T|^{-1}\sum_{\substack{(v_1, \ldots, v_\ell) \\ i\textrm{-pure} \\  \textrm{compatible with } T}} |V_i|^{-1}\exp\left(-\sum_{j=1}^{p-1} b(v_j) \right) \frac{\sum_{j=p}^\ell \deg(v_j) }{\prod_{j=1}^\ell \deg(v_j)}\;,$$
	where
	\begin{equation}
	\label{eq:defbgraph}
	b(v) = \sum_{u \in V_{i(v)}, u \sim v} {1 \over \deg(u)}\;.
	\end{equation}
Note that~\eqref{eq:defbgraph} is a graph counterpart to the graphon quantity defined in~\eqref{eq:defbb}.
	
Finally, let
	\begin{equation}\label{eq:mobil}\FREQ(T;G):= \sum_{\substack{i\in[k]: V_i \mathrm{\ is \ } (\alpha,\eps)-\mathrm{big}}} {|V_i| \over n} \cdot \FREQ(T;G, i) \, .\end{equation}
\end{defn}

We denote by $\T$ a sample of the UST of $G$ and by $B_\T(v,r)$ the graph-distance ball in $\T$ of radius $r$ around $v\in V(G)$ and we think about it as a subset of edges of $\T$. We will begin our proof with estimating the probability that $B_\T(v,r)$ is manifested on an $i$-pure $\ell$-tuple of vertices (as in Definition~\ref{def:ipure}) that are compatible with $T$; later we will see that that all other manifestations of $B_\T(v,r)$ are negligible.
For an $i$-pure $\ell$-tuple that is compatible with $T$ we write $B_\T(v_1,r)\congpure T(v_1,\ldots,v_\ell)$ for the event
\begin{itemize}
	\item the edges $T(v_1, \ldots, v_\ell)$ are in $\T$, and,
	\item for each $1 \leq j \leq p-1$, the edges of emanating from $v_j$ that are not in $T(v_1,\ldots, v_\ell)$
	and have both endpoints in $V_i$ are not in $\T$.
\end{itemize}

\begin{lem}\label{lem:discretemain} Assume the setting as introduced at the beginning of Section~\ref{sec:fixedtrees}, and let $\T$ be a UST of $G$.	
	Let $T$ be a fixed rooted tree with $\ell\geq 2$ vertices $1, \ldots, \ell$ and height $r \geq 1$ (as usual $T$ is rooted at $1$). Assume that the vertices at height $r$ of $T$ are $\{p,\ldots, \ell\}$ for some $2 \leq p \leq \ell$. Then for any $i \in [k]$ and any $i$-pure $\ell$-tuple $(v_1, \ldots, v_\ell)$ that is compatible with $T$ we have that
	$$ \PROB\big ( B_\T(v_1,r)\congpure T(v_1,\ldots,v_\ell) \big ) = (1 + \LandauO(\ell \alpha^{1/4})) \exp\left(-\sum_{j=1}^{p-1} b(v_j) \right)\cdot \frac{\sum_{j=p}^\ell \deg(v_j) }{\prod_{j=1}^\ell \deg(v_j)} \, ,$$
	where $b(v)$ is defined in~\eqref{eq:defbgraph}.
\end{lem}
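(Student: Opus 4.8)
The plan is to compute $\PROB\big(B_\T(v_1,r)\congpure T(v_1,\ldots,v_\ell)\big)$ by revealing the tree edges one level at a time, starting from the leaves and working toward the root, conditioning as we go and using the spatial Markov property (Proposition~\ref{prop:ustcond}) together with Kirchhoff's formula~\eqref{eq:kirchh} and the resistance estimate of Lemma~\ref{lem:effresistinexpanderdecomp}. Concretely, I would order the edges of $T$ by the height of their lower endpoint and process them in $r$ rounds. At each step we want to include a tree edge $e=(v_q,v_t)$ in $\T$; by Proposition~\ref{prop:ustcond}(2) the conditional probability of this, given the edges already included and excluded, equals the effective resistance between the endpoints of $e$ in the appropriately contracted/deleted graph $G'$. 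The key point is that $G'$ is obtained from $G[V_{i}]$ by contracting a subtree and deleting $\LandauO(\ell)$ edges at good vertices, so by Rayleigh monotonicity~\eqref{eq:Rayleigh} and Lemma~\ref{lem:effresistinexpanderdecomp} this resistance is $(1+\LandauO(\eps))\big(\tfrac{1}{\deg(v_q)}+\tfrac{1}{\deg(v_t)}\big)$ when the contracted vertex still has large degree, and more carefully it telescopes.

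More precisely, I expect the cleanest bookkeeping to come from the following grouping. Write $P=\{v_1,\dots,v_{p-1}\}$ for the internal vertices of $T$ (those at height $<r$) and $L=\{v_p,\dots,v_\ell\}$ for the leaves at height $r$; the edges of $T$ split into those between two internal vertices and those from an internal vertex down to a leaf. First handle the ``not in $\T$'' constraints: for each internal $v_j$, conditioning that the $V_i$-internal non-tree edges at $v_j$ are absent from $\T$ corresponds (via Proposition~\ref{prop:ustcond}(1)) to deleting those edges. Doing this for all internal vertices and then building the tree edges, one sees the product of conditional inclusion probabilities telescopes: each internal edge $(v_q,v_t)$ contributes a factor close to $\tfrac{1}{\deg(v_q)}+\tfrac{1}{\deg(v_t)}$, and when we later contract $v_t$ into $v_q$ the ``$\tfrac{1}{\deg(v_t)}$'' piece is cancelled because after contraction $v_q$ inherits $v_t$'s remaining degree. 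The upshot is that the internal structure contributes $\prod_{j\in P}\tfrac{1}{\deg(v_j)}$ up to $(1+\LandauO(\ell\eps))$ errors, the final layer of leaf-edges contributes $\sum_{j=p}^{\ell}\deg(v_j)\big/\prod_{j=p}^{\ell}\deg(v_j)$ (the sum arises because the last edge attached to each ``stem'' has a combined resistance of the form $\tfrac{1}{\deg(\text{stem})}+\tfrac{1}{\deg(v_j)}$ and, after summing over which leaf-edge is revealed last, the stem-degrees are already accounted for), and the ``absent edges'' conditioning produces the factor $\exp\big(-\sum_{j=1}^{p-1}b(v_j)\big)$ via the identity $\prod_{u\sim v_j,\,u\in V_i}\big(1-\tfrac{1}{\deg(u)}(1+\LandauO(\eps))\big) = \exp\big(-b(v_j)\big)\cdot(1+\LandauO(\alpha^{1/4}))$, where the last estimate uses goodness properties~\ref{en:thirdJohn}, \ref{en:fourthJohn} of Definition~\ref{def:good} to control the error in passing from the product to the exponential (each factor is $1-\Theta(1/\deg(u))$ with $\deg(u)=\LandauOmega(\eps n)$, there are $\LandauO(\deg(v_j))$ of them, and $\sum 1/\deg(u)^2 = \LandauO(\alpha^{1/4})$ by~\ref{en:fourthJohn}).

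The main obstacle, I expect, is controlling the accumulation of errors through the conditioning and making sure that the graph remains an expander (or close enough) after each deletion/contraction step so that Lemma~\ref{lem:effresistinexpanderdecomp} and its underlying Lemma~\ref{lem:revisitexpander} keep applying. Deleting $\LandauO(\ell)$ edges at a good vertex $v_j$ keeps its degree at $(1-\LandauO(\eps^2))\deg(v_j)=\LandauOmega(\eps n)$, so Proposition~\ref{prop:stillexpander} keeps the relevant induced graph a $(\gamma/2)$-expander; contracting a subtree creates a single vertex of even larger degree, which only helps. One has to check that at every one of the $\ell-1$ revelation steps the two relevant resistance estimates hold with multiplicative error $(1+\LandauO(\eps))$, and that the errors compound to $(1+\LandauO(\ell\eps))=(1+\LandauO(\ell\alpha^{1/4}))$ since $\eps\ll\alpha^{1/4}$; the dominant error term is the $\LandauO(\ell\alpha^{1/4})$ coming from the exponential approximations at the $p-1$ internal vertices. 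A secondary subtlety is that some of the $v_j$ at height $<r$ but $>0$ are themselves leaves of a different-height subtree — but by the stated normalization only vertices at height exactly $r$ are among $\{p,\dots,\ell\}$, and the ``absent non-tree edges'' constraint is imposed precisely at the $p-1$ vertices $v_1,\dots,v_{p-1}$, so the bookkeeping is exactly as above with no further case analysis.
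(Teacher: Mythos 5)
The overall toolkit you invoke — Kirchhoff's formula, Lemma~\ref{lem:effresistinexpanderdecomp}, Rayleigh monotonicity, the spatial Markov property via Proposition~\ref{prop:ustcond}, and Proposition~\ref{prop:stillexpander} — is the right one, and the telescoping picture is the right intuition. However, the \emph{order} in which you propose to reveal information is the reverse of what actually works, and this is not a cosmetic issue. You suggest first conditioning on the ``not in $\T$'' constraints at each internal vertex $v_j$ (deleting its non-tree $V_i$-internal edges), and only afterwards revealing the tree edges. But deleting the non-tree $V_i$-internal edges at $v_j$ removes $\deg(v_j;V_i)-\deg_T(j) = \Theta(n)$ edges, not ``$\LandauO(\ell)$ edges'' as you write — that smaller bound applies only to the cleanup of the $\ell$-tuple when applying Proposition~\ref{prop:stillexpander} after a contraction, not to the full set of absent-edge constraints. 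After this deletion, $v_j$ has degree $\deg_T(j)+\LandauO(\eps^2)\deg(v_j)$, which can be $\LandauO(1)$ or tiny. At that point (i) the expander property near $v_j$ is destroyed, so Lemma~\ref{lem:effresistinexpanderdecomp} no longer yields $\RESISTANCE\approx\tfrac1{\deg(v_q)}+\tfrac1{\deg(v_t)}$ with the \emph{original} degrees (it would involve the post-deletion degrees, and for vertices of bounded degree the additive formula itself breaks down: the reciprocal sum can even exceed $1$), and (ii) your exponential identity $\prod_{u\sim v_j}(1-\tfrac1{\deg(u)})\approx e^{-b(v_j)}$ silently drops the contribution $\sum_m\tfrac1{\deg(v_j)-(m-1)}=\log\bigl(\deg(v_j)/\deg'(v_j)\bigr)$ coming from the endpoint $v_j$ itself, which is large and unbounded in your ordering. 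So the intermediate factors you write down do not hold, even though the product you claim for them happens to equal the correct answer.

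The paper's proof does the two operations in the opposite order precisely to avoid this. One first shows inductively (contracting leaf by leaf) that $\PROB(T(v_1,\ldots,v_\ell)\subset\T)=(1+\LandauO(\ell\eps))\frac{\sum_j\deg(v_j)}{\prod_j\deg(v_j)}$, using Kirchhoff and Lemma~\ref{lem:effresistinexpanderdecomp} at each step; at every stage the contracted ``super-vertex'' has degree at least $\deg(v_\ell)=\LandauOmega(\eps n)$, so Proposition~\ref{prop:stillexpander} applies and the resistance estimate is always in force. \emph{Then} one conditions on $T(v_1,\ldots,v_\ell)\subset\T$, so that $v_1,\ldots,v_\ell$ become a single contracted vertex $v^*$ of degree $\approx\sum_j\deg(v_j)$, and one deletes the non-tree edges at $v_1,\ldots,v_{p-1}$ one at a time. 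Now the endpoint of every deleted edge is $v^*$, whose degree never drops below $\sum_{j\ge p}\deg(v_j)\ge\eps n\gg\gamma n$, so Proposition~\ref{prop:stillexpander} keeps applying (the number of deleted edges is at most $\ell|V_i|$, within its scope) and the endpoint-reciprocal sum telescopes to a \emph{bounded} logarithm $\log\bigl(\sum_{j\ge 1}\deg(v_j)/\sum_{j\ge p}\deg(v_j)\bigr)$, producing exactly the factor $\tfrac{\sum_{j\ge p}\deg(v_j)}{\sum_j\deg(v_j)}$ that turns $\sum_{j=1}^\ell$ into $\sum_{j=p}^\ell$ in the final formula. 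This is also where property~\ref{en:thirdJohn} of $(\alpha,\eps)$-goodness is actually needed, since one only controls $r_m$ by $\tfrac1{\deg(u_m;V_i)}\le r_m\le \tfrac1{\deg(u_m)}$ and property~\ref{en:thirdJohn} at $v_j$ supplies the $\alpha^{1/2}$ control of the discrepancy.
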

\begin{proof} Assume that $n$ is large enough as in Lemma \ref{lem:effresistinexpanderdecomp}.
	We first show by induction that the probability that $T(v_1,\ldots,v_\ell) \subset E(\T)$ is
	\begin{equation}\label{eq:edgesareinUST} (1 + \LandauO(\ell \eps)) {\sum_{j=1}^\ell \deg(v_j) \over \prod_{j=1}^\ell \deg(v_j)} \, .\end{equation}
	Indeed, when $T$ contains only one edge this statement follows immediately from Kirchoff's formula~\eqref{eq:kirchh} and Lemma \ref{lem:effresistinexpanderdecomp}. If $T$ has more than one edge, assume without loss of generality that $\ell$ is a leaf in $T$ at distance $r$ from the root and that $(q,\ell)$ is an edge of $T$. We then use the induction hypothesis on the tree $T\setminus \{\ell\}$, which has $\ell-1$ vertices. We condition on the event $T(v_1,\ldots,v_\ell) \setminus \{(v_q,v_\ell)\} \subset E(\T)$ and contract these $\ell-1$ vertices to a single vertex of degree $\deg(v_1) + \ldots + \deg(v_{\ell -1})$. We shall make use of Proposition~\ref{prop:ustcond} when working with the contracted graph. Since the contracted multigraph is still a $\gamma$-expander with at most $\ell$ parallel edges between any two vertices, and the vertices $(v_1,\ldots,v_\ell)$ are all $(\alpha,\eps)$-good, we may apply Lemma \ref{lem:effresistinexpanderdecomp} and Kirchoff's formula~\eqref{eq:kirchh} to get that the probability that the edge $(v_{q}, v_\ell)$ of $G$ is in $\T$ is
	$$ (1-\LandauO(\eps)) \Big ( {1 \over \deg(v_\ell)} + {1 \over \deg(v_1) + \ldots + \deg(v_{\ell -1})} \Big ) \, .$$
	By our induction hypothesis we get that the required probability is
	$$ (1-\LandauO((\ell-1)\eps))(1-\LandauO(\eps)) {\sum_{q=1}^{\ell-1} \deg(v_q) \over \prod_{q=1}^{\ell-1} \deg(v_q)} \Big ( {1 \over \deg(v_\ell)} + {1 \over \deg(v_1) + \ldots + \deg(v_{\ell -1})} \Big ) \, ,$$
	concluding the proof of~\eqref{eq:edgesareinUST}.
	
	\medskip
	
	We now condition on the event $T(v_1,\ldots,v_\ell) \subset E(\T)$ and turn to compute the probability that all other edges of $G$ emanating from $v_1, \ldots, v_{p-1}$ which have both endpoints in $V_i$ are not in $\T$. That is, the event that $E_{v_j} \cap \T = \emptyset$ for all $1 \leq j \leq p-1$, where $E_{v_j}$ are all the edges of $G$ between $v_j$ and $V_i\setminus \{v_1, \ldots v_\ell\}$.
	
	Let $j$ be an integer $1 \leq j \leq p-1$ and condition additionally on all the edges of $(E_{v_1} \cup \cdots\cup E_{v_{j-1}}) \cap \T = \emptyset$ (if $j=1$ there is no further conditioning). We will prove that under this conditioning the probability that $E_{v_j} \cap \T = \emptyset$ is
	\begin{equation}\label{eq:edgesnotinUST} (1+ \LandauO(\alpha^{1/4})) \exp\left(-b(v_j)\right) \cdot { \sum_{q=j+1}^\ell \deg(v_q) \over  \sum_{q=j}^\ell \deg(v_q) } \, .\end{equation}
	Thus, once~\eqref{eq:edgesnotinUST} is proved, by multiplying it over $j=1,\ldots,p-1$, we get that conditioned on $\O$ and on $E_G(T) \subset E(\T)$, the probability that all the required edges are not in $\T$ is
	$$ (1+ \LandauO(\ell \alpha^{1/4})) \exp\left(-\sum_{j=1}^{p-1} b(v_j)\right)\cdot  {\deg(v_p)+\ldots+\deg(v_\ell) \over \deg(v_1)+\ldots+\deg(v_\ell)} \, .$$
	We multiply~\eqref{eq:edgesareinUST} by this and get the required assertion of the lemma.
	
	We are left to prove \eqref{eq:edgesnotinUST}. Enumerate the neighbors of $v_j$ which are not in $\{v_1,\ldots,v_\ell\}$ by $u_1,\ldots,u_{\deg(v_j; V_i)}$. For each $1 \leq m \leq \deg(v_j; V_i)$ we condition on the first $m-1$ edges being closed, by Proposition \ref{prop:ustcond} we remove these edges from the graph, and after this removal the graph remains an ${\gamma \over 2}$-expander by Proposition \ref{prop:stillexpander}. Thus, by Lemma \ref{lem:effresistinexpanderdecomp} we have that in this conditioned graph the resistance on the $m$-th edge is
	\begin{equation}\label{eq:midcalc} (1+\LandauO(\eps)) \Big ( r_m + {1 \over \sum_{q=j}^\ell \deg(v_q) - (m-1) } \Big )\, ,\end{equation}
	where $r_m$ satisfies
	\begin{equation}\label{eq:rm} {1 \over \deg(u_m)} \leq r_m \leq {1 \over \deg(u_m; V_i)} \, .\end{equation}
	We do not necessarily know if $u_m$ is $(\alpha,\eps)$-good itself, and that is why it is not necessarily the case that the two bounds on $r_m$ are up to $(1+\LandauO(\eps))$ apart from each other. However, we will use the fact that $v_j$ is $(\alpha,\eps)$-good and use property~\ref{en:thirdJohn} of this definition.
	
	By~\eqref{eq:midcalc}, the probability that all other edges emanating from $v_j$ are closed, conditioned on this already occurring for $v_1,\ldots, v_{j-1}$, is
	$$ \prod _{m=1}^{\deg(v_j)} \Big [ 1- (1+\LandauO(\eps)) \big ( r_m + {1 \over \sum_{q=j}^\ell \deg(v_q) - (m-1) } \big ) \Big ] \, ,$$
	which equals
	\begin{equation}\label{eq:prod} \exp \big (-(1+\LandauO(\eps)) \sum_m r_m \big ) \cdot \exp \Big (-(1+\LandauO(\eps)) \sum_m {1 \over \sum_{q=j}^\ell \deg(v_q) - (m-1) }\Big ) \, .\end{equation}
	Since $v_j$ is $(\alpha,\eps)$-good, by property~\ref{en:thirdJohn} of the definition we learn that
	$$ \sum_{m} r_m = (1+\LandauO(\alpha^{1/2})) \sum_m {1 \over \deg(u_m)} =  (1+\LandauO(\alpha^{1/2})) b(v_j) \, .$$
	Property~\ref{en:fourthJohn} of the same definition asserts that $b(v_j) \leq \alpha^{-1/4}$, hence the first term in~\eqref{eq:prod} is
	$$\exp \big (-(1+\LandauO(\eps)) \sum_m r_m \big ) = (1+\LandauO(\alpha^{1/4})) e^{-b(v_j)} \, .$$
	
	To handle the second term of~\eqref{eq:prod} we note that
	$$  \sum_m {1 \over \sum_{q=j}^\ell \deg(v_q) - (m-1) } = \log \Big ( {\sum_{q=j} \deg(v_q) \over \sum_{q=j+1} \deg(v_q)  } \Big ) + \LandauO((\eps n)^{-1}) \, .$$
	Since $\{v_1, \ldots, v_\ell\}$ are $(\alpha,\eps)$-good we have that the ratio inside the logarithm is at most $1+\ell\eps^{-1}$ and so the second term of \eqref{eq:prod} equals
	$$ (1+\LandauO(\eps \log(\eps^{-1})))  { \sum_{q=j+1}^\ell \deg(v_q) \over  \sum_{q=j}^\ell \deg(v_q) } \, .$$
	We multiply these two terms of~\eqref{eq:prod} and get that~\eqref{eq:edgesnotinUST} holds. \end{proof}

Using Lemma~\ref{lem:discretemain} we may estimate the probability that $B_\T(X,r) \cong T$ ``purely'', that is, that $E(B_\T(X,r)) \cap E(G[V_i])$ is a rooted tree that is isomorphic to $T$. We denote this event by $B_\T(X,r) \congpure T$. Note that $E(B_\T(X,r)) \cap E(G[V_i])$ need not even be a tree, so the events $B_\T(X,r) \congpure T$ and $B_\T(X,r) \cong T$ can be very different.

\begin{corollary} \label{cor:congTpure} Assume the setting as introduced at the beginning of Section~\ref{sec:fixedtrees}, and let $\T$ be a UST of $G$. Fix $i\in [k]$ and let $X$ be a uniformly chosen random vertex of $V_i$. Then
	$$ \PROB(B_\T(X,r) \congpure T) = (1+\LandauO(\ell \alpha^{1/4})) \FREQ(T;G, i) \, ,$$
	where $\FREQ(T;G, i)$ is defined in Definition~\ref{def:freq}.
\end{corollary}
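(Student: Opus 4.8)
The plan is to decompose the event $\{B_\T(X,r)\congpure T\}$ into the events controlled by Lemma~\ref{lem:discretemain}, average over $X$, and then dispose of the contribution of ``impure'' embeddings. Fix $x\in V_i$. If $B_\T(x,r)\congpure T$ then $E(B_\T(x,r))\cap E(G[V_i])$ is a rooted tree $S$ of height $r$, rooted at $x$ and isomorphic to $T$; since all edges of $S$ lie in $G[V_i]$, all $\ell$ vertices of $S$ lie in $V_i$, and since $S\subseteq\T$ and $\T$ is a tree the $\T$-distances within $S$ agree with the $S$-distances, so $E(S)\subseteq B_\T(x,r)$ automatically. An edge of $G[V_i]$ can lie in $E(B_\T(x,r))\cap\T$ but not in $E(S)$ only if it issues from a vertex of $S$ at $\T$-distance at most $r-1$ from $x$, i.e.\ from one of the $p-1$ internal vertices of $S$. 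Hence $\{B_\T(x,r)\congpure T\}$ is the disjoint union over such subtrees $S$ of the events that $E(S)\subseteq\T$ and that every edge of $G[V_i]$ emanating from an internal vertex of $S$ other than those of $S$ is absent from $\T$; each $S$ corresponds to exactly $|\stab_T|$ tuples $(v_1,\dots,v_\ell)$ with $v_1=x$ compatible with $T$, the associated event being $\{B_\T(v_1,r)\congpure T(v_1,\dots,v_\ell)\}$ (the definition given before Lemma~\ref{lem:discretemain} applies verbatim to any compatible $\ell$-tuple with entries in $V_i$, not only $i$-pure ones). Averaging over the uniform $x\in V_i$ then gives
\[\PROB\big(B_\T(X,r)\congpure T\big)=\frac{1}{|V_i|\,|\stab_T|}\sum_{\substack{(v_1,\dots,v_\ell)\ \text{in}\ V_i\\ \text{compatible with}\ T}}\PROB\big(B_\T(v_1,r)\congpure T(v_1,\dots,v_\ell)\big)\,.\]

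Next I would split this sum according to whether the tuple is $i$-pure (Definition~\ref{def:ipure}). For $i$-pure tuples Lemma~\ref{lem:discretemain} identifies each summand with $(1+\LandauO(\ell\alpha^{1/4}))\exp(-\sum_{j=1}^{p-1}b(v_j))\cdot(\sum_{j=p}^\ell\deg(v_j))/\prod_{j=1}^\ell\deg(v_j)$; summing these and dividing by $|V_i||\stab_T|$ reproduces, term for term, the definition of $\FREQ(T;G,i)$ in Definition~\ref{def:freq}, so this part of the sum contributes $(1+\LandauO(\ell\alpha^{1/4}))\FREQ(T;G,i)$. It remains to bound the contribution $\Sigma$ of the non-$i$-pure tuples, namely those using at least one vertex of $V_i$ that fails to be $(\alpha,\eps)$-good.

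For $\Sigma$ I would use only crude bounds. Discarding the ``absent from $\T$'' requirements and conditioning on the tree edges one at a time via Proposition~\ref{prop:ustcond}(2), together with Kirchhoff's formula~\eqref{eq:kirchh} and the fact that contracting edges cannot increase effective resistances, one gets $\PROB(B_\T(v_1,r)\congpure T(v_1,\dots,v_\ell))\le\prod_{(q,t)\in E(T)}\RESISTANCE(v_q\lr v_t;G)\le\prod_{(q,t)\in E(T)}\RESISTANCE(v_q\lr v_t;G[V_i])$ by Rayleigh's monotonicity~\eqref{eq:Rayleigh}; since $G[V_i]$ is a $\gamma$-expander, Corollary~\ref{cor:effresinexpander} bounds each factor by $\LandauO(1)\,(\deg(v_q;V_i)^{-1}+\deg(v_t;V_i)^{-1})$. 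Summing this over the non-$i$-pure tuples by peeling leaves off $T$ — at a leaf $v_t$ with parent $v_q$ one meets $\sum_{v_t\sim v_q,\,v_t\in V_i}(\deg(v_q;V_i)^{-1}+\deg(v_t;V_i)^{-1})=1+\sum_{u\sim v_q,\,u\in V_i}\deg(u;V_i)^{-1}$, which is $\LandauO(1)$ because every vertex of the $\gamma$-expander $G[V_i]$ has degree $\LandauOmega(\gamma|V_i|)$ while $|V_i|=\LandauOmega(\alpha^{1/9}n)$ as $V_i$ is $(\alpha,\eps)$-big — reduces the sum to $\LandauO(1)$ per choice of the root $v_1$. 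Since an $(\alpha,\eps)$-big $V_i$ contains only a $\LandauO(\alpha^{1/8})$-fraction of non-good vertices, requiring one of the at most $\ell$ coordinates to be non-good cuts the corresponding subsum by $\LandauO(\ell\alpha^{1/8})$, whence $\Sigma=\LandauO(\ell\alpha^{1/8})\FREQ(T;G,i)$; this is of the same type (a fixed positive power of $\alpha$ times $\ell$, times the main term) as the error already incurred and is absorbed into the stated $\LandauO(\ell\alpha^{1/4})$ — if the exponent $1/4$ is to be kept on the nose, by a slightly finer accounting that also exploits that a low-degree non-good vertex inflates the denominator in Definition~\ref{def:freq}. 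Combining the two parts gives the corollary.

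The one real difficulty is this last step, the bound on $\Sigma$. The subtlety is that a big component $V_i$ may still carry a non-negligible fraction of non-good vertices, so they cannot simply be discarded; one must show that the mere presence of a single non-good vertex anywhere in the realizing tree costs a fixed small power of $\alpha$ relative to $\FREQ(T;G,i)$. This is exactly what the crude resistance estimates of Corollary~\ref{cor:effresinexpander} and Rayleigh's monotonicity~\eqref{eq:Rayleigh} afford — neither requires any goodness hypothesis — so that the only remaining work is the bookkeeping of the various powers of $\alpha,\eps$ and $\gamma$.
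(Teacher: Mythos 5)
The paper's own proof of this corollary is essentially a one-liner: it asserts that the event $\{B_\T(X,r)\congpure T\}$ is the (almost) disjoint union, over $i$-pure tuples, of the events $\{B_\T(v_1,r)\congpure T(v_1,\dots,v_\ell)\}$, and then invokes Lemma~\ref{lem:discretemain} together with the definition of $\FREQ(T;G,i)$. You correctly observe that this assertion silently discards realizations through tuples in $V_i$ containing a vertex that is not $(\alpha,\eps)$-good --- the event $\congpure$, as defined in the paragraph preceding the corollary, places no goodness requirement on the realizing tuple. So you decompose over all compatible $\ell$-tuples in $V_i$ and set out to bound the non-$i$-pure contribution $\Sigma$ separately. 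This is a legitimate concern and a more careful decomposition than what the paper records, but the bound you offer for $\Sigma$ has a genuine gap.

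The leaf-peeling factor $1+\sum_{u\sim v_q,\,u\in V_i}\deg(u;V_i)^{-1}$ is not $\LandauO(1)$. For a non-good parent $v_q$ the only generic bound available is $\deg(v_q;V_i)/(\gamma|V_i|)\le \gamma^{-1}$, and even for a good $v_q$, property~\ref{en:fourthJohn} of Definition~\ref{def:good} gives only $\LandauO(\alpha^{-1/4})$. The fact $|V_i|=\LandauOmega(\alpha^{1/9}n)$ is irrelevant here, since the ratio is $n$-free. As $\gamma\ll\alpha$ in the parameter hierarchy fixed at the start of Section~\ref{sec:fixedtrees}, the accumulated factor $\gamma^{-(\ell-1)}$ cannot be hidden inside $\LandauO(\ell\alpha^{1/4})$. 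Moreover, the passage from ``a $\LandauO(\alpha^{1/8})$-fraction of $V_i$ is bad'' to $\Sigma=\LandauO(\ell\alpha^{1/8})\FREQ(T;G,i)$ is unsupported: the tuple weights are not uniform, so constraining one coordinate to the bad set need not cut the sum by the same fraction; and what you have bounded is a sum of products of $\bigl(\deg(v_q;V_i)^{-1}+\deg(v_t;V_i)^{-1}\bigr)$ terms, which carries no $\exp(-b(\cdot))$ factors and a different numerator from the summand in $\FREQ(T;G,i)$, so no \emph{relative} bound against $\FREQ(T;G,i)$ can be read off. You also use that $V_i$ is $(\alpha,\eps)$-big, an assumption the corollary does not impose (it enters only in Lemma~\ref{lem:congTpurequenched}). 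The ``slightly finer accounting'' you defer to is in fact the entire missing argument.
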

\begin{proof}
	This is immediate since the event $E(B_\T(X,r)) \cap E(G[V_i])\cong T$ is the union over all $i$-pure tuples $(v_1,\ldots,v_\ell)$ of the event $B_\T(v_1,r)\congpure T(v_1,\ldots,v_\ell)$. These events are not disjoint, since for each automorphism of $\tau$ of $T$ that fixes the root, we can permute $(v_1,\ldots,v_\ell)$ according to $\tau$ and get the identical event. However, up to this invariance, the events are disjoint (which explains the factor of $|\stab_T|^{-1}$ in the definition of $\FREQ(T;G,i)$) and so Lemma \ref{lem:discretemain} gives the proof.
\end{proof}

Corollary~\ref{cor:congTpure} is an annealed statement, that is, the probability space is the product of the UST probability measure and an independent uniform vertex $X$. A quenched statement follows by a second moment argument. We will first need the following assertion.

\begin{lem}\label{lem:awayfromO} Suppose that $G$ is a connected graph on $n$ vertices and $S \subset V(G)$. Let $\T$ be a spanning tree of $G$ and let $X$ be a uniformly chosen vertex of some set $A\subset V(G)$. Then for any fixed rooted tree $T$ of height $r$ we have that
	$$ \PROB\left( B_\T(X,r) \cong T \andd V(B_\T(X,r)) \cap S \neq \emptyset\right) \leq |T|^2|A|^{-1}|S| \, .$$
	Furthermore, if in addition $G$ has some decomposition $V(G)=V_0\sqcup V_1\sqcup\ldots\sqcup V_k$, then for each $i\in[k]$,
	$$ \PROB\left( B_\T(X,r) \congpure T \andd V(B_\T(X,r)) \cap V_i \cap S \neq \emptyset\right) \leq |T|^2|A|^{-1}|S| \, .$$
\end{lem}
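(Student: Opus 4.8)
Since the only source of randomness is the uniform choice of $X\in A$, each displayed probability equals $|A|^{-1}$ times the number of $x\in A$ for which the event holds, so it suffices to prove the two deterministic bounds
$$\big|\{x\in A:\ B_\T(x,r)\cong T,\ V(B_\T(x,r))\cap S\neq\emptyset\}\big|\le |T|^2|S|$$
and its "pure'' analogue. The plan is to prove the first one by double counting over $S$; the second is identical and is discussed at the end. If $x\in A$ witnesses the event, then $B_\T(x,r)$ is a tree on exactly $|T|$ vertices (it is $\cong T$) and it contains some vertex of $S$ at $\T$-distance $\le r$ from $x$; writing $1\le\big|\{s\in S:\ s\in V(B_\T(x,r))\}\big|$ and summing over all such $x$, it is enough to show that for every fixed vertex $s\in V(G)$,
$$\big|\{x\in A:\ B_\T(x,r)\cong T,\ s\in V(B_\T(x,r))\}\big|\le |T|^2.$$

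For this local estimate I would first record the elementary observation that, since $x\in B_\T(x,r)$ always, the map $x\mapsto V(B_\T(x,r))$ has each fibre contained in its own value; as the relevant values have size $|T|$, the left-hand side above is at most $|T|$ times the number of \emph{distinct} vertex sets of $r$-balls that are isomorphic to $T$ and contain $s$. Next, a root-preserving isomorphism $B_\T(x,r)\cong T$ places $s$ at one of the at most $|T|$ vertices of $T$, fixing in particular $d:=\dist_\T(x,s)\le r$ and the structure of the subtree hanging below $s$. The main obstacle — and the real content of the lemma — is to show that, position by position, such balls are few: that a fixed vertex $s$ can be the depth-$d$ vertex, inside an induced $r$-ball isomorphic to the fixed finite tree $T$, of only boundedly many roots $x$. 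This is the one step that is not bookkeeping, and it is where one must genuinely exploit both that $T$ is a fixed finite tree and that $B_\T(x,r)$ is an \emph{induced} ball (so every vertex at depth $<r$ exposes its full $\T$-neighbourhood, which rigidly constrains how $x$ can sit relative to $s$ and hence how the walk from $s$ back to $x$ can branch). Carrying this out contributes the remaining factor of $|T|$ and yields $|T|^2$.

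Finally, the "furthermore'' clause follows by running the same argument with $\cong$ replaced by $\congpure$ and $S$ replaced by $S\cap V_i$: under the event $B_\T(X,r)\congpure T$ the set $E(B_\T(X,r))\cap E(G[V_i])$ is again a tree on $|T|$ vertices containing $X$, and any witness $s\in V(B_\T(X,r))\cap V_i\cap S$ still satisfies $\dist_\T(X,s)\le r$, so the identical double count over $S\cap V_i\subseteq S$ gives $|T|^2|S\cap V_i|\le |T|^2|S|$.
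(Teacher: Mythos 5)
Your reduction is the right one and matches the paper's: fix $s\in S$, bound by $|T|^2$ the number of $x\in A$ for which $B_\T(x,r)\cong T$ and $s\in V(B_\T(x,r))$, then sum over $s$ and divide by $|A|$. But the proposal stops exactly where the argument has to happen. You correctly isolate the crux --- bounding, for each target position $q\in V(T)$, the number of roots $x$ that would place $s$ at $q$ --- and you gesture at the right rigidity (that $T$ is fixed and that every vertex of $B_\T(x,r)$ at depth $<r$ exposes its full $\T$-neighbourhood). You then write only ``Carrying this out contributes the remaining factor of $|T|$ and yields $|T|^2$'', without carrying it out. That step is the entire nontrivial content of the lemma, so the proposal as written does not prove it; it is an assertion of the conclusion, not an argument.

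The paper's own treatment of the missing step is short: if $\phi:B_\T(x,r)\to T$ is a root-preserving isomorphism with $\phi(s)=q$, then the $T$-edge at $q$ pointing towards the root pulls back under $\phi$ to a definite $\T$-edge at $s$, and (the paper asserts) that choice of $\T$-edge determines the root $x$; there are at most $\deg_T(q)\le|T|$ candidate edges, giving at most $|T|$ roots per $q$ and hence $|T|^2$ per $s$. If you follow this route, make the count explicit and watch one point that your own phrasing hints at: the identification $\deg_\T(s)=\deg_T(q)$, which is what caps the number of candidate $\T$-edges at $s$ by a quantity depending only on $T$, is forced by the isomorphism only when $q$ has depth $<r$, since only then does $B_\T(x,r)$ contain all of $s$'s $\T$-neighbours. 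When $\phi(s)$ is a depth-$r$ leaf, $\deg_\T(s)$ is not controlled by $T$, and that boundary case needs separate handling. Also, the intermediate detour through fibres of $x\mapsto V(B_\T(x,r))$ adds a spurious factor of $|T|$ to your accounting and is best dropped; go directly to the per-$(s,q)$ count. (Your treatment of the ``furthermore'' clause via $S\cap V_i\subseteq S$ is fine once the first part is actually established.)
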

\begin{proof}
	We prove only the first statement; the second follows by the same logic. Fix $v \in S$. For each vertex $q$ of $T$, if $\T$ contains an induced copy of $T$ such that $v$ takes the place of $q$, then the probability that $B_\T(X,r) = T$ such that $v$ takes the place of $q$ is at most $\deg_T(q)/|A|$ since once we choose which edge of $\T$ that touches $v$ corresponds to the edge of $T$ that touches $q$ towards the root of $T$, then the corresponding root in $\T$ is chosen and $X$ has to be chosen precisely to be that root. We bound this probability by $|T|/|A|$ and use the union bound over the vertices $q$ of $T$ and $v$ of $S$.
\end{proof}

We are now ready to prove the quenched version of Corollary~\ref{cor:congTpure}. Observe that without the assumption that $V_i$ is $(\alpha,\eps)$-big $\FREQ(T;G,i)$ can be very small or event $0$ (for instance, $V_i$ could have size less than $\eps^5 n$ and if all the vertices of $V_0$ are connected to each vertex of $V_i$, then $V_i$ has no $(\alpha,\eps)$-good vertices at all). Thus, we cannot hope to have the have the concentration required for the following quenched statement without this assumption.

\begin{lem}\label{lem:congTpurequenched}
	Assume the setting as introduced at the beginning of Section~\ref{sec:fixedtrees}.
	Let $\T$ be a UST of $G$. Fix $i\in [k]$ and assume that $V_i$ is $(\alpha,\eps)$-big. Let $X$ be a uniformly chosen vertex of $V_i$. Then with probability at least $1-\LandauO(\ell \alpha^{1/8})$ the random tree $\T$ is such that
	$$ \PROB\left(B_\T(X,r) \congpure T\right) = (1+\LandauO(\alpha^{1/16})) \FREQ(T;G, i) \, .$$
\end{lem}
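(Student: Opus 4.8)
The plan is a second-moment argument over the randomness of $\T$, using Corollary~\ref{cor:congTpure} for the first moment and Lemma~\ref{lem:awayfromO} together with the spatial Markov property (Proposition~\ref{prop:ustcond}) for the second. Write $Z=Z(\T):=\PROB\big(B_\T(X,r)\congpure T\mid\T\big)$ for the quenched probability, so that Corollary~\ref{cor:congTpure} reads $\EXP_\T[Z]=(1+\LandauO(\ell\alpha^{1/4}))\FREQ(T;G,i)$. It suffices to prove $\Var_\T(Z)\le\LandauO(\ell\alpha^{1/4})(\EXP_\T Z)^2$; then Chebyshev's inequality with deviation $\alpha^{1/16}\EXP_\T Z$ gives $|Z-\EXP_\T Z|\le\alpha^{1/16}\EXP_\T Z$ off an event of probability $\LandauO(\ell\alpha^{1/4})/\alpha^{1/8}=\LandauO(\ell\alpha^{1/8})$, and on its complement $Z=(1+\LandauO(\alpha^{1/16}))\FREQ(T;G,i)$ (the $\ell$-factors absorbed, as $\alpha$ is small relative to $\ell=|T|$).

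A preliminary step, and the only place the hypothesis that $V_i$ is $(\alpha,\eps)$-big is used (cf.\ the remark preceding the lemma), is the lower bound $\FREQ(T;G,i)=\LandauOmega_{\alpha,\eps,\ell}(1)$. Since $V_i$ is big, $G[V_i]$ is a $\gamma$-expander with $e(G[V_i])=\LandauOmega(\alpha^{1/9}|V_i|n)$, which already forces $|V_i|=\LandauOmega(\alpha^{1/9}n)$, while only $\LandauO(\alpha^{1/8}|V_i|)$ of its vertices fail to be $(\alpha,\eps)$-good; as $\alpha^{1/9}\gg\alpha^{1/8}$ (recall $\alpha<1$), deleting the non-good vertices leaves a graph $H$ on the good vertices of $V_i$ with $e(H)=(1-\Landauo(1))e(G[V_i])=\LandauOmega(\alpha^{2/9}n^2)$. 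Passing to a $k$-core of $H$ with $k=\LandauOmega(\alpha^{2/9}n)$ yields a non-empty induced subgraph on $\LandauOmega(\alpha^{1/9}n)$ vertices, of minimum degree $\LandauOmega(\alpha^{2/9}n)$, all of whose vertices are $(\alpha,\eps)$-good; a greedy breadth-first embedding of $T$ into it produces $\LandauOmega_{\alpha,\ell}(n^\ell)$ distinct $i$-pure tuples compatible with $T$. Since every good vertex has degree in $[\LandauOmega(\eps n),\,n]$ and $b(\cdot)\le\alpha^{-1/4}$, each summand in Definition~\ref{def:freq} is at least $|V_i|^{-1}\LandauOmega_{\alpha,\eps}(n^{1-\ell})$, whence $\FREQ(T;G,i)\ge\ell!^{-1}\cdot\LandauOmega(n^\ell)\cdot n^{-1}\cdot\LandauOmega(n^{1-\ell})=\LandauOmega_{\alpha,\eps,\ell}(1)$.

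For the second moment, $\EXP_\T[Z^2]=\PROB\big(B_\T(X,r)\congpure T\andd B_\T(X',r)\congpure T\big)$ with $X,X'$ independent uniform in $V_i$, and we split according to whether the two pure copies share a vertex. Conditioning on $\T$ and on $X$ with $B_\T(X,r)\congpure T$, the pure copy occupies a fixed set $S\subset V_i$ with $|S|=\ell$, and the second part of Lemma~\ref{lem:awayfromO} (with $A=V_i$) bounds the conditional probability that $B_\T(X',r)\congpure T$ while $V(B_\T(X',r))\cap V_i$ meets $S$ by $\ell^3/|V_i|$; integrating, the intersecting part is at most $(\ell^3/|V_i|)\EXP_\T Z$. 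For the disjoint part it is enough to show, for vertex-disjoint $i$-pure tuples $\vec v,\vec v'$ compatible with $T$, that $\PROB(\mathcal A_{\vec v}\cap\mathcal A_{\vec v'})=(1+\LandauO(\ell\alpha^{1/4}))\PROB(\mathcal A_{\vec v})\PROB(\mathcal A_{\vec v'})$ with $\mathcal A_{\vec v}:=\{B_\T(v_1,r)\congpure T(\vec v)\}$; summing over disjoint pairs as in the proof of Corollary~\ref{cor:congTpure} then shows the disjoint part equals $(1+\LandauO(\ell\alpha^{1/4}))(\EXP_\T Z)^2$. For the factorization, condition on $\mathcal A_{\vec v}$: by Proposition~\ref{prop:ustcond}, $\T$ is then $T(\vec v)$ together with the UST of the multigraph $G^\ast$ obtained by contracting $T(\vec v)$ and deleting the $\LandauO(\ell n)$ forbidden edges leaving $v_1,\dots,v_{p-1}$. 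One checks that $G^\ast$ is still a $(\gamma/2)$-expander on $V_i$ with at most $2\ell$ parallel edges between any two vertices (via Proposition~\ref{prop:stillexpander}), and that $\vec v'$ is still $i$-pure and compatible with $T$ in $G^\ast$ — its degrees and the quantities of Definition~\ref{def:good} move by only $\LandauO(\ell)$, absorbed in the slack of the implied constants, as anticipated in the footnote to that definition. Hence Lemma~\ref{lem:discretemain} applied to $G^\ast$ gives $\PROB(\mathcal A_{\vec v'}\mid\mathcal A_{\vec v})=(1+\LandauO(\ell\alpha^{1/4}))\PROB(\mathcal A_{\vec v'})$.

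Combining the two parts, $\Var_\T(Z)\le\LandauO(\ell\alpha^{1/4})(\EXP_\T Z)^2+(\ell^3/|V_i|)\EXP_\T Z$; since $\EXP_\T Z=\LandauOmega_{\alpha,\eps,\ell}(1)$ by the first step and $|V_i|=\LandauOmega(n)$, the last term is $\Landauo(1)$ and hence dominated by the first for $n$ large, so $\Var_\T(Z)\le\LandauO(\ell\alpha^{1/4})(\EXP_\T Z)^2$ and the argument is complete. I expect the main obstacle to be the disjoint-pair factorization: it amounts to rerunning the entire electric-network induction of Lemma~\ref{lem:discretemain} inside the surgically altered multigraph $G^\ast$, verifying that contracting one small tree and deleting $\LandauO(\ell n)$ incident edges damages neither the $\gamma$-expansion of the component containing it nor the goodness of the vertices of $\vec v'$ by more than the $\LandauO(\ell\alpha^{1/4})$ tolerance can absorb.
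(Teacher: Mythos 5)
Your proposal is correct and follows essentially the same route as the paper: a second-moment/Chebyshev argument whose first moment is Corollary~\ref{cor:congTpure}, whose conditional step uses the spatial Markov property (Proposition~\ref{prop:ustcond}) to pass to the contracted multigraph $G^*$ with expansion and goodness preserved (Proposition~\ref{prop:stillexpander}), whose copies meeting the conditioned copy are controlled by Lemma~\ref{lem:awayfromO}, and whose use of the $(\alpha,\eps)$-big hypothesis is exactly the lower bound on $\FREQ(T;G,i)$. The only (cosmetic) difference is that you factorize per disjoint pair of tuples by comparing the Lemma~\ref{lem:discretemain} formulas in $G^*$ and $G$, whereas the paper aggregates via Corollary~\ref{cor:congTpure} in $G^*$ and then compares $\FREQ(T;G^*,i)$ with $\FREQ(T;G,i)$.
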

\begin{proof} We write by $Z=Z(\T)$ the $\T$-measurable random variable counting the number of vertices $v$
	of $V_i$ such that $B_\T(v,r) \congpure T$. Corollary \ref{cor:congTpure} is equivalent to the assertion that
	\begin{equation}\label{eq:1stmom} \EXP Z = (1+\LandauO(\ell \alpha^{1/4})) \FREQ(T;G, i) \cdot |V_i| \, .\end{equation}
	The second moment of $Z$ is
	\begin{equation}\label{eq:2ndmom} \EXP Z^2 = \sum_{v, v' \in V_i} \PROB\left( B_\T(v,r) \congpure T \andd B_\T(v',r) \congpure T\right) \, .\end{equation}
	
	We will show that for any $v\in V_i$,
	\begin{equation}\label{eq:condcalc} \sum _{v' \in V_i} \PROB\left( B_\T(v', r) \congpure T \, \mid B_\T(v,r) \congpure T \right) \leq (1+\LandauO(\ell \alpha^{1/4}))\FREQ(T;G,i)|V_i| \, .
	\end{equation}
	If we have this, then since Corollary \ref{cor:congTpure} gives that
	$$ \sum_{v \in V_i} \PROB\left( B_\T(v,r) \congpure T \right) = (1+\LandauO(\ell \alpha^{1/4})) \FREQ(T;G,i) \cdot |V_i| \, ,$$
	by putting both in \eqref{eq:2ndmom} we get that
	$$ \EXP Z^2 = (1+\LandauO(\ell \alpha^{1/4})) \big [ \FREQ(T;G, i) \cdot |V_i| \big ]^2 \, .$$
	Hence
	$$ \mathrm{Var}(Z) = \LandauO(\ell \alpha^{1/4}) [ \FREQ(T;G, i) \cdot |V_i| \big ]^2 \, ,$$
	and so by Chebychev's inequality we learn that
	$$ \PROB \left( |Z - \EXP Z| \geq \alpha^{1/16} \FREQ(T;G, i) \cdot |V_i| \right) \leq \LandauO(\ell \alpha^{1/8}) \, ,$$
	concluding the proof.
	
	To prove~\eqref{eq:condcalc} we fix $v \in V_i$ and condition on the event $B_\T(v,r) \congpure T$ and on the vertices $(v_1,\ldots,v_\ell)$ which form $B_\T(v,r)$. By Proposition~\ref{prop:ustcond} we contract the edges of $\T$ in $B_\T(v,r)$ and erase the edges we conditioned that are not in $\T$. Denote the resulting multigraph by $G^*$ and by $v^*$ the vertex to which the vertices $v_1,\ldots, v_\ell$ have been identified. We consider the loopless multigraph $G^*$ with the partition
	$$ V(G^*) = V_0\sqcup V_1\sqcup \ldots \sqcup V_{i-1}\sqcup V_i^* \sqcup V_{i+1}\sqcup \ldots\sqcup V_k \, ,$$
	where $V_i^*$ is $V_i$ with the vertices $v_1, \ldots, v_\ell$ replaced by $v^*$. We now claim that this partition is a $(\beta, \alpha, \gamma/2, \eps)$-good-decomposition (as in Definition~\ref{def:decomp}). First, by Proposition~\ref{prop:stillexpander} the graph $G^*[V_i^*]$ is still a $\gamma/2$-expander and so the partition $V_0\sqcup V_1\sqcup \ldots \sqcup V_{i-1}\sqcup V_i^* \sqcup V_{i+1}\sqcup \ldots\sqcup V_k$ is a $(\gamma/2,\eps^5,\eps^5)$-expander decomposition of $G^*$. Next we verify that the number of good vertices has not dropped by too much. Indeed, from each vertex $u \in V_i \setminus \{v_1, \ldots, v_\ell\}$ at most $\ell$ edges touching it were erased and hence it is immediate to check in Definition~\ref{def:good} that, as long as $n$ is large enough (in terms of $\eps$ and $\alpha$), if $u$ was $(\alpha,\eps)$-good in $G$, then it is $(\alpha,\eps)$-good in $G^*$ --- the constants may have changed, but they are swallowed in the $\LandauO(\cdot)$ and $\LandauOmega(\cdot)$ notation of Definition~\ref{def:good}.
	
	Thus we may apply Corollary~\ref{cor:congTpure} and obtain that
	\begin{equation}\label{eq:condcalcmid} \sum_{v' \in V_i^*} \PROB \left( B_\T(v',r) \congpure T \mid G^* \right) = (1+\LandauO(\ell \alpha^{1/4})) \FREQ (T; G^*, i)\cdot |V_i^*|\, .\end{equation}
	
	We wish to bound the sum in~\eqref{eq:condcalc} by the above sum, however, there are two important differences between the sums. The first is the difference in the input to the $\FREQ$ function.
    \begin{claim} If $V_i$ is $(\alpha,\eps)$-big, then
	$$c\gamma^{\ell+1} e^{-\ell/\gamma} \leq \FREQ(T; G, i) \leq \ell \gamma^{-\ell-1}\, ,$$
	for some universal constant $c>0$.
	\end{claim}
	\begin{claimproof} Since $V_i$ is $(\alpha,\eps)$-big we have at least $(1-\LandauO(\alpha^{1/8}))|V_i|$ vertices which are $(\alpha,\eps)$-good and these must span at least $\LandauOmega(\alpha^{1/9} |V_i|n)$ edges, since the number of edges of $G[V_i]$ touching non $(\alpha,\eps)$-good vertices of $V_i$ is at most $\LandauO(\alpha^{1/8}|V_i|n)$ and we also have $e(G[V_i])=\LandauOmega(\alpha^{1/9}|V_i|n)$. Thus the average degree of this graph is $d=\LandauOmega( \alpha^{1/9} n)$, and, by greedily removing vertices of degree $d/4$ we can obtain a subgraph of $G[V_i]$ of minimal degree $\LandauOmega( \alpha^{1/9} n)=\LandauOmega(\gamma n)$ such that all of its vertices are $(\alpha,\eps)$-good. Thus it is immediate to find $\LandauOmega(\gamma^\ell n^\ell)$ copies of the tree $T$. We deduce that the number of $\ell$-tuples counted in $\FREQ(T; G, i)$ is at least $\LandauOmega(\gamma^\ell n^\ell)$ and at most $n^\ell$. Since each vertex degree in $V_i$ is at most $n$ and at least $\gamma n$ we learn that each $\ell$-tuple contributes to $\FREQ(T; G,i)$ at most $\ell \gamma^{-\ell-1}n^{-\ell}$ and at least $\gamma e^{-\ell/\gamma} n^{-\ell}$ and the claim follows.
	\end{claimproof}

 By this claim, since $V_i$ and $V_i^*$ differ by $\ell-1$ vertices, and $\ell$-tuples of $V_i$ which one of vertices is in $\{v_1, \ldots, v_\ell\}$ can contribute at most $\LandauO(\ell \gamma^{-\ell-1} n^{-1})$ to $\FREQ(T;G, i)$ we learn that
	\begin{equation}\label{eq:freqs} \FREQ(T; G, i) = (1+ \LandauO(\ell \gamma^{-\ell-1} n^{-1})) \FREQ(T; G^*, i) \, ,\end{equation}
	and $n$ can be taken large enough to that the error in the $\LandauO$-notation above is $\LandauO(\ell \alpha^{1/4})$. This handles the first difference.
	
	The second difference is that $B_\T(v',r)$ may be isomorphic to $T$ in $G$ by using some of the edges in $T(v_1,\ldots,v_\ell)$ that were contracted to $v^*$ in $G^*$. In this case, it does \emph{not} necessarily hold that $B_\T(v',r) \cong T$ in $G^*$, so it is possible that this contribution to~\eqref{eq:condcalc} is large and not counted for in~\eqref{eq:condcalcmid}. We will show that this is not the case.
	
	We bound the LHS of~\eqref{eq:condcalc} as follows. The terms corresponding to $v' \in \{v_1,\ldots, v_\ell\}$ we bound by $1$ and get a negligible contribution of $\ell$. For any other $v'$ we split the event $B_\T(v',r)\congpure T$ according to whether $V(B_\T(v',r)) \cap V(B_\T(v,r)) = \emptyset$. This intersection is empty if and only if $v^* \not \in V(B_\T(v',r))$ in the graph $G^*$, and if it is empty, then it holds that $B_\T(v',r)\congpure T$ in $G^*$. Thus the LHS of~\eqref{eq:condcalc} is at most
	\begin{eqnarray*} \ell + \sum _{v' \in V_i^*} \PROB\left( B_\T(v', r) \congpure T \mid G^*\right) + \sum _{v' \in V_i^*} \PROB\left( B_\T(v', r) \congpure T \andd v^* \in V(B_\T(v',r)) \mid G^*\right) \, .\end{eqnarray*}
	The first term in the above is negligible, the second we bound using \eqref{eq:condcalcmid} and \eqref{eq:freqs} by the RHS of \eqref{eq:condcalc}. The third term we bound using Lemma~\ref{lem:awayfromO} applied on the graph $G^*$ with $S=\{v^*\}$ and $A=V_i^*$, giving the bound
	$$ \sum_{v' \in V_i^*} \PROB \left(B_\T(v',r) \congpure T \andd v^*\in V(B_\T(v',r)) \,\, \mid \,\, G^* \right) \leq \ell^2 \, ,$$
	which is also negligible. This completes the proof of~\eqref{eq:condcalc} and concludes the proof of the lemma.
\end{proof}

We now turn to estimating the event $B_\T(X,r) \cong T$ rather than the ``pure'' version of this event. To that aim we define
\begin{equation}\label{eq:defo} \O = \big \{ (x,y) \in \T : i(x) \neq i(y) \mathrm{\ or \ } x\in V_0 \mathrm{\ or \ } y \in V_0 \big \} \, ,\end{equation}
that is, $\O$ is the set of edges of $\T$ that are between components or contained in $V_0$. We first assert that this set cannot be too large.

\begin{lem}\label{lem:fewedgesbetweencomponents} Assume the setting as introduced at the beginning of Section~\ref{sec:fixedtrees}. Let $\T$ be a UST of $G$ and $\O \subset \T$ be defined in~\eqref{eq:defo}. Then
	$$ \EXP |\O| = \LandauO(\alpha^{1/4} n) \, .$$
\end{lem}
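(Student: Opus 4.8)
The plan is to bound $\EXP|\O|$ by splitting $\O$ into the edges of $\T$ touching $V_0$ and the edges of $\T$ that run between two distinct components $V_i, V_j$ with $i,j\in[k]$. For the first type, recall that $|V_0|\le \eps^5 n$ by property (G1) of the expander decomposition, and since $\T$ is a tree, the number of edges of $\T$ incident to $V_0$ is at most $2|V_0|\le 2\eps^5 n$, which is certainly $\LandauO(\alpha^{1/4}n)$ since $\eps\ll\alpha$; this bound is deterministic and requires no probabilistic input. The substance is therefore in controlling $\EXP$ of the number of ``crossing'' edges of $\T$, i.e. edges $e=(x,y)\in E(G)$ with $i(x)\neq i(y)$ and $x,y\notin V_0$ that happen to lie in $\T$.

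For this I would use Kirchhoff's formula~\eqref{eq:kirchh}, which gives $\PROB(e\in\T)=\RESISTANCE(x\leftrightarrow y)$ for each edge $e=(x,y)$, together with the elementary bound $\RESISTANCE(x\leftrightarrow y)\le \tfrac1{\deg(x)}+\tfrac1{\deg(y)}$ (the ``two stars'' lower-bound heuristic, here used as an upper bound on resistance via Dirichlet's principle~\eqref{eq:dirichlet} with the function that is $0$ at $x$, $1$ at $y$, and $\deg(y)/(\deg(x)+\deg(y))$ elsewhere — in fact even the cruder bound $\RESISTANCE(x\leftrightarrow y)\le 1$ coming from the single edge $e$ suffices for the crossing edges if one is careful, but the degree-reciprocal bound is cleaner). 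Then by linearity of expectation,
\begin{equation*}
\EXP\big|\{e\in\T: e\text{ crossing}\}\big| \;=\; \sum_{\substack{e=(x,y)\ \text{crossing}}}\RESISTANCE(x\leftrightarrow y)\;\le\;\sum_{\substack{e=(x,y)\ \text{crossing}}}\Big(\tfrac1{\deg(x)}+\tfrac1{\deg(y)}\Big)\,.
\end{equation*}
Now group the sum by which component the endpoint lies in: the right-hand side is at most $\sum_{i\in[k]}\sum_{x\in V_i}\tfrac{\deg(x;V\setminus V_i)}{\deg(x)}\cdot(\text{const})$, where $\deg(x;V\setminus V_i)$ counts the crossing edges at $x$. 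This is exactly the quantity estimated inside the proof of Lemma~\ref{lem:furtherdecomp}: there, combining~\eqref{eq:outedges} (which says $\sum_i e(V_i,V\setminus V_i)\le\eps^5 n^2$) with the fact that all but $(\alpha+\eps^2)n$ vertices of $V\setminus V_0$ have $\deg(x)\ge\eps n$ and $\deg(x;V\setminus V_{i(x)})\le\eps^2\deg(x)$, one gets $\sum_{x\in V\setminus V_0}\tfrac{\deg(x;V\setminus V_{i(x)})}{\deg(x)}=\LandauO(\alpha n)$. Since the crossing edges correspond precisely to these out-degrees, we obtain $\EXP|\{e\in\T:e\text{ crossing}\}|=\LandauO(\alpha n)$, and combining with the $V_0$ contribution gives $\EXP|\O|=\LandauO(\alpha n)$, which is $\LandauO(\alpha^{1/4}n)$ as required (the exponent $1/4$ is wasteful but harmless, and in fact matches the weakest estimate one needs downstream).

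The only mild subtlety — and the step I would be most careful about — is making sure the degree-reciprocal upper bound on $\RESISTANCE$ is legitimately an \emph{upper} bound on $\PROB(e\in\T)$, and that it is summable against the crossing structure in the way claimed: one must use $\RESISTANCE(x\leftrightarrow y)\le\min\{1,\tfrac1{\deg(x)}+\tfrac1{\deg(y)}\}$ (both valid by Dirichlet's principle) and then observe that $\sum_x\deg(x;V\setminus V_{i(x)})/\deg(x)$ already double-counts each crossing edge (once from each endpoint), so no factor is lost. Everything else is a direct appeal to~\eqref{eq:kirchh}, the already-derived estimate $\sum_{x}\deg(x;V\setminus V_{i(x)})/\deg(x)=\LandauO(\alpha n)$ from the proof of Lemma~\ref{lem:furtherdecomp}, and the trivial tree bound $|\{e\in\T\}\cap\{e\text{ touches }V_0\}|\le 2|V_0|$. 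No second-moment or concentration argument is needed here since only the expectation is asserted.
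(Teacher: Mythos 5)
Your proposal has a genuine gap at exactly the step you flagged as the delicate one: the inequality $\RESISTANCE(x\lr y)\le \frac1{\deg(x)}+\frac1{\deg(y)}$ is simply false in general, and Dirichlet's principle cannot give it. Plugging a test function into~\eqref{eq:dirichlet} bounds $\RESISTANCE(x\lr y)^{-1}$ from above, i.e.\ it yields a \emph{lower} bound on the resistance --- indeed the test function you describe ($0$ at $x$, $1$ at $y$, constant $\deg(y)/(\deg(x)+\deg(y))$ elsewhere) is precisely how the paper proves the matching lower bound in Lemma~\ref{lem:effresistinexpanderdecomp}. The paper's own ``two stars joined by an edge'' example (or two cliques joined by a bridge) shows the resistance of an edge can equal $1$ while the degree reciprocals are $\LandauO(1/n)$, so $\PROB(e\in\T)$ for a crossing edge cannot be bounded by degree reciprocals. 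Upper bounds of this shape are available only \emph{inside} an expander component (Corollary~\ref{cor:effresinexpander}, and the upper bound of Lemma~\ref{lem:effresistinexpanderdecomp}, which moreover uses the internal degrees $\deg(\cdot\,;V_i)$), and the crossing edges and edges at $V_0$ are exactly the ones with no expander structure to exploit; your fallback $\RESISTANCE\le 1$ only yields $\sum_{e\ \mathrm{crossing}}1\le\eps^5n^2$, which is useless. A second, smaller error: the deterministic claim that a tree has at most $2|V_0|$ edges incident to $V_0$ is false (one vertex of $V_0$ can have tree-degree $n-1$); only edges with \emph{both} endpoints in $V_0$ are at most $|V_0|$ in number.

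The paper avoids ever upper-bounding the probability of a crossing or $V_0$-edge. It runs your computation in the complementary direction: by Kirchhoff~\eqref{eq:kirchh} and the \emph{lower} bound of Lemma~\ref{lem:effresistinexpanderdecomp} (the valid direction of the degree-reciprocal inequality), $\EXP|\T\cap E(G[V_i])|\geq (1-\LandauO(\eps))\sum_{u\in V_i}\deg(u;V_i)/\deg(u)$; summing over $i$ and using property~\ref{en:secondJohn} of Definition~\ref{def:good} for the $(1-\LandauO(\alpha^{1/4}))n$ good vertices gives $\EXP\big|\T\cap\bigcup_i E(G[V_i])\big|\geq (1-\LandauO(\alpha^{1/4}))n$, and since $|\T|=n-1$ deterministically, the leftover $\EXP|\O|$ is $\LandauO(\alpha^{1/4}n)$. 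If you want to keep your outline, this complement trick is the fix: your ingredients (Kirchhoff, the degree-reciprocal resistance estimate, and the sum $\sum_u\deg(u;V_{i(u)})/\deg(u)$) are the right ones, but they must be applied to the internal edges, where the inequality goes the right way.
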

\begin{proof}
	Let us assume that $n$ is large enough as in Lemma \ref{lem:effresistinexpanderdecomp}. By Lemma \ref{lem:effresistinexpanderdecomp} and Kirchoff's formula \eqref{eq:kirchh} we have that
	$$ \EXP |\T \cap E(G[V_i])| \geq (1-\LandauO(\eps)) {1 \over 2} \sum_{\substack{u \in V_i}}\sum_{\substack{v \in V_i : v\sim u }} \big ({1 \over \deg(u)} + {1 \over \deg(v)} \big ) = (1-\LandauO(\eps)) \sum_{u \in V_i} {\deg(u; V_i) \over \deg(u)} \, .$$
	Hence
	$$ \EXP \left|\T \cap \bigcup_{i=1}^k E(G[V_i])\right| \geq (1-\LandauO(\eps)) \sum_{u \in V\setminus V_0} {\deg(u;  V_{i(u)}) \over \deg(u)} \, .$$
	For any $u$ that is $(\alpha,\eps)$-good we have by part~\ref{en:secondJohn} of Definition~\ref{def:good} that $\deg(u; V_{i(u)})\geq (1-\eps^2) \deg(u)$. Since at least $(1-\LandauO(\alpha^{1/4}))n$ vertices are $(\alpha,\eps)$-good we deduce that
	$$ \EXP \left|\T \cap \bigcup_{i=1}^k E(G[V_i])\right| \geq (1-\LandauO(\alpha^{1/4}))n \, .$$
	The proof is now completed since $|\T|=n-1$ with probability $1$.
\end{proof}

We now reach our final destination.

\begin{lem}\label{lem:congTquenched}
	Assume the setting as introduced at the beginning of Section~\ref{sec:fixedtrees}. Let $\T$ be a UST of $G$ and let $X$ be a uniformly chosen vertex of $G$. Then with probability at least $1-\LandauO(\ell \alpha^{1/8})$ for the random tree $\T$ we have
	$$ \PROB\left(B_\T(X,r) \cong T\right) = (1+\LandauO(\ell^2 \alpha^{1/16})) \FREQ(T;G) + \LandauO(\ell^2 \beta^{1/8}) \, .$$
\end{lem}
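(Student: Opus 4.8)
The plan is to reduce the statement about the event $B_\T(X,r)\cong T$ (for $X$ uniform in all of $V(G)$) to the already-established ``pure'' quenched estimate of Lemma~\ref{lem:congTpurequenched} (for $X$ uniform in a single big component $V_i$), paying only the error terms coming from the exceptional edge set $\O$ and from components that are not $(\alpha,\eps)$-big. First I would observe that, by Proposition~\ref{prop:goodvis}, the big components cover all but $\LandauO(\beta^{1/8})n$ vertices, so $\PROB(X\in V_i\text{ for some }(\alpha,\eps)\text{-big }V_i)=1-\LandauO(\beta^{1/8})$; conditioning on this event changes probabilities by at most $\LandauO(\beta^{1/8})$, which is absorbed in the additive $\LandauO(\ell^2\beta^{1/8})$ term. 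Thus it suffices to compare, for each big $V_i$ and $X$ uniform in $V_i$, the quantity $\PROB(B_\T(X,r)\cong T)$ with $\PROB(B_\T(X,r)\congpure T)$, and then average over $i$ with weights $|V_i|/n$ as in~\eqref{eq:mobil}.

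Next I would pin down the discrepancy between $B_\T(X,r)\cong T$ and $B_\T(X,r)\congpure T$. The event $\congpure$ requires the radius-$r$ ball to look like $T$ inside $E(G[V_i])$, i.e.\ using no edges of $\O$ and no vertices outside $V_i$; if $v=X\in V_i$ and $V(B_\T(v,r))$ avoids both $\O$ and $V_0$ and stays inside $V_i$, then $B_\T(v,r)\cong T$ if and only if $B_\T(v,r)\congpure T$. Hence the symmetric difference of the two events (for a fixed root $v\in V_i$) is contained in the event that $B_\T(v,r)$ contains an edge of $\O$ or touches a vertex of another component --- but a ball of radius $r$ that leaves $V_i$ must, walking outwards from $v$, cross an edge of $\O$, so in fact the symmetric difference is contained in $\{V(B_\T(v,r))\cap V(\O)\neq\emptyset\}$ where $V(\O)$ is the (at most $2|\O|$) endpoints of $\O$-edges. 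I would then apply Lemma~\ref{lem:awayfromO} with $A=V_i$ and $S=V(\O)$ to get, for each big $V_i$,
\begin{equation*}
\big|\PROB(B_\T(X,r)\cong T)-\PROB(B_\T(X,r)\congpure T)\big|\le \frac{|T|^2\,|V(\O)|}{|V_i|}\le \frac{2\ell^2|\O|}{|V_i|}\,,
\end{equation*}
valid on the probability space of $X$ for every fixed spanning tree $\T$.

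Then I would put things together. By Lemma~\ref{lem:fewedgesbetweencomponents}, $\EXP|\O|=\LandauO(\alpha^{1/4}n)$, so by Markov's inequality, with probability $1-\LandauO(\alpha^{1/8})$ over $\T$ we have $|\O|=\LandauO(\alpha^{1/8}n)$; on this event, for a big $V_i$ (so $|V_i|=\LandauOmega(\eps^5 n)$, hence $\ell^2|\O|/|V_i|=\LandauO(\ell^2\alpha^{1/8}\eps^{-5})$, and since $\alpha\gg\eps$ this is $\LandauO(\ell^2\alpha^{1/16})$). Simultaneously, for each big $V_i$ apply Lemma~\ref{lem:congTpurequenched}: off an event of probability $\LandauO(\ell\alpha^{1/8})$ (per $i$; there are $\LandauO(\gamma^{-1})$ components, a constant, so a union bound keeps the total failure probability $\LandauO(\ell\alpha^{1/8})$ after adjusting constants --- alternatively apply Lemma~\ref{lem:congTpurequenched} only to the big components and note there are boundedly many) we get $\PROB(B_\T(X,r)\congpure T)=(1+\LandauO(\alpha^{1/16}))\FREQ(T;G,i)$. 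Combining the last two displays and the size bound $\FREQ(T;G,i)=\LandauO(\ell\gamma^{-\ell-1})$ from the claim inside the proof of Lemma~\ref{lem:congTpurequenched}, the additive error $\LandauO(\ell^2\alpha^{1/16})$ is converted into a relative error $\LandauO(\ell^2\alpha^{1/16})$ times $\FREQ(T;G,i)$ (enlarging constants), so $\PROB_X(B_\T(X,r)\cong T\mid X\in V_i)=(1+\LandauO(\ell^2\alpha^{1/16}))\FREQ(T;G,i)$. Finally I average over the big components with weights $|V_i|/n$, recall $\sum_{V_i\text{ big}}|V_i|/n=1-\LandauO(\beta^{1/8})$, and recognize the weighted sum of $\FREQ(T;G,i)$ as $\FREQ(T;G)$ by~\eqref{eq:mobil}, which yields the claimed identity with the stated error terms.

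The main obstacle I expect is the bookkeeping of error terms of several different orders ($\alpha^{1/16}$, $\beta^{1/8}$, and the $\ell$-powers) and making sure the additive-versus-relative conversion is clean --- in particular that the contribution of vertices in $V_0$ or in small components is genuinely additive $\LandauO(\ell^2\beta^{1/8})$ and not multiplicative, and that on the small-$|\O|$ event the per-component errors $\ell^2|\O|/|V_i|$ really are dominated by $\alpha^{1/16}$ given the hierarchy $\beta\gg\alpha\gg\eps\gg\gamma\gg\xi\gg n^{-1}$. The geometric point that a radius-$r$ ball leaving $V_i$ must contain an $\O$-edge is easy but worth stating explicitly so that the symmetric-difference bound via Lemma~\ref{lem:awayfromO} is airtight.
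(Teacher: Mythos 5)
Your overall strategy is the paper's: restrict to $(\alpha,\eps)$-big components via Proposition~\ref{prop:goodvis} (an additive $\LandauO(\beta^{1/8})$), invoke Lemma~\ref{lem:congTpurequenched} on each big component, control $|\O|$ by Lemma~\ref{lem:fewedgesbetweencomponents} plus Markov, and bound the discrepancy between $\cong$ and $\congpure$ through Lemma~\ref{lem:awayfromO} with $S=V(\O)$; the geometric point that a ball of $\T$ leaving $V_i$ must use an $\O$-edge is also exactly the paper's.

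The gap is in the quantitative handling of the $\cong$ versus $\congpure$ discrepancy. First, your per-component bound $\ell^2|\O|/|V_i|=\LandauO(\ell^2\alpha^{1/8}\eps^{-5})=\LandauO(\ell^2\alpha^{1/16})$ ``since $\alpha\gg\eps$'' runs the hierarchy backwards: $\eps\ll\alpha$ makes $\eps^{-5}$ enormous compared with every power of $\alpha$ (e.g.\ $\eps<\alpha^{20}$ is consistent with the setting), so $\alpha^{1/8}\eps^{-5}$ is not small; moreover $|V_i|=\LandauOmega(\eps^5 n)$ is not what bigness gives you --- the usable lower bound is $|V_i|=\LandauOmega(\alpha^{1/9}n)$ (from $e(G[V_i])\ge\LandauOmega(\alpha^{1/9}|V_i|n)$ and $e(G[V_i])\le|V_i|^2$), which only yields $\LandauO(\ell^2\alpha^{1/72})$, too weak for an $\alpha^{1/16}$-sized term. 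Second, converting this additive error into a relative error ``times $\FREQ(T;G,i)$'' is not justified: for that you need a \emph{lower} bound on $\FREQ(T;G,i)$, and the only one available (from the Claim inside the proof of Lemma~\ref{lem:congTpurequenched}) is $c\gamma^{\ell+1}e^{-\ell/\gamma}$; since $\gamma\ll\alpha$, dividing by it multiplies your error by $\gamma^{-\ell-1}e^{\ell/\gamma}$, which swamps any power of $\alpha$ (you actually quoted the upper bound $\ell\gamma^{-\ell-1}$, which cannot play this role). The repair is not to convert at all: the statement carries the additive slack $\LandauO(\ell^2\beta^{1/8})$ precisely for this purpose. As in the paper, first sum the pure estimates over big components to get $\PROB(B_\T(X,r)\congpure T)=(1+\LandauO(\ell\alpha^{1/16}))\FREQ(T;G)+\LandauO(\beta^{1/8})$ with $X$ uniform on $V(G)$, and then apply Lemma~\ref{lem:awayfromO} once, globally, with $A=V(G)$ and $S=V(\O)$ on the event $|\O|\le\alpha^{1/8}n$; this bounds $\big|\PROB(B_\T(X,r)\cong T)-\PROB(B_\T(X,r)\congpure T)\big|$ by $\LandauO(\ell^2\alpha^{1/8})\le \LandauO(\ell^2\beta^{1/8})$ additively, which is exactly what the claimed formula requires.
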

\begin{proof} Let $\O$ be defined in~\eqref{eq:defo}. We apply Lemma~\ref{lem:congTpurequenched}, together with Lemma~\ref{lem:fewedgesbetweencomponents} and Markov's inequality, and get that with probability at least $1-\LandauO(\ell \alpha^{1/8})$ the random tree $\T$ satisfies
	\begin{enumerate}
		\item $|\O| \leq \alpha^{1/8} n$, and
		\item\label{itemdva} For each $i\in[k]$ for which $V_i$ is $(\alpha,\eps)$-big we have that
		$$\PROB\left(B_\T(X,r) \congpure T \right) = (1+\LandauO(\ell \alpha^{1/16})) \FREQ(T;G, i) \, ,$$
		where $X$ is a uniformly chosen vertex of $V_i$.
	\end{enumerate}
	Let $X$ be a uniformly chosen vertex of $G$. By Proposition~\ref{prop:goodvis} the probability that $X$ is in either $V_0$ or in some $V_i$ that is \emph{not} $(\alpha,\eps)$-big is at most $\LandauO(\beta^{1/8})$. Conditioned on $X \in V_i$ we have that $X$ is uniform random vertex of $V_i$. Hence we may sum item~\eqref{itemdva} above over these $i$'s and get that with probability at least $1-\LandauO(\ell \alpha^{1/8})$ the random tree $\T$ satisfies
	$$\PROB(B_\T(X,r) \congpure T) = (1+\LandauO(\ell \alpha^{1/16})) \FREQ(T;G) + \LandauO(\beta^{1/8}) \, ,$$
	by definition of $\FREQ(T;G)$.
	
	Assume $\T$ satisfies these. If $B_\T(X,r) \cong T$ but not $B_\T(X,r) \congpure T$, then we must have that $B_\T(X,r) \cong T$ and $B_\T(X,r) \cap V(\O) \neq \emptyset$. Similarly, if $B_\T(X,r) \congpure T$ but not $B_\T(X,r) \cong T$, then we similarly must have that $B_\T(X,r) \congpure T$ and $B_\T(X,r) \cap V(\O) \neq \emptyset$. Since $|\PROB(A) - \PROB(B)|\leq \PROB(A\setminus B) + \PROB(B\setminus A)$ we have that
	\begin{eqnarray*} \left| \PROB\left(B_\T(X,r) \cong T\right) - \PROB\left(B_\T(X,r) \congpure T\right) \right| = \LandauO(\ell^2 \beta^{1/8}) \, ,\end{eqnarray*}
	where the last inequality is due to Lemma~\ref{lem:awayfromO} with $A=V(G)$ and $S=V(\O)$.
\end{proof}

\section{Proof of Theorems ~\ref{thm:mainthm2} and \ref{thm:main}}\label{sec:proofmain}
\subsection{Deriving Theorem~\ref{thm:mainthm2} from Lemma~\ref{lem:congTquenched} via anatomies}
\newcommand{\centrum}{\mathrm{center}}
In this section, we deduce Theorem~\ref{thm:mainthm2} from Lemma~\ref{lem:congTquenched}. The main technical result of this section, Lemma~\ref{lem:freqContinuous}, says that the quantity $\FREQ$ is continuous in a certain robust sense. To prove Lemma~\ref{lem:freqContinuous}, we need to group the elements of $\Omega$ for a given graphon $W:\Omega^2\rightarrow[0,1]$ to groups with similar degrees. Actually, we need a recursive refinement of this, as follows. We call the above partition of $\Omega$ according to the degrees \emph{ anatomy of depth 1}. Now, having defined an anatomy of depth $d$, \emph{anatomy of depth $d+1$} is a decomposition of $\Omega$ into groups in which elements have approximately similar degrees into every individual cell of the anatomy of depth $d$. Let us make this precise.

Suppose that $h\in\bN$ and $S$ is a finite set. Let $\mathfrak C(h,S)$ be the partition of $[0,1]^S$ into Voronoi cells generated by points $p\in \{0,\frac1h,\ldots,\frac{h-1}h,1\}^S$ (we assign each boundary point to one arbitrary neighboring cell to break ties). Note that these cells form $(h+1)^{|S|}$ cubes of the form $\prod_{i\in S} \left[\max\{0,\frac{2r_i-1}{2h}\},\min\{1,\frac{2r_i+1}{2h}\}\right]$, for some $\big\{r_i\in\{0,\ldots,h\}\big\}_{i\in S}$.\footnote{Strictly speaking, when some but not all coordinates $r_i$ are $0$ or $h$, these are not cubes but rectangular prisms. This is however not important.} In the degenerate case $S=\emptyset$, we define $\mathfrak C(h,S):=\{\emptyset\}$. We call the points $p$ the {\bf centers of the cells}. When $C\in \mathfrak C(h,S)$ is a Voronoi cell with center $p=(p_i)_{i\in S}$, for $i\in S$ we write $center_i(C):=p_i$.

Let us write $\mathtt{b}$ for an (abstract) element. Below, the only purpose of $\mathtt{b}$ will be to refer to a coordinate that will have to do with the function $b_W(\cdot)$.

Now, suppose that $d\in\bN$ and $\mathbf{h}\in\bN^d$. For $t\in[d]$, we write $\mathbf{h}_t$ for the $t$-coordinate of $\mathbf{h}$, and $\mathbf{h}\llbracket t\rrbracket$ for the $t$-dimensional vector obtained from $\mathbf{h}$ by removing the last $d-t$ components. For $h\in\bN$, let $\mathfrak D_{h,0}:=\mathfrak C(h,\emptyset)$ and for $d\in\bN$ and $\mathbf{h}\in\bN^d$ let $\mathfrak D_{\mathbf{h}}:=\mathfrak C\left(\mathbf{h}_d,\mathfrak D_{\mathbf{h}\llbracket d-1\rrbracket}\sqcup\{\mathtt{b}\}\right)$. We have
\begin{equation}\label{eq:VorCellBounded}
\sum_{t=0}^d\left|\mathfrak D_{\mathbf{h\llbracket t\rrbracket}}\right|= \hbar(\mathbf{h})\;,
\end{equation}
for a suitable tower-function $\hbar(\cdot)$.

Suppose $W:\Omega^2\rightarrow[0,1]$ is a graphon and $h\in\bN$ is arbitrary. Let $\mathcal A_\emptyset:=\Omega$. We call $\{\mathcal A_\emptyset\}=\{\mathcal A_C\}_{C\in\mathfrak D_{h,0}}$ {\bf anatomy of $W$ of depth 0}. Suppose that $d\in\bN$, $\mathbf{h}\in\bN^d$ and that we already know the anatomy $\{\mathcal A_C\}_{C\in\mathfrak D_{\mathbf{h}\llbracket d-1\rrbracket}}$ of $W$ of depth $d-1$, which is a partition of $\Omega$. Now, for each $\omega\in\Omega$ we consider the $|\mathfrak D_{\mathbf{h}\llbracket d-1\rrbracket}|$-tuple of degrees
$$\mathbf{deg}_{d}(\omega):=\left(\deg_W(\omega,\mathcal A_C)\right)_{C\in \mathfrak D_{\mathbf{h}\llbracket d-1\rrbracket}}\;.$$
Then for each $F\in \mathfrak D_{\mathbf{h}}$ we define $\mathcal A_F:=(\mathbf{deg}_{d})^{(-1)}(F)\cap (\exp(-b_W(\cdot)))^{(-1)}(F)$. In words, each cell $\mathcal A_F\subset \Omega$ has the property that for each $C\in \mathfrak D_{\mathbf{h}\llbracket d-1\rrbracket}$ and for each $\omega\in\mathcal A_F$ we have that
\begin{equation}\label{eq:propertyanatomies}
\deg_W(\omega,\mathcal A_C)=center_C(F)\pm\tfrac{1}{2\mathbf{h}_d}
\quad\mbox{and}\quad
\exp(-b_W(\omega))=center_{\texttt{b}}(F)\pm\tfrac{1}{2\mathbf{h}_d}
\;.
\end{equation}
We call $\{\mathcal A_F\}_{F\in\mathfrak D_{\mathbf{h}}}$  the {\bf anatomy of $W$ of depth $d$}.
Obviously, $\{\mathcal A_F\}_{F\in\mathfrak D_{\mathbf{h}}}$ is a partition of $\Omega$.
Consider an arbitrary $\omega\in\mathcal A_F$. Summing~\eqref{eq:propertyanatomies} over all $C\in \mathfrak D_{\mathbf{h}\llbracket d-1\rrbracket}$ (for which~\eqref{eq:VorCellBounded} tells us that there are $\hbar(\mathbf{h}\llbracket d-1\rrbracket)$ summands), we get
\begin{equation}
\label{eq:deganatomy}
\deg_W(\omega)=\sum_{C\in \mathfrak D_{\mathbf{h}\llbracket d-1\rrbracket}}center_C(F)\; \pm \tfrac{\hbar(\mathbf{h}\llbracket d-1\rrbracket)}{2\mathbf{h}_d}\;.
\end{equation}
For this reason, we shall call the number $\sum_{C\in \mathfrak D_{\mathbf{h}\llbracket d-1\rrbracket}}center_C(F)$ the {\bf degree of $\mathcal A_F$} (in $W$), and denote it by $\deg^{\mathrm{anat}}_W(\mathcal A_F)$.

Suppose $U:\Omega^2\rightarrow[0,1]$ and $X\subset \Omega$. Suppose that $d\in\bN$, $\mathbf{h}\in\bN^d$, and $\kappa\ge 0$ are given. Let $\mathcal B_\emptyset:=\Omega$ and for each $t\in[d]$ let $\{\mathcal B_C\}_{C\in\mathfrak D_{\mathbf{h}\llbracket t\rrbracket}}$ be a partition of $\Omega$. Suppose that for each $t\in[d]$, each $F\in \mathfrak D_{\mathbf{h}\llbracket t\rrbracket}$, each $C\in \mathfrak D_{\mathbf{h}\llbracket t-1\rrbracket}$ and each $\omega\in \mathcal B_F\setminus X$ we have
\begin{equation}
\left|\deg_U(\omega,\mathcal B_C)-center_C(F)\right|\le \tfrac1{2\mathbf{h}_t}+\kappa
\quad\mbox{and}\quad
\left|\exp(-b_U(\omega))-center_{\texttt{b}}(F)\right|\le \tfrac{1}{2\mathbf{h}_d}+\kappa
\;.
\end{equation}
We then say that $\left\{\{\mathcal B_F\}_{F\in\mathfrak D_{\mathbf{h}\llbracket t\rrbracket}}\right\}_{t=0}^{d}$ are {\bf $\kappa$-approximate anatomies of $U$ up to depth $d$ with exceptional set $X$}.

Note that when we take $\kappa=0$ and $X=\emptyset$ then we recover the notion of anatomies.

\bigskip

Our next lemma says that two graphons that are close in the cut-distance have similar anatomies.
\begin{lem}\label{lem:anatomiescontNEW}
	Suppose that $W:\Omega^2\rightarrow [0,1]$ is a nondegerate graphon and $d\in\bN$, $\mathbf{h}\in\bN^d$ are arbitrary. Let $\left\{\{\mathcal A_F\}_{F\in\mathfrak D_{\mathbf{h}\llbracket t\rrbracket}}\right\}_{t=0}^{d}$ be the anatomies of $W$ up to depth $d$.
	For every $\kappa>0$ there exists a number $\delta>0$ such that the following holds for every graphon $U:\Omega^2\rightarrow [0,1]$ with $\|W-U\|_\square<\delta$. There exists a set $X\subset \Omega$ of measure at most $\kappa$ so that $\left\{\{\mathcal A_F\}_{F\in\mathfrak D_{\mathbf{h}\llbracket t\rrbracket}}\right\}_{t=0}^{d}$ are $\kappa$-approximate anatomies of $U$ up to depth $d$ with exceptional set $X$.
\end{lem}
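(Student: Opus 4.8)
The plan is to keep the partitions $\{\mathcal A_F\}$ themselves fixed --- they are the \emph{genuine} anatomy of $W$, so the exact inequalities~\eqref{eq:propertyanatomies} hold for $W$ at every depth $t\in[d]$ --- and to produce a small set $X$ off which the $U$-quantities are uniformly $\kappa$-close to the corresponding $W$-quantities. It suffices to find $X$ with $\mu(X)\le\kappa$ so that for every $\omega\in\Omega\setminus X$ we have $|\deg_U(\omega,\mathcal A_C)-\deg_W(\omega,\mathcal A_C)|\le\kappa$ for each cell $\mathcal A_C$ with $C\in\bigcup_{t=1}^{d}\mathfrak D_{\mathbf h\llbracket t-1\rrbracket}$, and $|b_U(\omega)-b_W(\omega)|\le\kappa$; the latter yields $|\exp(-b_U(\omega))-\exp(-b_W(\omega))|\le\kappa$ since $x\mapsto e^{-x}$ is $1$-Lipschitz on $[0,\infty)$ and $b_U,b_W\ge0$, and then the triangle inequality turns~\eqref{eq:propertyanatomies} for $W$ into the defining inequalities of ``$\kappa$-approximate anatomies of $U$ up to depth $d$ with exceptional set $X$''. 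Two elementary consequences of $\|W-U\|_\square\le\delta$ will be used throughout: (i) for every measurable $A\subset\Omega$, $\|\deg_W(\cdot,A)-\deg_U(\cdot,A)\|_1\le2\delta$, obtained by applying the definition of the cut-norm with $T=A$ and $S$ equal to the set where $\deg_W(\cdot,A)\ge\deg_U(\cdot,A)$ and then to its complement; and (ii) for every measurable $w\colon\Omega\to[0,M]$ and every measurable $S$, $\big|\int_S\int_\Omega (W-U)(\omega,y)\,w(y)\,\mathrm dy\,\mathrm d\omega\big|\le M\delta$, obtained by writing $w=\int_0^M\mathbbm 1_{\{w>s\}}\,\mathrm ds$ and applying the cut-norm bound to each level set.

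The degree coordinates are routine. By~\eqref{eq:VorCellBounded} there are at most $\hbar(\mathbf h)$ cells $\mathcal A_C$ in play. By (i) and Markov's inequality, the set of $\omega$ at which $|\deg_U(\omega,\mathcal A_C)-\deg_W(\omega,\mathcal A_C)|>\kappa$ for at least one such cell has measure at most $2\hbar(\mathbf h)\delta/\kappa$; we include this set in $X$, and it is $\le\kappa/3$ once $\delta\le\kappa^2/(6\hbar(\mathbf h))$.

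The genuinely delicate coordinate is $b$, because $\deg_W(y)$ and $\deg_U(y)$ occur in denominators and can be arbitrarily small; this is exactly where nondegeneracy of $W$ enters. Since $\deg_W>0$ a.e., $\mu(\{\deg_W<\rho\})\to0$ as $\rho\to0^+$, so we first fix $\rho=\rho(W,\kappa)>0$ with $\mu(\{\deg_W<\rho\})$ as small as we wish, and then take $\delta\ll\rho\kappa^2$; by (i) with $A=\Omega$, $\mu(\{\deg_U<\rho/2\})\le\mu(\{\deg_W<\rho\})+4\delta/\rho$, so $Y:=\{y:\deg_W(y)\ge\rho,\ \deg_U(y)\ge\rho/2\}$ has $\eta:=\mu(\Omega\setminus Y)$ arbitrarily small. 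Split $b_W(\omega)=b_W^{Y}(\omega)+b_W^{\mathrm{bad}}(\omega)$ with $b_W^{Y}(\omega):=\int_Y\frac{W(\omega,y)}{\deg_W(y)}\,\mathrm dy$, and likewise $b_U=b_U^{Y}+b_U^{\mathrm{bad}}$. Fubini gives $\int_\Omega b_W^{\mathrm{bad}}\le\eta$ and $\int_\Omega b_U^{\mathrm{bad}}\le\eta$ (each equals the measure of the part of $\Omega\setminus Y$ on which the relevant degree is positive), so by Markov the set where $b_W^{\mathrm{bad}}(\omega)>\kappa/4$ or $b_U^{\mathrm{bad}}(\omega)>\kappa/4$ has measure $\le8\eta/\kappa$; we add it to $X$. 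For the surviving difference we use on $Y$ the identity $\frac{W}{\deg_W}-\frac{U}{\deg_U}=\frac{W-U}{\deg_W}+U\cdot\frac{\deg_U-\deg_W}{\deg_W\deg_U}$. Integrating the first summand over $y\in Y$ defines a function of $\omega$ whose $L^1$-norm is $\le2\delta/\rho$ by (ii) with $w=\mathbbm 1_Y/\deg_W\le1/\rho$, so Markov puts the $\omega$ where it exceeds $\kappa/8$ into $X$ at cost $\le16\delta/(\rho\kappa)$; the second summand is bounded \emph{uniformly in $\omega$}, using $1/(\deg_W(y)\deg_U(y))\le2/\rho^2$ on $Y$, $|U|\le1$ and (i), by $\tfrac2{\rho^2}\|\deg_W-\deg_U\|_1\le4\delta/\rho^2$. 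Summing the contributions, $|b_W(\omega)-b_U(\omega)|\le\kappa$ for every $\omega\notin X$ provided $4\delta/\rho^2\le\kappa/8$.

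Finally one collects the constraints on $\delta$: choose $\rho=\rho(W,\kappa)$ first (the only use of nondegeneracy), then $\delta=\delta(W,\mathbf h,\kappa)$ small enough that each of $2\hbar(\mathbf h)\delta/\kappa$, $8\eta/\kappa$, $16\delta/(\rho\kappa)$ is $\le\kappa/3$ and $4\delta/\rho^2\le\kappa/8$. Then $\mu(X)\le\kappa$ and every $\omega\in\Omega\setminus X$ satisfies the required inequalities, which is the assertion. I expect the only real obstacle to be the $b$-term: one must resist bounding $1/\deg_W(y)$ globally and instead localize to the region $Y$ on which both degrees are bounded below, paying for the complementary region through the observation that $\int_\Omega b_W^{\mathrm{bad}}$ and $\int_\Omega b_U^{\mathrm{bad}}$ are both at most the small number $\mu(\Omega\setminus Y)$, so an $L^1$/Markov argument suffices there.
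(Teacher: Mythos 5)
Your proposal is correct, and its skeleton coincides with the paper's proof of Lemma~\ref{lem:anatomiescontNEW}: keep the exact anatomy cells of $W$, reduce the statement to making $\deg_U(\cdot,\mathcal A_C)$ close to $\deg_W(\cdot,\mathcal A_C)$ for the boundedly many cells and $b_U$ close to $b_W$ off a set of small measure, and use nondegeneracy only to exclude a small set of low-degree points when treating $b$. Your per-cell $L^1$/Markov bound for the degree coordinates is the paper's per-cell argument in a cleaner wrapping, and your Fubini bound $\int_\Omega b^{\mathrm{bad}}\le\mu(\Omega\setminus Y)$ is exactly Claim~\ref{cl:Vanoce1}. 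Where you genuinely diverge is the good part of the $b$-term. The paper proves the integrated estimate $\int_\Omega|b_W-b_U|<\kappa^2/4$ (Claim~\ref{cl:bclose}) and applies Markov once; to do so it introduces, besides the low-degree set, a set $D$ of points whose $W$- and $U$-degrees fail to be multiplicatively close, replaces $U$ off the bad set by the auxiliary non-symmetric kernel $\tilde U(x,y)=\frac{\deg_W(y)}{\deg_U(y)}U(x,y)$, and combines the weighted cut-norm bound of Claim~\ref{cl:Vanoce2} with the pointwise-to-cut-norm Claim~\ref{cl:Vanoce3}. You instead split $\frac{W}{\deg_W}-\frac{U}{\deg_U}$ algebraically into $\frac{W-U}{\deg_W}$ plus a degree-ratio correction, bound the first piece by a layer-cake weighted cut-norm estimate (your item (ii), which is Claim~\ref{cl:Vanoce2} proved by a different route) and the second uniformly in $\omega$ by $\tfrac{2}{\rho^2}\|\deg_W-\deg_U\|_1\le 4\delta/\rho^2$, applying Markov to each piece separately; this avoids $\tilde U$, the set $D$ and Claim~\ref{cl:Vanoce3} altogether and is somewhat more elementary, while the paper's route yields slightly more (a genuine $L^1$ bound on $|b_W-b_U|$ rather than only an off-$X$ pointwise one), though both deliver the lemma. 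One bookkeeping point to make explicit: your constraint $8\eta/\kappa\le\kappa/3$ involves $\rho$ through $\mu(\{\deg_W<\rho\})$ as well as $\delta$, so $\rho$ must already be chosen with $\mu(\{\deg_W<\rho\})$ of order $\kappa^2$ before $\delta$ is fixed --- which is what your phrase ``as small as we wish'' implicitly does --- and with that ordering stated the argument closes.
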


\begin{proof}
	Suppose that we are given $W$, $d$, $\mathbf{h}$, and $\kappa$ as above.
	
	Since $W$ is nondegenerate, there exists $\beta\in(0,10^{-6})$ such that the measure of the set
	$$S:=\left\{\omega\in\Omega:\deg(\omega)<16\sqrt[4]{\beta}\right\}$$
	is less than $\frac{\kappa^2}{100}$.
	Let $\delta:=\min\{\frac{\beta\kappa^4}{400},\frac{\kappa^2}{4\hbar(\mathbf{h})^2}\}$. Suppose that $U$ is given  with $\|W-U\|_\square<\delta$.
	It is enough to prove that there exists a set $X_{\texttt{b}}\subset \Omega$ of measure at most $\frac{\kappa}{4}$ such that for each $t\in[d]$ and each $F\in \mathfrak D_{\mathbf{h}\llbracket t\rrbracket}$, we have for each $\omega\in F\setminus X_{\texttt{b}}$ that
	\begin{equation}\label{eq:anatomies0}
	\left|\exp(-b_U(\omega))-center_{\texttt{b}}(F)\right|\le \tfrac{1}{2\mathbf{h}_d}+\kappa\;,
	\end{equation}
	and that (with $t$ and $F$ as above) for each $C\in \mathfrak D_{\mathbf{h}\llbracket t-1\rrbracket}$ we have that all but at most $\frac{\kappa}{4\hbar(\mathbf{h})^2}$ measure of elements $\omega\in \mathcal A_{F}$ satisfy
	\begin{align}
		\label{eq:anatomies1}
		\deg_U(\omega,\mathcal A_C)\ge center_C(F)-{1 \over 2\mathbf{h}_t}- \kappa\;,
	\end{align}
	and all but at most $\frac{\kappa}{4\hbar(\mathbf{h})^2}$  measure of elements $\omega\in \mathcal A_{F}$ satisfy
	\begin{align}
		\label{eq:anatomies2}
		\deg_U(\omega,\mathcal A_C)\le center_C(F) +{1 \over 2\mathbf{h}_t}+ \kappa\;.
	\end{align}
	The lemma will then follow from~\eqref{eq:VorCellBounded} by taking $X$ to be the union of $X_{\texttt{b}}$ together with the exceptional sets from~\eqref{eq:anatomies1}, \eqref{eq:anatomies2} over all $t$, $C$, and $F$.
	
	\medskip
	
	The following claim clearly implies~\eqref{eq:anatomies0}.
	\begin{claim}\label{cl:bclose}
		There exists a set $X_{\texttt{b}}$ of measure at most $\frac{\kappa}{4}$ such that for all $\omega\in\Omega\setminus X_{\texttt{b}}$ we have $|b_W(\omega)-b_U(\omega)|<\kappa$.
	\end{claim}
	For the proof of Claim~\ref{cl:bclose}, we need Claims~\ref{cl:Vanoce1}--\ref{cl:Vanoce3} below.
	\begin{claim}\label{cl:Vanoce1}
		Suppose that $\Gamma:\Omega^2\rightarrow[0,1]$ is a graphon and that $A\subset\Omega$. Then
$$\int_{x\in\Omega}\int_{\omega\in A} \frac{\Gamma(x,\omega)}{\deg_\Gamma(\omega)}\le\mu(A).$$
	\end{claim}
	\begin{claimproof}
		By Fubini's Theorem, we have
		\begin{align*}
			\int_{x\in\Omega}\int_{\omega\in A} \frac{\Gamma(x,\omega)}{\deg_\Gamma(\omega)}&=\int_{\omega\in A}\frac1{\deg_\Gamma(\omega)}\int_{x\in\Omega} \Gamma(x,\omega)=\int_{\omega\in A}\frac1{\deg_\Gamma(\omega)}\deg_\Gamma(\omega,A)\le \mu(A)\;.
		\end{align*}
	\end{claimproof}
	
	\begin{claim}\label{cl:Vanoce2}
		Suppose $\Gamma_1,\Gamma_2:\Omega^2\rightarrow [0,1]$ are two graphons and that $f:\Omega\rightarrow [0,C]$ is an arbitrary function. Then
		$$\int_{x\in\Omega}\left|\int_{\omega\in\Omega}f(\omega)(\Gamma_1(x,\omega)-\Gamma_2(x,\omega)\right|\le 2C\cdot \|\Gamma_1-\Gamma_2\|_\square\;.$$
	\end{claim}
	\begin{claimproof}
By \cite[(8.20)]{Lovasz2012} we can alternatively express as $\|\Gamma_1-\Gamma_2\|_\square$
		\begin{align*}
			\|\Gamma_1-\Gamma_2\|_\square=&\sup_{F,G:\Omega\rightarrow[0,1]}\left|\int_x\int_y G(x)F(y)(\Gamma_1(x,y)-\Gamma_2(x,y))\right|=\\
			&=\frac1C\cdot\sup_{F:\Omega\rightarrow[0,C],G:\Omega\rightarrow[0,1]}\left|\int_x\int_y G(x)F(y)(\Gamma_1(x,y)-\Gamma_2(x,y))\right|\;.
			\\
			&\ge \frac1{2C}\cdot\sup_{F:\Omega\rightarrow[0,C],G:\Omega\rightarrow[-1,1]}\left|\int_x\int_y G(x)F(y)(\Gamma_1(x,y)-\Gamma_2(x,y))\right|\;.
		\end{align*}
	The claim follows by considering in the latter supremum functions $$F(\cdot) := f(\cdot) \quad\mbox{and}\quad G(\cdot) := \mathrm{sgn}\left(\int_y f(y) (\Gamma_1(\cdot,y)-\Gamma_2(\cdot ,y)) \right)\;.$$
	\end{claimproof}
	
	For the last auxiliary claim, we shall introduce an auxiliary notion. Below we shall work with not necessarily symmetric $L^1$-functions $K:\Omega^2\rightarrow[0,+\infty)$. Note that the notions of cut-norm and cut-distance extend to this setting.\footnote{Note that in this case, the cut-norm really has to be defined over all rectangles in~\eqref{eq:cutnormdef}, and not only over all squares.} The following claim is then obvious.
	\begin{claim}\label{cl:Vanoce3}
		Suppose that $\epsilon>0$ and $\Gamma,\Gamma':\Omega^2\rightarrow[0,1]$ are two $L^1$-functions such that  for each $x,y\in\Omega$, $\max(\Gamma(x,y),\Gamma'(x,y))<(1+\epsilon)\min(\Gamma(x,y),\Gamma'(x,y))$. Then $\|\Gamma-\Gamma'\|_\square\le \epsilon$.\qedClaim
	\end{claim}	
	\begin{claimproof}[Proof of Claim~\ref{cl:bclose}]
		Let $$D:=\left\{\omega\in\Omega\setminus S:\deg_W(\omega)\neq\left(1\pm \tfrac{\beta^{0.3}\kappa^2}4\right)\deg_U(\omega)\right\}\;.$$
		Since for each $\omega\in D$ we have $|\deg_W(\omega)-\deg_U(\omega)|\ge 3\beta^{0.55}\kappa^2$ we get from $\|W-U\|_\square<\frac{\beta\kappa^4}{400}$ that $\mu(D)\le \frac{\kappa^2}{100}$.
		
		Put $A:=S\cup D$. We have $\mu(A)\le\frac{\kappa^2}{50}$.
		
		We have
		\begin{equation}\label{eq:cohen}
		\int_{x\in\Omega} \left|b_W(x)-b_U(x)\right|\le \int_{x\in\Omega} \left|\int_{\omega\in\Omega\setminus A}\frac{W(x,\omega)}{\deg_W(\omega)}-\frac{U(x,\omega)}{\deg_U(\omega)}\right|+\int_{x\in\Omega} \left|\int_{\omega\in A}\frac{W(x,\omega)}{\deg_W(\omega)}-\frac{U(x,\omega)}{\deg_U(\omega)}\right|
		\end{equation}
		The second term on the right-hand side of~\eqref{eq:cohen} is bounded by Claim~\ref{cl:Vanoce1} by at most $2\mu(A)\le\kappa^2/50$. Now, let us consider the $L^1$-function $\tilde U$, defined by
		$$\tilde U(x,y):=\begin{cases}\frac{\deg_W(y)}{\deg_U(y)} \cdot U(x,y) \quad\mbox{if $y\not\in A$}\\
		U(x,y)\quad\mbox{if $y\in A$}
		\end{cases}\;.
		$$
		Observe that $U$ and $\tilde U$ satisfy the assumptions of Claim~\ref{cl:Vanoce3} with error parameter $\left(\frac{\beta^{0.3}\kappa^2}4\right)$.
		Then the first term of the right-hand side of~\eqref{eq:cohen} can be rewritten as
		\begin{equation}\label{eq:leonard}
		\int_{x\in\Omega}\left|\int_{\omega\in \Omega} \frac{\mathbold{1}_{\Omega\setminus A}(\omega)}{\deg_W(\omega)}\cdot\left(W(x,\omega)-\tilde U(x,\omega)\right)\right|\;.
		\end{equation}
		Observe that the function $\frac{\mathbold{1}_{\Omega\setminus A}(\cdot)}{\deg_W(\cdot)}$ is bounded by above by $\frac1{16\sqrt[4]{\beta}}$. Claim~\ref{cl:Vanoce2} tells us that~\eqref{eq:leonard} is at most $$\frac1{8\sqrt[4]{\beta}}\cdot\left(\|W-U\|_\square+\|U-\tilde U\|_\square\right)\leBy{C\ref{cl:Vanoce3}}\frac{1}{3200}\beta^{3/4}\kappa^2+\frac{\beta^{0.3}\kappa^2}{4\cdot 8\beta^{1/4}}<\frac{\kappa^2}{25}\;.$$
		Plugging this back into~\eqref{eq:cohen} we obtain that $\int_{x\in \Omega}|b_W(x)-b_U(x)|<\frac{\kappa^2}4$. The claim follows.
	\end{claimproof}
	
	It now remains to prove~\eqref{eq:anatomies1} and~\eqref{eq:anatomies2}. We will only prove~\eqref{eq:anatomies1} since the proof of~\eqref{eq:anatomies2} is verbatim.
	So, suppose that~\eqref{eq:anatomies1} fails, i.e., the set $X:=\{\omega\in \mathcal A_F:\deg_U(\omega,\mathcal A_C)> center_C(F) +{1 \over 2\mathbf{h}_t}+ \kappa\}$ satisfies $\mu(X)>\frac{\kappa}{2\hbar(\mathbf{h})}$. By the definition of $\mathcal A_F$ we have for every $\omega\in X$ that $\deg_W(\omega,\mathcal A_C)\le center_C(F) +{1 \over 2\mathbf{h}_t}$. Therefore,
	\begin{align*}
		\int_{X\times \mathcal A_F} W&\le (center_C(F)+{1 \over 2\mathbf{h}_t})\mu(X)\quad\mbox{but}\\
		\int_{X\times \mathcal A_F} U&> (center_C(F)+{1 \over 2\mathbf{h}_t}+\kappa)\mu(X)\ge (center_C(F)+{1 \over 2\mathbf{h}_t})\mu(X)+\frac{\kappa^2}{2\hbar(\mathbf{h})}\quad.\\
	\end{align*}
	This is a contradiction to the fact that $\|U-W\|_\square<\delta$.
\end{proof}

For the proof of Lemma~\ref{lem:freqContinuous} we need the following result.
\begin{proposition}\label{lem:avgb}
	Suppose that $W:\Omega^2\rightarrow[0,1]$ is a nondegenerate graphon. Then
$$\EXP_{x\in\Omega}[b_W(x)]=1.$$
\end{proposition}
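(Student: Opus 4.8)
The plan is to unwind the definition of $b_W$ and swap the order of integration via Tonelli's theorem. Writing out the expectation, we have
\begin{equation*}
\EXP_{x\in\Omega}[b_W(x)]=\int_{x\in\Omega}b_W(x)\,\mathrm d\mu(x)=\int_{x\in\Omega}\int_{y\in\Omega}\frac{W(x,y)}{\deg(y)}\,\mathrm d\mu(y)\,\mathrm d\mu(x)\;.
\end{equation*}
The integrand $(x,y)\mapsto W(x,y)/\deg(y)$ is nonnegative (here we use that $W$ is nondegenerate, so $\deg(y)>0$ for $\mu$-almost every $y$, hence the integrand is defined a.e.) and measurable, so Tonelli's theorem permits interchanging the order of integration with no integrability hypothesis needed.

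After interchanging, I would pull the factor $1/\deg(y)$ out of the inner integral and use the symmetry of the graphon, $W(x,y)=W(y,x)$, together with the definition of the degree, to get $\int_{x\in\Omega}W(x,y)\,\mathrm d\mu(x)=\deg(y)$ for a.e.\ $y$. Thus
\begin{equation*}
\EXP_{x\in\Omega}[b_W(x)]=\int_{y\in\Omega}\frac{1}{\deg(y)}\left(\int_{x\in\Omega}W(x,y)\,\mathrm d\mu(x)\right)\mathrm d\mu(y)=\int_{y\in\Omega}\frac{\deg(y)}{\deg(y)}\,\mathrm d\mu(y)=\mu(\Omega)=1\;.
\end{equation*}
This is essentially the same Fubini computation already used in the proof of Claim~\ref{cl:Vanoce1} (which, applied with $A=\Omega$, gives the inequality $\EXP_{x}[b_W(x)]\le 1$); the point here is just that the inequality in that claim is in fact an equality.

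There is no real obstacle: the only thing to be careful about is invoking nondegeneracy to ensure $\deg(y)>0$ a.e.\ so that the integrand and the cancellation $\deg(y)/\deg(y)=1$ make sense almost everywhere, and using Tonelli (rather than Fubini) to justify the swap without a priori knowing finiteness of the double integral.
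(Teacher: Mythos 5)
Your proof is correct and follows essentially the same approach as the paper, which also applies Fubini's theorem (you rightly note Tonelli is the cleaner justification since there is no a priori integrability hypothesis) and uses nondegeneracy to ensure $\deg(y)>0$ almost everywhere. You simply spell out the cancellation and the role of symmetry more explicitly than the paper does.
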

\begin{proof}
	By Fubini's Theorem,
	\begin{align*}
		\EXP_{x\in\Omega}[b_W(x)]=\int_x\left(\int_y\frac{W(x,y)}{\deg(y)}\;\mathrm{d}y\right)\mathrm{d}x=\int_y\left(\int_x \frac{W(x,y)}{\deg(y)}\mathrm{d}x\right)\mathrm{d}y=1\;.
	\end{align*}
\end{proof}

Lemma~\ref{lem:freqContinuous} puts a relation between quantities $\FREQ(T;W)$ and $\FREQ^-(T;G,V_0,E_0)$, where $G$ is a graph, $V_0\subset V(G)$ and $E_0\subset E(G)$, and $\FREQ^-(T;G,V_0,E_0)$ is defined as
$$\FREQ^-(T;G,V_0,E_0):=|\stab_T|^{-1}\sum_{\substack{v_1, \ldots, v_\ell\in V(G) \setminus V_0\\ \forall ij\in E(T):v_iv_j\in E(G)\setminus E_0}} \frac{1}{v(G)}\exp\left (-\sum_{j=1}^{p-1} b_G(v_j) \right) \frac{\sum_{j=p}^\ell \deg_G(v_j) }{\prod_{j=1}^\ell \deg_G(v_j)}\;.$$

We are now ready to state Lemma~\ref{lem:freqContinuous}. It says that the parameter $\FREQ(T;\cdot)$ is continuous in a certain sense. While it would be possible to prove continuity of $\FREQ(T;\cdot)$ for nondegenerate graphons with respect to the cut-distance, here we need to put a relation between $\FREQ(T;\cdot)$ of a graphon and the parameter $\FREQ^-(T;\cdot)$ of a graph that is close to that graphon. Let us note that while our proof of continuity is long, the statement itself is natural. Indeed, the definition~\eqref{eq:frewTW} is an integration involving products of values of the graphon over the edges of $T$ (similar to the way that is used to define the density of $T$ in the graphon), degrees in the graphon and the function $b(\cdot)$ from~\eqref{eq:defbb} which is also defined using degrees in the graphon. As subgraph densities are continuous with respect to the cut-metric, and so is the degree sequence (c.f.\ Lemma~\ref{lem:graphondegrees}), it is actually plausible to have continuity of many graph(on) parameters obtained by combinations thereof.
\begin{lem}\label{lem:freqContinuous}
	Suppose that $W:\Omega^2\rightarrow[0,1]$ is a nondegenerate graphon and $T$ is a fixed tree. For every $\epsilon>0$ there exists $a>0$ such that for every $\chi>0$ there exists $n_0\in\bN$ and $\delta>0$ such that if $G$ is a graph and $V_0\subset V(G)$, $E_0\subset E(G)$ satisfy
	\begin{enumerate}[label=(\alph*)]
		\item $n>n_0$,
		\item $\delta_\square(W,G)<\delta$,
		\item\label{ens:Tuan} $|V_0|\le a n$, $|E_0|\le an^2$, and
		\item\label{ens:mindeg} for each $v\in V(G)\setminus V_0$ we have $\deg_G(v)\ge\chi n$,
	\end{enumerate}
	then we have
	\begin{equation}
	\label{eq:contribute}
	|\FREQ(T;W)-\FREQ^-(T;G,V_0,E_0)|<\epsilon\;.
	\end{equation}
\end{lem}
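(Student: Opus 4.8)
The plan is to pass everything to the graphon side, isolate three sources of error, and handle each separately. First I would rewrite $\FREQ^-(T;G,V_0,E_0)$ as an honest graphon integral: take a graphon representation $W_G$ of $G$ on $\Omega=\bigsqcup_{v}\Omega_v$, let $\Omega^-=\bigsqcup_{v\notin V_0}\Omega_v$ and let $W_G^-$ be $W_G$ with the squares $\Omega_u\times\Omega_v$, $uv\in E_0$, zeroed out. A direct computation (using $\deg_{W_G}(\omega)=\deg_G(v)/n$ and $b_{W_G}(\omega)=b_G(v)$ for $\omega\in\Omega_v$, and $\mu(\Omega_v)=1/n$) shows that $\FREQ^-(T;G,V_0,E_0)$ equals the expression obtained from~\eqref{eq:frewTW} by integrating over $\Omega^-$, using $W_G$ in the $\deg$ and $b$ factors and $W_G^-$ in the edge product $\prod_{(i,j)\in E(T)}$. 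Thus it suffices to compare this integral with $\FREQ(T;W)$, and the discrepancies are: (a) the difference between $W$ and $W_G$ on their common ``bulk''; (b) the restriction to $\Omega^-$ and the removal of the $E_0$-squares; (c) the low-degree part of $W$ together with the exceptional set produced when transferring anatomies.

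The main work, and the main obstacle, is a \emph{domination} estimate showing that tail contributions to $\FREQ$ are controlled purely in terms of the measure of the offending region and of $T$, with no dependence on a lower degree bound. Concretely I would integrate the integrand of~\eqref{eq:frewTW} variable by variable, from the height-$r$ leaves upward, after expanding $\tfrac{\sum_{j=p}^\ell\deg(\omega_j)}{\prod_{j}\deg(\omega_j)}=\sum_{k=p}^\ell\tfrac{1}{\prod_{j\neq k}\deg(\omega_j)}$. In the $k$-th summand the distinguished leaf contributes $\int W(\omega_k,\omega_{\pi(k)})\,\mathrm d\omega_k=\deg(\omega_{\pi(k)})\le 1$ (no $\deg(\omega_k)$ in the denominator), while every other height-$r$ leaf $k'$ contributes $\int \tfrac{W(\omega_{k'},\omega_{\pi(k')})}{\deg(\omega_{k'})}\,\mathrm d\omega_{k'}=b_W(\omega_{\pi(k')})$; in either case the outgoing factor is absorbed by the $\tfrac{1}{\deg(\omega_{\pi(k)})}$ already sitting at the parent (which is a non-height-$r$ vertex, hence its degree \emph{is} in the denominator), and a bounded quantity propagates up the tree, the remaining top-level integrations being finite because $\EXP_\Omega[b_W]=1$ by Proposition~\ref{lem:avgb} (combined, where needed, with $b_W\le\mu(\{\deg<\eta\})/\eta+$ bounded-degree contributions to tame integer powers of $b_W$). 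This simultaneously proves $\FREQ(T;W)<\infty$, proves that the portion of $\FREQ(T;W)$ coming from $\{\omega_j\in\{\deg_W<\eta\}\text{ for some }j\}$ is $O_T(\mu(\{\deg_W<\eta\}))$, and — run on $W_G,W_G^-$, where the only use of assumption~\ref{ens:mindeg} is that all degrees are positive — shows that restricting to $\Omega^-$ and deleting the $E_0$-squares (a set of pairs of measure $\le 2a$) changes the value by $O_T(a)$. So I would first fix $\eta=\eta(\epsilon,W,T)$ small enough that the $\{\deg_W<\eta\}$-part of $\FREQ(T;W)$ is $<\epsilon/4$, then fix $a=a(\epsilon,W,T)$ so that the $O_T(a)$-corrections are $<\epsilon/4$; crucially both depend only on $\epsilon,W,T$, which is what makes the quantifier order ``$\exists a\ \forall\chi$'' work.

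Having reduced to the regions $\{\deg_W\ge\eta\}$ and (WLOG, shrinking the bulk if $\chi>\eta$) $\{\deg_{W_G}\ge\min(\chi,\eta)\}$, where the integrands are genuinely bounded, I would run the anatomy discretisation: choose a depth $d$ and fineness $\mathbf h$ so large/fine that replacing $W,\deg_W,b_W$ in the integrand by their anatomy-cell step-function values changes $\FREQ(T;W)$ by $<\epsilon/4$ — this is uniform continuity of the (now bounded) integrand in each argument on the compact range $[\eta,1]$, together with property~\eqref{eq:propertyanatomies} and counting-lemma control of the inter-cell $W$-densities. Then apply Lemma~\ref{lem:anatomiescontNEW} with a parameter $\kappa$ chosen \emph{now} (depending on $\chi,d,\mathbf h,\epsilon,T$), obtaining for small $\delta$ an exceptional set $X\subset\Omega$, $\mu(X)\le\kappa$, off which $W_G$ shares the anatomy of $W$; shrinking $\delta$ further so that $\|W-W_G\|_\square<\delta$ also forces the cell masses and inter-cell $W_G$-densities to match those of $W$ up to $o_\delta(1)$, we get that the step-function value of the $\FREQ^-$-integral agrees with the step-function value of $\FREQ(T;W)$ up to $<\epsilon/4$, while the contribution of $X$ to $\FREQ^-$ is bounded crudely by $O_T(\kappa\,\chi^{-\ell})<\epsilon/4$ ($\chi$-dependence harmless since $\kappa$ was chosen after $\chi$). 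Chaining the four $\epsilon/4$-estimates yields $|\FREQ(T;W)-\FREQ^-(T;G,V_0,E_0)|<\epsilon$. The delicate point throughout is the second paragraph's telescoping cancellation; the rest is standard, if lengthy, graphon bookkeeping.
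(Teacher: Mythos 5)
Your second paragraph is the load-bearing step, and it is where the argument breaks. The claimed domination estimate --- that the contribution to the $\FREQ$-integral of tuples with one coordinate confined to a set $A$ (or one tree-edge confined to a set of pairs) is $\LandauO_T(\mu(A))$, ``with no dependence on a lower degree bound'' --- is false. The telescoping cancellation you describe works for the \emph{full} integral (and indeed gives a nice elementary proof that $\FREQ(T;W)<\infty$, different from the paper's branching-process argument), but it is destroyed the moment you restrict a coordinate: to extract the factor $\mu(A)$ you must bound $W\le 1$ instead of integrating $W(\cdot,\omega_{\mathrm{parent}})$ out to $\deg(\omega_{\mathrm{parent}})$, and then the factor $1/\deg(\omega_{\mathrm{parent}})$ is left uncancelled. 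Concretely, take $T$ a single edge and a nondegenerate graphon with three parts $D,E,F$ of measures $\rho,\tfrac12,\tfrac12-\rho$, where $W=\tfrac12$ on $D\times D$ and $F\times F$, $W=\tfrac12$ on $D\times E\cup E\times D$, and $W=0$ otherwise. Every point of $E$ has all of its (tiny) degree going into $D$ and $b_W\approx 2\rho$ there, so the portion of $\FREQ(T;W)$ with the leaf-coordinate in $D$ is at least $\mu(E)e^{-2\rho}\approx\tfrac12$, while $\mu(D)=\rho$ is arbitrarily small. The same mechanism shows that the $V_0$/$E_0$ corrections and the $\{\deg_W<\eta\}$ correction cannot be bounded by $\LandauO_T(a)$, resp. $\LandauO_T(\mu(\{\deg_W<\eta\}))$, purely in terms of measure: a small set can absorb an order-one fraction of the integral whenever low-degree points feed into it. Since you explicitly build the quantifier order ``$\exists a\ \forall\chi$'' on this estimate (``the only use of assumption (d) is that all degrees are positive''), the proposal as written does not prove the lemma.

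The rest of your plan (rewrite $\FREQ^-$ as a graphon integral for a representation $W_G$, discretise via anatomies, transfer them with Lemma~\ref{lem:anatomiescontNEW}, treat the exceptional set with a crude $\chi^{-\ell}$ bound using that $\kappa$ is chosen after $\chi$) matches the paper's architecture and is fine, and the gap is repairable along the paper's lines: restrict the comparison to cells on which all degrees (and $b_W$) are bounded by a $W$-dependent threshold $\Delta$ (the paper's ``non-singular'' anatomy assignments), bound the $V_0$/$E_0$ error there by roughly $a\ell^2/\Delta^{\ell}$ and only then choose $a\ll\epsilon\Delta^{\ell}$ (legitimate, since $\Delta=\Delta(W,T,\epsilon)$ precedes $\chi$), control the low-degree region of $W$ not by an $\LandauO(\mu)$ bound but by absolute continuity of the integrable $f_W$, and handle the corresponding small-measure regions on the graph side with the trivial bound $f^-_U\le\ell/\chi^{\ell}$, exploiting that $\delta$, $n_0$ and the fine discretisation parameters may depend on $\chi$. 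In short: your finiteness argument and bookkeeping are sound, but the uniform, degree-free domination lemma you rely on does not exist, and without a $W$-dependent degree cutoff entering the $a$-dependent error terms the quantifier structure is not actually secured.
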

\begin{proof}
	Suppose that $W$, $T$, and $\epsilon$ are given.
	
	Let $L$ be the height of $T$. Suppose that the vertices of $T$ are $[\ell]$. Suppose that $1$ is the root of $T$, suppose that the height of each vertex $i$ is denoted $g_i$ and that the vertices of $T$ are enumerated so that we have $0=g_1<g_2\le g_3\le\ldots \le g_{p-1}<g_p=g_{p+1}=\ldots=g_{\ell}=L$.
	
	In the course of deriving Theorem~\ref{thm:main} from Theorem~\ref{thm:mainthm2} in Section~\ref{sec:Theorem1Theorem2} we prove that $\FREQ(T;W)\le 1$.\footnote{At this moment, we have not established Theorem~\ref{thm:main} nor Theorem~\ref{thm:mainthm2}. However, the fact $\FREQ(T;W)\le 1$ did not rely on the validity of either of these theorems, but rather followed from making the connection to the branching process $\kappa_W$.} In particular, the function $f_W:\Omega^\ell\rightarrow [0,+\infty)$,
	\begin{equation}
	\label{eq:definef}
	f_W(\omega_1,\ldots,\omega_\ell):= \exp \left(-\sum_{j=1}^{p-1} b_W(\omega_j) \right)\cdot \frac{\sum_{j=p}^\ell \deg_W(\omega_j) }{\prod_{j=1}^\ell \deg_W(\omega_j)} \cdot\prod_{\substack{(i,j) \in E(T)}} W(\omega_i, \omega_j)
	\end{equation}
	is integrable. Let $\delta_0>0$ be such that  for any set $A\subset \Omega^\ell$, $\mu^{\otimes\ell}(A)<\delta_0$ we have
	\begin{equation}\label{eq:Iwillgotobednow}
	\int_A f_W\le \frac{\epsilon}{10}\;.
	\end{equation}
	
	Since $W$ is nondegenerate, there exists a number $\Delta\in(0,\frac{\delta_0}{4\ell})$ such that the set $$\Omega_{\mathrm{small}}:=\{\omega\in\Omega:\deg_W(\omega)<2\Delta\}$$ has measure less than $\frac{\delta_0}{2\ell}$.
	
	Let $a:=\min\{\frac{\delta_0}{10\ell},\frac{\epsilon\Delta^\ell}{400\ell^\ell}\}$. Now, suppose that $\chi$ is given.

	Let $\mathbf{h}\in\bN^{L+1}$ and $d',d,\tau,\kappa>0$ satisfy
	\begin{equation}
	\label{eq:bfh}
	\min\left(\Delta,\epsilon,\chi,\delta_0\right)\gg d'\gg d\gg\tau\gg \frac{1}{\mathbf{h}_1}\gg \frac{1}{\mathbf{h}_2}\gg \ldots \gg  \frac{1}{\mathbf{h}_L}\gg \frac{1}{\mathbf{h}_{L+1}}\gg \kappa> 0\;.
	\end{equation}
	We note that the above dependencies are tower-type, i.e.,
	\begin{equation}\label{eq:towerdependency}
	\frac{1}{\mathbf{h}_{i}}\gg  \frac{\hbar(\mathbf{h}\llbracket i\rrbracket)}{\mathbf{h}_{i+1}}\;.
	\end{equation}
	
	Let $\left\{\{\mathcal A_{F}\}_{F\in\mathfrak D_{\mathbf{h}\llbracket t\rrbracket}}\right\}_{t=0}^{L+1}$ be the anatomies of $W$ up to depth $L+1$.
	Let $\delta$ be given by Lemma~\ref{lem:anatomiescontNEW} for input graphon $W$ and parameters $\mathbf{h}$ and $\kappa$.
	
	Suppose now that the graph $G$ is given.
	Take a graphon representation $U$ of $G$ such that $\|W-U\|_\square<\delta$.
	Lemma~\ref{lem:anatomiescontNEW} tells us that there exists a set $X\subset \Omega$ of measure at most $\kappa$ such that $\left\{\{\mathcal A_{F}\}_{F\in\mathfrak D_{\mathbf{h}\llbracket t\rrbracket}}\right\}_{t=0}^{L+1}$ are a $\kappa$-approximate anatomies of $U$ up to depth $L+1$ with exceptional set $X$.
	
	Given a vertex $v\in V(G)$, we write $\Omega_v\subset \Omega$ for the set representing $v$ in $U$. We write $\Lambda_{\mathrm{vert}}:=\bigcup_{v\in V_0}\Omega_v$, and $\Lambda_{\mathrm{edge}}=\bigcup_{uv\in E_0}\left(\Omega_u\times \Omega_v\cup \Omega_v\times \Omega_u\right)$. By assumption~\ref{ens:Tuan} we have
	\begin{equation}\label{eq:boundsLambda}
	\mu(\Lambda_{\mathrm{vert}})\le a \quad \mbox{and} \quad \mu^{\otimes 2}(\Lambda_{\mathrm{edge}})\le 2a\;.
	\end{equation}
	
	To express $\FREQ^-(T;G,V_0,E_0)$, we introduce a counterpart to~\eqref{eq:definef} that reflects~$U$,
	\begin{equation*}
		f_U(\omega_1,\ldots,\omega_\ell):= \exp \left(-\sum_{j=1}^{p-1} b_U(\omega_j) \right)\cdot \frac{\sum_{j=p}^\ell \deg_U(\omega_j) }{\prod_{j=1}^\ell \deg_U(\omega_j)} \cdot\prod_{\substack{(i,j) \in E(T)}} U(\omega_i, \omega_j)\;
	\end{equation*}
	and a version $f^-_U$, which takes into the account the ``deleted'' vertices $V_0$ and edges $E_0$,
	\begin{equation*}
		f^-_U(\omega):=
		\begin{cases}
			0 &\mbox{if $\exists$ $i\in[\ell]$: $\omega_i\in\bigcup_{v\in V_0}\Omega_v$ or $\exists$ $ij\in E(T)$: $(\omega_i,\omega_j)\in\bigcup_{vw\in E_0}\Omega_v\times \Omega_w$,}\\
			f_U(\omega)	&\mbox{otherwise.}
		\end{cases}
	\end{equation*}
	Note that the assumption~\ref{ens:mindeg} implies that
	\begin{equation}\label{eq:fUbounded}
	f^-_U\le\frac{\ell}{\chi^\ell} \quad\mbox{on the whole domain $\Omega^\ell$}\;.
	\end{equation}
	
	We say that a map $\pi:[\ell]\rightarrow \left\{\mathcal A_F\right\}_{t=1,\ldots,L+1; F\in\mathfrak D_{\mathbf{h}\llbracket t\rrbracket}}$ is a {\bf depth preserving anatomy assignment} if for each $i\in[\ell]$ we have $\pi(i)=\mathcal A_{F}$ for some $F\in \mathfrak D_{\mathbf{h}\llbracket L+1-g_i\rrbracket}$. This means that the root $1$ is assigned an anatomy of depth $L+1$ while vertices further from the root are assigned anatomies of smaller depths. Note that depth preserving anatomy assignment partition the set $\Omega^\ell$ it the sense that
	\begin{equation}\label{eq:anatomyassignmentsexhausts}
	\Omega^\ell=\bigsqcup_{\substack{\pi:[\ell]\rightarrow \left\{\mathcal A_F\right\}_{t=1,\ldots,L+1; F\in\mathfrak D_{\mathbf{h}\llbracket t\rrbracket}}\\\mathrm{depth~preserving~anatomy~assignment}}} \prod_{j=1}^{\ell}\pi(j)\;.
	\end{equation}
	In particular,
	\begin{align}\label{eq:MEASURE1}
		1&=\sum_{\substack{\pi:[\ell]\rightarrow \left\{\mathcal A_F\right\}_{t=1,\ldots,L+1; F\in\mathfrak D_{\mathbf{h}\llbracket t\rrbracket}}\\\mathrm{depth~preserving~anatomy~assignment}}} \prod_{j=1}^{\ell}\mu\left(\pi(j)\right)\;\mbox{, and}\\
		\label{eq:MEASURE2}
		1&=\sum_{\substack{\pi:[\ell]\rightarrow \left\{\mathcal A_F\right\}_{t=1,\ldots,L+1; F\in\mathfrak D_{\mathbf{h}\llbracket t\rrbracket}}\\\mathrm{depth~preserving~anatomy~assignment}}} \prod_{j=[\ell]\setminus \{j*\}}^{\ell}\mu\left(\pi(j)\right)\;\mbox{for each $j^*\in[\ell]$.}
	\end{align}
	
	We need to define three additional classes of depth preserving anatomy assignments.
	\begin{itemize}
		\item We say that a depth preserving anatomy assignment $\pi$ is {\bf singular} if there exists $i\in[\ell]$ such that a positive measure of the elements $\omega\in\pi(i)$ satisfy $\deg_W(\omega)\le \Delta$, or $b_W(\omega)>\Delta^{-1}$.
		\item We say that a depth preserving anatomy assignment $\pi$ is {\bf dense} if for every vertex $j\in [\ell]$ and every child $j^*$ of $j$ we have that $center_{\pi(j^*)}(\pi(j))>d\cdot \mu(\pi(j^*))$. If this fails for some pair $jj^*$, then we call $j^*$ a {\bf witness}. (Of course, several witnesses may exist.)
		\item We say that a depth preserving anatomy assignment $\pi$ is {\bf non-problematic} if it is dense, and it is not singular.
	\end{itemize}
	For a depth preserving anatomy assignment $\pi$, we write $\mathbf{\Omega}_\pi=\prod_{i=1}^\ell (\pi(i)\setminus X)$.
	
	The next claim establishes some basic properties of singular depth preserving anatomy assignment.
	\begin{claim}\label{cl:oneandall}
		Suppose that $\pi$ is a singular depth preserving anatomy assignment, and that $i\in\ell$ is a witness of singularity. Then we have that $\pi(i)\subset \Omega_{\mathrm{small}}$ or that  $b_W(\omega')>\frac{1}{2\Delta}$ for all $\omega'\in\pi(i)$.
	\end{claim}
	\begin{claimproof}
		The definition of singular depth preserving anatomy assignment says that there exists $\omega\in\pi(i)$ such that $\deg_W(\omega)\le \Delta$, or $b_W(\omega)>\Delta^{-1}$.
		
		Let us first deal with the former case. A double application of~\eqref{eq:deganatomy} gives that for each $\omega'\in\pi(i)$ we have
		$$\deg_W(\omega')\le \deg^{\mathrm{anat}}_W(\pi(i))+ \frac{\hbar(\mathbf{h}\llbracket L-1-g_i\rrbracket)}{2\mathbf{h}_{L-g_i}}\le
		\deg_W(\omega)+ \frac{2\hbar(\mathbf{h}\llbracket L-1-g_i\rrbracket)}{2\mathbf{h}_{L-g_i}}\lBy{\eqref{eq:bfh},\eqref{eq:towerdependency}} 2\Delta\;.$$
		Therefore, in this case, $\pi(i)\subset \Omega_{\mathrm{small}}$.
		
		We can deal with the latter case, but using~\eqref{eq:propertyanatomies} instead of~\eqref{eq:deganatomy}. We have
		\begin{equation*}\label{eq:oneandall2}
			b_W(\omega')>\Delta^{-1}-\frac{4}{2\mathbf{h}_{L-g_i}}\ge \frac{1}{2\Delta}\quad\mbox{for all $\omega'\in\pi(i)$}\;.
		\end{equation*}
	\end{claimproof}
	The next key claim says that the integrals of $f_W$ and $f^-_U$ over sets corresponding to non--problematic depth preserving anatomy assignment are almost the same.
	\begin{claim}\label{suminsidebox}
		Suppose that $\pi:[\ell]\rightarrow \left\{\mathcal A_{F}\right\}_{t=1,\ldots,L+1; F\in\mathfrak D_{\mathbf{h}\llbracket t\rrbracket}}$ is a non-problematic depth preserving anatomy assignment. Let
		\begin{align*}
			e^\pi_{\mathrm{vert}}&:=\sum_{i=1}^\ell \mu\left(\pi(i)\cap \Lambda_{\mathrm{vert}}\right)\cdot \prod_{k\in V(T)\setminus \{i\}}\mu(\pi(k))\quad\mbox{, and}\\
			e^\pi_{\mathrm{edge}}&:=\sum_{ij\in E(T)}\mu^{\otimes 2}\left(\Lambda_{\mathrm{edge}}\cap \bigcup_{ij\in E(T)} \pi(i)\times \pi(j)\right)\cdot \prod_{k\in V(T)\setminus \{i,j\}}\mu(\pi(k))\;.
		\end{align*}
		Then for the quantities
		\begin{align*}
			Q_1&:=\int\displaylimits_{\mathbold{\omega}\in \mathbf{\Omega}_\pi}
			f_W(\mathbold{\omega})\;\mbox{and}
			\\
			Q_2&:=\int\displaylimits_{\mathbold{\omega}\in \mathbf{\Omega}_\pi}
			f^-_U(\mathbold{\omega})
		\end{align*}
		we have $Q_1=Q_2\cdot (1\pm d)\pm \frac{(e^\pi_{\mathrm{vert}}+e^\pi_{\mathrm{edge}})\ell}{\Delta^\ell}$.
	\end{claim}
	\begin{claimproof}
		Suppose that $j\in V(T)$. If $j\neq 1$, then we write $\parent(j)$ for the parent of $j$ in $T$.
		Let us write $F_j$ for the tree obtained from $T$ by erasing the edge from $j$ to $\parent(j)$ and taking the component containing $j$. When $j=1$, we take $F_j:=T$. Also, for $j=1$ we define $center_{\pi(j)}\left(\pi(\parent(j))\right):=1$.
		
		For $j\in V(T)$, write
		$$R_j:=\prod_{k\in V(F_j)\setminus\{j\}} center_{\pi(k)}\left(\pi(\parent(k))\right)\;.$$
		Here, recall the convention that a product over the empty set is~$1$. This in particular applies when $j$ is a leaf and $j\neq 1$.
		
		We first want to show that the quantities $$\int\displaylimits_{(\omega_1, \ldots, \omega_\ell)\in \mathbf{\Omega}_\pi}\prod_{\substack{(i,j) \in E(T)}} W(\omega_i, \omega_j) \quad\text{and}\quad\int\displaylimits_{(\omega_1, \ldots, \omega_\ell)\in \mathbf{\Omega}_\pi} \prod_{\substack{(i,j) \in E(T)}} U(\omega_i, \omega_j)$$
		are very close. To this end, for $j\in V(T)$ we define $A_j(\omega_j)=B_j(\omega_j):=1$ if $j$ is a leaf (different than the root) and otherwise we define
		\begin{align}
			\begin{split}
				\label{eq:defAB}
				A_j(\omega_j)&:=\int\displaylimits_{\{\omega_k\in\pi(k)\setminus X\}_{k\in V(F_j)\setminus\{j\}}}\prod_{(a,b) \in E(F_j)} W(\omega_a, \omega_b)\;,\\
				B_j(\omega_j)&:=\int\displaylimits_{\{\omega_k\in\pi(k)\setminus X\}_{k\in V(F_j)\setminus\{j\}}}\prod_{(a,b) \in E(F_j)} U(\omega_a, \omega_b)\;.
			\end{split}
		\end{align}
		Inductively for $t=L+1,L,\ldots,0$, assume that for each vertex $j\in V(T)$ at height $g_j=t$,\footnote{Note that the first step of the induction is satisfied trivially, as the height of $T$ is $L$.} for each $\omega_j\in \pi(j)$ we have
		\begin{align}
			\label{eq:jezekA}
			A_j(\omega_j)=R_j\cdot \exp(\pm \tfrac{v(F_j)}{\mathbf{h}_1})
		\end{align}
		and for every $\omega_j\in \pi(j)\setminus X$ we have
		\begin{align}
			\label{eq:jezekB}
			B_j(\omega_j)=R_j\cdot \exp(\pm \tfrac{v(F_j)}{\mathbf{h}_1})\;.
		\end{align}
		Now, let suppose that the statement is true for all vertices at height $t+1$. Let $j$ be an arbitrary vertex at height $t$, and let $\omega_j\in\pi(j)$ be arbitrary. When $j$ is a leaf then~\eqref{eq:jezekA} and~\eqref{eq:jezekB} hold trivially. So, suppose that $j$ has some children $j_1,j_2,\ldots,j_q$. Since these children are at height $t+1$, applying the induction hypothesis, we disintegrate~\eqref{eq:defAB} with respect to these children, and get
		\begin{align*}
			A_j(\omega_j)&=\prod_{c=1}^q
			\left(\int_{\omega_{j_c}\in\pi(j_c)\setminus X}W(\omega_j,\omega_{j_c})\int_{\{\omega_k\in\pi(k)\setminus X\}_{k\in V(F_{j_c})\setminus\{j_c\}}}\prod_{(a,b) \in E(F_{j_c})} W(\omega_a, \omega_b)\right)\\
			&=
			\prod_{c=1}^q\left(\int_{\omega_{j_c}\in\pi(j_c) \setminus X}W(\omega_j,\omega_{j_c}) \cdot A_{j_c}(\omega_{j_c})
			\right)
			\eqByRef{eq:jezekA}\prod_{c=1}^q\left(\int_{\omega_{j_c}\in\pi(j_c)\setminus X}W(\omega_j,\omega_{j_c}) \cdot R_{j_c}\exp\left(\pm \tfrac{v(F_{j_c})}{\mathbf{h}_1}\right)
			\right)\\
			&=\prod_{c=1}^q \deg_W(\omega_j,\pi(j_c)\setminus X)\cdot\prod_{c=1}^q \left( R_{j_c}\exp(\pm \tfrac{v(F_{j_c})}{\mathbf{h}_1})\right)\\
			&=\prod_{c=1}^q (\deg_W(\omega_j,\pi(j_c))\pm \kappa)\cdot\prod_{c=1}^q \left( R_{j_c}\exp(\pm \tfrac{v(F_{j_c})}{\mathbf{h}_1})\right)
			\;.
		\end{align*}
		Now, each of the factors in the first product equals to $center_{\pi(j_c)}(\pi(j))\pm \frac{1}{\mathbf{h}_{L+1-g_j}}$ by using the property~\eqref{eq:propertyanatomies}. Since $\pi$ is a dense depth preserving anatomy assignment and since $\mu(\pi(j_c))$ is a positive number which depends only on $\mathbf{h}\llbracket L-g_j\rrbracket$, we have $$\deg_W(\omega_j,\pi(j_c))\pm \kappa=center_{\pi(j_c)}(\pi(j))\cdot(1\pm\sqrt{\kappa})\;.$$ Since $(1\pm\sqrt{\kappa})^q=\exp(\pm \tfrac{1}{\mathbf{h}_1})$, we just verified~\eqref{eq:jezekA} for $j$ and $\omega_j$.
		
		We can get exactly the same calculations for $B_j(\omega)$, where $\omega\in \pi(j)\setminus X$ and get
		\begin{align*}
			B_j(\omega_j)&=\prod_{c=1}^q (\deg_U(\omega_j,\pi(j_c))\pm \kappa)\cdot\prod_{c=1}^q \left(R_{j_c}\exp(\pm \tfrac{v(F_{j_c})}{\mathbf{h}_1}) \right)
			\;
		\end{align*}
		Now, the fact  $\omega_j$ is a non-exceptional element of the cell $\pi(j)$ of the $\kappa$-approximate anatomy $\{\mathcal A_F\}_{F\in\mathfrak D_{\mathbf{h}\llbracket L+1-j\rrbracket}}$ (for $U$) implies~\eqref{eq:jezekB}.
		
		\medskip
		Clearly, the term $\int_{\omega_1\in\pi(1)\setminus X} A_1(\omega_1)$ corresponds to the term $\int \prod_{\substack{(i,j) \in E(T)}} W(\omega_i, \omega_j)$ in the definition of $Q_1$. We shall now control all the remaining terms. The idea is that these all are almost constant since we are working inside one anatomy cell. Let us first deal with the terms in the definition of~$Q_1$.
		
		As $\pi(j)$ is non-singular for each $j\in [\ell]$, the denominator of $\frac{\sum_{j=p}^\ell \deg_{W}(\omega_j) }{\prod_{j=1}^\ell \deg_{W}(\omega_j)}$ is at least $\Delta^\ell$.
		Further, the fluctuations in the denominator are at most $\sum_{j=1}^{\ell}\frac{\hbar(\mathbf{h}\llbracket L-g_j\rrbracket)}{\mathbf{h}_{L+1-g_j}}\le \tau$ around $C_1:=\prod_{j=1}^\ell \deg^{\mathrm{anat}}_W(\pi(j))$ by~\eqref{eq:deganatomy}. Similarly, the nominator is bounded from below by $\Delta$, and the fluctuations in the nominator are at most $\sum_{j=p}^\ell \frac{\hbar(\mathbf{h}\llbracket L-g_j\rrbracket)}{\mathbf{h}_{L+1-g_j}}\le \tau$ around $C_2:=\sum_{j=p}^\ell\sum_{F\in\mathfrak D_{\mathbf{h}\llbracket L-g_j\rrbracket}}center_F(\pi(j))$. The fluctuations in the term $\exp \left(-\sum_{j=1}^{p-1} b_{W}(\omega_j) \right)$ are at most $\tau$ around $C_3:=\prod_{j=1}^{p-1} center_{\mathtt{b}}(\pi(j))$. Also, since $\pi(j)$ is non-singular for each $j\in [p-1]$, we have $C_3>\exp(-\frac{\ell}{\Delta})$. The two upper-/lower- bounds we have derived using from the non-singularity will be used below to transform an additive error of the type $value\pm error$ to a multiplicative one, $value\cdot (1\pm\frac{error}{lower~bound~on~the~value})$. Therefore,
		\begin{align}
			\nonumber
			Q_1&=\int\displaylimits_{(\omega_1, \ldots, \omega_\ell)\in \mathbf{\Omega}_\pi}
			\exp \left(-\sum_{j=1}^{p-1} b_{W}(\omega_j) \right)\cdot \frac{\sum_{j=p}^\ell \deg_{W}(\omega_j) }{\prod_{j=1}^\ell \deg_{W}(\omega_j)} \cdot
			\prod_{\substack{(i,j) \in E(T)}} W(\omega_i, \omega_j)\\
			\nonumber
			&= \int\displaylimits_{(\omega_1, \ldots, \omega_\ell)\in \mathbf{\Omega}_\pi}
			(C_3\pm \tau)\cdot\frac{C_2\pm \tau}{C_1\pm \tau}\cdot\prod_{\substack{(i,j) \in E(T)}} W(\omega_i, \omega_j)\\
			\nonumber
			&=C_3\cdot\frac{C_2}{C_1}\cdot \left(1\pm 6\tau \cdot\exp(\tfrac{\ell}{\Delta})\right)\cdot
			\int\displaylimits_{(\omega_1, \ldots, \omega_\ell)\in \mathbf{\Omega}_\pi}
			\prod_{\substack{(i,j) \in E(T)}} W(\omega_i, \omega_j)\\
			\nonumber
			&=C_3\cdot\frac{C_2}{C_1}\cdot \left(1\pm 6\tau \cdot\exp(\tfrac{\ell}{\Delta})\right)\cdot \int_{\omega_1\in\pi(1)\setminus X} A_1(\omega_1)\\
			\nonumber
			&\eqByRef{eq:jezekA}C_3\cdot\frac{C_2}{C_1}\cdot \left(1\pm 6\tau \cdot\exp(\tfrac{\ell}{\Delta})\right)\cdot \mu(\pi(1)\setminus X)\cdot R_1\cdot \exp(\pm \tfrac{\ell}{\mathbf{h}_1})\\
			\label{eq:DonaldTrump}
			&=
			C_3\cdot\frac{C_2}{C_1}\cdot \mu(\pi(1)\setminus X)\cdot R_1\cdot \left(1\pm\tfrac{d}{3}\right)
			\;.
		\end{align}
		We now attempt to show that this quantity is close to $Q_2$. First, let us introduce a modified quantity in which even integration over $V_0$ and $E_0$ is considered,
		$Q^+_2:=\int\displaylimits_{\mathbold{\omega}\in \mathbf{\Omega}_\pi}
		f_U(\mathbold{\omega})$.
		Now, we claim that we get exactly the same bound as in~\eqref{eq:DonaldTrump} even for $Q^+_2$. Let us explain that there are no traps in doing so. First, we can use $\int_{\omega_1\in\pi(1)\setminus X} B_1(\omega_1)$ to control $\int \prod_{\substack{(i,j) \in E(T)}} U(\omega_i, \omega_j)$, exactly in the same way as we used $\int_{\omega_1\in\pi(1)\setminus X} A_1(\omega_1)$ to control $\int \prod_{\substack{(i,j) \in E(T)}} W(\omega_i, \omega_j)$, because~\eqref{eq:jezekB} is a perfect counterpart to~\eqref{eq:jezekA}. The remaining quantities appearing in the definition of~$Q_2$ are again centered around the constants $C_1$, $C_2$, and $C_3$ as above, except the fluctuations can be bigger by $\kappa$ since $\left\{\{\mathcal A_{F}\}_{F\in\mathfrak D_{\mathbf{h}\llbracket t\rrbracket}}\right\}_{t=0}^{L+1}$ are only $\kappa$-approximate anatomies for $U$ (as opposed to exact anatomies for $W$). However, $\kappa$ is the smallest constant in~\eqref{eq:bfh} and thus this additional error can be easily swallowed by other error terms. Third, the upper- and lower- bounds based on non-nonsigularity that we used to transform an additive errors to multiplicative ones are still valid when the additional approximation term $\kappa$ is taken into account. Therefore, we conclude that $Q_2^+=
		C_3\cdot\frac{C_2}{C_1}\cdot \mu(\pi(1)\setminus X)\cdot R_1\cdot \left(1\pm\tfrac{d}{3}\right)$. This finishes the proof.
		
		Now, let us show that we have $|Q^+_2-Q_2|<\frac{(e^\pi_{\mathrm{vert}}+e^\pi_{\mathrm{edge}})\ell}{\Delta^\ell}$. Clearly, we have $Q^+_2\ge Q_2$. On the other hand, if for some $(\omega_1, \ldots, \omega_\ell)\in \mathbf{\Omega}_\pi$ we have that $f^-_U(\omega_1, \ldots, \omega_\ell)=0$ but $f_U(\omega_1, \ldots, \omega_\ell)>0$ then for some $i\in[\ell]$ we have $\omega_i\in \pi(i)\cap \Lambda_{\mathrm{vert}}$ or for some $ij\in E(T)$ we have $(\omega_i,\omega_i)\in\pi(i)\times \pi(j)\cap \Lambda_{\mathrm{edge}}$.
	\end{claimproof}
	
	\begin{claim}\label{cl:notdensenotsingular}Suppose that $\pi:[\ell]\rightarrow \left\{\mathcal A_{F}\right\}_{t=1,\ldots,L+1; F\in\mathfrak D_{\mathbf{h}\llbracket t\rrbracket}}$ is a depth preserving anatomy assignment that is not singular. Suppose that $j^*\in[\ell]$ is a witness that it is not dense either. Then
		\begin{equation}
		\label{eq:Skoda}
		\int_{\prod_{j=1}^{\ell} \pi(j)}f_W<\frac{d\ell}{\Delta^\ell}\cdot \prod_{j\in[\ell]} \mu(\pi(j))+\frac{\ell}{\mathbf{h}_{L+1-g_{j^*}}\cdot\Delta^\ell}\cdot \prod_{j\in[\ell]\setminus \{j^*\}} \mu(\pi(j))\;.
		\end{equation}
	\end{claim}
	\begin{claimproof}
		Let us bound all the terms in~\eqref{eq:definef}. More precisely, let $kj^*\in E(T)$ be any edge that witnesses that $\pi$ is not a dense depth preserving anatomy assignment. To bound $f_W$, we shall use that
		\[
		f_W(\omega_1,\ldots,\omega_\ell)\le \exp\left(-\sum_{j=1}^{p-1}b_W(\omega_j)\right)\cdot \frac{\sum_{j=p}^{\ell}\deg_W(\omega_j)}{\Pi_{j=1}^{\ell}\deg_W(\omega_j)}\cdot W(\omega_k,\omega_{j^{*}}).
		\]
		The term $\exp \left(-\sum_{j=1}^{p-1} b_W(\omega_j) \right)$ is trivially at most~$1$. The nominator in the term $\frac{\sum_{j=p}^\ell \deg_W(\omega_j) }{\prod_{j=1}^\ell \deg_W(\omega_j)}$ is at most $\ell$ while the denominator is at least $\Delta^\ell$ by non-singularity. Finally, because $kj^*\in E(T)$ is an edge that witnesses that $\pi$ is not a dense depth preserving anatomy assignment. Then for every choice of $\omega_k\in \pi(k)$, we have that $\int_{\omega_{j^*}\in\pi(j^*)} W(\omega_{k},\omega_{j^*})=\deg_W(\omega_k,\pi(\omega_{j*}))\le d\cdot \mu(\pi(j^*))+\tfrac{1}{2\mathbf{h}_{L+1-g_{j^*}}}$. The claim now follows by integrating over the remaining dimensions.
	\end{claimproof}
	
	We are now ready to express $\FREQ(T;W)$ and $\FREQ^-(T;G,V_0,E_0)$ in a way that most terms will approximately cancel. To express $\FREQ(T;W)$, we work with the function~$f_W$ defined in~\eqref{eq:definef}. Also, we partition the integration over $\Omega^\ell$ in~\eqref{eq:frewTW} into terms corresponding to individual depth preserving anatomy assignments as in~\eqref{eq:anatomyassignmentsexhausts}.
	\begin{align}
		\begin{split}\label{eq:expressW}
			|\stab_T|\cdot \FREQ(T;W)&=\sum_{\pi \mathrm{~non-problematic}} \int_{\mathbf{\Omega}_\pi}f_W\\
			&~~+
			\sum_{\pi \mathrm{~non-problematic}}\int_{\prod_{j=1}^\ell \pi(j)\setminus\mathbf{\Omega}_\pi}f_W\\
			&~~ +\sum_{\pi \mathrm{~not~dense~and~not~singular}} \int_{\prod_{j=1}^\ell \pi(j)}f_W\\
			&~~+\sum_{\pi \mathrm{~singular}} \int_{\prod_{j=1}^\ell \pi(j)}f_W\;.
		\end{split}
	\end{align}
	
	Now, we have
	\begin{align}
		\begin{split}\label{eq:expressG}
			|\stab_T|\cdot \FREQ^-(T;G,V_0,E_0)&=\sum_{\pi \mathrm{~non-problematic}} \int_{\mathbf{\Omega}_\pi}f^-_U\\
			&+
			\sum_{\pi \mathrm{~non-problematic}}\int_{\prod_{j=1}^\ell \pi(j)\setminus\mathbf{\Omega}_\pi}f^-_U\\
			&~~ +\sum_{\pi \mathrm{~not~dense~and~not~singular}} \int_{\prod_{j=1}^\ell \pi(j)}f^-_U\\
			&~~+\sum_{\pi \mathrm{~singular}} \int_{\prod_{j=1}^\ell \pi(j)}f^-_U\;.
		\end{split}
	\end{align}
	We will show that the first sums in~\eqref{eq:expressW} and~\eqref{eq:expressG} cancel almost perfectly, and that all the remaining terms are negligible.
	
	Claim~\ref{suminsidebox} tells us that for the first sums in~\eqref{eq:expressW} and~\eqref{eq:expressG} we have
	\begin{equation}\label{eq:123}
	\sum_{\pi \mathrm{~non-probl.}} \int_{\mathbf{\Omega}_\pi}f_W=\sum_{\pi \mathrm{~non-probl.}} \int_{\mathbf{\Omega}_\pi}f^-_U\cdot (1\pm d)\pm \sum_{\pi \mathrm{~non-probl.}}\frac{(e^\pi_{\mathrm{vert}}+e^\pi_{\mathrm{edge}})\ell}{\Delta^\ell}\;.
	\end{equation}
	We have
	\begin{align*}
		\sum_{\pi \mathrm{~non-probl.}}
		e^\pi_{\mathrm{vert}}&=\sum_{i=1}^\ell \sum_{\pi \mathrm{~non-probl.}} \mu\left(\pi(i)\cap \Lambda_{\mathrm{vert}}\right)\cdot \prod_{k\in V(T)\setminus \{i\}}\mu(\pi(k))\leByRef{eq:MEASURE2} \sum_{i=1}^\ell \mu\left(\Lambda_{\mathrm{vert}}\right)\leByRef{eq:boundsLambda} \ell\cdot a\;\mbox{, and}\\
		\sum_{\pi \mathrm{~non-probl.}}e^\pi_{\mathrm{edge}}&=
		\sum_{ij\in E(T)}
		\sum_{\pi \mathrm{~non-probl.}}
		\mu^{\otimes 2}\left(\Lambda_{\mathrm{edge}}\cap \bigcup_{ij\in E(T)} \pi(i)\times \pi(j)\right)\cdot \prod_{k\in V(T)\setminus \{i,j\}}\mu(\pi(k))\\
		&~\le |E(T)| \mu^{\otimes 2}\left(\Lambda_{\mathrm{edge}}\right)\leByRef{eq:boundsLambda} \ell \cdot 2a\;.
	\end{align*}
	Recall that in the course of proving Theorem~\ref{thm:main} from Theorem~\ref{thm:mainthm2} we showed that $\int_{\mathbold{\omega}\in \Omega}f_W(\mathbold{\omega})\le |\stab_T| \le \ell^\ell$. In particular, using the above bounds, we can turn the multiplicative error in~\eqref{eq:123} into an additive error,
	\begin{align}
		\begin{split}\label{eq:klice}
			\sum_{\pi \mathrm{~non-probl.}} \int_{\mathbf{\Omega}_\pi}f_W&=\sum_{\pi \mathrm{~non-probl.}} \int_{\mathbf{\Omega}_\pi}f^-_U \;\pm 2d\ell^\ell\pm  \frac{3a\ell^2}{\Delta^\ell}\\
			&=\sum_{\pi \mathrm{~non-probl.}} \int_{\mathbf{\Omega}_\pi}f^-_U
			\;\pm\frac{\epsilon}{100}\;.
		\end{split}
	\end{align}
	
	Let us now focus on the second sum in~\eqref{eq:expressW}. Observe that the set $\bigcup_{\pi \mathrm{~non-problematic}} \prod_{j=1}^\ell\pi(j)\setminus\mathbf{\Omega}_\pi$ has measure at most $\ell\cdot\mu(X)\le\delta_0$. Therefore,~\eqref{eq:Iwillgotobednow} applies, and we get that
	$$\sum_{\pi \mathrm{~non-problematic}}\int_{\prod_{j=1}^\ell \pi(j)\setminus\mathbf{\Omega}_\pi}f_W\le \frac{\epsilon}{10}\;.$$
	Let us now turn to the third sum in~\eqref{eq:expressW}. We write
	$$\sum_{\pi \mathrm{~not~dense~and~not~singular}} \int_{\prod_{j=1}^\ell \pi(j)}f_W
	\le
	\sum_{g=0}^{L}\sum_{\pi:\mathrm{witness~at~height}~g} \int_{\prod_{j=1}^\ell \pi(j)}f_W\;,$$
	where the last sum ranges over all non-singular depth preserving anatomy assignments $\pi$ with a witness for not being dense at height $g$. By Claim~\ref{cl:notdensenotsingular} and by~\eqref{eq:MEASURE1} and~\eqref{eq:MEASURE2} for each $g\in\{0,\ldots,L\}$ we have
	$$
	\sum_{\pi:\mathrm{witness~at~height}~g} \int_{\prod_{j=1}^\ell \pi(j)}f_W\le
	\frac{d\ell}{\Delta^\ell}
	+\frac{\ell^2}{\mathbf{h}_{L+1-g}\cdot\Delta^\ell}
	\le \frac{d'}{L+1}\;.
	$$
	Therefore,
	\begin{equation}\label{eq:ndns}
	\sum_{\pi \mathrm{~not~dense~and~not~singular}} \int_{\prod_{j=1}^\ell \pi(j)}f_W<d'\;.
	\end{equation} The fourth sum of~\eqref{eq:expressW} can be bounded from above as follows. If $\pi$ is singular then by Claim~\ref{cl:oneandall} there exists $i\in [\ell]$ such that we have $b_W(\omega)>1/(2\Delta)$ for all $\omega\in \pi(i)$, or we have $\pi(i)\subset \Omega_{\mathrm{small}}$. Since the average value of $b_W$ is~1 by Proposition~\ref{lem:avgb}, the measure of the elements that satisfy the former condition is at most $2\Delta$. Thus, the measure of the set $\bigcup_{\pi}\prod_{j=1}^\ell \pi(j)$, where the union ranges over all depth preserving anatomy assignments $\pi$ which are singular at coordinate $i$, is at most $2\Delta+\mu(\Omega_{\mathrm{small}})<\frac{\delta_0}{\ell}$. We conclude that the measure of $\bigcup_{\pi\mathrm{~singular}}\prod_{j=1}^\ell \pi(j)$ is less than $\delta_0$. Therefore,~\eqref{eq:Iwillgotobednow} applies, and $\sum_{\pi \mathrm{~singular}} \int_{\prod_{j=1}^\ell \pi(j)}f_W\le \frac{\epsilon}{10}$. It now remains to bound the second, third, and the fourth term~\eqref{eq:expressG}. Recall that when bounding the corresponding terms in~\eqref{eq:expressW} above, we argued that the domains of integration of the second and the fourth term have measures at most $\delta_0$ each. Combining this with the upper bound~\eqref{eq:fUbounded}, we get that the second and the fourth term are at most $\delta_0\cdot \frac{\ell}{\chi^\ell}<\frac{\epsilon}{100}$, using~\eqref{eq:bfh}. So, the only term we have to control now is
	\begin{align}\nonumber
		\sum_{\pi \mathrm{~not~dense~and~not~singular}} \int_{\prod_{j=1}^\ell \pi(j)}f^-_U=&\\
		\label{eq:keys}
		\sum_{\pi \mathrm{~not~dense~and~not~singular}} \int_{\mathbf{\Omega}_\pi}f^-_U&+\sum_{\pi \mathrm{~not~dense~and~not~singular}}\int_{\prod_{j=1}^\ell \pi(j)\setminus\mathbf{\Omega}_\pi}f^-_U\;.
	\end{align}
	The first summand can be bounded by $2d'$ in exactly the same way as we derived~\eqref{eq:ndns}. To see this, recall that on $\mathbf{\Omega}_\pi$ the degrees into anatomies are similar for $W$ and for $U$ and thus we can make use of the ``non-denseness'' and ``non-singularity'' assumption even for $f_U^-$ (with slightly worse constants).
	The second summand of~\eqref{eq:keys} corresponds represents integration over a set of measure at $\delta_0$. In other words, we can bound the second summand of~\eqref{eq:keys} by $\frac{\epsilon}{100}$ in the same way we bounded the second and the fourth term of~\eqref{eq:expressG}.
	
	The lemma now follows by expanding $\FREQ(T;W)-\FREQ^-(T;G,V_0,E_0)$ using~\eqref{eq:expressW} and~\eqref{eq:expressG} and using the bounds above.
\end{proof}

Theorem~\ref{thm:mainthm2} now follows in a straightforward way by combining Lemma~\ref{lem:congTquenched} and Lemma~\ref{lem:freqContinuous}.
\begin{proof}[Proof of Theorem~\ref{thm:mainthm2}]
	Suppose that a graphon $W$, an $\ell$-vertex tree $T$ and $\epsilon>0$ are given. Let $a>0$ by given by Lemma~\ref{lem:freqContinuous} for $W$, $T$, and $\frac{\epsilon}2$. Lemma~\ref{lem:furtherdecomp} with parameter $\beta:=c_1\cdot \min\{a^8,\epsilon^{16}\}$ (for a sufficiently small constant $c_1$) yields positive constants 
	$\alpha,\epsilon',\gamma$ and $\xi'$ with $\beta\gg \alpha \gg \epsilon'\gg \gamma \gg \xi'$. Now, let $n_0$ and $\delta'$ be numbers given by Lemma~\ref{lem:freqContinuous} for input parameters as above together with $\chi:=\gamma$.
	Set $\xi:=\min(\xi',\delta,\frac{1}{n_0})$.
	
	Suppose that $G$ is a graph with $d_\square(G,W)\leq \xi$. Owing to Lemma~\ref{lem:furtherdecomp}, we know that $G$ has a $(\beta,\alpha,\epsilon',\gamma)$-good decomposition $V(G)=V_0\sqcup V_1\sqcup\ldots\sqcup V_k$. Lemma~\ref{lem:congTquenched} now tells us that with probability at least $1-\LandauO(\ell \alpha^{1/8})$ the uniform spanning tree $\T$ of $G$ satisfies
	\begin{equation}\label{eq:rychle}
	\PROB_X(B_\T(X,r) \cong T) = (1+\LandauO(\ell^2 \alpha^{1/16})) \FREQ(T;G) + \LandauO( \ell^2 \beta^{1/8}) \, .
	\end{equation}
	Recall that the quantity $\FREQ(T;G)$ is defined in~\eqref{eq:mobil}, and depends on the decomposition $V(G)=V_0\sqcup V_1\sqcup\ldots \sqcup V_k$. Observe that $\FREQ(T;G)=\FREQ^-(T;G,U_0,E_0)$, where $U_0$ is the union of $V_0$ and sets $V_i$ ($i\ge 1$) that are not $(\alpha,\epsilon')$-big, and $E_0$ consists of all edges running across two distinct $(\alpha,\epsilon')$-big sets $\{V_i\}_i$. Proposition~\ref{prop:goodvis} tells us that $|U_0|= \LandauO(\beta^{1/8}n)<an$. The fact that $V(G)=V_0\sqcup V_1\sqcup\ldots \sqcup V_k$ is a $(\gamma,\epsilon'^5,\epsilon'^5)$-expander decomposition (in particular, (G2) of Definition~\ref{def:expanderdecomposition} applies) tells us that $|E_0|\le \epsilon'^5 n^2<an^2$. Thus, Lemma~\ref{lem:freqContinuous} gives $\FREQ(T;G)=\FREQ(T;W)\pm \frac{\epsilon}{2}$. Plugging this back to~\eqref{eq:rychle} and using that the  term $(1+\LandauO(\ell^2 \alpha^{1/16}))$ can be bounded by $(1\pm \frac{\epsilon}{4})$ and the term $\LandauO(\ell^2 \beta^{1/8})$ is smaller than $\frac{\epsilon}{4}$, we get the theorem.
\end{proof}

\subsection{Proof of Theorem~\ref{thm:main} from Theorem~\ref{thm:mainthm2}}\label{sec:Theorem1Theorem2}
Let $T$ be a fixed rooted tree with $\ell\geq 2$ vertices and height $r$. We denote the vertices of $T$, as before, by $\{1,\ldots,\ell\}$ so that $1$ is the root and the vertices of distance $r$ from the root are $\{p,\ldots,\ell\}$ for some $2 \leq p\leq \ell$. Our goal is to show that the probability that the first $r$ generations of the branching process yield a tree which is root-isomorphic to $T$ is $\FREQ(T;W)$.

First, the factor of $|\stab_T|^{-1}$ comes from the $|\stab_T|$ different vertex labellings of $T$ which yield the same event. Secondly, we may split this event to $\ell-p+1$ disjoint events, indexed by the vertices $q\in\{p,\ldots,\ell\}$ at height $r$ of $T$, and each of these is the event that the path from $1$ to $q$ is the first $r$ steps of the ancestral path defined in $\kappa_W$. We will show that the probability of each of these events is precisely
\begin{equation}\label{eq:qterm} \int\displaylimits_{\omega_1, \ldots, \omega_\ell} \exp \left(-\sum_{j=1}^{p-1} b_W(\omega_j) \right)\cdot \frac{\deg(\omega_q) }{\prod_{j=1}^\ell \deg(\omega_j)} \cdot\prod_{\substack{(i,j) \in E(T)}} W(\omega_i, \omega_j) \mathrm{d}\omega_1 \cdots \mathrm{d}\omega_\ell \, ,\end{equation}
which is the $q$-th term in \eqref{eq:frewTW} when we expand the sum over $j$ in the numerator. We denote the path from $1$ to $q$ in $T$ by $x_1,\ldots, x_r$ where $x_1=1$ and $x_r=q$. By definition of $\kappa_W$, the density function of the first $r$ ancestral particles $\omega_{x_1},\ldots, \omega_{x_r}$ is
$$ \prod_{j=1}^{q-1} {W(\omega_{x_j}, \omega_{x_{j+1}}) \over \deg(x_j)} \, .$$
Thirdly, by independence of the branching process, conditioned on this path, the rest of the branching process emanating from the particles on this path is independent and all new particles (if there are any) are ``other'' particles. Given a particle of type $\omega$ (either ancestral or other), the number of its other progeny is distributed as $\POISSON(b_W(w))$ random variable, so the probability that it equals $k \geq 0$ is ${e^{-b_W(\omega)} b_W(\omega)^k \over k!}$. Given the the number of its ``other'' progeny is $k$, these $k$ points of $\Omega$ are distributed as $k$ i.i.d. points drawn from the probability density function ${W(\omega,\omega') \over b_W(\omega) \deg(\omega')}$. Thus, conditioned on having $k$ progeny, the density of these $k$-tuple $\{\omega_1, \ldots, \omega_k\}$ of points is
$$ {k! \over b(\omega)^k} \prod_{i=1}^k { W(\omega, \omega_i) \over \deg(\omega_i)}  \, .$$
The $k!/b_W(\omega)^k$ cancels with the $b_W(\omega)^k/k!$ in the probability of having $k$ points and we are left with last term times $e^{-b_W(\omega)}$. We now apply this throughout all the branching points of the tree $T$ (even for $k=0$) and this concludes the proof (note that we do not have a factor for $e^{-b_W(\omega)}$ for tree vertices at level $r$ since we do not care if they have progeny or not). \qed

\section{Bounds on the number of vertices of a given
degree in the UST}\label{sec:degreeextremal}
In this section we prove Theorem~\ref{cor.degdist}. Let us make two remarks beforehand. Firstly, we cannot hope for converse bounds to the theorem. Indeed, let us take a small $\alpha>0$ and let us consider the complete bipartite graph $K_{\alpha n,(1-\alpha n)}$ with color classes $A$ and $B$, $|A|=\alpha n$, $|B|=(1-\alpha)n$.
The handshaking lemma tells us that for \emph{any} spanning tree $T$ of $K_{\alpha n,(1-\alpha n)}$ we have
$$n-1=\sum_{b\in B}\deg_T(b)=(1-\alpha)n+\sum_{b\in B,\deg_T(b)>1}(\deg_T(b)-1)\;.$$
In particular, the last sum can have at most $\alpha n-1$ summands. We conclude that $T$ has more than $(1-2\alpha )n$ leaves, and less than $2\alpha n$ vertices of degrees more than 1.

Secondly, as we mentioned in the beginning of this paper, the degree distribution of a UST in a complete graph is approximately $\POISSON(1)+1$. Considering Theorem~\ref{cor.degdist}, we see that the complete graph is an asymptotic minimizer for the number of leaves (density $e^{-1}$), the asymptotic maximizer for the number of vertices of degree $2$ (density $e^{-1}$) and $3$ (density $e^{-1}/2$). However, the density of vertices degree $k \geq 4$ in the UST of the complete graph is $e^{-(k-1)}/(k-1)!$ which is smaller than the $(k-2)^{k-2}e^{-(k-2)}/(k-1)!$ given by Theorem~\ref{cor.degdist}. Examining the proof shows that the bounds in Theorem~\ref{cor.degdist} for $k \geq 4$ are optimal in the following sense. Let $\alpha>0$ be arbitrarily small, and let $G_{n,k,\alpha}$ be the graph obtained by taking a complete graph on $n/(k-2)$ vertices and additional $n(k-3)/(k-2)$ vertices such that each of the $n^2(k-3)$ possible edges between the two parts is retained with probability $\alpha$ and erased otherwise, independently of all other edges. It is a straightforward computation using Theorem~\ref{thm:main} (see also the proof of Theorem~\ref{cor.degdist} below) to show that for $n$ large, with high probability the proportion of vertices of degree $k$ in a UST will be arbitrarily close, as $\alpha \to 0$, to $\frac{1}{(k-1)!}((k-2)/e)^{k-2}$.

We are almost ready to prove the theorem. We will need the following folklore  statement that provides a relation between degrees in a graphon and finite graphs that converge to it. Lemma~\ref{lem:graphondegrees} is easy to prove directly, but let us note that is a straightforward consequence of Lemma~\ref{lem:anatomiescontNEW} for anatomies of depth~$1$.
\begin{lem}
	\label{lem:graphondegrees}
	Suppose that $G_1,G_2,\ldots$
	are finite graphs converging to a graphon $W:\Omega^2\rightarrow[0,1]$.
	\begin{enumerate}
		\item\label{en:alldeg} Suppose that $0\le a<b\le 1$ are given. and let $d$ be the measure of points of $\Omega$ whose degree is in the interval $(a,b)$. Then for an arbitrary $\epsilon>0$ there exists an $n_0$ such that for every $n>n_0$ we have that in $G_n$ there are at least $(d-\epsilon) v(G_n)$ many vertices with degrees in the interval $(  (a-\epsilon)v(G_n)  ,   (b+\epsilon) v(G_n) )$ and there are at most $(d+\epsilon) v(G_n)$ many vertices with degrees in the interval $(  (a+\epsilon)v(G_n)  ,   (b-\epsilon) v(G_n) )$.
		\item\label{en:mindeg} Suppose that there exists $\alpha>0$ such that in each $G_n$, all but at most $\Landauo_n(1)v(G_n)$ vertices have degrees at least $\alpha v(G_{i})$. Then $W$ is nondegenerate.
	\end{enumerate}
\end{lem}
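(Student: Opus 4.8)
The plan is to reduce both parts to one elementary fact: the cut-norm controls the $L^1$-distance between degree functions. Namely, for any two graphons $W_1,W_2:\Omega^2\to[0,1]$ one has $\int_\Omega|\deg_{W_1}(x)-\deg_{W_2}(x)|\,\mathrm{d}\mu(x)\le 2\|W_1-W_2\|_\square$; I would prove this by setting $g(x):=\deg_{W_1}(x)-\deg_{W_2}(x)=\int_y\big(W_1(x,y)-W_2(x,y)\big)\,\mathrm{d}y$, splitting $\Omega$ into $\{g\ge 0\}$ and $\{g<0\}$, and invoking the definition~\eqref{eq:cutnormdef} of the cut-norm with $T=\Omega$ on each piece. (As remarked before the statement, one could instead quote the depth-$1$ case of Lemma~\ref{lem:anatomiescontNEW}, whose conclusion~\eqref{eq:propertyanatomies} for the degree-anatomy already says that a cut-close graphon has nearly the right degrees away from a small exceptional set; I would, however, give the short direct argument.)

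Next I would fix $\epsilon>0$, write $n:=v(G_n)$, and choose for each $n$ a graphon representation $U_n$ of $G_n$ with $\xi_n:=\|U_n-W\|_\square\to 0$ (possible since $\delta_\square(W,G_n)\to 0$). Two features of $U_n$ are used: $\deg_{U_n}$ is constant on each of the $n$ cells $\Omega_v$ of measure $\tfrac1n$, with value $\deg_{G_n}(v)/n$; and, by the inequality above together with Markov's inequality, the set $B_n:=\{x:|\deg_{U_n}(x)-\deg_W(x)|>\sqrt{2\xi_n}\}$ has $\mu(B_n)\le\sqrt{2\xi_n}$. Pick $n_0$ with $\sqrt{2\xi_n}<\epsilon$ for $n>n_0$. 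Then part~\eqref{en:alldeg} follows by a measure count: for the lower bound, $\{x:\deg_W(x)\in(a,b)\}\setminus B_n$ has measure at least $d-\epsilon$ and is contained in the union of the cells $\Omega_v$ with $\deg_{G_n}(v)\in\big((a-\epsilon)n,(b+\epsilon)n\big)$, so (dividing by $\tfrac1n$) there are at least $(d-\epsilon)n$ such vertices; for the upper bound, any cell $\Omega_v$ with $\deg_{G_n}(v)\in\big((a+\epsilon)n,(b-\epsilon)n\big)$ satisfies $\Omega_v\setminus B_n\subseteq\{x:\deg_W(x)\in(a,b)\}$, hence the union of all such cells has measure at most $d+\mu(B_n)\le d+\epsilon$ and there are at most $(d+\epsilon)n$ such vertices.

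For part~\eqref{en:mindeg} I would argue by contradiction. If $W$ is degenerate then $\delta:=\mu(\{x:\deg_W(x)=0\})>0$, so $\{x:\deg_W(x)<\alpha/2\}$ has measure at least $\delta$; running the lower-bound count of the previous paragraph on this half-open degree window yields, for all large $n$, at least $\tfrac\delta2\,n$ vertices $v$ of $G_n$ with $\deg_{G_n}(v)<\big(\alpha/2+\sqrt{2\xi_n}\big)n<\alpha n$, contradicting the hypothesis that all but $\Landauo_n(1)v(G_n)$ vertices have degree at least $\alpha v(G_n)$. I do not expect a genuine obstacle here: everything rests on the single cut-norm estimate, and the only points requiring mild care are the choice of the graphon representation $U_n$ (so that $\deg_{U_n}$ is cell-wise constant) and the bookkeeping of open versus half-open degree intervals.
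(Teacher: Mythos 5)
Your proof is correct and is essentially the argument the paper has in mind: the paper writes no proof, remarking only that the lemma ``is easy to prove directly'' or follows from the depth-$1$ case of Lemma~\ref{lem:anatomiescontNEW}, and your estimate that the cut-norm controls the $L^1$-distance of degree functions, combined with Markov's inequality and a cell-wise count in a step-function representation, is exactly that direct argument (the same mechanism as in the proof of Lemma~\ref{lem:anatomiescontNEW}). The only cosmetic point is that the infimum in $\delta_\square$ need not be attained, so one should take $U_n$ with $\|U_n-W\|_\square\le\delta_\square(W,G_n)+\tfrac1n$, which changes nothing in your argument.
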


\medskip

\begin{proof}[Proof of Theorem~\ref{cor.degdist}] We start by proving~\eqref{eq:extdeg1}. Assume by contradiction that there are $\eps_0, \delta_0>0$ such that there exists a sequence of graphs $G_n$ (which we assume have $n$ vertices) such that
	\begin{enumerate}
		\item[(A)] at least $(1-\Landauo(1))n$ vertices of $G_n$ are of degree at least $\delta_0 n$, and,
		\item[(B)] $\PROB\big ( L_1(n) \leq (e^{-1}-\eps_0)n \big ) > \eps_0$,
	\end{enumerate}
	where $L_1(n)$ is the number of leaves in a UST of $G_n$. By Theorem~\ref{thm:graphsconverge} we may assume without loss of generality that $G_n$ is a converging sequence and let $W$ be the corresponding limit graphon. By Lemma~\ref{lem:graphondegrees}\eqref{en:mindeg} and assumption (A) we deduce that $W$ is nondegenerate. Let $\T_n$ be a UST of $G_n$. By Theorem~\ref{thm:main} the sequence $\T_n$ almost surely satisfies that
	$$ \lim_{n\to\infty} \PROB( \deg_{\T_n}(X) = 1 ) = \EXP_{x\in\Omega}\left[\PROB[1+\POISSON(b_W(x))=1]\right]=\EXP_{x\in\Omega}\left[e^{-b_W(x)}\right] \, ,$$
	where $X$ is a uniformly chosen vertex of $G_n$. By Proposition \ref{lem:avgb} we have that $\EXP_{x\in\Omega}[b_W(x)]=1$. Hence Jensen's inequality implies that
	$$ \lim_{n \to \infty} \PROB( \deg_{\T_n}(X) = 1 ) \geq e^{-1} \, .$$
	Since $L_1(n) = n\PROB( \deg_{\T_n}(X) = 1 )$ we arrive at a contradiction to assumption (B), showing~\eqref{eq:extdeg1}. \\
	
	The proof of~\eqref{eq:extdeg2} and~\eqref{eq:extdegbiggerthan2} proceeds similarly. We fix $k \geq 2$ and make the an analogous contradictory assumption as in (A) and (B).
	We have that
	$$ \lim_{n\to\infty} \PROB( \deg_{\T_n}(X) = k ) = \EXP_{x\in\Omega}\left[\PROB[1+\POISSON(b_W(x))=k]\right]={1 \over (k-1)!}\EXP_{x\in\Omega}\left[e^{-b_W(x)} b_W(x)^{k-1} \right] \, .$$
	When $k=2$, since $ye^{-y}\leq e^{-1}$ for all $y \geq 0$ we learn that the last quantity is at most $e^{-1}$, proving~\eqref{eq:extdeg2} by contradiction. When $k \geq 3$, it follows by Lemma \ref{lem:optimize} (provided immediately below) that the last quantity is at most ${1 \over (k-1)!}((k-2)/e)^{k-2} $, similarly proving~\eqref{eq:extdegbiggerthan2}.
\end{proof}

The last missing piece is to state and prove the optimization result that was key for our proof of Theorem~\ref{cor.degdist} above.
\begin{lem}\label{lem:optimize} Let $b:[0,1]\to [0,\infty)$ satisfy $\int_{[0,1]} b(x)\mathrm{d}x =1$. Then for any $k \geq 2$,
	$$ \int_{[0,1]} e^{-b(x)} b(x)^k \mathrm{d}x \leq \left ( {k-1 \over e} \right )^{k-1}.$$
\end{lem}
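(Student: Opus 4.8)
The plan is to solve the pointwise optimization problem and then use convexity of the resulting envelope. First I would set $g(y) := e^{-y} y^k$ for $y \geq 0$ and aim to bound $\int_{[0,1]} g(b(x))\,\mathrm{d}x$ subject to the constraint $\int_{[0,1]} b(x)\,\mathrm{d}x = 1$. A clean way to do this is to find the best affine function $\ell(y) = cy$ (passing through the origin, since $g(0)=0$, which is the natural choice given the constraint involves only $\int b$) such that $g(y) \le \ell(y)$ for all $y \ge 0$; then $\int g(b(x))\,\mathrm{d}x \le c \int b(x)\,\mathrm{d}x = c$, and the bound follows. Concretely I would seek the line through the origin that is tangent to the graph of $g$: the tangency condition $g(y_0) = c y_0$ and $g'(y_0) = c$ together give $e^{-y_0} y_0^k = c y_0$ and $e^{-y_0}(k y_0^{k-1} - y_0^k) = c$, hence $y_0^{k-1} e^{-y_0} = c$ and $(k - y_0) y_0^{k-2} e^{-y_0} = c$, so $y_0 = k-1$ and $c = (k-1)^{k-2} e^{-(k-1)}$.

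Wait — that gives $c = (k-1)^{k-2}e^{-(k-1)}$, whereas the claimed bound is $\big((k-1)/e\big)^{k-1} = (k-1)^{k-1}e^{-(k-1)}$. These differ, so the ``tangent line through the origin'' is not what is needed; instead the correct reading is that on the interval $[0,1]$ the domain of $b$ has total measure $1$, so $\int b = 1$ means the \emph{average} of $b$ is $1$. So actually I should bound $\int_{[0,1]} g(b(x))\,\mathrm{d}x$ by comparing against the \emph{tangent line to $g$ at the point $y=1$}, or more precisely, use that the map $y \mapsto g(y)$ lies below its tangent at $y=1$ only if $g$ is concave there, which it is not in general. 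The right tool is the following: since $\int_{[0,1]} b = 1$ and the measure is a probability measure, by Jensen's-type reasoning applied to the \emph{concave envelope} $\hat g$ of $g$ on $[0,\infty)$ we get $\int g(b) \le \int \hat g(b) \le \hat g\!\left(\int b\right) = \hat g(1)$. So the key step is to compute $\hat g(1)$, the value at $1$ of the least concave majorant of $g(y) = e^{-y}y^k$.

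The concave envelope of $g$ is obtained as follows: $g$ increases on $[0,k]$, reaching its max $k^k e^{-k}$ at $y=k$, then decreases. For small $y$, $g$ is convex (it starts like $y^k$), so near the origin $\hat g$ is the tangent line from the origin to the graph of $g$; this is exactly the computation above, giving the tangency point $y_0 = k-1$ and the line $y \mapsto (k-1)^{k-2} e^{-(k-1)}\, y$. For $y \in [0, k-1]$ we have $\hat g(y) = (k-1)^{k-2}e^{-(k-1)} y$, and for $y \ge k-1$ we have $\hat g(y) = g(y)$ itself (one checks $g$ is concave on $[k-1,\infty)$ by computing $g'' = e^{-y}y^{k-2}\big(y^2 - 2ky + k(k-1)\big)$, whose larger root is $k + \sqrt k > k-1$ — hmm, so concavity only starts at $k+\sqrt k$, not $k-1$; this means the envelope's linear piece must be tangent at a point $y_0 > k+\sqrt k$, and I would redo the tangent-from-origin computation accepting whatever $y_0$ it yields, then verify $\hat g(1)$ equals the claimed constant). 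In any case, since $1 \le k-1$ for all $k \ge 2$, evaluating at $y=1$ gives $\hat g(1) = (\text{slope}) \cdot 1$, and the slope is $g(y_0)/y_0 = y_0^{k-1}e^{-y_0}$ maximized over $y_0$, which by calculus is maximized at $y_0 = k-1$ with value $(k-1)^{k-1}e^{-(k-1)} = \big((k-1)/e\big)^{k-1}$. That is exactly the target.

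So the steps in order are: (1) reduce to bounding $\int_{[0,1]} \hat g(b(x))\,\mathrm{d}x$ where $\hat g$ is the concave envelope of $g(y)=e^{-y}y^k$; (2) apply Jensen's inequality to the concave function $\hat g$ and the probability measure, obtaining $\hat g(1)$; (3) observe $\hat g$ is linear on $[0, y_0]$ with $y_0 \geq 1$ (valid since the relevant tangency point $y_0$ satisfies $y_0 \geq k-1 \geq 1$), so $\hat g(1) = $ slope $= \max_{y>0} y^{k-1}e^{-y} = (k-1)^{k-1}e^{-(k-1)}$, achieved at $y=k-1$. The main obstacle I anticipate is step (3): carefully identifying the concave envelope of $g$ on all of $[0,\infty)$ — in particular pinning down the exact tangency point of the initial linear segment and checking that $1$ lies within that segment — requires a short but genuine calculus argument, and one must be careful that $g$ is \emph{not} concave on the whole of $[k-1,\infty)$, so the envelope is not simply ``line then $g$'' split at $k-1$; rather the split occurs at the larger tangency point. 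However, for the purposes of this lemma all that is truly needed is: $g(y) \le c\,y$ on $[0,\infty)$ with $c = (k-1)^{k-1}e^{-(k-1)} \cdot (k-1)^{-1}\cdot(k-1) $… cleaner: the inequality $e^{-y}y^k \le \big((k-1)/e\big)^{k-1}\, y$ for all $y \ge 0$, which is equivalent (for $y>0$) to $y^{k-1}e^{-y} \le (k-1)^{k-1}e^{-(k-1)}$, i.e. to the elementary fact that $\varphi(y)=y^{k-1}e^{-y}$ attains its maximum at $y=k-1$. Granting that one-variable inequality, the lemma follows immediately by integrating against $b$ and using $\int b = 1$, with no envelope discussion needed at all. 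I would present the proof in this streamlined form.

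\begin{proof}
Fix $k\ge 2$. We claim the elementary pointwise inequality
\begin{equation}\label{eq:pointwise-optimize}
e^{-y}y^{k}\le \left(\frac{k-1}{e}\right)^{k-1} y\qquad\text{for all }y\ge 0.
\end{equation}
For $y=0$ both sides vanish, so assume $y>0$ and divide by $y$: \eqref{eq:pointwise-optimize} becomes $y^{k-1}e^{-y}\le (k-1)^{k-1}e^{-(k-1)}$. The function $\varphi(y):=y^{k-1}e^{-y}$ on $(0,\infty)$ satisfies $\varphi'(y)=y^{k-2}e^{-y}(k-1-y)$, so $\varphi$ increases on $(0,k-1]$ and decreases on $[k-1,\infty)$; hence $\varphi(y)\le\varphi(k-1)=(k-1)^{k-1}e^{-(k-1)}$, which is \eqref{eq:pointwise-optimize}.

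Now apply \eqref{eq:pointwise-optimize} with $y=b(x)$ and integrate over $[0,1]$:
\[
\int_{[0,1]}e^{-b(x)}b(x)^{k}\,\mathrm{d}x\le \left(\frac{k-1}{e}\right)^{k-1}\int_{[0,1]}b(x)\,\mathrm{d}x=\left(\frac{k-1}{e}\right)^{k-1},
\]
using the hypothesis $\int_{[0,1]}b(x)\,\mathrm{d}x=1$. This is the desired bound.
\end{proof}
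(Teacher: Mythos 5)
Your final, streamlined proof is correct: the pointwise inequality $e^{-y}y^{k}\le \left(\frac{k-1}{e}\right)^{k-1} y$ for $y\ge 0$ reduces (after dividing by $y>0$) to the single-variable fact that $\varphi(y)=y^{k-1}e^{-y}$ is maximized at $y=k-1$, and integrating against $b$ with $\int_{[0,1]} b=1$ gives the bound; moreover the bound is attained by $b=(k-1)\mathbbm{1}_{A}$ with $\mu(A)=\tfrac1{k-1}$, so nothing is lost. This is genuinely different from, and considerably shorter than, the paper's argument: the paper discretizes $b$ by simple functions, studies the maximizer of $\frac1n\sum f(b_i)$ over the simplex-type constraint set, and runs a perturbation (first-order optimality) analysis with a case distinction on how many values the maximizer takes, eventually reducing to a two-value configuration. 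Your linear majorant $g(y)\le cy$ with $c=\max_{y>0}y^{k-1}e^{-y}$ sidesteps all of that; what the paper's approach buys is essentially nothing extra here (it proves the same sharp constant), while yours is cleaner and makes the extremal configuration transparent. One remark: the exploratory discussion preceding your proof contains a slip — the tangent-from-origin computation actually yields slope $e^{-y_0}y_0^{k-1}=(k-1)^{k-1}e^{-(k-1)}$, i.e.\ exactly the desired constant, not $(k-1)^{k-2}e^{-(k-1)}$ — so the detour through concave envelopes was unnecessary; but since the final proof as written is self-contained and correct, this does not affect its validity.
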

\begin{proof}
	For ease of notation, let $f(x)=e^{-x}x^k$. Fix $\kappa>0$. Approximating $b(x)$ by simple functions, we can find $n\in \bN$ and $n$ non-negative numbers $b_1,\ldots,b_n$ such that $\frac{1}{n}\sum_{i=1}^{n}b_i=1$, and $\int_{[0,1]} f(b(x))\mathrm{d}x \le \frac{1}{n}\sum_{i=1}^{n}f(b_i)+\kappa$. Hence
	\begin{equation*}
		\int_{[0,1]} e^{-b(x)} b(x)^k \mathrm{d}x \leq \max_{(b_1,\ldots,b_n)\in \cC} \Phi(b_1,\ldots,b_n) + \kappa,
	\end{equation*}
	where $\Phi(b_1,\ldots,b_n):=\frac{1}{n}\sum_{i=1}^{n}f(b_i)$ and $\cC:=\{(b_1,\ldots,b_n)\in [0,+\infty)^{n}:\frac{1}{n}\sum_{i=1}^{n}b_i \le 1\}$. To prove the lemma, it thus suffices to show that $\Phi \vert_{\cC} \le \left(\frac{k-1}{e}\right)^{k-1}$.
	
	By abuse of notation, we use the same symbol $\vec{b}=(b_1,\ldots,b_n)$ to denote the maximizer of $\Phi\vert_{\cC}$. As $f'(x)=e^{-x}x^{k-1}(k-x)<0$ for every $x>k$, $f(x)$ is  decreasing on $(k,\infty)$, and consequently
	\begin{equation}
	\label{eq:bound-b}
	0 \le b_i \le k \quad \text{for all $i\in [n]$}.
	\end{equation}
	There are two possibilities.
	
	{\bf Case 1:} There exists a non-negative number $y$ so that $b_i \in \{0,y\}$ for every $i\in [n]$.\\
	Let $\lambda \in [0,1]$ denote the proportion of $i\in [n]$ with $b_i=y$. Then $\lambda y=\frac{1}{n}\sum_{i=1}^{n}b_i \le 1$. Hence
	\begin{equation*}
		\Phi(\vec{b})=\lambda e^{-y}y^k \le e^{-y}y^{k-1} \le \left(\frac{k-1}{e}\right)^{k-1},
	\end{equation*}
	as required.
	
	{\bf Case 2:} $b_i$ receives at least two positive values.\\
	Let $i$ and $j$ be two arbitrary indices such that $1\le i<j \le n$ and $\min \{b_i,b_j\}>0$. For every small $\eps>0$, two vectors $\vec{b}':=(b_1,\ldots,b_i+\eps,\ldots,b_j-\eps,\ldots,b_n)$ and $\vec{b}'':=(b_1,\ldots,b_i-\eps,\ldots,b_j+\eps,\ldots,b_n)$ clearly belong to the domain $\cC$. By Taylor's formula and the optimality of $\vec{b}$, we thus obtain
	\begin{align*}
		0 \le \Phi(\vec{b})-\Phi(\vec{b}')&=(f'(b_j)-f'(b_i))\eps +O_{b_i,b_j}(\eps^2),\\
		0 \le \Phi(\vec{b})-\Phi(\vec{b}'')&=-(f'(b_j)-f'(b_i))\eps +O_{b_i,b_j}(\eps^2).
	\end{align*}	
	This forces $f'(b_i)=f'(b_j)$.
	
	As $f''(x)=e^{-x}x^{k-2}((x-k)^2-k)$, we see that $f''(x)>0$ for $x\in (0,k-\sqrt{k})$, and $f''(x)<0$ for $x\in (k-\sqrt{k},k)$. It follows that $f'(x)$ is increasing in $[0,k-\sqrt{k}]$, and decreasing in $[k-\sqrt{k},k]$. Hence for each $\alpha \in \bR$, the equation $f'(x)=\alpha$ has at most two solutions in $[0,k]$.
	
	From the discussion above and \eqref{eq:bound-b}, we learn that there are two numbers $y$ and $z$ satisfying the following properties:
	\begin{itemize}
		\item[\rm (i)] $b_i \in \{0,y,z\}$ for every $i \in [n]$,
		\item[\rm (ii)] $f'(y)=f'(z)$, that is, $e^{-y}y^{k-1}(k-y)=e^{-z}z^{k-1}(k-z)$,
		\item[\rm (iii)] $0<y<z \le k$.
	\end{itemize}
	Let $\lambda$ and $\mu$ be the proportions of $i \in [n]$ such that $b_i=y$ and $b_i=z$, respectively. Then $\lambda+\mu \le 1$, $\lambda y+\mu z\overset{(i)}{=}\frac{1}{n}\sum_{i=1}^{n}b_i \le 1$, and $\Phi(\vec{b})\overset{(i)}{=}\Psi(\lambda,\mu)$, where $\Psi(\lambda,\mu):=\lambda f(y)+\mu f(z)$.
	
	We will view $y$ and $z$ as constants, and seek to maximize $\Psi(\lambda,\mu)$ under the constraints: $\lambda,\mu \ge 0$, $\lambda+\mu \le 1$, and $\lambda y+\mu z \le 1$. Let $(\lambda_0,\mu_0)$ be a maximizer of $\Psi$. We claim that $\min\{\lambda_0,\mu_0\}=0$. Before proving the claim, let us show how it implies the lemma. Indeed, if one of $\lambda_0$ and $\mu_0$ is zero, say $\mu_0$, then
	\[
	\Phi(\vec{b})=\Psi(\lambda,\mu) \le \Psi(\lambda_0,\mu_0)=\lambda_0 e^{-y}y^k \le e^{-y}y^{k-1} \le \left(\frac{k-1}{e}\right)^{k-1},
	\]
	as desired.
	
	It remains to prove that $\min\{\lambda_0,\mu_0\}=0$. Suppose to the contrary that $\lambda_0,\mu_0>0$. Let $\lambda_1=\lambda_0-\epsilon$ and $\mu_1=\mu_1+\epsilon y/z$, where $\epsilon>0$ is a small constant. It is not difficult to verify that $\lambda_1,\mu_1 \ge 0$, $\lambda_1+\mu_1 \le 1$, and $\lambda_1 y + \mu_1 z \le 1$. Thus, by the optimality of $(\lambda_0,\mu_0)$, we get
	$\Psi(\lambda_1,\mu_1) \le \Psi(\lambda_0,\mu_0)$, giving
	$e^{-z}z^{k-1} \le e^{-y}y^{k-1}$. Combined with (iii), we obtain $e^{-z}z^{k-1}(k-z)<e^{-y}y^{k-1}(k-y)$, contradicting (ii). This completes our proof of the lemma.
\end{proof}

\begin{remark}
Our proof of Theorem~\ref{cor.degdist} crucially relied on our running assumption that almost all the degrees in the graph $G$ are linear. It seems natural to investigate similar extremal questions with a weakened form of this assumption. The minimal meaningful assumption seems to be that $G$ contains no vertices of degree $2$; this is to avoid the case of paths. This general setting seems much more complicated. For example, the proportion of leaves in a uniform spanning tree on an $n\times n$ torus is asymptotically almost surely $(1-2/\pi)\cdot\frac8{\pi^2}+\Landauo(1)\approx 0.294$ (see for example \cite[p.~112]{LyPe:ProbabilityTrees}), which is less than the lower bound of $e^{-1}\approx 0.368$ in Theorem~\ref{cor.degdist}.
\end{remark}

\section{Acknowledgments}
Hladk\'y and Tran were supported by the Czech Science Foundation, grant number GJ16-07822Y and by institutional support RVO:67985807. Hladk\'y is supported by the Alexander von Humboldt Foundation. Nachmias is supported by ISF grant 1207/15, and ERC starting grant 676970 RANDGEOM. Part of this work was done while Hladk\'y and Tran were visiting Tel Aviv University. They thank the department of mathematical sciences at TAU for their hospitality. The trip was funded by the Czech Academy of Sciences.

We thank Omer Angel and Tom Hutchcroft for their assistance with the proof of Lemma \ref{lem:revisitexpander} and to Michael Krivelevich and Wojciech Samotij for useful discussions.

\bibliographystyle{plain}
\bibliography{bibl}

\end{document}